\newcommand{\C}{{\mathbb C}}
\newcommand{\Z}{{\mathbb Z}}
\newcommand{\nc}{\newcommand}
\nc{\one}{\mbox{\bf 1}}
\nc{\invtensor}{\underset{\leftarrow}{\otimes}}
\nc{\ad}{\operatorname{ad}}
\nc{\tr}{\operatorname{tr}}
\nc{\str}{\operatorname{str}}
\nc{\rank}{\operatorname{rank}}
\nc{\rnk}{\operatorname{rank}}
\nc{\corank}{\operatorname{corank}}
\nc{\Sym}{\operatorname{Sym}}
\nc{\sym}{\operatorname{sym}}
\nc{\id}{\operatorname{id}}
\nc{\Id}{\operatorname{Id}}
\nc{\htt}{\operatorname{ht}}
\nc{\Norm}{\operatorname{Norm}}
\nc{\Ker}{\operatorname{Ker}}
\nc{\rker}{\operatorname{rKer}}
\nc{\im}{\operatorname{Im}}
\nc{\osp}{\operatorname{osp}}
\nc{\sgn}{\operatorname{sgn}}
\nc{\Mod}{\operatorname{Mod}}
\nc{\Mat}{\operatorname{Mat}}
\nc{\sMat}{\operatorname{sMat}}
\nc{\Soc}{\operatorname{Soc}}
\nc{\Inj}{\operatorname{Inj}}
\nc{\Hom}{\operatorname{Hom}}
\nc{\End}{\operatorname{End}}
\nc{\supp}{\operatorname{supp}}
\nc{\Card}{\operatorname{Card}}
\nc{\Ann}{\operatorname{Ann}}
\nc{\Ind}{\operatorname{Ind}}
\nc{\Coind}{\operatorname{Coind}}
\nc{\wt}{\operatorname{wt}}
\nc{\spn}{\operatorname{span}}
\nc{\ch}{\operatorname{ch}}
\nc{\codim}{\operatorname{codim}}
\nc{\Stab}{\operatorname{Stab}}
\nc{\Supp}{\operatorname{Supp}}
\nc{\Irr}{\operatorname{Irr}}
\nc{\Spec}{\operatorname{Spec}}
\nc{\Prim}{\operatorname{Prim}}
\nc{\Aut}{\operatorname{Aut}}
\nc{\Fract}{\operatorname{Fract}}
\nc{\gr}{\operatorname{gr}}
\nc{\HC}{\operatorname{HC}}
\nc{\Ad}{\operatorname{Ad}}
\nc{\wdchi}{\widetilde{\chi}}
\nc{\wdH}{\widetilde{H}}
\nc{\wdN}{\widetilde{N}}
\nc{\wdM}{\widetilde{M}}
\nc{\wdO}{\widetilde{O}}
\nc{\wdR}{\widetilde{R}}
\nc{\wdS}{\widetilde{S}}
\nc{\wdV}{\widetilde{V}}
\nc{\wdC}{\widetilde{C}}
\nc{\soc}{\operatorname{soc}}
\nc{\cN}{\operatorname{\mathcal M}^{\#}}
\nc{\cM}{\operatorname{\mathcal M}}
\nc{\Ob}{\operatorname{\mathcal Ob}}
\nc{\cO}{\operatorname{\mathcal O}}
\nc{\cC}{\operatorname{\mathcal C}}
\nc{\cD}{\operatorname{\mathcal D}}
\nc{\Dglie}{\operatorname{{\mathcal D}glie}}
\nc{\Fin}{\operatorname{{\mathcal F}in}}
\nc{\Sg}{{\mathcal S}({\mathfrak g})}
\nc{\Ug}{{\mathcal U}({\mathfrak g})}
\nc{\Zg}{{\mathcal Z}({\mathfrak g})}
\nc{\tZg}{{\widetilde{\mathcal Z}({\mathfrak g})}}
\nc{\Zk}{{\mathcal Z}({\mathfrak k})}
\nc{\Sh}{{\mathcal S}({\mathfrak h}_0)}
\nc{\Uh}{{\mathcal U}({\mathfrak h})}
\nc{\Up}{{\mathcal U}({\mathfrak p})}
\nc{\Ub}{{\mathcal U}({\mathfrak b})}
\nc{\Zh}{{\mathcal Z}({\mathfrak h})}
\nc{\Ah}{{\mathcal A}({\mathfrak h})}
\nc{\Ag}{{\mathcal A}({\mathfrak g})}
\nc{\Ap}{{\mathcal A}({\mathfrak p})}
\nc{\Zp}{{\mathcal Z}({\mathfrak p})}
\nc{\cZ}{\mathcal Z}
\nc{\cS}{\mathcal S}
\nc{\cB}{\mathcal B}
\nc{\cP}{\mathcal P}
\nc{\cA}{\mathcal A}
\nc{\cU}{\mathcal U}
\nc{\cH}{\mathcal H}
\nc{\cL}{\mathcal L}
\nc{\cF}{\mathcal F}
\nc{\fg}{\mathfrak g}
\nc{\CO}{\mathcal O}
\nc{\Cl}{\mathcal {C}\ell}
\nc{\fn}{\mathfrak n}
\nc{\fm}{\mathfrak m}
\nc{\fp}{\mathfrak p}
\nc{\fh}{\mathfrak h}
\nc{\ft}{\mathfrak t}
\nc{\fk}{\mathfrak k}
\nc{\fb}{\mathfrak b}
\nc{\fI}{\mathfrak I}
\nc{\veps}{\varepsilon}
\nc{\vareps}{\varepsilon}
\def\ga{\mathfrak{a}}
\def\gb{\mathfrak{b}}
\def\gg{\mathfrak{g}}
\def\gh{\mathfrak{h}}
\def\gl{\mathfrak{l}}
\def\go{\mathfrak{o}}
\def\gp{\mathfrak{p}}
\def\gr{\mathfrak{r}}
\def\gs{\mathfrak{s}}
\def\cA{\mathcal{A}}
\def\cB{\mathcal{B}}
\def\cC{\mathcal{C}}
\def\cD{\mathcal{D}}
\def\cF{\mathcal{F}}
\def\cH{\mathcal{H}}
\def\cL{\mathcal{L}}
\def\cM{\mathcal{M}}
\def\cN{\mathcal{N}}
\def\cO{\mathcal{O}}
\def\cP{\mathcal{P}}
\def\cS{\mathcal{S}}
\def\cU{\mathcal{U}}
\def\cZ{\mathcal{Z}}
\def\Z{\mathbb{Z}}
\def\gh{\mathfrak h}
\def\im{{\rm im}}
\newtheorem{thm}{Theorem}[section]
\newtheorem{lem}[thm]{Lemma}
\newtheorem{prop}[thm]{Proposition}
\newtheorem{cor}[thm]{Corollary}
\newtheorem{defn}[thm]{Definition}
\newtheorem{examp}[thm]{Example}
\newtheorem{rmk}[thm]{Remark}
\begin{document}

\title{Simple bounded weight modules of $\displaystyle{\mathfrak{sl}(\infty)}$, $\displaystyle{\mathfrak{o}(\infty)}$, $\displaystyle{\mathfrak{sp}(\infty)}$}

\author[Dimitar Grantcharov]{\;Dimitar Grantcharov}

\address{
Dimitar Grantcharov
\newline Department of Mathematics
\newline University of Texas, Arlington
\newline Arlington, TX 76019-0408
\newline  USA}
\email{grandim@uta.edu }

\author[Ivan Penkov]{\;Ivan Penkov}

\address{
Ivan Penkov
\newline Jacobs University Bremen
\newline Campus Ring 1
\newline 28759 Bremen, Germany}
\email{i.penkov@jacobs-university.de}

\maketitle

\begin{abstract}
We classify the simple  bounded weight modules of ${\mathfrak{sl}(\infty})$, 
${\mathfrak{o}(\infty)}$ and 
${\mathfrak{sp}(\infty)}$, and compute their annihilators  in $U({\mathfrak{sl}(\infty}))$, 
$U({\mathfrak{o}(\infty))}$,
$U({\mathfrak{sp}(\infty))}$, respectively.
\end{abstract}

{\small
\medskip\noindent 2010 MSC: Primary 17B10, 17B65 \\
\noindent Keywords and phrases: direct limit Lie algebra, Weyl algebra, 
weight module, localization, annihilator.}

\section{Introduction}

In recent years, the theory of representations of the three simple finitary complex Lie algebras $\mathfrak{sl}(\infty)$, $\mathfrak{o}(\infty)$, $\mathfrak{sp}(\infty)$ has been developing actively, see \cite{CP}, \cite{DPS2}, \cite{HPS}, \cite{Na}, \cite{PS},  \cite{PStyr},  \cite{PZ4}. In general, this representation theory is  much richer  than the representation theory of a  simple finite-dimensional Lie algebra. Nevertheless, some problems admit a simpler answer for $\mathfrak{sl}(\infty)$, $\mathfrak{o}(\infty)$, $\mathfrak{sp}(\infty)$ than for a finite-dimensional simple Lie algebra. This applies for instance to the classification of primitive ideals in the enveloping  algebra $U(\mathfrak{sl}(\infty))$, see \cite{PP}.

In this paper we solve a classification problem which also admits a relatively simple answer compared to the finite-dimensional case: This is the problem of classifying simple bounded weight modules over the Lie algebras $\mathfrak{sl}(\infty)$, $\mathfrak{o}(\infty)$, $\mathfrak{sp}(\infty)$.

The desirability of such a classification has been clear for about 20 years. Indeed, the classification of bounded infinite-dimensional simple ${\mathfrak{sl}(n+1)}$-, ${\mathfrak{sp}(2n)}$-modules given by Mathieu in 1998 (and following earlier work of Benkart, Britten, Fernando, Futorny, Lemire,  and others) has  been  a milestone in the theory of weight modules of finite-dimensional Lie algebras. In the study of weight modules of ${\mathfrak{sl}(\infty)}$, ${\mathfrak{o}(\infty)}$, ${\mathfrak{sp}(\infty)}$, and especially of weight modules with finite weight multiplicities, a detailed understanding of the simple bounded modules is absolutely necessary.

 Soon after the celebrated work of Mathieu \cite{M}, and the work of Dimitrov and the second author \cite{DP}, Dimitrov gave several seminar talks in which he sketched a classification of simple weight $\mathfrak{sl}(\infty)$-modules with finite-dimensional weight spaces. As  this classification has still not appeared,  we consider the problem of classifying  the simple  bounded weight $\mathfrak{sl}(\infty)$-, $\mathfrak{o}(\infty)$-, $\mathfrak{sp}(\infty)$-modules from scratch. 

 Our starting point was a recollection of  Dimitrov's idea that bounded simple $\mathfrak{sl}(\infty)$-modules should be multiplicity free. This recollection turned out  to be essentially correct, and we show that all \emph{nonitegrable} simple $\mathfrak{sl}(\infty)$-  and $\mathfrak{sp}(\infty)$-modules are multiplicity free.

A brief account of the contents of the paper is as follows. In Section 2 we have collected  all necessary results on weight modules of finite-dimensional Lie algebras. This section is based on work  of Fernando, Mathieu and others, but also contains some technical results for which we found no reference. Section 3 is a summary of structural properties  of the Lie algebras $\mathfrak{sl}(\infty)$, $\mathfrak{o}(\infty)$, $\mathfrak{sp}(\infty)$. The main results of the paper are spread over Sections 4--7. Section 4 is devoted to general results on bounded weight  $\mathfrak{sl}(\infty)$-,  $\mathfrak{0}(\infty)$- ,  $\mathfrak{sp}(\infty)$-modules. Integrable  bounded weight modules are classified in Section 5, and nonintegrable bounded $\mathfrak{sl}(\infty)$- and $\mathfrak{sp}(\infty)$-modules are classified in Section 6. Finally, in Section 7 the primitive ideals arising from bounded weight modules are computed.

{\bf Acknowledgment.} DG was supported in part by Simons Collaboration Grant 358245. IP was supported in part by DFG grants PE 980/6-1 and 980/7-1. The results in Subsection \ref{subsec-int-sim-sl} which concern the modules $\Lambda_A^{\infty}V$ have been established by the second author together with Aban Husain while working on her Bachelor's Thesis at Jacobs University. We also thank Lucas Calixto and Todor Milev for useful discussions.

\section{Background on weight modules of $\mathfrak{sl}(n+1)$, ${\mathfrak{o}(2n+1)}$, ${\mathfrak{o}(2n)}$, ${\mathfrak{sp}(2n)}$.} \label{sec-background}

\subsection{Notation} \label{subsec-notation}
In this paper the ground field is $\mathbb C$. 
All vector spaces, algebras, and tensor
products are assumed to be over $\mathbb C$ unless otherwise
stated. Upper star  $\cdot^*$  indicates dual space. We write $\langle \:\: \rangle_A$ for span over a monoid $A$. By $\C^{{\mathbb Z}_{>0}}$ we denote the space of all infinite sequences ${\bf a} = (a_1,a_2,...)$ of complex numbers, and by ${\C}^{{\mathbb Z}_{>0}}_{\rm f}$ the set of all finite sequences. Similarly we define $\Z^{{\mathbb Z}_{>0}}_{\rm f}$ and ${{\mathbb Z}_{>0}}^{{\mathbb Z}_{>0}}_{\rm f}$. 
For a finite or infinite sequence ${\mathbf a} = (a_1, a_2,...)$ of complex numbers, by ${\rm Int} ({\bf a})$, respectively, by ${\rm Int}^+  ({\bf a})$ or ${\rm Int}^- ({\bf a})$,  we denote the subset of ${\mathbb Z}_{>0}$ consisting of all $i$ such that $a_i \in {\mathbb Z}$, respectively, $a_i \in {\mathbb Z}_{\geq 0}$ or $a_i \in {\mathbb Z}_{< 0}$.   If  ${\bf a}$ is  a finite sequence, we set $|{\bf a}| := \sum_{i > 0} a_i$.

For the sequences $\underbrace{(x,x,...,x)}_{\text{($n$ times)}}$ and  $(x,x,...)$ we sometimes use the short notations $x^{(n)}$ and $x^{(\infty)}$ . Sequences like $(\underbrace{x,x,...,x}_{\text{($n$ times)}}, \underbrace{y,y,...,y}_{\text{($m$ times)}})$ may be abbreviated as $(x^{(n)}, y^{(m)})$. For arbitrary sets $A$ and $B$, we write $A \subset B$ (respectively, $A \subsetneq B$) if $A$ is a subset (respectively, proper subset) of $B$.  By $S^\cdot(\cdot)$ and $\Lambda^\cdot(\cdot)$ we denote respectively the symmetric and exterior algebra of a vector space.

\subsection {Generalities} \label{subsec-general-sln}  
Let $\fg_n:=\mathfrak{sl}({n+1})$, $\mathfrak{o}({2n+1})$ , $\mathfrak{o}({2n})$, $\mathfrak{sp}({2n})$, and let $U_{n}=U(\mathfrak{g}_n)$ be the enveloping algebra of $\mathfrak{g}_n$. By $\mathfrak{h}_n$ we denote a fixed Cartan subalgebra of $\mathfrak{g}_n$ and $Q_n$ stands for the root lattice of $\mathfrak{g}_n$. We use Bourbaki's notation for the roots of $\mathfrak{g}_n$, $\Delta_n$ stands for the roots of $\fg_n$ with respect to $\fh_n$, and in all four cases we have well-definded vectors $\varepsilon_j$ which belong to $\mathfrak{h}_n^*$ for $\mathfrak{g}_n\neq\mathfrak{sl}({n+1})$. For $\fg_n=\mathfrak{sl}(n+1)$, the vectors $\varepsilon_j$ belong to the dual of a respective Cartan subalgebra of $\mathfrak{gl}(n+1)$. For $\fg_n=\go(2n+1),\go(2n),\gs\gp(2n)$ we identify $\fh_n^*$ with $\mathbb{C}^{n}$: $\fh_n^*\ni\lambda=(\lambda_1,\dots,\lambda_{n})=\Sigma_{i=1}^n\lambda_i\varepsilon_i$. When $\fg_n=\gs\gl(n+1)$ we use the same notation  $\lambda=(\lambda_1,\dots, \lambda_{n+1})=\sum_{i=1}^{n+1}\lambda_i\varepsilon_i$ for the weight of $\mathfrak{sl}(n+1)$ and  $\mathfrak{gl}(n+1)$, and automatically consider the projection of $\lambda$ in $\fh^*_n$ when we think of $\lambda$ as a weight of $\mathfrak{sl}(n+1)$. In this connection, note that if $Q_{\mathfrak{gl}(n+1)}$ denotes the root lattice of  $\mathfrak{gl}(n+1)$, the projection $Q_{\mathfrak{gl}(n+1)} \to Q_n$ is an isomorphism.

A {\it weight module} of $\gg_n$ is a module $M$ for which 
$$
M = \bigoplus_{\lambda \in \gh_n^*} M^{\lambda}
$$
where $M^{\lambda} = \{ m \in M \; | \; h\cdot m = \lambda(h)m, \forall \, h \in \gh_n\}$.  The\emph{ support of} a weight module $M$ is the set 
$$
\Supp M := \{\lambda \in \gh_n^* \; | \; M^{\lambda} \neq 0\}.
$$
Unless stated otherwise, we will assume that dim$M^\lambda<\infty$ for any $\lambda\in \Supp M$.

The Lie algebra $\fg_n$ has a \emph{ natural representation}, denoted respectively by $V_{n+1}$, $V_{2n+1}$, $V_{2n}$ and $V_{2n}$ for $\fg_n=\gs\gl(n+1)$,   $\go(2n+1)$, $\go(2n)$ and $\gs\gp(2n)$. A natural representation is characterized, up to isomorphism, by its support:

$$ \Supp V_{n+1} =  \{\varepsilon_i \, |1\leq i\leq n+1 \} \text{ for} \,\, \fg=\mathfrak{sl}(n+1),$$
$$   \Supp V_{2n+1} = \{0,\pm\varepsilon_i  \, |1\leq i\leq n \} \text{ for} \,\, \fg=\mathfrak{o}(2n+1),$$ 
$$   \Supp V_{2n} = \{\pm\varepsilon_i  \, |1\leq i\leq n \} \text{ for} \,\, \fg=\mathfrak{o}(2n), \mathfrak{sp}(2n).$$

Let $M$ be a $\fg_n$-module.  We say that a root space $\gg_n^{\alpha}$ acts {\it locally finitely} (respectively, {\it injectively}) on  $M$ if  $\gg_n^{\alpha}$ acts {locally finitely} (respectively, {\it injectively}) on every $m$ in $M$. A weight $\gg_n$-module $M$ will be called  \emph{uniform}  if every root space of $\gg_n$ acts either locally finitely or injectively on $M$. A simple weight module $M$ is always uniform. Indeed, let $\alpha \in \Delta$ and  $0 \neq x \in \gg_n^{\alpha}$. Then the set $M(\alpha)$ of all vectors $m \in M$ annihilated by a (variable) power of $x$ is a $\gg_n$-submodule. If $M(\alpha) = 0$, then $x$ acts injectively. Otherwise $M(\alpha) = M$ and $x$ acts locally finitely, cf. Lemma 3.1  in \cite{DMP}.

If $M$  is uniform,  we set $\Delta_n = \Delta_n^{M{\rm-fin}} \sqcup \Delta_n^{M{\rm-inf}}$, where $ \Delta_n^{M{\rm-fin}}$ (respectively,  $\Delta_n^{M{\rm-inf}}$) is the set of roots that act {locally finitely} (respectively, {\it injectively}) on $M$. By $C(M)$ we denote the cone $\langle \Delta_n^{M{\rm-inf}}\rangle_{\mathbb Z_{\geq 0}}$.  Note that, $\alpha,\beta\in\Delta_n^{M-inf}$, $\alpha+\beta\in\Delta_n$ implies $\alpha+\beta\in\Delta_n^{M-inf}$, see \cite{DMP}. Therefore, if $\gamma\in\Delta_n\cap C(M)$ then $\gamma\in\Delta^{M-inf}$.

A {\it bounded weight module} (or simply \emph{bounded module}) $M$ of $\gg_n$ is by definition a weight module all of whose weight multiplicities are bounded by a fixed constant $c$: $\dim M^{\lambda} < c$. A weight module is {\it cuspidal} if the root vectors of $\gg_n$ act injectively on $M$. In particular, every cuspidal module $N$  is bounded since all weight multiplicities of $N$ coincide. The {\it degree} of a bounded $\gg_n$-module $M$ is its maximal weight multiplicity:
$$
\deg M := \max \{\dim M^{\lambda} \; | \; \lambda \in \gh_n^*\}.
$$
A \emph{multiplicity-free} $\mathfrak{g}_n$-module is a bounded $\fg_n$-module of degree 1. Note that some authors, see for instance \cite{BBL}, \cite{BL1} call multiplicity-free weight modules pointed modules. 
The {\it essential support} of a bounded module $M$ is 
$$
{\rm Supp}_{\rm ess}\,M := \{ \lambda \in {\rm Supp}\, M \; | \; \dim M^{\lambda} = \deg M \}.
$$  

By ${\mathcal B} (\gg_n)$ we denote the category of bounded weight $\gg_n$-modules. 
It is a theorem of Fernando \cite{Fe} and Benkart-Britten-Lemire \cite{BBL} that infinite-dimensional simple bounded weight modules exist only for $\mathfrak{g}_n=\mathfrak{sl}({n+1}),\,\mathfrak{sp}({2n})$. In these cases there also exist simple cuspidal modules. For $\mathfrak{g}_n=\mathfrak{o}({2n+1}),\,\mathfrak{o}({2n})$, the category $\mathcal{B}(\mathfrak{g}_n)$ coincides with the category of finite-dimensional $\mathfrak{g}_n$-modules.

By $(\, ,)$ we denote the restriction of the Killing form on $\gh_n$. The induced form on ${\mathfrak h}_n^*$ will be denoted by $(\, ,)$ as well.  By $W_n$ we denote the Weyl group of ${\mathfrak g}_n$.  We only consider Borel subalgebras $\gb_n \subset \gg_n$ such that $\gb_n \supset \gh_n$. Fixing $\gb_n$ is equivalent to fixing positive roots $\Delta_n^+$. 

Let  $Z_n$ be the center of $U_n$.  By $\chi_{\lambda+\rho}: Z_n \to \C$ we
denote the central character of the irreducible $\gb_n$-highest weight
$\gg_n$-module with highest weight $\lambda$, where $\rho$
is the half-sum of positive roots. Recall that
$\chi_{\mu}=\chi_{\nu}$ if and only if $\mu=w(\nu)$ for
some element $w$ of the Weyl group $W_n$.  As usual, we write $w\cdot\lambda$ for the weight $w(\lambda+\rho)-\rho$ for $w\in W_n$, $\lambda\in\fh^*_n$. Finally, recall that a weight $\lambda\in \fh_n^*$ is {\it $\gb_n$-dominant integral} if
$(\lambda,\alpha)\in \mathbb Z_{\geq 0}$ for all $\alpha$ in $\Delta_n^+$.

\subsection{Bounded highest weights modules of $\mathfrak{sl}({n+1})$}
Throughout the subsection, $\gg_n = \mathfrak{sl}({n+1})$. 
In what follows, we fix $\fb_n$ to be the Borel subalgebra of $\fg_n$ with simple roots $\varepsilon_i-\varepsilon_{i+1}$ for  $1\leq i\leq n$. With a slight abuse of notation, we will denote the corresponding Borel subalgebra of $\mathfrak{gl}({n+1})$ also by $\gb_n$. By $L(\lambda)$ we denote the simple highest weight module with highest weight $\lambda$ relative to $\gb_n$. For two weights $\lambda, \mu$ we write $\lambda>\mu$ if $\lambda-\mu$ is a sum of $\gb_n$-positive roots.
 The reflection corresponding to a root $\alpha$ will be denoted by $s_{\alpha}$.  Set $s_i:=s_{\varepsilon_i-\varepsilon_{i+1}}$ and

$$s_i...s_k:=\left\{\begin{array}{ll}
s_is_{i+1}\ldots s_k &\ \text{ for } n\geq k\geq i\geq 1,\\
s_is_{i-1}\ldots s_k & \ \text{ for } 1<k<i<n.
\end{array}
\right.$$

 For a proof of the following proposition we refer the reader to \S 3.3 in \cite{GG}.

\begin{prop} \label{cor-int-bnd}
Assume \emph{dim}$L(\lambda)<\infty$. The modules of the form $L((s_i\dots s_j)\cdot \lambda)$ for $1\leq i,j\leq n$ are, up to isomorphism, all infinite-dimensional simple bounded weight modules which are $\gb_n$-highest weight modules and have central character $\chi_{\lambda + \rho}$. Furthermore,
 $$\Delta^{L((s_i...s_j)  \cdot \lambda) {\rm - inf}} = \Delta^{L((s_k...s_\ell) \cdot \lambda) {\rm - inf}}$$
 if and only if $i = k$.
\end{prop}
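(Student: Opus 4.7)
The plan is to combine the regularity of the central character $\chi_{\lambda+\rho}$ with the characterization of bounded simple $\mathfrak{sl}(n+1)$ highest weight modules, and then compute $\Delta_n^{L(\mu)-\mathrm{inf}}$ by direct inspection of $\mu$.

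First, since $\dim L(\lambda) < \infty$, $\lambda$ is $\fb_n$-dominant integral and $\lambda+\rho$ is regular. Every simple $\fb_n$-highest weight module with central character $\chi_{\lambda+\rho}$ is therefore of the form $L(w\cdot\lambda)$ for a unique $w \in W_n = S_{n+1}$, and distinct $w$'s yield non-isomorphic modules.

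Second, I would cut $W_n$ down to the contiguous-chain elements $s_is_{i\pm 1}\cdots s_j$ by invoking the twisted-localization framework of Mathieu \cite{M}: every infinite-dimensional simple bounded $\fg_n$-module arises as a simple subquotient of a twisted Mathieu localization of a finite-dimensional module. Under the central character constraint, the only finite-dimensional input is $L(\lambda)$ itself. A direct coordinate calculation shows that for $w = s_is_{i+1}\cdots s_j$ (respectively $w = s_is_{i-1}\cdots s_j$), the vector $w(\lambda+\rho)$ is obtained from $\lambda+\rho$ by a cyclic shift of the contiguous range of coordinates $[i, j+1]$ (respectively $[j, i+1]$); in particular, $\alpha_i$ is the unique simple root with $(\mu, \alpha_i^\vee) < 0$ for $\mu = w \cdot \lambda$, and the Levi of the parabolic $\fp_n \supset \fb_n$ attached to the remaining simple roots acts finite-dimensionally on the highest weight space, so $L(\mu)$ is bounded. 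Conversely, for any other $w \in W_n$, the weight structure of $w \cdot \lambda$ fails one of these conditions, and a careful analysis of the Bruhat structure of $L(w \cdot \lambda)$ rules out boundedness.

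Third, for the computation of $\Delta_n^{L(\mu)-\mathrm{inf}}$ with $\mu = (s_i\cdots s_j)\cdot\lambda$: the coordinate calculation in Step 2 identifies $\alpha_i$ as the unique simple root with $(\mu, \alpha_i^\vee) < 0$, so $X_{-\alpha_i}$ acts injectively on a highest weight vector and hence on all of $L(\mu)$ by simplicity. Using the closure of $\Delta_n^{L(\mu)-\mathrm{inf}}$ under addition inside $\Delta_n$ (recalled in Subsection \ref{subsec-general-sln}) together with boundedness, the cone $C(L(\mu))$ is generated by $\alpha_i$ alone, so $\Delta_n^{L(\mu)-\mathrm{inf}} = \Delta_n \cap C(L(\mu))$ depends only on $i$. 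The reverse implication is immediate, since $\alpha_i$ is recovered from $C(L(\mu))$ as its unique simple root.

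The main obstacle I anticipate is the converse direction of Step 2 --- excluding all $w \in W_n$ outside the contiguous-chain family. A one-descent count on $w^{-1}$ is necessary but not sufficient (e.g., $s_1 s_3 s_2 \in S_4$ satisfies the one-descent condition yet is not of the listed form), so the argument must use the finer Bruhat-cell and Kazhdan--Lusztig character structure of $L(w \cdot \lambda)$ and is cleanest when phrased through the twisted-localization computations of \cite{GG} \S 3.3.
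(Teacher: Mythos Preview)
The paper does not give its own proof of this proposition; it simply refers the reader to \S 3.3 of \cite{GG}. Your final remark that the converse in Step~2 ``is cleanest when phrased through the twisted-localization computations of \cite{GG} \S 3.3'' is therefore, in effect, the paper's entire argument. There is no independent proof in the paper to compare against.

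That said, two of your intermediate steps contain concrete errors.

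In Step~2, the implication ``the Levi of the parabolic $\fp_n\supset\fb_n$ attached to the remaining simple roots acts finite-dimensionally on the highest weight space, so $L(\mu)$ is bounded'' is a non-sequitur. Being a quotient of the parabolic Verma module $M_{\fp}(\mu)$ for the maximal parabolic obtained by deleting $\alpha_i$ does not imply boundedness: for $1<i<n$ the nilradical $\mathfrak u^-$ has dimension $i(n+1-i)$, and the weight multiplicities of $S^{\cdot}(\mathfrak u^-)$ (hence of $M_{\fp}(\mu)$) count nonnegative integer $i\times(n+1-i)$ matrices with prescribed row and column sums, which are unbounded. The boundedness of the simple quotient $L((s_i\cdots s_j)\cdot\lambda)$ is a nontrivial fact requiring either the explicit realization as a subquotient of some $\mathcal F_{\mathfrak{sl}}(\nu)$ (cf.\ \S\ref{subsec-sln-mult-free}) or the coherent-family machinery of \cite{M}.

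In Step~3, the claim that ``the cone $C(L(\mu))$ is generated by $\alpha_i$ alone'' is false. The correct description is
\[
\Delta_n^{L(\mu)\text{-}\mathrm{inf}}=\{\varepsilon_a-\varepsilon_b\mid a>i,\ b\le i\},
\]
so $C(L(\mu))$ is an $n$-dimensional cone, not a ray. Your argument establishes only that $-\alpha_i\in\Delta_n^{L(\mu)\text{-}\mathrm{inf}}$; the closure of $\Delta_n^{\mathrm{inf}}$ under root addition does not let you build the remaining roots from $-\alpha_i$ alone, since for example $\varepsilon_{i+2}-\varepsilon_i=(-\alpha_i)+(-\alpha_{i+1})$ with $-\alpha_{i+1}\in\Delta_n^{\mathrm{fin}}$. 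What is actually needed is Fernando's theorem \cite{Fe} that $\Delta_n^{L(\mu)\text{-}\mathrm{fin}}$ is the set of roots of a parabolic subalgebra containing $\fb_n$; the parabolic is then pinned down by the single non-integrable simple root $\alpha_i$, and the dependence of $\Delta_n^{L(\mu)\text{-}\mathrm{inf}}$ only on $i$ follows. Your conclusion is correct, but not for the reason you give.
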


\begin{lem} \label{lem-step3} With notation as above, $\Supp L((s_i...s_j)\cdot \mu) \subset \Supp L(s_i\cdot \mu) $. 
\end{lem}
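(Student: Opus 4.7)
The plan is to realize each $L((s_i \ldots s_j) \cdot \mu)$ as a simple parabolic Verma module for a fixed parabolic $\mathfrak{p}_i$, and then compare the parabolic inducing data. Set $\nu_j := (s_i \ldots s_j) \cdot \mu$, and let $\mathfrak{p}_i \subset \mathfrak{g}_n = \mathfrak{sl}(n+1)$ be the standard parabolic subalgebra with Levi $\mathfrak{l}_i$ generated by $\mathfrak{h}_n$ together with the root spaces $\mathfrak{g}_n^{\pm\alpha_k}$ for $k \neq i$; its positive nilradical $\mathfrak{n}_i^+$ has root system $\Delta(\mathfrak{n}_i^+) = \{\varepsilon_p - \varepsilon_q : p \leq i < q\}$.

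First I would verify, by iterating the recursion
\[
\nu_j - \nu_{j-1} \,=\, -(\mu+\rho, \alpha_j^{\vee})\,(\varepsilon_i - \varepsilon_{j+1})
\]
(for $j > i$; the case $j < i$ is symmetric) and using the dominance of $\mu$, that $(\nu_j, \alpha_k^{\vee}) \in \mathbb{Z}_{\geq 0}$ for each $k \neq i$. This makes $\nu_j$ the $\mathfrak{l}_i$-highest weight of a finite-dimensional simple $\mathfrak{l}_i$-module $F_j$, and $L(\nu_j)$ is the simple quotient of the parabolic Verma module $M_{\mathfrak{p}_i}(F_j) := U(\mathfrak{g}_n) \otimes_{U(\mathfrak{p}_i)} F_j$.

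Next I would establish that $M_{\mathfrak{p}_i}(F_j)$ is itself simple via Jantzen's criterion: for every positive root $\beta \in \Delta(\mathfrak{n}_i^+)$ one shows $(\nu_j+\rho, \beta^{\vee}) \notin \mathbb{Z}_{>0}$, a direct check from the explicit form of $\nu_j$ and the dominance of $\mu$. This yields
\[
\Supp L(\nu_j) \,=\, \Supp F_j \;-\; \langle \Delta(\mathfrak{n}_i^+)\rangle_{\mathbb{Z}_{\geq 0}},
\]
reducing the lemma to the inclusion $\Supp F_j \subset \Supp L(s_i \cdot \mu)$. Every weight $\xi$ of $F_j$ has the form $\nu_j - \sigma$ for $\sigma$ a non-negative integer combination of simple roots $\alpha_k$ with $k \neq i$; using the explicit formula $\nu_j = \nu_i - \sum_k (\mu+\rho, \alpha_k^{\vee})(\varepsilon_i - \varepsilon_{k+1})$ and the $\mathfrak{l}_i$-weight structure of $F_i$, one exhibits $\eta \in \Supp F_i$ and $\delta \in \langle \Delta(\mathfrak{n}_i^+) \rangle_{\mathbb{Z}_{\geq 0}}$ with $\xi = \eta - \delta$, which places $\xi$ in $\Supp L(s_i \cdot \mu)$.

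The main technical obstacle is the simplicity of $M_{\mathfrak{p}_i}(F_j)$; a conceptually cleaner alternative is to realize $L(\nu_j)$ as a twisted localization of the finite-dimensional module $L(\mu)$ along the positive root $\varepsilon_i - \varepsilon_{i+1}$ in the sense of Mathieu \cite{M}, from which the support structure and the required containment become transparent.
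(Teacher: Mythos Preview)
Your overall strategy---identify $L(\nu_j)$ with a module whose support is $\Supp F_j - \langle\Delta(\mathfrak{n}_i^+)\rangle_{\mathbb{Z}_{\ge 0}}$ and then compare the inducing data---is exactly the paper's strategy. The problem is the way you justify the support formula. The parabolic Verma modules $M_{\mathfrak{p}_i}(F_j)$ are \emph{not} simple in general: already for $i=1$ the paper itself records (in the proof of Lemma~\ref{lemma-deg}) the exact sequence
\[
0 \to L\bigl((s_1\cdots s_{j+1})\cdot\mu\bigr) \to M_{\mathfrak{p}_1}\bigl((s_1\cdots s_j)\cdot\mu\bigr) \to L\bigl((s_1\cdots s_j)\cdot\mu\bigr) \to 0
\]
whenever $j<n$. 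Concretely, writing $\mu+\rho=(a_1>\cdots>a_{n+1})$, one has $\nu_j+\rho=(a_1,\dots,a_{i-1},a_{j+1},a_i,\dots,a_j,a_{j+2},\dots)$, so for $\beta=\varepsilon_i-\varepsilon_q$ with $q>j+1$ you get $(\nu_j+\rho,\beta^\vee)=a_{j+1}-a_q>0$; Jantzen's criterion does not give simplicity.

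The formula you are after, $\Supp L(\nu_j)=\Supp F_j - \langle\Delta(\mathfrak{n}_i^+)\rangle_{\mathbb{Z}_{\ge 0}}$, is nonetheless correct, but it is precisely Fernando's theorem \cite[Theorem~4.18]{Fe}: for a simple weight module $M$ with $\Delta_n^{M\text{-fin}}$ the roots of a Levi $\mathfrak{l}$, one has $\Supp M=\Supp L_{\mathfrak{l}}(\lambda)+C(M)$. The paper invokes this directly, then uses $C(L(\nu_j))=C(L(\nu_i))$ from Proposition~\ref{cor-int-bnd} together with the observation that $\nu_i-\nu_j=\sum_{m=i+1}^{j}(a_m-a_{m+1})(\varepsilon_i-\varepsilon_{m+1})\in\langle\Delta(\mathfrak{n}_i^+)\rangle_{\mathbb{Z}_{\ge 0}}$ and $W(\mathfrak{l}_i)$-invariance of both supports. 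Your twisted-localization alternative would also need care: localizing $L(\mu)$ along $f_{\alpha_i}$ alone produces a module whose simple constituents are the various $L(\nu_j)$, not a single one, so extracting the support of a specific $L(\nu_j)$ that way is not immediate.
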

\begin{proof}
 Let $\gs \gl (i,n-i)$ be the subalgebra of $\gg$ which contains $\fh$ and has roots
$$\left\{\pm (\varepsilon_k - \varepsilon_{\ell}) \; | \; k < \ell  \leq i  \right\} \sqcup \left\{\pm (\varepsilon_r - \varepsilon_{s}) \; | \; r > s > i \right\}.$$

By a result of Fernando, see Theorem 4.18 in \cite{Fe}, we have
$$\Supp L((s_i...s_j)\cdot \mu)) = \Supp L_{\gs \gl (i,n-i)} ((s_i...s_j)\cdot \mu) + C(L((s_i...s_j)\cdot \mu))).$$
The inclusion $\Supp L((s_i...s_j)\cdot \mu) \subset \Supp L(s_i\cdot \mu) $ follows from the facts that  $(s_i...s_j)\cdot \mu < s_i\cdot \mu$ and $C(L((s_i...s_j )\cdot \mu)) = C(L(s_i \cdot \mu))$ (by Proposition \ref{cor-int-bnd}), and that the supports of $L((s_i...s_j)\cdot \mu)$ and $L(s_i\cdot \mu))$ are invariant with respect to the Weyl group of $\gs \gl (i,n-i)$. 
\end{proof}

\subsection{Localization of weight modules}
In this subsection $\gg_n = \mathfrak{sl}({n+1})$ or  $\gg_n = \mathfrak{sp}({2n})$.
Let  $F = \{ f_1,...,f_k\}$ be a  subset of pairwise commuting elements of $U_n$ with the condition that each $f_i$ is a locally nilpotent endomorphism of $U_n$.   Let 
$F_{U_n}$ be the multiplicative subset  of $U_n$ generated by $F$, i.e.  $F_{U_n}$  consists of the monomials $f_1^{n_1}...f_k^{n_k}$, $n_i \in {\mathbb Z}_{\geq 0}$. By $D_{F} U_n$ we denote the localization of $U_n$ relative to $F_{U_n}$. Note that $F_{U_n}$ satisfies Ore's localizability conditions  as the operators $\textup{ad} f_i$ are locally nilpotent, see for example Lemma 4.2 in \cite{M}. 

For a $U_n$-module $M$, by $D_F M: = D_F U_n \otimes_{U_n} M$ we denote the localization of $M$ relative to $F_{U_n}$. We will consider $D_F M$ both as a $U_n$-module and as a $D_F U_n$-module. 
By $\theta_{F} : M \to {D}_{F} M$ we denote the map defined by $\theta_{F} (m) = 1 \otimes m $. Note that $\theta_{F}$ is an injection if and only if every element of $F$ acts injectively on $M$. In the latter case, $M$ will be considered naturally as a submodule of ${D}_{F} M$.  

It is well known that ${D}_{F}$ is an exact functor from the category of  $U_n$-modules to the category of ${D}_{F} {U_n}$-modules. The following lemma follows from Lemma 4.4(ii) in \cite{M}.

\begin{lem} \label{lem-loc-sl2} Let $\alpha \in \gg_n^{\alpha}$ and $F = \{ f_{\alpha} \}$ for some $f_{\alpha} \in \gg_n^{-\alpha}$. Let also $M$ be a bounded $\gg_n$-module such that $-\alpha \in \Delta_n^{M{\rm -inf}}$. Then $\deg M = \deg D_F M$.
\end{lem}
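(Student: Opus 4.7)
The plan is to establish both inequalities $\deg M \leq \deg D_F M$ and $\deg D_F M \leq \deg M$ separately and combine them.

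First I would handle $\deg M \leq \deg D_F M$. The hypothesis $-\alpha \in \Delta_n^{M{\rm -inf}}$ says that $f_\alpha$ acts injectively on $M$, so by the observation immediately preceding the lemma the canonical map $\theta_F \colon M \to D_F M$ is an injection of $U_n$-modules. Since it preserves the weight grading, it restricts to an injection $M^\mu \hookrightarrow (D_F M)^\mu$ for every weight $\mu$, and taking the maximum over $\mu$ yields the desired inequality.

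For the reverse inequality I would describe the weight spaces of the localization explicitly. The multiplicative set $F_{U_n}$ is generated by the single weight vector $f_\alpha$ of weight $-\alpha$, so every element of $D_F M$ has the form $f_\alpha^{-n} m$ with $m \in M$ and $n \geq 0$, and such an element lies in $(D_F M)^\mu$ exactly when $m \in M^{\mu - n\alpha}$. The identity $f_\alpha^{-n} m = f_\alpha^{-(n+1)}(f_\alpha m)$ produces inclusions $f_\alpha^{-n} M^{\mu - n\alpha} \subset f_\alpha^{-(n+1)} M^{\mu - (n+1)\alpha}$, and hence
\[
(D_F M)^\mu = \bigcup_{n \geq 0} f_\alpha^{-n} M^{\mu - n\alpha}.
\]
Because $f_\alpha$ is injective on $M$, the assignment $m \mapsto f_\alpha^{-n} m$ is injective on each $M^{\mu - n\alpha}$, so every term of this union has dimension at most $\deg M$. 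A nested union of subspaces of uniformly bounded dimension inherits that bound, so $\dim (D_F M)^\mu \leq \deg M$ and we conclude $\deg D_F M \leq \deg M$.

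I expect the one genuinely delicate point to be verifying the above description of the weight-space filtration of the Ore localization. The key input is that $\mathfrak{h}_n$ normalizes $F_{U_n}$ (because $f_\alpha$ is a weight vector), which ensures both that the $\mathfrak{h}_n$-action extends to $D_F M$ and that the filtration is compatible with it. This is essentially the content of Lemma 4.4(ii) in \cite{M}, which the statement of the lemma invokes, so I would simply cite it rather than redo the Ore-theoretic bookkeeping.
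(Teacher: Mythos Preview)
Your argument is correct and is essentially an unpacking of the citation the paper gives: the paper simply says the lemma follows from Lemma~4.4(ii) in \cite{M}, and your two-inequality argument (injectivity of $\theta_F$ for one direction, the exhaustive filtration $(D_F M)^\mu=\bigcup_{n\geq 0} f_\alpha^{-n}M^{\mu-n\alpha}$ for the other) is precisely the elementary content behind that reference. There is no genuine difference in approach.
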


We now introduce the  ``generalized conjugation'' in $D_F U_n$ following \S 4 of \cite{M}. For ${\bf x}  \in {\mathbb C}^{k}$ we define the automorphism $\Theta_F^{\bf x}$ of 
${D}_{F}  U_n $ in the following way.  For $u
\in {D}_{F}U$, $f \in F$, and $x \in {\mathbb C}$, we set
\begin{eqnarray*}
\Theta_{\{f\}}^{x}(u):= \sum\limits_{
i \geq 0} \binom{x}{i} \,(\mbox{ad}\,f)^{i} (u) \,f^{-i},
\end{eqnarray*}
 where $\binom{x}{i} :=
x(x-1)...(x-i+1)/i!$ for $x \in {\mathbb C}$ and $i \in {\mathbb Z}_{\geq 0}$. Note that the sum on the right-hand side is well defined since $f$ is ad-nilpotent on $U_n$. Now, for  ${\bf x}  = (x_1,...,x_k) \in {\mathbb C}^k$ and $F = \{ f_1,...,f_k\}$, let

\begin{eqnarray*}
\Theta_{F}^{{\bf x}}(u): =\prod_{1\leq j \leq k} \Theta_{\{f_j\}}^{x_j}(u) 
\end{eqnarray*}
(the product being well-defined because $F$ is a commutative set). Note that, if ${\bf f}^{\bf x}:=f_{1}^{x_1}...f_{k}^{x_k}$ for ${\bf x} \in {\mathbb Z}^k$ then  $\Theta_{F}^{\bf x}(u) = {\bf f}^{\mathbf x}u {\bf f}^{-\mathbf{x}}$.

For a
${D}_{F} U_n$-module $N$ we denote by $\Phi^{\bf x}_{F} N$ 
the ${D}_{F} {U_n}$-module $N$ twisted by $\Theta_{F}^{\bf x}$.  The action on  $\Phi^{\bf x}_{F} N$ is given by
 $$
u \cdot v^{\bf x} :=
 ( \Theta_{F}^{\bf x}(u)\cdot v)^{\bf x},
$$
where $u \in {D}_{F} {U_n}$, $v \in N$, and $w^{\bf x}$
stands for the element $w$ of $N$ considered as an element of
$\Phi^{\bf x}_{F} N$.
In the case $J = \{1,2,...,k \}$ and ${\bf x} \in {\mathbb Z}^k$, there is a natural isomorphism of ${D}_{F} {U_n}$-modules $M \to \Phi_{F}^{\bf x} M$ given by $m \mapsto ({\bf f}^{\bf x} \cdot m)^{\bf x}$, with inverse map defined by $n^{\bf x} \mapsto {\bf f}^{-\bf{x}} \cdot n$.
In view of this isomorphism, for ${\bf x} \in {\mathbb Z}^k$, we will identify $M$ with  $\Phi_{F}^{\bf x}M$, and for any ${\bf x} \in {\mathbb C}^k$ will write ${\bf f}^{\bf x} \cdot m$ (or simply ${\bf f}^{\bf x} m$) for $m^{-\bf{x}}$ whenever $m \in M$.  Properties of the twisting functor $\Phi_{F}^{\bf x}$ are listed below. The proofs of (i) and (ii) can be found in \S 4  of \cite{M}, while (iii) is a standard fact.

\begin{lem} \label{lemma-conj} Let $F = \{ f_1,...,f_k\}$ be a set of locally ad-nilpotent commuting elements of $U_n$, $M$   be  a ${D}_{F} {U_n}$-module, $m \in M$, $u \in  U_n$, and  ${\bf x}, {\bf y} \in {\mathbb C}^k$. 

{\rm(i)}  $\Phi^{\bf x}_{F} \Phi^{\bf y}_{F}M   \simeq \Phi^{\bf{x+y}}_{F} M$, in particular, $\Phi^{\bf x}_{F}
\Phi^{-\bf{x}}_{F} M \simeq M$;

{\rm(ii)}   ${\bf f}^{\bf x} \cdot ( u \cdot ({\bf f}^{-{\bf x}} \cdot m) ) = \Theta_{F}^{\bf x}(u) \cdot m$;

{\rm(iii)}  $\Phi^{\bf x}_{F}$ is an exact functor;

 \end{lem}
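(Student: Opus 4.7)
Plan: Part (iii) is formal and essentially automatic. The vector space underlying $\Phi_F^{\bf x} N$ equals $N$, and the new $D_F U_n$-action is obtained by precomposing the old action with the algebra automorphism $\Theta_F^{\bf x}$ of $D_F U_n$. Consequently any $D_F U_n$-linear map $\varphi : N \to N'$ remains linear with respect to the twisted actions, so $\Phi_F^{\bf x}$ acts as the identity on morphism spaces and therefore preserves kernels, cokernels, and short exact sequences.

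For (ii), I would first check consistency when ${\bf x} \in \mathbb{Z}^k$. In that case ${\bf f}^{\bf x} \in D_F U_n$ is a literal element, and the claim reduces to the algebra identity ${\bf f}^{\bf x} u {\bf f}^{-{\bf x}} = \Theta_F^{\bf x}(u)$ inside $D_F U_n$, which one proves by induction on $|{\bf x}|$ using $fu - uf = (\operatorname{ad} f)(u)$ (equivalently $fu = \sum_{i \geq 0}\binom{1}{i}(\operatorname{ad} f)^i(u)f^{1-i}$), with commutativity of $F$ reducing everything to single generators. For general ${\bf x} \in \mathbb{C}^k$, one unpacks the definitions: ${\bf f}^{-{\bf x}} m$ stands for $m^{\bf x} \in \Phi_F^{\bf x} M$, then $u \cdot m^{\bf x} = (\Theta_F^{\bf x}(u) m)^{\bf x}$ by the defining rule of the twisted action, and finally ${\bf f}^{\bf x}$ acts on $\Phi_F^{\bf x} M$ by shifting the superscript back, giving $(\Theta_F^{\bf x}(u) m)^{0} = \Theta_F^{\bf x}(u) m \in M$. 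The passage from the integer case to complex ${\bf x}$ is justified by a polynomial identity principle, using that all sums involved terminate by ad-nilpotency of the $f_i$.

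For (i), having (ii) in hand, it suffices to check that the identity on underlying vector spaces $\Phi_F^{\bf x}(\Phi_F^{\bf y} M) \to \Phi_F^{{\bf x}+{\bf y}} M$, $(v^{\bf y})^{\bf x} \mapsto v^{{\bf x}+{\bf y}}$, intertwines the two actions. This reduces to the automorphism identity $\Theta_F^{\bf y} \circ \Theta_F^{\bf x} = \Theta_F^{{\bf x}+{\bf y}}$ on $D_F U_n$. Commutativity of $F$ again reduces the verification to a single $f$, where
\begin{equation*}
\Theta_{\{f\}}^x\bigl(\Theta_{\{f\}}^y(u)\bigr) = \sum_{i,j \geq 0} \binom{x}{i}\binom{y}{j}(\operatorname{ad} f)^{i+j}(u)\,f^{-(i+j)}
\end{equation*}
because $(\operatorname{ad} f)(f^{-1}) = 0$ lets $\operatorname{ad} f$ commute past each $f^{-j}$. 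Grouping terms by $k = i+j$ and invoking the Chu--Vandermonde identity $\sum_{i+j=k}\binom{x}{i}\binom{y}{j} = \binom{x+y}{k}$ collapses the double sum to $\Theta_{\{f\}}^{x+y}(u)$.

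The main subtlety throughout is the treatment of ${\bf f}^{\bf x}$ as a formal symbol for non-integer ${\bf x}$: the symbols lie outside $D_F U_n$, so one must verify that the module-theoretic definitions are compatible with the literal integer case and extend by polynomial continuation, which is legitimate because ad-nilpotency makes all relevant series finite. The full details of (i) and (ii) are carried out in \S 4 of \cite{M}.
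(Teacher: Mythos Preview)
Your proposal is correct and in fact goes further than the paper does: the paper gives no proof at all, merely stating that (i) and (ii) can be found in \S4 of \cite{M} and that (iii) is a standard fact. Your sketch of (iii) via the observation that $\Phi_F^{\bf x}$ is the identity on underlying vector spaces (hence an equivalence of categories), and of (i) via the Chu--Vandermonde identity applied to $\Theta_{\{f\}}^x\circ\Theta_{\{f\}}^y$, are exactly the arguments one would supply, and your unpacking of (ii) from the notational convention ${\bf f}^{\bf x}\cdot m := m^{-{\bf x}}$ is accurate. One minor point: in your display for (i) you wrote $\Theta_{\{f\}}^x\bigl(\Theta_{\{f\}}^y(u)\bigr)$ but stated the identity as $\Theta_F^{\bf y}\circ\Theta_F^{\bf x}$; the order is immaterial by commutativity but you may want to make the two consistent.
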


For any $U_n$-module $M$, and ${\bf x} \in {\mathbb C}^k $ we define
the {\it twisted localization ${D}_{F}^{\bf x} M$ of $M$
relative to $F$ and $\bf x$} by ${D}_{F}^{\bf x} M:=
\Phi^{\bf x}_{F} {D}_{F} M $. The twisted localization is an exact functor from $U_n$-mod to $D_F U_n$-mod. In the case when $F = \{f_{\alpha_1},...,f_{\alpha_k} \}$ for some $\alpha_i$ such that $\alpha_i + \alpha_j \notin \Delta$ and  $f_{\alpha_i} \in \gg_n^{-\alpha_i}$, we will write ${D}_{\Sigma}M$ and ${D}_{\Sigma}^{\bf x} M$ for ${D}_{F} M$ and ${D}_{F}^{\bf x} M$, respectively, where $\Sigma = \{\alpha_1,...,\alpha_k \}$. By definition, ${D}_{\Sigma} M = M$ if $\Sigma = \emptyset$.

 The following provides a classification of  simple bounded $\gg_n$-modules in terms of twisted localization of highest weight modules, see  \cite[Theorem 13.3]{M}. 

\begin{thm}[Mathieu, \cite{M}] \label{thm-clas-bounded}
Let $M$ be a simple infinite-dimensional bounded $\gg_n$-module. Then there is a simple highest weight module $L$, a subset $\Sigma = \{ \alpha_1,...,\alpha_k\}$ of $\Delta_n^{L-{\rm inf}}$, and ${\bf x} \in {\mathbb C}^{|\Sigma|}$ such that $M \simeq {D}_{\Sigma}^{\bf x} L$. Moreover, the central characters of $M$ and $L$ coincide, $\deg M = \deg L$, and 
$$
\Supp M =  x_1\alpha_1+\cdots + x_k \alpha_k +  \Supp L + {\mathbb Z} \Sigma.
$$
\end{thm}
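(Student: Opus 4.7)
My plan is to follow the classical strategy of Fernando and Mathieu, combining parabolic induction with twisted localization. Since $M$ is simple it is uniform, so $\Delta_n = \Delta_n^{M\text{-fin}} \sqcup \Delta_n^{M\text{-inf}}$, and Fernando's theorem produces a parabolic subalgebra $\mathfrak{p} = \mathfrak{l} \oplus \mathfrak{u}$ of $\mathfrak{g}_n$ containing $\mathfrak{h}_n$, whose nilradical $\mathfrak{u}$ is spanned by root spaces in $\Delta_n^{M\text{-fin}}$, and such that
$$M \cong U(\mathfrak{g}_n) \otimes_{U(\mathfrak{p})} N$$
for a simple bounded $\mathfrak{l}$-module $N$ that is cuspidal on the semisimple part $[\mathfrak{l}, \mathfrak{l}]$. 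The Fernando--Benkart--Britten--Lemire restriction forces the simple factors of $[\mathfrak{l},\mathfrak{l}]$ to be of type $A$ or $C$, which is automatic under the standing assumption $\mathfrak{g}_n = \mathfrak{sl}(n+1)$ or $\mathfrak{sp}(2n)$.

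The next step is to apply Mathieu's classification of simple cuspidal modules to each simple factor of $[\mathfrak{l},\mathfrak{l}]$. This yields an isomorphism
$$N \cong D_\Sigma^{\mathbf{x}} N^{\mathrm{hw}},$$
where $\mathfrak{b}_\mathfrak{l} \subset \mathfrak{l}$ is a suitable Borel, $N^{\mathrm{hw}}$ is a simple $\mathfrak{b}_\mathfrak{l}$-highest weight $\mathfrak{l}$-module, $\Sigma = \{\alpha_1, \dots, \alpha_k\}$ is a commuting set of negative roots of $\mathfrak{l}$ with each $\alpha_i \in \Delta(\mathfrak{l})^{N^{\mathrm{hw}}\text{-inf}}$, and $\mathbf{x} \in \mathbb{C}^k$ encodes the coherent-family parameter. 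The content here is Mathieu's coherent family technology: every simple cuspidal module embeds in a unique irreducible semisimple coherent family, which also contains a highest weight module, and is recovered from it by Ore localization at commuting negative root vectors followed by the generalized-power twist $\Phi_\Sigma^{\mathbf{x}}$.

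To lift the picture from $\mathfrak{l}$ to $\mathfrak{g}_n$, extend $\mathfrak{b}_\mathfrak{l}$ to a Borel $\mathfrak{b}_n$ of $\mathfrak{g}_n$ compatible with $\mathfrak{p}$, and let $L = L(\lambda)$ be the simple $\mathfrak{g}_n$-highest weight module whose $\mathfrak{b}_n$-highest weight equals that of $N^{\mathrm{hw}}$. The key compatibility to prove is
$$U(\mathfrak{g}_n) \otimes_{U(\mathfrak{p})} D_\Sigma^{\mathbf{x}} N^{\mathrm{hw}} \;\cong\; D_\Sigma^{\mathbf{x}}\bigl(U(\mathfrak{g}_n) \otimes_{U(\mathfrak{p})} N^{\mathrm{hw}}\bigr);$$
this holds because the $f_{\alpha_i}$ lie in $U(\mathfrak{p})$ (so the Ore conditions on $U(\mathfrak{g}_n)$ restrict to those on $U(\mathfrak{p})$), because $D_\Sigma$ and $\Phi_\Sigma^{\mathbf{x}}$ are exact, and because $U(\mathfrak{g}_n)$ is free over $U(\mathfrak{p})$ on a PBW basis. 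Passing to the unique simple quotient $L(\lambda)$ of the induced module $U(\mathfrak{g}_n)\otimes_{U(\mathfrak{p})} N^{\mathrm{hw}}$, whose maximal proper submodule is killed by $D_\Sigma^{\mathbf{x}}$ thanks to the injective action of the $f_{\alpha_i}$ on $M$, one obtains $M \cong D_\Sigma^{\mathbf{x}} L$. The auxiliary claims are then routine: central characters agree because $\Theta_\Sigma^{\mathbf{x}}$ fixes every $z \in Z_n$ (central elements being $\mathrm{ad}$-invariant); $\deg M = \deg L$ follows by iterating Lemma \ref{lem-loc-sl2} over the $\alpha_i \in \Sigma \subset \Delta_n^{L\text{-inf}}$ together with the affine shift supplied by $\mathbf{x}$; and the support formula is the direct unwinding $\Supp D_\Sigma^{\mathbf{x}} L = \sum_i x_i \alpha_i + \Supp L + \mathbb{Z}\Sigma$.

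The truly difficult ingredient is Mathieu's classification of cuspidal modules used in the second step, a deep theory in its own right resting on the construction and semisimplification of coherent families. The third step is clean in principle but does require checking that the generalized conjugation $\Theta_\Sigma^{\mathbf{x}}$ behaves well on the PBW complement of $U(\mathfrak{p})$ in $U(\mathfrak{g}_n)$ for arbitrary $\mathbf{x} \in \mathbb{C}^k$, so that parabolic induction and twisted localization genuinely commute; this is the main technical, as opposed to conceptual, obstacle in the argument.
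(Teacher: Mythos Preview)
The paper does not prove this theorem; it is quoted from Mathieu \cite[Theorem~13.3]{M} without argument. Your sketch proposes a route that is \emph{not} Mathieu's: you reduce via Fernando's parabolic induction to a cuspidal module on a Levi $\mathfrak{l}$, invoke coherent families there, and then try to push the twisted localization back up through induction. Mathieu instead builds coherent families directly on $\mathfrak{g}_n$: any simple bounded $M$ sits in a unique irreducible semisimple coherent family $\mathcal{M}$, that family already contains a bounded simple highest weight $\mathfrak{g}_n$-module $L$, and $\mathcal{M}$ is recovered as $\bigoplus_t (D_\Sigma^{t} L)^{\mathrm{ss}}$ for a commuting set $\Sigma$ spanning $Q_n$. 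One then reads off $M$ as a specific $D_\Sigma^{\mathbf{x}} L$. The direct approach avoids ever having to commute twisted localization with parabolic induction or to control maximal submodules under those functors.

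Your argument has two genuine gaps. First, the displayed isomorphism $M \cong U(\mathfrak{g}_n) \otimes_{U(\mathfrak{p})} N$ is false in general: Fernando's theorem only gives $M$ as the unique simple \emph{quotient} of the induced module, and for cuspidal $N$ with integral-type central character the induced module is typically reducible. Second, and more seriously, your claim that the maximal proper submodule $K$ of $U(\mathfrak{g}_n)\otimes_{U(\mathfrak{p})} N^{\mathrm{hw}}$ is killed by $D_\Sigma^{\mathbf{x}}$ does not follow from injectivity of the $f_{\alpha_i}$ on $M$. The $\alpha_i$ lie in $\Delta(\mathfrak{l})$ and the $f_{\alpha_i}$ act injectively on $N^{\mathrm{hw}}$, hence (as $\mathfrak{l}$ normalizes $\mathfrak{u}^-$ and the induced module is free over $U(\mathfrak{u}^-)$) they act injectively on the entire induced module, including $K$; so $D_\Sigma K \neq 0$. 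What you actually need is that $D_\Sigma^{\mathbf{x}} L$ is \emph{simple} for the particular $\mathbf{x}$ in question, and that is exactly the nontrivial content Mathieu extracts from the coherent-family description on $\mathfrak{g}_n$ --- in particular by matching $M$ to the correct choice of $L$ among the several highest weight modules in the family.
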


\begin{cor} \label{cor-cones}
Let $\gg_n = \mathfrak{sl}({n+1})$ or $\gg_n = \mathfrak{sp}({2n})$ and let $M_1$ and $M_2$ be two infinite-dimensional bounded $\gg_n$-modules with the same central character and such that $\Supp M_1$ and $\Supp M_2$ are in a single  coset $\lambda + Q_n$ of $Q_n$ in $\fh_n^*$. Then either $C(M_1) = C(M_2)$, or $C(M_1) \cap C(M_2)$ is contained in a hyperplane of $\fh_n^*$. 
\end{cor}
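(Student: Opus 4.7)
The plan is to reduce to simple highest weight modules via Mathieu's classification (Theorem \ref{thm-clas-bounded}) and then analyze the finitely many possibilities for the cones. Assume $M_1, M_2$ are simple (the case of interest for subsequent applications). By Theorem \ref{thm-clas-bounded}, for $j=1,2$ we write $M_j \simeq D_{\Sigma_j}^{\mathbf{x}_j} L_j$ for a simple bounded highest weight module $L_j$ with $\chi(L_j)=\chi(M_j)$, a set $\Sigma_j \subseteq \Delta_n^{L_j\text{-inf}}$ consisting of roots with no pairwise sums in $\Delta_n$, and $\mathbf{x}_j \in \mathbb{C}^{|\Sigma_j|}$. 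Since $\chi(M_1)=\chi(M_2)$, we also have $\chi(L_1)=\chi(L_2)$. A short check using Lemma \ref{lemma-conj}(ii) together with the closure of $\Delta_n^{M_j\text{-inf}}$ under root-sums landing in $\Delta_n$ (recorded in Subsection \ref{subsec-general-sln}) identifies $C(M_j)$ with the $\mathbb{Z}_{\ge 0}$-cone generated by $\Delta_n^{L_j\text{-inf}}\cup(-\Sigma_j)$ and its closure in $\Delta_n$.

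A cleaner description is in terms of Fernando's parabolic. Each simple bounded $M_j$ is parabolically induced from a cuspidal module on the Levi $\mathfrak{l}_j$ of its Fernando parabolic $\mathfrak{p}_j$, and
\[
C(M_j) = \bigl\langle \Delta^{\mathfrak{l}_j} \cup \Delta^{\mathfrak{u}_{j-}}\bigr\rangle_{\mathbb{Z}_{\ge 0}},
\]
where $\mathfrak{u}_{j-}$ is the opposite nilradical. For $\gg_n=\mathfrak{sl}(n+1)$, Proposition \ref{cor-int-bnd} together with Mathieu's cuspidal classification restricts the possible $\mathfrak{l}_j$ to block Levis, and an analogous short list holds for $\gg_n=\mathfrak{sp}(2n)$. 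The support-coset hypothesis $\Supp M_j \subset \lambda+Q_n$, combined with the formula $\Supp M_j=\mathbf{x}_j\cdot\Sigma_j+\Supp L_j+\mathbb{Z}\Sigma_j$ of Theorem \ref{thm-clas-bounded}, forces $\mathbf{x}_j\cdot\Sigma_j \in Q_n$ modulo $\mathbb{Z}\Sigma_j$; the extra constraint $\chi(L_1)=\chi(L_2)$ then tightly restricts which pairs $(\mathfrak{l}_1,\mathfrak{l}_2)$ of Levi types can co-occur, because Levi-cuspidal central characters of different Levi types typically lie in disjoint $W_n$-orbits of $\fh_n^*$.

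The main step, and expected principal obstacle, is to show that if $\mathfrak{l}_1\ne\mathfrak{l}_2$ then $C(M_1)\cap C(M_2)$ lies in a hyperplane. Each cone $C(M_j)$ is ``$\mathfrak{l}_j$-symmetric but biased toward $\mathfrak{u}_{j-}$'': it contains the full root-span of $\mathfrak{l}_j$ in both directions, but only the negative nilradical $\mathfrak{u}_{j-}$ in one direction. When $\mathfrak{l}_1\ne\mathfrak{l}_2$, an $\varepsilon$-coordinate computation shows that this asymmetry places $C(M_1)$ and $C(M_2)$ on opposite sides of a coordinate hyperplane determined by the partition indices where the Levi blocks differ, so that $C(M_1)\cap C(M_2)$ is forced into that hyperplane. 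The subtle part is ensuring that the biasing directions are genuinely opposite under the same-coset hypothesis — which rules out configurations where $\mathfrak{u}_{1-}$ and $\mathfrak{u}_{2-}$ point into a common open half-space of $\fh_n^*$ — and then carrying the argument uniformly to $\mathfrak{sp}(2n)$, where the long roots $\pm 2\varepsilon_i$ and the additional cuspidal types require separate bookkeeping of which coordinate hyperplane separates each pair of admissible Levi types.
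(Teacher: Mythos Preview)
Your proposal identifies the right ingredients but misses the key reduction step, and the ``main step'' you flag as the obstacle is left essentially unaddressed. The paper's proof avoids your difficulty entirely by invoking Mathieu's coherent family theory: since $M_1$ and $M_2$ have the same central character and supports in a single $Q_n$-coset, one may assume (after replacing $M_2$ by a module with the same cone) that both lie in the \emph{same} irreducible semisimple coherent family. This forces a \emph{common} choice of $\Sigma$ and $\mathbf{x}$ with $M_j \simeq D_\Sigma^{\mathbf{x}} L_j$ for $j=1,2$. Then $C(M_j)$ is generated by $(-\Sigma) \sqcup \Delta_n^{L_j\text{-inf}}$ with the same $\Sigma$, so it suffices to compare $C(L_1)$ and $C(L_2)$ for highest weight modules $L_1, L_2$ with the same central character. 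At that point the paper simply reads off the answer from Mathieu's explicit list: $\Delta_n^{L\text{-inf}}$ is always of the form $\Delta_{\mathfrak{sl}}(I)$, $\Delta_{\mathfrak{sl}}(I)\cup\Delta_{\mathfrak{sl}}(I\cup\{i_0\})$, or $\Delta_{\mathfrak{sp}}(J)$, and for two distinct such sets the intersection of the cones visibly sits in a coordinate hyperplane.

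Your route, by contrast, keeps $\Sigma_1 \neq \Sigma_2$ and tries to compare $C(M_1)$ and $C(M_2)$ directly via their Fernando parabolics. The sentence ``the same-coset hypothesis rules out configurations where $\mathfrak{u}_{1-}$ and $\mathfrak{u}_{2-}$ point into a common open half-space'' is precisely the content of the corollary and is not established; the coset condition alone says nothing about the relative orientation of the nilradicals. Nor does the central-character constraint act the way you suggest: for $\mathfrak{sl}(n+1)$ with integral central character, \emph{every} proper nonempty $I\subset\{1,\dots,n+1\}$ arises from some bounded highest weight module with that character, so Levi types are not ``tightly restricted''. Without the coherent-family reduction you have no mechanism to pin down the pair $(C(M_1),C(M_2))$, and the $\varepsilon$-coordinate computation you allude to cannot begin.
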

\begin{proof}  The statement follows from description of the singular parts of the semisimple irreducible coherent families in Sections 9 and 10 in \cite{M}, but for the reader's convenience, a short proof is provided.

 We  apply Theorem \ref{thm-clas-bounded} simultaneously to $M_1$ and $M_2$ and find highest weight modules $L_i$, $\Sigma \subset \Delta_n^{L_i-{\rm inf}}$, and  ${\bf x} \in {\mathbb C}^{|\Sigma|}$, so that $M_i \simeq D_{\Sigma}^{\bf x}L_i$, $i = 1,2$. The fact that we can choose the same $\Sigma$ and ${\bf x}$ for both $M_1$ and $M_2$ is not trivial and follows from the notion of coherent family, see Section 4 in \cite{M}. More precisely, since $M_1$ and $M_2$ have the same central character and $\Supp M_1 - \Supp M_2 \subset Q_n$, we may assume that $M_1$ and $M_2$ are in the same irreducible semisimple coherent family (replacing $M_2$ with a module $M_2'$ such that $C(M_2) = C(M_2')$ if necessary).  From $M_i \simeq D_{\Sigma}^{\bf x}L_i$ we  easily  check that $C(M_i)$ is generated by $(-\Sigma) \sqcup \Delta_n^{L_i-{\rm inf}}$, $i=1,2$. Hence, it is enough to prove the corollary for highest weight modules. 
 
 In this case, the statement follows from the following description of the sets $\Delta_n^{L-{\rm inf}}$ of any infinite-dimensional  simple highest weight bounded $\gg_n$-module $L$.  
 
 Let  $I_{\mathfrak{sl}}:=\{1,...,n+1\}$, $I_{\mathfrak{sp}}:=\{1,...,n\}$. For subsets $I$ of $I_{\mathfrak{sl}}$  and $J$ of  $I_{\mathfrak{sp}}$, set 
$$
\Delta_{\mathfrak{sl}}(I) := \left\{\varepsilon_i - \varepsilon_j \; | \; i \in I, j \notin I  \right\}, \; \Delta_{\mathfrak{sp}}(J) :=  \left\{\varepsilon_{i} + \varepsilon_{j}, \varepsilon_{i} - \varepsilon_{k}, - \varepsilon_k - \varepsilon_{\ell} \; | \; i,j \in J; \, k,\ell \notin J  \right\}.
$$
Note that by definition $\Delta_{\mathfrak{sl}}(\emptyset) = \Delta_{\mathfrak{sl}}(I_{\mathfrak{sl}}) = \emptyset$. Then the following holds:
\begin{itemize}
\item[(i)] If $\gg_n = \mathfrak{sl}({n+1})$ and $L$ has integral central character, then $\Delta_n^{L-{\rm inf} }= \Delta(I)$ for some nonempty proper subset $I$ of $I_{\mathfrak{sl}}$.
\item[(ii)] If $\gg_n = \mathfrak{sl}({n+1})$ and $L$ has nonintegral central character, then $\Delta_n^{L-{\rm inf} }= \Delta(I) \cup \Delta(I \cup \{i_0\})$ for some proper subset 
 $I$ of $I_{\mathfrak{sl}}$ and some $i_0 \in I_{\mathfrak{sl}}$.
\item[(iii)]  If $\gg_n = \mathfrak{sp}({2n})$, then $\Delta_n^{L-{\rm inf} }= \Delta_{\mathfrak{sp}}(J)$ for some subset $J$ of $I_{\mathfrak{sp}}$.
\end{itemize}

Claims (i)--(iii) are direct corollaries from Proposition 8.5 and Lemma 9.2 in \cite{M}. \end{proof}

\begin{cor} \label{cor-ext-fd}
Let $\gg_n = \mathfrak{sl} (n+1)$, and let $M$ be a simple bounded $\gg_n$-module such that ${\rm Ext}^1_{\mathcal B (\gg_n)} (F,M) \neq 0$ for some simple finite-dimensional $\fg_n$-module $F$. Then $M$ is a highest weight module.
\end{cor}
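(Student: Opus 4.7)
My plan is to argue by contradiction, using Mathieu's classification (Theorem \ref{thm-clas-bounded}) to write a non-highest-weight $M$ as a twisted localization, and then showing that any such module admits no nontrivial extension by a finite-dimensional module. First I would dispose of the finite-dimensional case: Weyl complete reducibility gives ${\rm Ext}^1_{\cB(\fg_n)}(F_1, F_2) = 0$ whenever $F_1, F_2$ are simple finite-dimensional, so $M$ must be infinite-dimensional. Suppose $M$ is not a highest weight module. Theorem \ref{thm-clas-bounded} then supplies a simple highest weight module $L$, a \emph{nonempty} subset $\Sigma = \{\alpha_1,\dots,\alpha_k\} \subset \Delta_n^{L\text{-}{\rm inf}}$, and ${\bf x} \in \C^k$ with $M \simeq D_\Sigma^{\bf x} L$. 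Since the $f_{\alpha_i}$ commute pairwise ($\alpha_i + \alpha_j \notin \Delta_n$), one has $\Theta_\Sigma^{\bf x}(f_{\alpha_i}) = f_{\alpha_i}$, so each $f_{\alpha_i}$ still acts bijectively on $M$; hence $M$ is canonically a $D_\Sigma U_n$-module and $\theta_\Sigma^M \colon M \to D_\Sigma M$ is an isomorphism.

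Next, I would apply the exact functor $D_\Sigma$ to a nontrivial short exact sequence
\begin{equation*}
0 \to M \xrightarrow{i} E \to F \to 0
\end{equation*}
representing a nonzero class in ${\rm Ext}^1_{\cB(\fg_n)}(F, M)$. Since $F$ is finite-dimensional, each $f_{\alpha_i}$ is nilpotent on $F$, and the standard computation $1 \otimes v = f_{\alpha_i}^{-N} \otimes f_{\alpha_i}^{N} v = 0$ (for $N \gg 0$) gives $D_\Sigma F = 0$. The localized sequence therefore collapses to an isomorphism $D_\Sigma i \colon D_\Sigma M \xrightarrow{\sim} D_\Sigma E$.

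Finally, I would define the candidate splitting
\begin{equation*}
\sigma \colon E \xrightarrow{\theta_\Sigma^E} D_\Sigma E \xrightarrow{(D_\Sigma i)^{-1}} D_\Sigma M \xrightarrow{(\theta_\Sigma^M)^{-1}} M.
\end{equation*}
The naturality identity $\theta_\Sigma^E \circ i = D_\Sigma i \circ \theta_\Sigma^M$ yields $\sigma \circ i = \id_M$. All four arrows are $\fg_n$-equivariant and weight-preserving, and $\sigma$ is a morphism in $\cB(\fg_n)$ because the localization of a bounded weight module is again bounded (Lemma \ref{lem-loc-sl2}). The extension thus splits, contradicting the choice of a nonzero Ext class; hence $\Sigma = \emptyset$ and $M \simeq L$ is a highest weight module. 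The step I expect to be the most delicate is the verification that $M$ carries a $D_\Sigma U_n$-module structure after the $\Phi_\Sigma^{\bf x}$-twist, which rests on the identity $\Theta_\Sigma^{\bf x}(f_{\alpha_i}) = f_{\alpha_i}$ recorded above.
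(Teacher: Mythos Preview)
Your proof is correct but follows a genuinely different route from the paper's. The paper argues via central characters and the cone comparison of Corollary~\ref{cor-cones}: the nonvanishing of $\mathrm{Ext}^1$ forces $M$, $F$, and $L$ to share a central character and a single $Q_n$-coset of support; then the inclusion $C(L)\subset C(M)$ together with Corollary~\ref{cor-cones} yields $C(M)=C(L)$, so $M$ is highest weight. You instead construct an explicit splitting of any extension $0\to M\to E\to F\to 0$ whenever $M$ is a \emph{nontrivial} twisted localization, using that $D_\Sigma F=0$ (root vectors act nilpotently on finite-dimensional modules) and that $\theta_\Sigma\colon M\to D_\Sigma M$ is already an isomorphism (the elements $f_{\alpha_i}$ act invertibly on $M$, as you verify via $\Theta_\Sigma^{\bf x}(f_{\alpha_i})=f_{\alpha_i}$). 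Your approach is more elementary and self-contained, bypassing both the cone analysis and the implicit appeal to coherent families behind Corollary~\ref{cor-cones}; the paper's argument, on the other hand, situates the result within Mathieu's structural framework and is the one that generalizes most naturally inside the paper's later arguments. One small simplification: your closing remark about Lemma~\ref{lem-loc-sl2} is unnecessary, since $\sigma$ is a $U_n$-homomorphism between the objects $E,M\in\mathcal{B}(\fg_n)$ and $\mathcal{B}(\fg_n)$ is a full subcategory of $\fg_n$-modules, so whether the intermediate localizations are bounded is irrelevant.
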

\begin{proof}
This is a known fact, nevertheless  we provide a proof.  Assume that $M$ is not a highest weight module. Then by Theorem \ref{thm-clas-bounded}, there is a nonempty set $\Sigma$ and ${\bf x} \in \C^{|\Sigma|}$such that $M \simeq {D}_{\Sigma}^{\bf x} L$ for some highest weight module $L$. The condition ${\rm Ext}^1_{\mathcal B (\gg_n)} (F,M) \neq 0$ implies that $M$ and $F$ have the same central character $\chi$. Since twisted localization preserves central characters, $L$ has central character $\chi$ as well. Next ${\rm Ext}^1_{\mathcal B (\gg_n)} (F,M) \neq 0$ forces  $\Supp M$, $\Supp F$ and $\Supp L$ to be in the same $Q_n$-coset in $\gh_n^*$. The isomorphism  $M \simeq {D}_{\Sigma}^{\bf x} L$ implies that $C(L) \subset C(M)$. Then $C(M) = C(L)$ by Corollary \ref{cor-cones}, and hence $M$ is a highest weight module. \end{proof}

An important property of the twisted localization is that it preserves annihilators in $U_n$. We set $\Ann (\cdot):= \Ann_{U_n}(\cdot)$.
\begin{lem} \label{lem-loc-ann}
Let $M$ be an $U_n$-module. Then $\Ann M \subset \Ann {D}_{F}^{\bf x}  M$.
\end{lem}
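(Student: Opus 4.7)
The plan is to pass through the two-sided ideal $\tilde J$ of $D_F U_n$ generated by $J:=\Ann_{U_n} M$, and to exploit the fact that the twist by $\Theta_F^{\bf x}$ preserves this extended ideal. Once $\tilde J$ is shown to annihilate $D_F M$, and $\Theta_F^{\bf x}(J)\subset \tilde J$, the defining formula $u\cdot v^{\bf x}=(\Theta_F^{\bf x}(u)\cdot v)^{\bf x}$ for the twisted action closes the argument immediately.

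First I would observe that the Ore property of $F_{U_n}$, combined with $J$ being two-sided in $U_n$, makes $\tilde J:=D_F U_n\cdot J = J\cdot D_F U_n$ a well-defined two-sided ideal of $D_F U_n$. Since $M$ carries the structure of a $U_n/J$-module, exactness of $D_F$ yields a canonical identification $D_F M\cong (D_F U_n/\tilde J)\otimes_{U_n/J} M$, so $\tilde J$ annihilates $D_F M$. In particular $J\subset \Ann_{D_F U_n}(D_F M)$, which already gives the untwisted version $\Ann M\subset \Ann D_F M$.

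Next I would verify that $\Theta_F^{\bf x}(u)\in \tilde J$ for every $u\in J$ and every ${\bf x}\in {\mathbb C}^k$. One factor is handled by the explicit expansion
\[
\Theta_{\{f_j\}}^{x_j}(u)=\sum_{i\ge 0}\binom{x_j}{i}(\ad f_j)^i(u)\,f_j^{-i}.
\]
Since $J$ is two-sided, every $(\ad f_j)^i(u)$ lies in $J$, so each summand belongs to $J\cdot D_F U_n=\tilde J$. For the product $\Theta_F^{\bf x}=\prod_j \Theta_{\{f_j\}}^{x_j}$ one iterates: each $\Theta_{\{f_j\}}^{x_j}$ is an algebra automorphism of $D_F U_n$ which sends the generators $J$ of $\tilde J$ into $\tilde J$ and therefore stabilizes $\tilde J$; composing $k$ such automorphisms yields $\Theta_F^{\bf x}(u)\in \tilde J$.

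Combining, for $u\in J$ and any $v^{\bf x}\in D_F^{\bf x}M=\Phi_F^{\bf x}D_F M$, the twisted action gives
\[
u\cdot v^{\bf x} = (\Theta_F^{\bf x}(u)\cdot v)^{\bf x} = 0,
\]
because $\Theta_F^{\bf x}(u)\in \tilde J\subset \Ann_{D_F U_n}(D_F M)$. The proof is essentially formal; the only point that warrants a small check is that each $\Theta_{\{f_j\}}^{x_j}$ actually stabilizes $\tilde J$ so that the iteration in the second step is legal, and this is immediate once $\Theta_{\{f_j\}}^{x_j}(J)\subset \tilde J$ is established.
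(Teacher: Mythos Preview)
Your proof is correct and follows essentially the same route as the paper's: both hinge on the observation that $(\ad f)^i(u)\in\Ann M$ for $u\in\Ann M$ (since $\Ann M$ is two-sided), and then combine this with the explicit formula for $\Theta_F^{\bf x}$ to conclude. Your packaging via the extended two-sided ideal $\tilde J=D_FU_n\cdot J$ is a cleaner way of organizing the same computation; the paper's argument is terser but implicitly uses the same facts (in particular, its final step $\Theta_F^{-{\bf x}}(u)\cdot m=0$ relies on exactly the inclusion $\Theta_F^{-{\bf x}}(u)\in\tilde J\subset\Ann_{D_FU_n}(D_FM)$ that you spell out).
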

\begin{proof}
Let $u \in \Ann M$. Observe that $(\ad f)^i(u) \in \Ann M$ for all $f\in F$. This implies that  $u$ belongs to the annihilator of the $U_n$-module $ {D}_{F} M$. Now 
the statement follows from the fact that $u \cdot ({\bf f}^{\bf x}m) = {\bf f}^{\bf x} (\Theta_{F}^{\bf -x}(u) \cdot m) = 0$ for $m \in M$ and ${\bf x} \in {\mathbb C}^k$,  by Lemma \ref{lemma-conj}(ii).
\end{proof}

\begin{cor}
Let $M$ be a simple infinite-dimensional bounded $\gg_n$-module. Then there exists a simple bounded highest weight module $L$ such that $\Ann M = \Ann L$.
\end{cor}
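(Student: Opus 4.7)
The plan is to invoke Theorem \ref{thm-clas-bounded} to present $M$ as a twisted localization of a simple bounded highest weight module, and then use Lemma \ref{lem-loc-ann} twice---once in each direction---to sandwich $\Ann M$ between $\Ann L$ and itself.

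More precisely, by Theorem \ref{thm-clas-bounded} there exist a simple highest weight module $L$, a subset $\Sigma \subset \Delta_n^{L\text{-inf}}$ and a tuple ${\bf x} \in \C^{|\Sigma|}$ with $M \simeq D_\Sigma^{\bf x} L$; moreover $\deg L = \deg M < \infty$, so $L$ is bounded as required. The inclusion $\Ann L \subset \Ann M$ is then an immediate application of Lemma \ref{lem-loc-ann}.

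For the reverse inclusion, the idea is to apply Lemma \ref{lem-loc-ann} a second time, now to $M$ itself with twist parameter $-{\bf x}$, giving $\Ann M \subset \Ann D_\Sigma^{-{\bf x}} M$. The key identification is $D_\Sigma^{-{\bf x}} M \simeq D_\Sigma L$: since $M \simeq \Phi_\Sigma^{\bf x} D_\Sigma L$ is already a $D_\Sigma U_n$-module, the localization functor $D_\Sigma$ acts trivially on it, and Lemma \ref{lemma-conj}(i) supplies the isomorphism $\Phi_\Sigma^{-{\bf x}} \Phi_\Sigma^{\bf x} D_\Sigma L \simeq D_\Sigma L$. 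Finally, the natural $U_n$-embedding $L \hookrightarrow D_\Sigma L$---available because each element being inverted acts injectively on $L$ (otherwise $D_\Sigma L$ would vanish and $M$ would not be infinite-dimensional)---yields $\Ann D_\Sigma L \subset \Ann L$. Chaining the three inclusions gives $\Ann M \subset \Ann L$, completing the proof.

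The argument is essentially formal once Theorem \ref{thm-clas-bounded} and Lemma \ref{lem-loc-ann} are in hand; the only mild subtlety is tracking the compatibility of the twist and the localization so as to correctly identify $D_\Sigma^{-{\bf x}} M$ with $D_\Sigma L$, and ensuring the injectivity of the $U_n$-action that produces the embedding $L \hookrightarrow D_\Sigma L$. Neither of these presents a genuine obstacle, so I do not anticipate any serious difficulty.
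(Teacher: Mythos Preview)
Your proof is correct and follows essentially the same route as the paper's: invoke Theorem~\ref{thm-clas-bounded} to write $M \simeq D_\Sigma^{\bf x} L$, apply Lemma~\ref{lem-loc-ann} once to get $\Ann L \subset \Ann M$, then use Lemma~\ref{lemma-conj}(i) to identify $D_\Sigma^{-{\bf x}} M \simeq D_\Sigma L$ and the embedding $L \hookrightarrow D_\Sigma L$ together with a second application of Lemma~\ref{lem-loc-ann} for the reverse inclusion. The only cosmetic difference is that you spell out the intermediate identifications a bit more explicitly; note also that the injectivity of $L \to D_\Sigma L$ is more directly available from the hypothesis $\Sigma \subset \Delta_n^{L\text{-inf}}$ in Theorem~\ref{thm-clas-bounded} than from your parenthetical contrapositive.
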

\begin{proof}
From Theorem \ref{thm-clas-bounded},  there is an isomorphism $M \simeq  {D}_{F}^{\bf x}  L$ for some highest weight module $L$. Then  $\Ann L \subset \Ann M$ by Lemma \ref{lem-loc-ann}. On the other hand, by Lemma \ref{lemma-conj}(i), $L$ is a submodule of ${D}_{F} L \simeq   {D}_{F}^{\bf -x} M$. Hence, again by  Lemma \ref{lem-loc-ann} we have  $\Ann M \subset \Ann L$.
\end{proof}

\subsection{Uniform bounded weight modules of $\mathfrak{sl}({n+1})$ and $\mathfrak{sp}({2n})$} \label{subsec-properties} Recall that every bounded  $\gg_n$-module, whose support lies in a single $Q_n$-coset,  has finite length, see   Lemma 3.3 in \cite{M}.

\begin{lem} \label{lem-deg}
Let  $M$ be an infinite-dimensional uniform bounded $\gg_n$-module whose support lies in a single $Q_n$-coset, and let $M_1,...,M_k$, $F_1,...,F_{\ell}$ be the simple constituents of $M$, counted with multiplicities. Assume that the modules $M_i$ are infinite dimensional and the modules $F_j$ are finite dimensional. Then $\Delta_n^{M-{\rm inf}}$ = $\Delta_n^{M_i-{\rm inf}}$ for all $i$, and 
$$
\deg M = \deg M_1 + \cdots + \deg M_k .
$$
\end{lem}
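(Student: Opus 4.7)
The plan is to reduce first to a single generalized central character by decomposing $M$ as a direct sum of generalized $Z_n$-eigenspaces, an operation that preserves uniformity and reduces both assertions to each block. Within such a block, all composition factors share a common central character and lie in a single $Q_n$-coset, so Corollary \ref{cor-cones} becomes applicable pairwise to the infinite-dimensional constituents.

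For the first assertion $\Delta_n^{M{\rm -inf}}=\Delta_n^{M_i{\rm -inf}}$, my key elementary observation is a chain argument in a composition series $0=N_0\subset\cdots\subset N_r=M$: if $e_\alpha$ acts locally nilpotently on every simple subquotient, iterating the containment $e_\alpha^{a_j}v\in N_{j-1}$ shows that $e_\alpha$ is locally nilpotent on $M$. Combined with $\Delta_n^{F_j{\rm -inf}}=\emptyset$ for the finite-dimensional $F_j$, this forces $\Delta_n^{M{\rm -inf}}=\bigcup_i\Delta_n^{M_i{\rm -inf}}$. To upgrade this union to a common set, I would invoke Corollary \ref{cor-cones}, which says that pairwise either $C(M_a)=C(M_b)$ or $C(M_a)\cap C(M_b)$ lies in a hyperplane; combining this with the closure identity $\Delta_n^{M{\rm -inf}}=C(M)\cap\Delta_n$ and the explicit descriptions (i)--(iii) in the proof of Corollary \ref{cor-cones} of the admissible $\Delta(I)$ and $\Delta_{\mathfrak{sp}}(J)$, two genuinely distinct cones among the $C(M_i)$ would generate a root in $C(M)\cap\Delta_n$ lying outside $\bigcup_i \Delta_n^{M_i{\rm -inf}}$, contradicting the union equality.

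For the degree formula I would treat the two inequalities separately. The lower bound $\deg M\geq\deg M_1+\cdots+\deg M_k$ is handled by picking a weight $\lambda$ in the nonempty intersection $\bigcap_i {\rm Supp}_{\rm ess}(M_i)\setminus\bigcup_j {\rm Supp}(F_j)$: by Mathieu's classification each ${\rm Supp}_{\rm ess}(M_i)$ is the complement of a thin subset of a single $Q_n$-coset, while $\bigcup_j {\rm Supp}(F_j)$ is finite, so such $\lambda$ exists; there $\dim M^\lambda=\sum_i\dim M_i^\lambda=\sum_i\deg M_i$. For the matching upper bound I would embed the block of $M$ into a semisimple irreducible coherent family $\mathcal{M}$ of constant weight multiplicity $\sum_i\deg M_i$ (the common value of $\Delta_n^{M_i{\rm -inf}}$ placing all the $M_i$ in a single such family), so that at every weight $\mu$ the deficit $\sum_i(\deg M_i-\dim M_i^\mu)$ equals the total contribution at $\mu$ from the finite-dimensional constituents of $\mathcal{M}$; since the $F_j$ appearing in $M$ sit among those constituents, this yields $\sum_j\dim F_j^\mu\leq\sum_i(\deg M_i-\dim M_i^\mu)$, and hence $\dim M^\mu\leq\sum_i\deg M_i$.

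The principal difficulty is making the upper bound rigorous: one must carefully verify that the $F_j$ occurring in the composition series of $M$ have multiplicities bounded by the degeneracy deficit in $\mathcal{M}$. This depends on the explicit structure of the singular part of semisimple irreducible coherent families from Sections 9 and 10 of \cite{M} and requires separate analysis for the $\mathfrak{sl}(n+1)$ and $\mathfrak{sp}(2n)$ cases.
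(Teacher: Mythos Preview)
Your treatment of the first assertion is essentially the paper's argument: the paper also uses Corollary~\ref{cor-cones} to rule out distinct cones among the $C(M_i)$, after first observing that $\Delta_n^{M{\rm -inf}}=\Delta_n^{M_1{\rm -inf}}$ for a simple submodule $M_1$ and that $\Delta_n^{M_i{\rm -inf}}\subset\Delta_n^{M{\rm -inf}}$ for all $i$. Your preliminary reduction to a single generalized central character is harmless but unnecessary, since the dichotomy of Corollary~\ref{cor-cones} ultimately rests on the structural descriptions (i)--(iii) of the sets $\Delta_n^{L{\rm -inf}}$, which do not depend on the central character.

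For the degree formula you take a genuinely different and substantially harder route. The paper does not split into lower and upper bounds at all. Instead, having established that all $\Delta_n^{M_i{\rm -inf}}$ coincide, it chooses a commuting set $\Sigma\subset\Delta_n^{M{\rm -inf}}$ that generates $Q_n$ (such $\Sigma$ exists by Lemma~4.4(i) of \cite{M}) and applies the exact localization functor $D_\Sigma$. Since $D_\Sigma F_j=0$ for every finite-dimensional $F_j$, the module $D_\Sigma M$ is filtered by the $D_\Sigma M_i$; each of these has support equal to the full coset $\lambda+Q_n$ with constant weight multiplicity, so $\deg D_\Sigma M=\sum_i\deg D_\Sigma M_i$ is immediate. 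Repeated use of Lemma~\ref{lem-loc-sl2} gives $\deg D_\Sigma M=\deg M$ and $\deg D_\Sigma M_i=\deg M_i$, and the formula follows in one line.

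Your upper bound, by contrast, has the genuine gap you yourself flag, and it is more serious than you suggest. The coherent family $\mathcal M$ constrains the composition factors of \emph{its own} $Q_n$-slices, not those of an arbitrary uniform bounded module $M$ sharing the same infinite-dimensional constituents. There is no a~priori reason the multiplicity of a finite-dimensional $F$ among your $F_j$ is bounded by its multiplicity in the singular part of $\mathcal M$; indeed $M$ need not embed in $\mathcal M$ at all, nor need $M^{\rm ss}$. Closing this would require controlling iterated extensions of finite-dimensional modules on top of the $M_i$ via Ext computations, a case analysis that the localization argument bypasses completely. Your lower bound is correct in spirit but also heavier than needed, relying on density properties of essential supports.
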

\begin{proof} The uniformity of $M$ implies that $M$ has no finite-dimensional submodules. Since $M$ has finite length, without loss of generality we may assume that $M_1$ is a simple submodule of $M$. It is easy to see that $\Delta_n^{M{\rm-inf}} =\Delta_n^{M_1{\rm-inf}}$, $\Delta_n^{M{\rm-fin}} =\Delta_n^{M_1{\rm-fin}}$, $\Delta_n^{M_i{\rm-inf}} \subset \Delta_n^{M{\rm-inf}}$ and $\Delta_n^{M{\rm-fin}} \subset \Delta_n^{M_i{\rm-fin}}$ for $i>1$. Assume that $\Delta_n^{M{\rm-inf}} \neq\Delta_n^{M_i{\rm-inf}}$ for some $i>1$. Then $C(M_1) \neq C(M_i)$ and, by Corollary \ref{cor-cones}, $C(M_1) \cap C(M_i)$ is contained in a hyperplane. Therefore there is a root in $\Delta_n^{M_i{\rm-inf}} $ which does not lie in $\Delta_n^{M_1{\rm-inf}} = \Delta_n^{M{\rm-inf}}$, contradicting the inclusion  $\Delta_n^{M_i{\rm-inf}} \subset \Delta_n^{M{\rm-inf}}$ . Thus $\Delta_n^{M{\rm-inf}} = \Delta_n^{M_i{\rm-inf}}$ for all $i$, and consequently, $C(M_i)=C(M)$ for all $i$.

Let $\Sigma$ be a basis of $Q_n$ such that it consists of commuting roots of  $\Delta_n^{M{\rm-inf}} =  \Delta_n^{M_i{\rm-inf}}$. Such a basis exists by Lemma 4.4(i) in \cite{M}. We apply the localization functor ${D}_{\Sigma}$ with respect to the Ore subset $F$ of $U_n$ generated by $f_{\alpha} \in \gg_n^{-\alpha}$, $\alpha \in \Sigma$.  The functor ${D}_{\Sigma}$ is  exact  and  $D_{\Sigma} F_j = 0$, $j=1,...,\ell$. Therefore $D_{\Sigma}M$ has a filtration with consecutive quotients $D_{\Sigma}M_i$ for $i=1,...,k$. Each of the $k+1$ modules $D_{\Sigma}M$, $D_{\Sigma}M_1,\dots, D_{\Sigma}M_k$ has support $\lambda + Q_n$ for any $\lambda \in \Supp M$,  and its weight spaces have equal dimension. Hence,  $\deg D_{\Sigma} M = \sum_{i=1}^k \deg D_{\Sigma} M_i$. On the other hand, after multiple applications of Lemma \ref{lem-loc-sl2}, we obtain $\deg D_{\Sigma}M = \deg M $ and $\deg D_{\Sigma}M_i = \deg M_i$ for $i=1,...,k$. Therefore, 
$$
\deg M = \deg D_{\Sigma} M = \sum_{i=1}^k \deg D_{\Sigma} M_i =   \sum_{i=1}^k \deg  M_i.
$$
\end{proof}

Let  $U^0_n$ denote the centralizer of $\mathfrak h_{n}$ in $U_n$. 

The following proposition is crucial for proving Corollary \ref{cor-loc-simple} below.
\begin{prop} \label{prop_ext_simple}
Let $\gg_n = \mathfrak{sl}({n+1})$ and let $M'$ be an infinite-dimensional  bounded uniform weight $\gg_n$-module. Assume $(M')^{\lambda}$ is a simple $U^0_n$-module for some $\lambda\in \textup{Supp}_\textup{ess}M'$. Then, for every $m \in (M')^{\lambda}$, the module $U_n\cdot m$ is either  simple, or is an extension of a simple finite-dimensional module by a simple infinite-dimensional  module.
\end{prop}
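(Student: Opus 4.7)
My approach is to set $N := U_n \cdot m$ and analyze it as a submodule of $M'$: it is uniform and bounded, and since it is generated by the weight vector $m$, its support lies in the single $Q_n$-coset $\lambda + Q_n$, so $N$ has finite length by \cite[Lemma 3.3]{M}. Replacing $M'$ with its $Q_n$-coset component $\bigoplus_{\mu \in \lambda + Q_n}(M')^\mu$ if necessary, I may assume $M'$ itself has support in $\lambda + Q_n$. Since $m$ generates $N$, $N^\lambda = U_n^0 \cdot m$; simplicity of $(M')^\lambda$ over $U_n^0$ and $m \neq 0$ force $N^\lambda = (M')^\lambda$. For any submodule $N' \subseteq N$, $(N')^\lambda$ is a $U_n^0$-submodule of the simple $(M')^\lambda$, so it is $0$ or equals $(M')^\lambda$; in the latter case $m \in N'$ and $N' = N$. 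Thus every proper submodule of $N$ has vanishing $\lambda$-weight space, the sum of all proper submodules is itself proper, and $N$ has a unique maximal proper submodule $N_{\max}$ with simple quotient $S^* := N/N_{\max}$ of $\lambda$-dimension $d := \deg M'$.

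I then dichotomize on whether $S^*$ is infinite- or finite-dimensional. A preliminary observation: the composition factors with nonzero $\lambda$-weight contribute dimensions in $\{0,d\}$ summing to $d$, so exactly one factor---namely $S^*$---has $\lambda$-weight $d$. All composition factors share the central character by which $Z_n \subseteq Z(U_n^0)$ acts on the simple $U_n^0$-module $(M')^\lambda$ by a scalar, and since any two dominant integral weights with the same central character coincide, any two finite-dimensional simple constituents of $N$ must be isomorphic; a second copy of $S^*$ would contribute another $d$ to $\dim N^\lambda = d$, which is impossible. So when $S^*$ is finite-dimensional, it is the unique finite-dimensional constituent of $N$.

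If $S^*$ is infinite-dimensional, I apply Lemma \ref{lem-deg} to the uniform module $N$ to get $\deg N = \sum \deg I_j$ over infinite-dimensional simple constituents; $S^*$ alone accounts for the full $d$, so $S^*$ is the unique constituent and $N = S^*$ is simple. If $S^*$ is finite-dimensional, then $N_{\max} \neq 0$ (else $N$ is a finite-dimensional submodule of $M'$, contradicting that the uniform infinite-dimensional $M'$ has no finite-dimensional submodules), and by the preceding paragraph all composition factors of $N_{\max}$ are infinite-dimensional. I choose a basis $\Sigma$ of $Q_n$ of pairwise commuting roots in $\Delta_n^{N{\rm-inf}}$ (Lemma 4.4(i) of \cite{M}) and apply the exact functor $D_\Sigma$. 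Since finite-dimensional $\mathfrak{sl}(n+1)$-modules are integrable, $D_\Sigma S^* = 0$, so $D_\Sigma N = D_\Sigma N_{\max}$ has the same composition length as $N_{\max}$. Now any nonzero $D_\Sigma U_n$-submodule $X$ of $D_\Sigma M'$ meets the $\lambda$-weight space nontrivially, since shifts by $f_\alpha^{\pm 1}$ for $\alpha \in \Sigma$ move any nonzero weight vector to a nonzero $\lambda$-weight vector ($\Sigma$ being a $\mathbb Z$-basis of $Q_n$), and $X^\lambda$ is then a nonzero $U_n^0$-submodule of the simple $(D_\Sigma M')^\lambda = (M')^\lambda$, hence equals it. As every weight space $(D_\Sigma M')^\mu$ with $\mu \in \lambda + Q_n$ is exhausted by the shifts ${\bf f}^{\bf k}(M')^\lambda$ (each being $d$-dimensional, by Lemma \ref{lem-loc-sl2}), we get $X = D_\Sigma M'$. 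Hence $D_\Sigma M'$ is simple as a $D_\Sigma U_n$-module, so the nonzero $D_\Sigma N \subseteq D_\Sigma M'$ equals $D_\Sigma M'$, forcing $N_{\max}$ to have length $1$; being a nonzero submodule of $M'$, it is a simple infinite-dimensional module.

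The main obstacle is the finite-dimensional-$S^*$ case, and in particular bounding the length of $N_{\max}$ to $1$: purely module-theoretic finiteness arguments are not enough. The key is to combine the localization functor $D_\Sigma$ (which kills the finite-dimensional top $S^*$) with the fact that, under the $U_n^0$-simplicity hypothesis on $(M')^\lambda$, the localized $D_\Sigma M'$ is itself simple as a $D_\Sigma U_n$-module.
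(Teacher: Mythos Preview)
Your argument is essentially correct and takes a genuinely different --- and cleaner --- route than the paper. One small point first: your claim that ``the uniform infinite-dimensional $M'$ has no finite-dimensional submodules'' is not quite justified as stated, since after you replace $M'$ by its $(\lambda+Q_n)$-coset component you have not checked that it remains infinite-dimensional (and if the original $M'$ were integrable, it need not). But this is harmless: if $N_{\max}=0$ then $N=S^*$ is simple and you are done; otherwise your own ``unique finite-dimensional constituent'' argument shows $N_{\max}$ has only infinite-dimensional constituents, so $\Delta_n^{N\text{-inf}}\neq\emptyset$ and your localization step goes through. (The paper's proof has the analogous implicit assumption that $M=U_n\cdot m$ is infinite-dimensional.) With this tweak, the proof stands.

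The comparison is worth recording. The paper argues by taking a simple \emph{submodule} $M_1\subset M$, reducing to the case $M_1^\lambda=0$, and then invoking a substantial amount of structure theory for bounded highest weight $\mathfrak{sl}(n+1)$-modules: Corollary~\ref{cor-ext-fd} to force $M_1$ to be highest weight, Proposition~\ref{cor-int-bnd} and Lemma~\ref{lem-step3} to pin down its highest weight in the form $s_i\cdot\mu$, a translation-functor and multiplicity argument to show $[M:L(\mu)]=[M:L(s_i\cdot\mu)]$ and all other multiplicities vanish, and finally a socle-filtration/Loewy-length analysis. Your approach bypasses all of this: you look at the simple \emph{quotient} $S^*$, observe that when $S^*$ is finite-dimensional the exact localization $D_\Sigma$ kills $S^*$ and identifies $D_\Sigma N_{\max}$ with the $D_\Sigma U_n$-module $D_\Sigma M'$, and then prove directly from the $U_n^0$-simplicity of $(M')^\lambda$ and $\lambda\in\mathrm{Supp}_{\mathrm{ess}}M'$ that $D_\Sigma M'$ is simple over $D_\Sigma U_n$. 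This last step is the heart of your argument and is both short and robust; it uses only that $\Sigma$ is a $\mathbb{Z}$-basis of $Q_n$ of commuting roots in $\Delta_n^{M'\text{-inf}}$ and that $(D_\Sigma M')^\lambda=(M')^\lambda$ (which follows since all weight spaces of $D_\Sigma M'$ have dimension $\deg D_\Sigma M'=\deg M'=d$ and $(M')^\lambda$ already has dimension $d$). The paper's route gives finer information about which highest weight modules can occur as $M_1$, but for the stated proposition your localization argument is more direct and avoids the classification in Proposition~\ref{cor-int-bnd}.
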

\begin{proof}
 Note that since $M'$ is bounded uniform  module, the module $M:=U_n\cdot m$ is a bounded uniform module whose support lies in a single $Q_n$-coset. Therefore, $M$ has finite length.  Consider a short exact sequence
$$
0 \to M_1 \to M \to M_2 \to 0,
$$
where $M_1$ is a simple module. Since $M$ is infinite dimensional, $M_1$ is also infinite dimensional, as otherwise $M$ would not be uniform. We have a short exact sequence of $U^0_n$-modules
$$
0 \to M_1^{\lambda} \to M^{\lambda}=(M')^{\lambda}  \to M_2^{\lambda}  \to 0.
$$
As $(M')^{\lambda}$ is a simple $U^0_n$-module, either $M_1^{\lambda} =0$ or $M_2^{\lambda} =0$. In the latter case $M_1^{\lambda} = M^{\lambda} $, hence $M = M_1$ is simple.

 Suppose  $M_1^{\lambda} =0$.  In the rest of the proof we show in several steps that $M_2$ is simple finite dimensional.

{\it Step 1:} Let $M_2$ be uniform. Assume to the contrary that dim$M_2=\infty$. Then Lemma \ref{lem-deg}, applied to both $M$ and $M_2$, implies $\deg M = \deg M_1 + \deg M_2$. On the other hand, $ \deg M = \dim M^{\lambda} = \dim M_2^{\lambda} \leq \deg M_2$. Therefore, $\deg  M = \deg M_2$ and $\deg M_1 = 0$, which is a contradiction, hence dim$M_2<\infty$. The simplicity of $M_2$ follows form the simplicity of $M_2^\lambda$ as a $U^0_n$-module.

{\it Step 2:} Assume  that $M_2$ is not uniform. Then $M_2$ must have a simple finite-dimensional subquotient, as otherwise Lemma \ref{lem-deg}, applied to $M$, would imply that $M_2$ has infinite-dimensional composition factors $M^k$ with $\Delta^{M^{k}-\textup{inf}}=\Delta^{M^{\ell}-\textup{inf}}$ for all $k,\ell$. In turn, this would imply that $M_2$ is uniform.

Moreover, $M_2$ must have a simple finite-dimensional submodule $F$. Otherwise, $M_2$ would again be uniform. The uniformity of $M$ implies ${\rm Ext}^1_{\mathcal B (\gg_n)} (F, M_1) \neq 0$. By Corollary \ref{cor-ext-fd}, $M_1$ is a simple highest weight module. As the category $\mathcal B (\gg_n)$ is stable under the automorphisms of $\fg$ from the Weyl group $W_n$, after twisting $M$ by an appropriate automorphism, we may assume that $M_1$ is highest weight module relative to the  Borel subalgebra $\gb_n$.  Since $\dim M_1=\infty$,  Proposition \ref{cor-int-bnd} shows that $M_1 \simeq L((s_i...s_j) \cdot \mu)$ for some $i,j$, $1 \leq i,j \leq n$, and some dominant integral $\mu$. Then $F \simeq L(\mu)$. 

{\it Step 3:} We note that $M_1\simeq L(s_i \cdot \mu)$ for some $i$. Consider first the case $\mu=0$.  By  Lemma \ref{lem-step3}, there is no $\alpha\in\Delta$ such that $\alpha\in \Supp L((s_i...s_j)\cdot 0)$ for $i\neq j$, which shows that Ext$^1_{\mathcal{O}}(L(0),L((s_i...s_j)\cdot 0)) = 0$ for $i\neq j$. For a general $\mu$, the statement follows by applying an appropriate translation functor. In the rest of the proof we fix $i$.

{\it Step 4:} We note that  $\lambda \in \Supp L(\mu)$ but $\lambda \notin \Supp L((s_i...s_j) \cdot \mu)$ for all $j$. Indeed, by assumption $\lambda\notin \Supp M_1$, where we write $\varkappa<\eta$ if $\eta-\varkappa$ is a sum of positive roots, therefore Lemma $2.3$ implies  $\lambda\notin \Supp ((s_i\dots s_j)\cdot \mu)$ for all $j$.

{\it Step 5:} Next, we claim that $[M:L (\mu))] = [M: L(s_i \cdot \mu)]$ and $[M:L(\nu)] = 0$ for all $\nu \neq \mu, s_i\cdot \mu$. Let $\alpha_i = \varepsilon_i - \varepsilon_{i+1}$ and let $0\neq x \in \gg^{\alpha_i}$. Then $x : M^{s_i (\mu)} \to M^{s_i \cdot \mu}$ is injective. On the other hand, since $s_i (\mu) > s_i \cdot \mu  > (s_i...s_j)\cdot \mu$, the following holds:
$$
[M: L(s_i \cdot \mu)] = \dim M^{s_i \cdot \mu}, \; [M: L(\mu)] = \dim M^{s_i (\mu)}.
$$ 
 The injectivity of $x : M^{s_i (\mu)} \to M^{s_i \cdot \mu}$ yields
\begin{equation}\label{eq-claim}
[M: L(\mu)]  \leq [M: L(s_i \cdot \mu)] .
\end{equation}
Set $d := \deg M = \dim M^{\lambda}$. The previous step shows that  $\deg M = d  [M: L(\mu)]$. On the other hand, one can check that $L(\mu)$ is isomorphic to a subquotient of the module $D_{f_{\varepsilon_{i} - \varepsilon_{i+1}}} \ldots D_{f_{\varepsilon_{j} - \varepsilon_{j+1}}} L((s_i...s_j) \cdot \mu) $, and by Lemma  \ref{lem-loc-sl2} we have $\deg L( (s_i...s_j) \cdot \mu)) \geq \deg L(\mu).$ Thus
$$
d [M: L(\mu)] =\deg M \geq  \sum_j \deg L(  (s_i...s_j) \cdot \mu) [M: L( (s_i...s_j) \cdot \mu)] \geq d \sum_j [M: L( (s_i...s_j) \cdot \mu)].
$$
This together with (\ref{eq-claim}) implies the statement of this step.

{\it Step 6: } Consider now the socle filtration of $M'$. Since there are no self-extensions of $L(s_i\cdot \mu)$ or of $L(\mu)$ in the category of weight modules, all nonzero odd layers of the socle filtration of $M'$ are direct sums of copies of $L(s_i\cdot \mu)$, and all even nonzero layers are sums of copies of $L(\mu)$. This shows that the Loewy length of $M$ is at most 3. Indeed, otherwise the submodule of $M'$ generated by the preimage in $M'$ of Layer 4 would be a quotient of a direct sum of Verma modules with highest weight $\mu$, and those do not have finite-dimensional subquotients of their radicals. Next, the irreducibly of $M^\lambda$ as $U^0_n$-module shows that there is a single copy of $L(\mu)$ in Layer 2. This together with Step 4 implies that $M_2$ is isomorphic to $L(\mu)$, and we obtain a contradiction with our assumption that $M_2$ is not uniform.

The result follows.
\end{proof}

\begin{cor} \label{cor-sp-deg1}
Let $\gg_n = \mathfrak{sp}({2n})$ and $m$, $\lambda$, and $M'$ be as in Proposition \ref{prop_ext_simple}. Then $U_n\cdot m$ is  a simple $\gg_n$-module.
\end{cor}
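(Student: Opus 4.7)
The plan is to follow the strategy of the proof of Proposition~\ref{prop_ext_simple}, strengthening the conclusion by one ingredient specific to $\mathfrak{sp}(2n)$. Set $M := U_n\cdot m$. If $\Delta_n^{M'\textup{-inf}}=\emptyset$, then $M'$ is integrable and hence a direct sum of finite-dimensional simple modules; the simplicity of $(M')^\lambda$ as a $U_n^0$-module forces $m$ to lie in a single such summand, and $U_n\cdot m$ coincides with that summand and is simple. We may therefore assume $\Delta_n^{M'\textup{-inf}}\neq\emptyset$.

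Suppose, for contradiction, that $M$ is not simple and fix a short exact sequence $0\to M_1\to M\to M_2\to 0$ with $M_1$ simple. As at the start of the proof of Proposition~\ref{prop_ext_simple}, the simplicity of $(M')^\lambda$ as a $U_n^0$-module, together with $m\in M^\lambda=(M')^\lambda$, forces $M_2^\lambda=0$ (whence $M=M_1$ is simple, a contradiction) or $M_1^\lambda=0$; we assume the latter. Step~1 of the proof of Proposition~\ref{prop_ext_simple} uses only Lemmas~\ref{lem-loc-sl2} and~\ref{lem-deg}, which hold for $\mathfrak{sp}(2n)$, so it rules out $M_2$ being uniform and infinite-dimensional. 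Therefore $M_2$ is either finite-dimensional simple or, if not uniform, contains a finite-dimensional simple submodule $F$ by Step~2 of the proof of Proposition~\ref{prop_ext_simple}. Letting $M''\subset M$ be the preimage of $F$, we obtain an exact sequence $0\to M_1\to M''\to F\to 0$. The uniformity of $M'$ together with $\Delta_n^{M'\textup{-inf}}\neq\emptyset$ prevents $M''$ from admitting any nonzero finite-dimensional submodule, so this sequence does not split, and $\textup{Ext}^1_{\mathcal B(\mathfrak{sp}(2n))}(F,M_1)\neq 0$.

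The final, $\mathfrak{sp}(2n)$-specific ingredient is that no such nonsplit extension exists. By Theorem~\ref{thm-clas-bounded} the central character of $M_1$ equals that of some simple infinite-dimensional bounded highest-weight module $L(\nu)$; by the Britten--Lemire--Mathieu classification of bounded highest-weight $\mathfrak{sp}(2n)$-modules (of which claim~(iii) in the proof of Corollary~\ref{cor-cones} is a structural ingredient), every such $\nu$ satisfies $\nu+\rho\in(\tfrac12+\mathbb{Z})^n$, while $F$ has a central character with an integral representative. Since the Weyl group $W_n$ of $\mathfrak{sp}(2n)$ acts by signed permutations and thereby preserves the parity of the fractional parts of coordinates, $M_1$ and $F$ lie in distinct central character blocks of $\mathcal B(\mathfrak{sp}(2n))$; hence $M''$ decomposes by central character, the extension splits, and we reach the desired contradiction.

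The main obstacle is justifying that $\nu+\rho\in(\tfrac12+\mathbb{Z})^n$, which rests on the classification of infinite-dimensional simple bounded highest-weight $\mathfrak{sp}(2n)$-modules. An alternative route avoiding that classification is to first establish the $\mathfrak{sp}(2n)$-analogue of Corollary~\ref{cor-ext-fd} (its proof goes through once claim~(iii) in the proof of Corollary~\ref{cor-cones} takes the role of the $\mathfrak{sl}$-specific claim~(i) there), forcing $M_1$ to be a highest-weight module, and then to adapt Steps~3--6 of the proof of Proposition~\ref{prop_ext_simple} to $\mathfrak{sp}(2n)$ via the socle-filtration and multiplicity analysis developed there.
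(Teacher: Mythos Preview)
Your proposal is correct and follows essentially the same route as the paper's proof. The paper's argument is terse: it invokes Lemma~\ref{lem-deg} together with the fact that there are no nontrivial extensions between a simple infinite-dimensional bounded $\mathfrak{sp}(2n)$-module and a finite-dimensional one, the latter being a consequence of the observation (attributed to \cite{M}) that the central characters of simple infinite-dimensional bounded $\mathfrak{sp}(2n)$-modules are never integral. This is exactly your ``$\mathfrak{sp}(2n)$-specific ingredient'': your condition $\nu+\rho\in(\tfrac12+\mathbb{Z})^n$ is precisely the non-integrality of the central character, and your block-separation argument is the mechanism by which the extension is ruled out. Your write-up is more explicit in tracing through Steps~1--2 of Proposition~\ref{prop_ext_simple} and in disposing of the integrable edge case $\Delta_n^{M'\textup{-inf}}=\emptyset$, but the substance is the same. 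The alternative route you sketch at the end (adapting Steps~3--6) is unnecessary once the central-character argument is in hand, as you yourself recognize.
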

\begin{proof}
The corollary follows from Lemma \ref{lem-deg} and from the fact that there are no nontrivial extensions between a simple infinite-dimensional bounded $\fg_n$-module and a finite-dimensional $\fg_n$-module. The latter is a consequence of the observation that the  central characters of simple bounded infinite-dimensional $\fg_n$-modules are never integral \cite{M}.
\end{proof}

\subsection{The Weyl Algebra $\mathcal D_{n+1}$}

Denote by $\mathcal D_{n+1}$ the algebra of polynomial differential operators of ${\mathbb C}[x_1,...,x_{n+1}]$. Namely, $\mathcal D_{n+1} = {\mathbb C}[x_1,...,x_{n+1}, \partial_1,...,\partial_{n+1}]$ with relations $x_i\partial_j - x_j\partial_i = \delta_{ij}$, $x_i x_j=x_j x_i$, $\partial_i\partial_j=\partial_j\partial_i$.

In what follows, we set
$x^{\lambda} := x_1^{\lambda_1}...x_{{n+1}}^{\lambda_{{n+1}}}$ for $\lambda = (\lambda_1,...,\lambda_{{n+1}})\in{\mathbb C}^{n+1}$.
Let $\mu = (\mu_1,...,\mu_{{n+1}})\in\C^{{n+1}}$.   The space of shifted Laurent polynomials
$$
\mathcal{F}(\mu):= \{ x^{\mu} p \; | \; p \in \C [x_1^{\pm1},...,x_{{n+1}}^{\pm 1}]\}.
$$
 is a ${\mathcal D}_{n+1}$-module in a natural way.

\subsection{Simple multiplicity-free modules of $\mathfrak{sl}({n+1})$}\label{subsec-sln-mult-free}  In this subsection $\fg_n=\mathfrak{sl}(n+1)$. The classification of simple multiplicity-free weight $\fg_n$-modules was first obtained in \cite{BBL}. Recall the homomorphism $U_n \to {\mathcal D}_{n+1}$ defined by the correspondence $e_{ij} \to x_i \partial_j$, where $ e_{ij} = e_{\varepsilon_i - \varepsilon_j}$ are the elementary $(n+1)\times(n+1)$-matrices. The $\mathcal{D}_{n+1}$-module  ${\mathcal F} (\mu)$ becomes a $\fg_n$-module through this homomorphism, and in particular,
$$
e_{ij} \cdot x^{\mu} =  \mu_j x^{\mu + \varepsilon_i- \varepsilon_j}.
$$

Moreover,
$$
{\mathcal F}_{\mathfrak{sl}} (\mu):= \{ x^{\mu} p \; | \; p \in \C [x_1^{\pm1},...,x_{n+1}^{\pm 1}], \deg p = 0\} =  {\rm Span} \{ x^{\lambda} \; | \; \lambda - \mu \in Q_{\mathfrak{gl}(n+1)} \}
$$
is a  $\fg_n$-submodule of ${\mathcal F} (\mu)$. It is clear that ${\mathcal F}_{\mathfrak{sl}} (\mu) \simeq {\mathcal F}_{\mathfrak{sl}}(\mu')$ if and only if  $\mu - \mu' \in Q_{\mathfrak{gl}(n+1)} $. 

Recall that ${\rm Int} (\mu)$, ${\rm Int}^+ (\mu)$ and ${\rm Int}^- (\mu)$ stand for the subsets of $\mathbb Z_{> 0}$ consisting of all $i$ such that $\mu_i \in {\mathbb Z}$, $\mu_i \in {\mathbb Z}_{\geq 0}$, $\mu_i \in {\mathbb Z}_{< 0}$,  respectively.  Set
$$
V_{\mathfrak{sl}} (\mu): = {\rm Span} \{ x^{\lambda} \; | \; \lambda - \mu \in Q_{\mathfrak{gl}(n+1)},   {\rm Int}^+ (\mu) \subset  {\rm Int}^+ (\lambda) \}.
$$
Note that $V_{\mathfrak{sl}} (\mu)$ is an  $\mathfrak{sl}({n+1})$-submodule of ${\mathcal F}_{\mathfrak{sl}} (\mu)$. Furthermore, $V_{\mathfrak{sl}} (\mu') \subset V_{\mathfrak{sl}} (\mu)$ if and only if $\mu - \mu' \in Q_{\mathfrak{gl}(n+1)} $ and ${\rm Int}^+ (\mu) \subset  {\rm Int}^+ (\mu')$. 

\begin{defn} \label{def-f} Let  $V_{\mathfrak{sl}}(\mu)^+ = 0$ whenever ${\rm Int}^+ (\mu) = {\rm Int}  (\mu)$. For  $\mu$ with ${\rm Int}^+ (\mu) \subsetneq {\rm Int}  (\mu)$,  let
$$
V_{\mathfrak{sl}}(\mu)^+ := \sum_{V_{\mathfrak{sl}} (\mu') \subsetneq V_{\mathfrak{sl}} (\mu)} V_{\mathfrak{sl}} (\mu'). 
$$
Then set $X_{\mathfrak{sl}}(\mu) := V_{\mathfrak{sl}} (\mu)/ V_{\mathfrak{sl}} (\mu)^+$.
\end{defn}

The $\mathfrak{sl}({n+1})$-module $X_{\mathfrak{sl}}(\mu)$ is clearly also a $\mathfrak{gl}({n+1})$-module, and moreover it is simple (both as a $\mathfrak{gl}(n+1)$- and an $\mathfrak{sl}(n+1)$-module). The modules $X_{\mathfrak{sl}}(\mu)$ have been first studied in \cite{BBL} (where $X_{\mathfrak{sl}}(\mu)$ is denoted by $N(\mu)$). 

In what follows,  for $\mu = (\mu_1,...,\mu_{n+1})$ we sometimes write $X_{\mathfrak{sl}}(\mu_1,...,\mu_{n+1})$ instead of $X_{\mathfrak{sl}}(\mu)$. The same convention applies to other  modules like $L(\mu)$,  etc.

\begin{defn}
Let $\mu, \mu' \in \C^{n+1}$.
\begin{itemize}
 \item[(i)]  We write $\mu  \sim_{\mathcal D} \mu'$ if $\mu_i - \mu'_i \in \Z$ for all $i$, ${\rm Int}^+ (\boldsymbol{\mu}) = {\rm Int}^+ (\boldsymbol{\mu'})$ and ${\rm Int}^- (\boldsymbol{\mu}) = {\rm Int}^- (\boldsymbol{\mu'})$.
 \item[(ii)] We write $\mu \sim_{\mathfrak{sl}} \mu'$ if  $\mu - \mu' \in Q_{\mathfrak{gl}(n+1)}$ and ${\rm Int}^+ (\mu) = {\rm Int}^+ (\mu')$. In particular, $\mu \sim_{\mathfrak{sl}} \mu'$ if and only if $\mu  \sim_{\mathcal D} \mu'$ and $\mu - \mu' \in Q_{\mathfrak{gl}(n+1)}$.
 \end{itemize}
\end{defn}

In what follows we will sometimes consider elements of ${\mathbb C}^{n+1}$ as weights of $\mathfrak{sl} (n+1)$, and we recall 
 (see Subsection \ref{subsec-general-sln}) that this means that we consider the projection of the respective sequence into $\gh_n^*$.

The next theorem follows from Theorem 5.8 and Proposition 3.4 in \cite{BBL}.
\begin{thm} \label{thm-bounded-finite}
Every simple multiplicity-free weight $\mathfrak{sl}({n+1})$-module is isomorphic either to $X_{\mathfrak{sl}}(\mu)$ for some $\mu \in \C^{n+1}$, or to $\Lambda^i(V_{n+1})$ for some $i$, $2\leq i \leq n-1$.
\end{thm}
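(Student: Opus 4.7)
The plan is to split the argument into the finite-dimensional and infinite-dimensional cases, and to reduce the latter via Mathieu's structural Theorem \ref{thm-clas-bounded} to the classification of simple bounded multiplicity-free highest weight modules from \cite{BBL}.

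For the finite-dimensional case, I would use a direct weight-counting argument to show that a simple finite-dimensional $\mathfrak{sl}(n+1)$-module is multiplicity-free exactly when its highest weight is a nonnegative multiple of $\varepsilon_1$ or of $-\varepsilon_{n+1}$, or equals a fundamental weight $\varepsilon_1+\cdots+\varepsilon_i$. The first two families yield the symmetric powers $S^k V_{n+1}$ and $S^k V_{n+1}^*$, which sit inside $\mathcal{F}_{\mathfrak{sl}}(\mu)$ for $\mu=(k,0,\ldots,0)$ or $\mu=(0,\ldots,0,-k)$ and so are of the form $X_{\mathfrak{sl}}(\mu)$. The fundamental-weight modules $\Lambda^i V_{n+1}$ with $i=1$ or $i=n$ coincide with $V_{n+1}$ or $V_{n+1}^*$ and are covered by the same construction, while those with $2 \le i \le n-1$ are the exceptional family singled out in the statement.

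For the infinite-dimensional case, let $M$ be simple multiplicity-free. Theorem \ref{thm-clas-bounded} produces an isomorphism $M \simeq D_\Sigma^{\mathbf x} L$ with $L$ a simple bounded highest weight module, and since twisted localization preserves degree, $\deg L = 1$. The first subtask is then to classify simple multiplicity-free highest weight $\mathfrak{sl}(n+1)$-modules: by Proposition 3.4 of \cite{BBL}, each such $L$ is isomorphic to $X_{\mathfrak{sl}}(\mu)$ for some $\mu$ with $\text{Int}^+(\mu) = \text{Int}(\mu)$, realized as the quotient $V_{\mathfrak{sl}}(\mu)/V_{\mathfrak{sl}}(\mu)^+$ inside $\mathcal{F}_{\mathfrak{sl}}(\mu)/V_{\mathfrak{sl}}(\mu)^+$. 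The second subtask is to identify the twisted localization $D_\Sigma^{\mathbf x} X_{\mathfrak{sl}}(\mu)$ with some $X_{\mathfrak{sl}}(\mu')$ where $\mu' = \mu + \sum_k x_k \alpha_k$, which on the level of $\mathcal{F}_{\mathfrak{sl}}$ amounts to inverting the monomials $x^{\alpha_k}$ corresponding to $\Sigma=\{\alpha_1,\ldots,\alpha_{|\Sigma|}\}$ and shifting the generating weight by $\sum_k x_k \alpha_k$; the complex shift generally shrinks the integrality set $\text{Int}^+(\mu')$.

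The main obstacle I expect is this last identification: one must trace the $U_n$-action through the homomorphism $U_n \to \mathcal{D}_{n+1}$ after the conjugation $\Theta_\Sigma^{\mathbf x}$ of Lemma \ref{lemma-conj}, and verify that the simple subquotient structure of $V_{\mathfrak{sl}}(\mu')$ — governed by the criterion that $V_{\mathfrak{sl}}(\mu'') \subsetneq V_{\mathfrak{sl}}(\mu')$ iff $\mu'-\mu'' \in Q_{\mathfrak{gl}(n+1)}$ and $\text{Int}^+(\mu') \subsetneq \text{Int}^+(\mu'')$ — correctly singles out $X_{\mathfrak{sl}}(\mu')$ as the simple quotient. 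Once this is in place, running over all admissible triples $(\mu, \Sigma, \mathbf x)$ exhausts all simple infinite-dimensional multiplicity-free $\mathfrak{sl}(n+1)$-modules, and combining with the finite-dimensional case completes the argument, matching Theorem 5.8 of \cite{BBL}.
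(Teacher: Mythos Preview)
The paper does not supply its own argument here: it simply records that the result is Theorem~5.8 together with Proposition~3.4 of \cite{BBL}, where the classification was obtained directly (and prior to Mathieu's work). Your approach is therefore genuinely different: you reduce via Theorem~\ref{thm-clas-bounded} to the highest weight case and then try to push the twisted localization back inside the family $X_{\mathfrak{sl}}(\cdot)$. This is a legitimate alternative, but two points deserve tightening. First, the condition you state for $L\simeq X_{\mathfrak{sl}}(\mu)$ to be highest weight, namely $\mathrm{Int}^+(\mu)=\mathrm{Int}(\mu)$, is not correct; compare Proposition~\ref{prop-sln-hw}, where the highest weight $\varepsilon_{\gb_n}(i_0,a)$ generally has both negative and nonnegative integer entries. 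Second, your ``main obstacle'' can be bypassed entirely: rather than tracking $\Theta_\Sigma^{\mathbf{x}}$ on generators, observe that $D_\Sigma^{\mathbf{x}}$ is exact and that $D_\Sigma^{\mathbf{x}}\mathcal{F}_{\mathfrak{sl}}(\mu)\simeq \mathcal{F}_{\mathfrak{sl}}(\mu+\sum_k x_k\alpha_k)$ as $\mathcal{D}_{n+1}$-modules (this is immediate on monomials). Hence $M\simeq D_\Sigma^{\mathbf{x}}X_{\mathfrak{sl}}(\mu)$, being simple, is a simple subquotient of $\mathcal{F}_{\mathfrak{sl}}(\mu')$, and every such subquotient is an $X_{\mathfrak{sl}}(\mu'')$ by construction. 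This avoids any delicate computation and makes the infinite-dimensional half of your outline go through cleanly; what your route buys over the bare citation is that it stays within the toolkit already set up in Section~\ref{sec-background}.
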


Some properties of the $\mathfrak{sl}({n+1})$-modules  $X_{\mathfrak{sl}}(\mu)$ are listed in the next proposition.

\begin{prop} \label{prop-x-sl}  Let $\mu \in \C^{n+1}$ and $n>1$. Then the following statements hold: 

\begin{itemize}
\item[(i)] The root space $\mathfrak{sl}(n+1)^{\varepsilon_i-\varepsilon_j}$ acts locally finitely on $X_{\mathfrak{sl}}(\mu)$ if and only if $i \in {\rm Int}^- (\mu)$ or $j \in {\rm Int}^+ (\mu)$.  In particular, $X_{\mathfrak{sl}}(\mu)$ is  finite-dimensional if and only if ${\rm Int}^- (\mu) = \{1,...,n+1 \}$ or ${\rm Int}^+ (\mu) = \{1,...,n+1 \}$, and  $X_{\mathfrak{sl}}(\nu)$ is cuspidal if and only if $ {\rm Int} (\nu) = \emptyset$; in the latter case  $X_{\mathfrak{sl}}(\nu) = {\mathcal F}_{\mathfrak{sl}} (\nu)$.
\item[(ii)] $\Supp X_{\mathfrak{sl}}(\mu) = \{\lambda \in \C^{n+1} \; | \; \lambda \sim_{\mathfrak{sl}} \mu\}$.
\item[(iii)] The central character of $X_{\mathfrak{sl}}(\mu)$ equals $\chi_{|\mu|\varepsilon_1+\rho}$, where $\rho=(\frac{n}{2},\frac{n-2}{2},\dots, -\frac{n}{2})$. 
\end{itemize}
\end{prop}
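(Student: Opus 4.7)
The plan is to first identify an explicit monomial basis of $X_{\mathfrak{sl}}(\mu)$, then extract (i)--(iii) from it. Since $\lambda-\mu\in Q_{\mathfrak{gl}(n+1)}$ forces $\lambda_i\equiv\mu_i\pmod{\mathbb{Z}}$ for every $i$, every monomial $x^\lambda\in V_{\mathfrak{sl}}(\mu)$ satisfies ${\rm Int}(\lambda)={\rm Int}(\mu)$, and the only extra condition beyond $\lambda-\mu\in Q_{\mathfrak{gl}(n+1)}$ is ${\rm Int}^+(\mu)\subset{\rm Int}^+(\lambda)$. Consequently $V_{\mathfrak{sl}}(\mu')\subsetneq V_{\mathfrak{sl}}(\mu)$ occurs precisely when $\mu-\mu'\in Q_{\mathfrak{gl}(n+1)}$ and ${\rm Int}^+(\mu)\subsetneq{\rm Int}^+(\mu')$. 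Unwinding Definition~\ref{def-f} then shows $X_{\mathfrak{sl}}(\mu)$ has basis $\{x^\lambda\mid\lambda\sim_{\mathfrak{sl}}\mu\}$, which immediately yields (ii).

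For (i) the key identity is
\[
e_{ij}^k\cdot x^\lambda=\lambda_j(\lambda_j-1)\cdots(\lambda_j-k+1)\,x^{\lambda+k(\varepsilon_i-\varepsilon_j)}.
\]
The right-hand side is eventually $0$ in $X_{\mathfrak{sl}}(\mu)$ iff either (a) the scalar prefactor vanishes, equivalent to $\lambda_j\in\mathbb{Z}_{\geq 0}$ and hence to $j\in{\rm Int}^+(\mu)$, or (b) the shifted monomial leaves the basis, equivalent for large $k$ to $\lambda_i+k$ becoming a non-negative integer while $i\notin{\rm Int}^+(\mu)$, i.e.\ to $i\in{\rm Int}^-(\mu)$. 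Conversely, if neither condition holds, a case analysis on whether $\mu_i,\mu_j\in\mathbb{Z}$ shows ${\rm Int}^+(\mu+k(\varepsilon_i-\varepsilon_j))={\rm Int}^+(\mu)$ and $\mu_j\notin\mathbb{Z}_{\geq 0}$ for every $k$, so $e_{ij}$ acts injectively on $x^\mu$. The consequences for finite-dimensionality and cuspidality are then direct: requiring the local-finiteness criterion to hold for all $(i,j)$ with $i\neq j$ forces ${\rm Int}^-(\mu)=\{1,\ldots,n+1\}$ or ${\rm Int}^+(\mu)=\{1,\ldots,n+1\}$, while requiring it to fail for every such pair forces ${\rm Int}(\mu)=\emptyset$. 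In the cuspidal case no $\mu'$ with $\mu-\mu'\in Q_{\mathfrak{gl}(n+1)}$ has ${\rm Int}^+(\mu')\supsetneq\emptyset$, so $V_{\mathfrak{sl}}(\mu)^+=0$ and $X_{\mathfrak{sl}}(\mu)=\mathcal{F}_{\mathfrak{sl}}(\mu)$.

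For (iii) I would exploit the Weyl-algebra realization. The map $e_{ij}\mapsto x_i\partial_j$ extends the $\mathfrak{sl}(n+1)$-action to a $\mathfrak{gl}(n+1)$-action factoring through the degree-zero subalgebra $\mathcal{D}_{n+1}^{(0)}\subset\mathcal{D}_{n+1}$; the Euler operator $E=\sum_i x_i\partial_i$, image of $\sum_i e_{ii}\in U(\mathfrak{gl}(n+1))$, is central in $\mathcal{D}_{n+1}^{(0)}$ and acts on $X_{\mathfrak{sl}}(\mu)$ as the scalar $|\mu|$. Applying any $z\in Z(U(\mathfrak{gl}(n+1)))$ to the weight vector $x^\mu$, normal-ordering in the Weyl algebra and replacing $E$ by $|\mu|$, produces a scalar that is a polynomial in $|\mu|$ alone; the same polynomial describes the action of $z$ on $S^k(V_{n+1})=L_{\mathfrak{gl}}(k\varepsilon_1)$ when $|\mu|=k\in\mathbb{Z}_{\geq 0}$, whose $\mathfrak{sl}(n+1)$-central character is $\chi_{k\varepsilon_1+\rho}$. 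By polynomial identity the two $\mathfrak{gl}(n+1)$-central characters agree for all $|\mu|\in\mathbb{C}$, and restricting to $Z(U(\mathfrak{sl}(n+1)))\subset Z(U(\mathfrak{gl}(n+1)))$ gives (iii).

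The main obstacle is the reduction in (iii) of the action of a central element to a polynomial in $|\mu|$. The cleanest route is to invoke the classical identity $Z(\mathcal{D}_{n+1}^{(0)})=\mathbb{C}[E]$; in its absence one can verify the claim directly for the quadratic Casimir by computing on $x^\mu$ and then extend to the remaining Harish-Chandra generators via polynomial continuity matched against $S^k(V_{n+1})$. The converse direction of (i) is routine case analysis.
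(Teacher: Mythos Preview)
Your treatment of (i) and (ii) is correct and is exactly what the paper has in mind: it simply says these parts ``follow in a straightforward way from the definition of $X_{\mathfrak{sl}}(\mu)$'' and gives no further detail. Your explicit identification of the monomial basis $\{x^\lambda\mid\lambda\sim_{\mathfrak{sl}}\mu\}$ and the computation with $e_{ij}^k$ is the natural way to make that sentence precise.

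For (iii) your argument is correct but genuinely different from the paper's. The paper invokes Mathieu's machinery: the module $\mathcal{M}(a):=\bigoplus_{|\mu|=a}\mathcal{F}_{\mathfrak{sl}}(\mu)^{\mathrm{ss}}$ is a semisimple irreducible coherent family, and all simple subquotients of such a family share a central character (Proposition~4.8 in \cite{M}); one then reads off that character from the highest weight constituent $X_{\mathfrak{sl}}(a,0,\dots,0)$. Your route via $Z(\mathcal D_{n+1}^{(0)})=\mathbb C[E]$ and polynomial interpolation is more elementary and self-contained, at the cost of needing that Weyl-algebra fact. Note that your fallback (``verify for the quadratic Casimir and extend by polynomial continuity'') is not quite enough as written, since matching on the one-parameter family $\mu=(k,0,\dots,0)$ does not determine a polynomial in $n+1$ variables. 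The clean way to avoid citing the center computation is to observe directly that the scalar $p_z(\mu)$ by which $z$ acts on $x^\mu$ is polynomial in $\mu$, and that commutation of the image of $z$ with $x_i\partial_j$ forces $p_z(\mu+\varepsilon_i-\varepsilon_j)=p_z(\mu)$ as polynomials (cancel the factor $\mu_j$), so $p_z$ depends only on $|\mu|$; then comparison with $S^k(V_{n+1})$ finishes. Either version of your argument gives a valid alternative to the coherent-family proof.
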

\begin{proof}Parts (i) and  (ii) follow in a straightforward way from the definition of $X_{\mathfrak{sl}}(\mu)$. For part (iii) one can use the fact that for $a \in \C$, the module ${\mathcal M} (a) := \bigoplus_{|\mu| = a}{\mathcal F}_{\mathfrak{sl}} (\mu)^{\rm ss}$ is a semisimple irreducible coherent family as defined by Mathieu in \cite{M}, Section 4. Here $\cdot^{ss}$ denotes the semisimplification of a module of finite length, and the direct sum runs over a complete set $\mu$ of  representatives of the quotient $\C^{n+1}/Q_{\gg \gl (n+1)}$ satisfying the condition $|\mu|=a$. Since all simple subquotients of such coherent family have the same central character (see Proposition 4.8 in \cite{M}), it is enough to look at the central character of  the simple highest weight submodule $X_{\mathfrak{sl}}(a, 0,...,0)$ of ${\mathcal M} (a) $. The latter central character is  $\chi_{a\varepsilon_1+\rho}$ by definition. \end{proof}

Recall that $x^{(k)}$ stands for the $k$-tuple $(x,x,...,x)$.
\begin{lem} \label{lem-x-sl-iso}  Let $n>1$.
\begin{itemize}
\item[(i)] $X_{\mathfrak{sl}}(\mu)$ and $X_{\mathfrak{sl}}(\mu')$ are isomorphic as $\mathfrak{gl}(n+1)$-modules if and only if $\mu' \sim_\mathfrak{sl} \mu$.
\item[(ii)] $X_{\mathfrak{sl}}(\mu)$ and $X_{\mathfrak{sl}}(\mu')$ are isomorphic as $\mathfrak{sl}({n+1})$-modules  if and only if either $\mu' \sim_{\mathfrak{sl}} \mu$ or $\{\mu,\mu'\} = \{ 0^{(n+1)}, (-1)^{(n+1)}\}$. 
\end{itemize}
\end{lem}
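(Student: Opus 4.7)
The plan for part (i) is to read everything off from Proposition~\ref{prop-x-sl}(ii), which identifies the $\mathfrak{gl}(n{+}1)$-support of $X_{\mathfrak{sl}}(\mu)$ as $\{\lambda\in\mathbb{C}^{n+1}\mid\lambda\sim_{\mathfrak{sl}}\mu\}$. Matching $\mathfrak{gl}(n{+}1)$-supports and noting that $\mu$ itself lies in its own support forces $\mu\sim_{\mathfrak{sl}}\mu'$; conversely, $\mu\sim_{\mathfrak{sl}}\mu'$ forces $V_{\mathfrak{sl}}(\mu)=V_{\mathfrak{sl}}(\mu')$ and $V_{\mathfrak{sl}}(\mu)^+=V_{\mathfrak{sl}}(\mu')^+$ via the inclusion criterion for $V_{\mathfrak{sl}}$'s stated just before Definition~\ref{def-f}, hence $X_{\mathfrak{sl}}(\mu)=X_{\mathfrak{sl}}(\mu')$. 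The substance of the lemma is therefore part (ii). For the $\Leftarrow$ direction of (ii), $\mu\sim_{\mathfrak{sl}}\mu'$ reduces to (i), while for $\{\mu,\mu'\}=\{0^{(n+1)},(-1)^{(n+1)}\}$ a direct check against the definition of $X_{\mathfrak{sl}}$ combined with Proposition~\ref{prop-x-sl}(i) shows that both modules are $1$-dimensional and hence isomorphic to the trivial $\mathfrak{sl}(n{+}1)$-module.

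For the $\Rightarrow$ direction of (ii), I would begin by matching the $\mathfrak{sl}$-supports. Projecting Proposition~\ref{prop-x-sl}(ii) to $\mathfrak{h}_n^*$ and using that $\bar{\mu'}$ lies in the common support, I can choose $\lambda_0\sim_{\mathfrak{sl}}\mu$ with $\bar{\lambda}_0=\bar{\mu'}$, so that $\lambda_0=\mu'+c(1,\dots,1)$ for some $c\in\mathbb{C}$. By part (i), $X_{\mathfrak{sl}}(\lambda_0)=X_{\mathfrak{sl}}(\mu)$, so after this harmless replacement I may assume $\mu=\mu'+c(1,\dots,1)$. If $c=0$ the conclusion is immediate.

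The next step is to use Proposition~\ref{prop-x-sl}(iii): the central character equality $\chi_{|\mu|\varepsilon_1+\rho}=\chi_{|\mu'|\varepsilon_1+\rho}$, combined with $|\mu|=|\mu'|+c(n{+}1)$, forces the two lifts $|\mu|\varepsilon_1+\rho$ and $|\mu'|\varepsilon_1+\rho$ in $\mathbb{C}^{n+1}$ to coincide up to a permutation in $W_n=S_{n+1}$ and a shift by a multiple of $(1,\dots,1)$. Summing coordinates pins the shift down to $c(1,\dots,1)$, and matching the resulting two multisets of coordinates gives, for $n\geq 2$, precisely the solutions $c=0$ or $c=\pm 1$ with $\{|\mu|,|\mu'|\}=\{0,-(n{+}1)\}$. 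I expect this combinatorial step to be the main obstacle, because one must simultaneously track the permutation, the forced shift, and the limited flexibility for entries of $\rho$ to match each other after translation by $c$.

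For the endgame, I take $c=1$, $|\mu|=0$, $|\mu'|=-(n{+}1)$ without loss of generality. Writing $B:=\{i\mid\mu'_i=-1\}$, I observe ${\rm Int}^+(\mu)={\rm Int}^+(\mu')\sqcup B$ and ${\rm Int}^-(\mu)={\rm Int}^-(\mu')\setminus B$. Matching the sets of locally finitely acting root spaces via Proposition~\ref{prop-x-sl}(i) then forces either $B=\emptyset$, or ${\rm Int}^+(\mu)=\{1,\dots,n{+}1\}={\rm Int}^-(\mu')$. The first alternative is ruled out by a direct comparison of the sum-zero representatives of the two $\mathfrak{sl}$-supports, which end up being properly shifted relative to each other and fail to coincide. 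In the second alternative, $\mu\in\mathbb{Z}_{\geq 0}^{n+1}$ with $|\mu|=0$ forces $\mu=0^{(n+1)}$, and $\mu'\in\mathbb{Z}_{<0}^{n+1}$ with $|\mu'|=-(n{+}1)$ forces $\mu'=(-1)^{(n+1)}$, placing $(\mu,\mu')$ in exactly the asserted exceptional pair.
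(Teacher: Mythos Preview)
Your overall strategy matches the paper's: use the support description in Proposition~\ref{prop-x-sl}(ii) and the central character in Proposition~\ref{prop-x-sl}(iii). Your reduction to $\mu=\mu'+c(1,\dots,1)$ and the combinatorial analysis forcing $c\in\{0,\pm1\}$ with $\{|\mu|,|\mu'|\}=\{0,-(n{+}1)\}$ is correct and more detailed than what the paper writes. The handling of the alternative $B\neq\emptyset$ via Proposition~\ref{prop-x-sl}(i) is also fine.

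There is, however, a genuine gap in the alternative $B=\emptyset$. Your claim that the sum-zero representatives of the two $\mathfrak{sl}$-supports ``fail to coincide'' is false in the cuspidal subcase ${\rm Int}(\mu)=\emptyset$. Indeed, if no $\mu_i$ is an integer then $B=\{i\mid\mu'_i=-1\}$ is automatically empty, and both $\{\lambda\mid\lambda\sim_{\mathfrak{sl}}\mu\}$ and $\{\lambda'+(1,\dots,1)\mid\lambda'\sim_{\mathfrak{sl}}\mu'\}$ equal the full coset $\mu+Q_{\mathfrak{gl}(n+1)}$, since the ${\rm Int}^+$ condition is vacuous. So in this subcase the $\mathfrak{sl}$-supports coincide, the central characters coincide (you already verified this), and the sets of locally finite roots coincide (both modules are cuspidal). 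None of the invariants you invoke distinguishes $X_{\mathfrak{sl}}(\mu)$ from $X_{\mathfrak{sl}}(\mu')$ here.

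To close the gap you need a finer invariant. The cleanest is the $U^0_n$-action on the one-dimensional weight space $M^{\bar\mu}$: the element $e_{ij}e_{ji}$ acts on $x^{\mu}\in X_{\mathfrak{sl}}(\mu)$ by $\mu_j(\mu_i+1)$ and on $x^{\mu'}\in X_{\mathfrak{sl}}(\mu')$ by $\mu'_j(\mu'_i+1)=(\mu_j-1)\mu_i$. An isomorphism forces these to agree, giving $\mu_i+\mu_j=0$ for all $i\neq j$; since $n+1\geq 3$ this yields $\mu=0^{(n+1)}$ and $\mu'=(-1)^{(n+1)}$, which is exactly the exceptional pair. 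This single computation in fact handles the entire case $c=1$ at once, making the $B$-dichotomy and the appeal to Proposition~\ref{prop-x-sl}(i) unnecessary.
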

\begin{proof} The ``if'' parts of the lemma easily follow from the definition of $X_{\mathfrak{sl}}(\mu)$. The ``only if'' parts follow by looking at the support and central character of $X_{\mathfrak{sl}}(\mu) \simeq X_{\mathfrak{sl}}(\mu')$, and using Proposition \ref{prop-x-sl}(ii),(iii).
\end{proof}

The previous lemma together with Proposition \ref{prop-x-sl}(ii), (iii) implies the following.
\begin{cor} \label{cor-x-sl}
 If $X_{\mathfrak{sl}}(\mu)$  has central character $\chi_{c\varepsilon_1 + \rho}$ and $\lambda \in \Supp X_{\mathfrak{sl}} (\mu)$, then $\mu \sim_{\mathfrak{sl}} \nu$ where $\nu_i := \lambda_i + \frac{1}{n+1} \left( c - |\lambda| \right)$.
\end{cor}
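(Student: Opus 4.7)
My plan is to identify $\nu$ as the unique lift of $\lambda$ from $\mathfrak{h}_n^*$ to a $\mathfrak{gl}(n+1)$-weight of $X_{\mathfrak{sl}}(\mu)$; once this is done, $\mu \sim_{\mathfrak{sl}} \nu$ follows immediately from Proposition \ref{prop-x-sl}(ii).

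First I would verify that $c = |\mu|$. By Proposition \ref{prop-x-sl}(iii) the central character of $X_{\mathfrak{sl}}(\mu)$ equals $\chi_{|\mu|\varepsilon_1 + \rho}$, and by hypothesis it coincides with $\chi_{c\varepsilon_1 + \rho}$. Under the natural convention that $c$ records the scalar by which the central element $e_{11} + \cdots + e_{n+1,n+1}$ of $\mathfrak{gl}(n+1)$ acts on $X_{\mathfrak{sl}}(\mu)$ (the same convention used in the proof of Proposition \ref{prop-x-sl}(iii), where the representative $a\varepsilon_1 + \rho$ is selected with $a = |\mu|$), we obtain $c = |\mu|$.

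With this in hand, Proposition \ref{prop-x-sl}(ii) identifies the $\mathfrak{gl}(n+1)$-weights of $X_{\mathfrak{sl}}(\mu)$ as exactly those $\tilde{\lambda} \in \C^{n+1}$ satisfying $\tilde{\lambda} \sim_{\mathfrak{sl}} \mu$. Every such $\tilde{\lambda}$ has $|\tilde{\lambda}| = |\mu| = c$, since $Q_{\mathfrak{gl}(n+1)}$ consists of integer vectors with zero coordinate sum.

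Finally, the kernel of the projection $\pi \colon \C^{n+1} \to \mathfrak{h}_n^*$ is $\C(1,\ldots,1)$, so any lift of $\pi(\lambda)$ to $\C^{n+1}$ has the form $\lambda + t(1,\ldots,1)$ for a unique $t \in \C$. Imposing the constraint $|\lambda + t(1,\ldots,1)| = c$ forces $t = (c - |\lambda|)/(n+1)$, so this unique lift coincides precisely with the vector $\nu$ defined in the corollary. Applying Proposition \ref{prop-x-sl}(ii) to $\nu$ yields $\mu \sim_{\mathfrak{sl}} \nu$. The only genuinely non-routine step is the identification $c = |\mu|$; everything else is a straightforward unwinding of $\pi$ and the definition of $\sim_{\mathfrak{sl}}$.
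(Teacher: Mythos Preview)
Your proof is correct and follows essentially the same route as the paper, which derives the corollary from Proposition~\ref{prop-x-sl}(ii),(iii) together with Lemma~\ref{lem-x-sl-iso}. You bypass the explicit reference to Lemma~\ref{lem-x-sl-iso} by reading Proposition~\ref{prop-x-sl}(ii) as a description of the $\mathfrak{gl}(n+1)$-support and then lifting $\lambda$ along the projection $\mathbb{C}^{n+1}\to\mathfrak{h}_n^*$; this is equivalent and arguably cleaner. Your flag about the identification $c=|\mu|$ is well placed: the equality $\chi_{c\varepsilon_1+\rho}=\chi_{|\mu|\varepsilon_1+\rho}$ of $\mathfrak{sl}(n+1)$-central characters does not by itself force $c=|\mu|$, and the corollary is tacitly using $c$ as the eigenvalue of the $\mathfrak{gl}(n+1)$-center, exactly as in the proof of Proposition~\ref{prop-x-sl}(iii) and in the application in Theorem~\ref{thm-bounded-infinite-sl}.
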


Let $\gb_n'$ be a  Borel subalgebra  of $\mathfrak{sl} (n+1)$.    For   $i_0 \in \{1,2,...,n+1 \}$ and $a \in \C$, define 
$$\varepsilon_{\gb_n'} (i_0,a)_i :=   
\begin{cases} 
 -1& \mbox{ if } \varepsilon_i - \varepsilon_{i_0} \mbox{ is a } \gb_n'\mbox{-positive root},\\
 a& \mbox{ if } i = i_0,\\
 0& \mbox{ if } \varepsilon_i - \varepsilon_{i_0} \mbox{ is a } \gb_n'\mbox{-negative root}.
\end{cases}
 $$
By definition $\varepsilon_{\gb_n'} (i_0,a) \in \C^{n+1}$, and $\varepsilon_{\gb_n'} (i_0,a) = \varepsilon_{\gb_n'} (j_0,b)$ if and only if $(i_0,a) = (j_0,b)$, or $i_0=j_0+1$ and $a =0, b=-1$. 

\begin{prop} \label{prop-sln-hw} Let $\gb_n'$ be a  Borel subalgebra of $\mathfrak{sl}({n+1})$.  Then $X_{\mathfrak{sl}}(\mu)$ is $\gb_n'$-highest weight module if and only if $\mu \sim_{\mathfrak{sl}} \varepsilon_{\gb_n'} (i_0,a)$ for some $a \in {\mathbb C}$ and some $i_0 \in \{1,...,n+1\}$,  and in this case $\varepsilon_{\gb_n'} (i_0,a)$ is the $\gb_n'$-highest weight of $X_{\mathfrak{sl}}(\mu)$. Equivalently, $X_{\mathfrak{sl}}(\mu)$ is $\gb_n'$-highest weight module if and only if there is $i_0 \in \{1,...,n+1\}$ so that $\mu_i \in \Z_{<0}$ if  $\varepsilon_i - \varepsilon_{i_0}$ is a $\gb_n'$-positive root and  $\mu_j \in \Z_{\geq 0}$ if $\varepsilon_{i_0} - \varepsilon_{j}$ is a $\gb_n'$-positive root.
\end{prop}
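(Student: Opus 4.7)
The plan is to prove both directions of the biconditional; the equivalent reformulation at the end will fall out from unpacking $\sim_{\mathfrak{sl}}$. Throughout, write $i \prec j$ when $\varepsilon_i - \varepsilon_j$ is $\gb_n'$-positive: this is a total order on $\{1,\ldots,n+1\}$, and with respect to it the vector $\varepsilon_{\gb_n'}(i_0,a)$ has entry $-1$ at each $i \prec i_0$, entry $a$ at $i_0$, and entry $0$ at each $i \succ i_0$. The main input is Proposition \ref{prop-x-sl}: part~(ii) gives $\Supp X_{\mathfrak{sl}}(\mu) = \{\eta : \eta \sim_{\mathfrak{sl}} \mu\}$, which will allow me to identify the (unique) $\gb_n'$-highest weight of $X_{\mathfrak{sl}}(\mu)$, when it exists, as a $\gb_n'$-maximal element of the support.

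For the ``if'' direction, assume $\mu \sim_{\mathfrak{sl}} \nu := \varepsilon_{\gb_n'}(i_0, a)$. By Lemma \ref{lem-x-sl-iso}(ii), $X_{\mathfrak{sl}}(\mu) \simeq X_{\mathfrak{sl}}(\nu)$, so it suffices to exhibit in $X_{\mathfrak{sl}}(\nu)$ a nonzero $\gb_n'$-highest weight vector of weight $\nu$, for which I take the class of $x^\nu$. Non-vanishing in the quotient follows because any strict inclusion $V_{\mathfrak{sl}}(\nu') \subsetneq V_{\mathfrak{sl}}(\nu)$ demands ${\rm Int}^+(\nu) \subsetneq {\rm Int}^+(\nu')$, while $x^\nu \in V_{\mathfrak{sl}}(\nu')$ would require ${\rm Int}^+(\nu') \subset {\rm Int}^+(\nu)$. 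To verify annihilation by each $\gb_n'$-positive $e_{ij}$, I use $e_{ij}\cdot x^\nu = \nu_j\, x^{\nu + \varepsilon_i - \varepsilon_j}$ and split into cases: if $i = i_0$ then $j \succ i_0$ forces $\nu_j = 0$; if $j = i_0$ with $a = 0$ the coefficient again vanishes; in the remaining cases, transitivity of $\prec$ (either $i \prec i_0$ directly when $j = i_0$, or via $i \prec j \prec i_0$ when $\nu_j = -1$ and $j \neq i_0$) forces $\nu_i = -1$, and a direct check shows ${\rm Int}^+(\nu + \varepsilon_i - \varepsilon_j) \supsetneq {\rm Int}^+(\nu)$, so the shifted monomial lies in $V_{\mathfrak{sl}}(\nu)^+$.

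For the ``only if'' direction, let $\lambda$ be the $\gb_n'$-highest weight of $X_{\mathfrak{sl}}(\mu)$. Then $\lambda \in \Supp X_{\mathfrak{sl}}(\mu)$ while $\lambda + (\varepsilon_i - \varepsilon_j) \notin \Supp X_{\mathfrak{sl}}(\mu)$ for every $i \prec j$; since both weights lie in the same $Q_{\mathfrak{gl}(n+1)}$-coset, Proposition \ref{prop-x-sl}(ii) forces ${\rm Int}^+(\lambda + \varepsilon_i - \varepsilon_j) \neq {\rm Int}^+(\lambda)$, which a direct computation shows is equivalent to $\lambda_i = -1$ or $\lambda_j = 0$. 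Setting $A := \{k : \lambda_k \neq -1\}$ and $B := \{k : \lambda_k \neq 0\}$, this forbids $i \prec j$ with $i \in A$ and $j \in B$; applied in both orientations to any pair of distinct elements of $A \cap B$, it yields $|A \cap B| \leq 1$. Moreover, among the positions where $\lambda_k \in \{-1, 0\}$, the $-1$'s must $\prec$-precede the $0$'s. Together these two facts pin $\lambda$ down as $\varepsilon_{\gb_n'}(i_0, a)$ for a suitable pair $(i_0, a)$; since $\lambda \sim_{\mathfrak{sl}} \mu$, this gives $\mu \sim_{\mathfrak{sl}} \varepsilon_{\gb_n'}(i_0, a)$.

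The equivalent reformulation is then obtained by unpacking $\sim_{\mathfrak{sl}}$: at positions other than $i_0$, the joint requirement $\mu - \varepsilon_{\gb_n'}(i_0, a) \in Q_{\mathfrak{gl}(n+1)}$ and ${\rm Int}^+(\mu) = {\rm Int}^+(\varepsilon_{\gb_n'}(i_0, a))$ forces $\mu_i \in \mathbb{Z}_{<0}$ for $i \prec i_0$ and $\mu_j \in \mathbb{Z}_{\geq 0}$ for $j \succ i_0$, while the converse implication (finding the appropriate $a$ from the sign condition on $\mu$) is routed through the main biconditional itself. I expect the main technical obstacle to be the ``if''-direction case analysis, specifically keeping track of the remaining subcases and verifying that each shifted monomial $x^{\nu + \varepsilon_i - \varepsilon_j}$ genuinely lies in some strictly smaller $V_{\mathfrak{sl}}(\nu'') \subsetneq V_{\mathfrak{sl}}(\nu)$.
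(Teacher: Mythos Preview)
Your proof is correct and follows essentially the same approach as the paper, which merely sketches ``use the definition of $X_{\mathfrak{sl}}(\mu)$ and verify that if $w \in X_{\mathfrak{sl}}(\mu)$ is annihilated by all $\gb_n'$-positive root vectors then $w$ has weight $\varepsilon_{\gb_n'}(i_0,a)$'' and then cites \cite{BBL}, Proposition~3.4. Your argument is a detailed fleshing-out of this: for the ``only if'' direction you cleanly replace the search for singular vectors by the support condition $\lambda+\alpha\notin\Supp X_{\mathfrak{sl}}(\mu)$ (via Proposition~\ref{prop-x-sl}(ii)), which is a nice simplification since weight spaces are one-dimensional anyway. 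One small remark on your ``if''-direction case split: the case $j\succ i_0$ with $i\neq i_0$ also has $\nu_j=0$ and hence vanishing coefficient, so your ``remaining cases'' are really only those with $\nu_j\neq 0$, i.e.\ $j\preceq i_0$ and (if $j=i_0$) $a\neq 0$; once phrased this way your transitivity argument covers them all.
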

\begin{proof}
We use the definition of $X_{\mathfrak{sl}}(\mu)$ and verify that, if $w \in X_{\mathfrak{sl}}(\mu) $ is such that $e_{ij} \cdot w = 0$ whenever $\varepsilon_i - \varepsilon_{j}$ is $\gb_n'$-positive root, then $w$ has weight $\varepsilon_{\gb_n'} (i_k,a)$ for some $k$ and $a$. The statement also follows form Proposition 3.4 in \cite{BBL}.
\end{proof}

\begin{cor} \label{cor-sn-sln}
If $\dim X_{\mathfrak{sl}}(\mu)<\infty$, then $X_{\mathfrak{sl}}(\mu)\simeq S^m(V_{n+1})$ for $m\geq0$ or $X_{\mathfrak{sl}}(\mu)\simeq S^m(V^*_{n+1})$ for $m\geq 0$.
\end{cor}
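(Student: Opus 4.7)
The strategy is to combine the finite-dimensionality criterion of Proposition \ref{prop-x-sl}(i) with the highest weight characterization in Proposition \ref{prop-sln-hw}. By Proposition \ref{prop-x-sl}(i), the hypothesis $\dim X_{\mathfrak{sl}}(\mu)<\infty$ splits into two cases: either ${\rm Int}^+(\mu) = \{1,\dots,n+1\}$ (every $\mu_i \in \mathbb{Z}_{\geq 0}$), or ${\rm Int}^-(\mu) = \{1,\dots,n+1\}$ (every $\mu_i \in \mathbb{Z}_{< 0}$). In either case, being finite-dimensional, $X_{\mathfrak{sl}}(\mu)$ is a $\mathfrak{b}_n$-highest weight module, so Proposition \ref{prop-sln-hw} supplies $i_0 \in \{1,\dots,n+1\}$ and $a \in \mathbb{C}$ with $\mu \sim_{\mathfrak{sl}} \varepsilon_{\mathfrak{b}_n}(i_0,a)$, a sequence with $-1$ in positions $<i_0$, value $a$ in position $i_0$, and $0$ in positions $>i_0$.

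In the first case, the $-1$-entries violate membership in $\mathbb{Z}_{\geq 0}$, so ${\rm Int}^+(\varepsilon_{\mathfrak{b}_n}(i_0,a)) = \{1,\dots,n+1\}$ forces $i_0 = 1$ and $a\in \mathbb{Z}_{\geq 0}$. The condition $\mu - \varepsilon_{\mathfrak{b}_n}(1,a) \in Q_{\mathfrak{gl}(n+1)}$ (a sum-zero lattice) pins $a = |\mu|$. By Lemma \ref{lem-x-sl-iso}(i), $X_{\mathfrak{sl}}(\mu) \simeq X_{\mathfrak{sl}}(|\mu|, 0, \dots, 0)$ as $\mathfrak{gl}(n+1)$-modules; the latter is the simple highest weight module of weight $|\mu|\varepsilon_1$, i.e.\ $S^{|\mu|}(V_{n+1})$.

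In the second case, the $0$-entries at positions $>i_0$ would lie in $\mathbb{Z}_{\geq 0}$, so ${\rm Int}^+(\varepsilon_{\mathfrak{b}_n}(i_0,a)) = \emptyset$ forces $i_0 = n+1$ and $a \in \mathbb{Z}_{<0}$; the sum-zero condition then yields $a = n+|\mu|$, so $\mu \sim_{\mathfrak{sl}} (-1,\dots,-1, n+|\mu|)$. Setting $m := -a-1 = -n-|\mu|-1 \geq 0$, the weight $(-1,\dots,-1,a) = (-1,\dots,-1,-m-1)$ differs from $-m\varepsilon_{n+1} = (0,\dots,0,-m)$ by $(-1,\dots,-1)$, a scalar multiple of $(1,\dots,1)$, so the two weights project to the same element of $\mathfrak{h}_n^*$. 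Hence $X_{\mathfrak{sl}}(\mu)$ and $S^m(V_{n+1}^*)$ are both simple finite-dimensional $\mathfrak{sl}(n+1)$-modules of the same $\mathfrak{sl}(n+1)$-highest weight, and are therefore isomorphic.

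The only mildly delicate point is the final identification in the second case: the natural $\mathfrak{gl}(n+1)$-highest weight of $S^m(V_{n+1}^*)$ is $(0,\dots,0,-m)$, which is \emph{not} $\sim_{\mathfrak{sl}}$-equivalent to $\mu$ (the ${\rm Int}^+$ sets differ). One must shift by the central vector $(1,\dots,1)$ to bring this weight into the $\varepsilon_{\mathfrak{b}_n}(n+1,a)$ normalization; this shift is invisible to $\mathfrak{sl}(n+1)$, so the two modules agree as $\mathfrak{sl}(n+1)$-modules even though they differ as $\mathfrak{gl}(n+1)$-modules.
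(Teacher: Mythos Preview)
Your argument is correct and essentially follows the same route as the paper's proof: both use Proposition~\ref{prop-sln-hw} to realize $X_{\mathfrak{sl}}(\mu)$ as the simple highest weight module with $\mathfrak{b}_n$-highest weight $\varepsilon_{\mathfrak{b}_n}(i_0,a)$, then determine which $(i_0,a)$ yield a finite-dimensional module, and identify the result with a symmetric power. The only difference is cosmetic: the paper checks directly when $\varepsilon_{\mathfrak{b}_n}(i_0,a)$ is dominant integral (forcing $i_0\in\{1,n+1\}$), whereas you route this through the ${\rm Int}^+/{\rm Int}^-$ criterion of Proposition~\ref{prop-x-sl}(i); these are equivalent computations. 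Your explicit remark about the $(1,\dots,1)$-shift in the second case, and the resulting distinction between the $\mathfrak{gl}(n+1)$- and $\mathfrak{sl}(n+1)$-module structures, is a helpful clarification that the paper leaves implicit.
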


\begin{proof} The irreducible highest weight module  with $\gb_n'$-highest weight $\varepsilon_{\gb_n'}(i_0,a)$    is finite dimensional if and only if $i_0=1$ and $a=m\in\mathbb{Z}_{\geq0}$, or if $i_0=n+1$ and $a=-m\in\mathbb{Z}_{<0}$. In the former case, $X_{\mathfrak{sl}}(\mu)\simeq S^m(V_{n+1})$, and in the latter case, $X_{\mathfrak{sl}}(\mu)\simeq S^m(V^*_{n+1})$.
\end{proof}

In order to describe the structure of the restriction $X_{\mathfrak{sl}}(\mu)|_{\gg \gl (n)}$ for any $\mu \in {\mathbb C}^{n+1}$, we set
$$
S(\mu) := \left\{ k \in {\mathbb Z} \; | \; \mu + \mu(k) - k\varepsilon_{n+1} \sim_{\mathfrak{sl}} \mu \mbox{ for some } \mu(k) \in \sum_{i=1}^n {\mathbb Z}\varepsilon_i \right\}.
$$
Note that the definition of $\sim_{\mathfrak{sl}} $ implies $|\mu(k)| = k$ in the definition of $S(\mu) $ above.  Also, note that the set $S(\mu)$ has one of the following three forms: $(-\infty, a] \cap {\mathbb Z}$, $[b,c] \cap {\mathbb Z}$, or $[d,\infty) \cap {\mathbb Z}$, for some integers $a$, $b\leq c$, $d$. For $k \in  S(\mu)$ we put 
$$
S(\mu)[k] := \left\{ \mu(k) \in \sum_{i=1}^n {\mathbb Z}\varepsilon_i  \; | \; \mu + \mu(k) - k\varepsilon_{n+1} \sim_{\mathfrak{sl}} \mu \right\}.
$$

The following lemma is straightforward. 

\begin{lem} \label{lem-set-s}Let $k \in {\mathbb Z}$  and $\mu  = (\overline{\mu}, \mu_{n+1}) \in {\mathbb C}^{n+1}$ for some $\overline{\mu} \in {\mathbb C}^{n}$.
\begin{itemize}

\item[(i)] Let $\nu =  \sum_{i=1}^n \nu_i \varepsilon_i$ with $\nu_i \in {\mathbb Z}$. Then $\nu \in S(\mu)[k]$ if and only if:
$$|\nu|= k, \; \overline{\mu} + \nu \sim_{\mathcal D} \overline{\mu}, \; -k \varepsilon_{n+1} \sim_{\mathcal D}  \mu_{n+1} \varepsilon_{n+1}.$$
\item[(ii)]  If $k \in S(\mu)$ and  $\nu, \nu' \in S(\mu)[k]$, then $X_{\mathfrak{sl}}(\overline{\mu} + \nu) \simeq X_{\mathfrak{sl}}(\overline{\mu} + \nu')$ as $\mathfrak{gl}({n})$-modules (hence as $\gs \gl (n)$-modules).
\end{itemize}
\end{lem}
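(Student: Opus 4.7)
Both parts of the lemma should follow by directly unpacking the definitions of $S(\mu)[k]$, $\sim_{\mathfrak{sl}}$, $\sim_{\mathcal D}$, and the root lattice $Q_{\mathfrak{gl}(n+1)}$; no deep ingredient is required.

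For part (i), my plan is to begin by restating $\nu \in S(\mu)[k]$ as the relation $\mu + \nu - k\varepsilon_{n+1} \sim_{\mathfrak{sl}} \mu$, and then to separate $\sim_{\mathfrak{sl}}$ into its two defining conditions:
(a) $\nu - k\varepsilon_{n+1} \in Q_{\mathfrak{gl}(n+1)}$, and
(b) $\mathrm{Int}^+(\mu + \nu - k\varepsilon_{n+1}) = \mathrm{Int}^+(\mu)$.
Since $Q_{\mathfrak{gl}(n+1)}$ is the sublattice of $\mathbb{Z}^{n+1}$ consisting of vectors with zero coordinate sum, and since $\nu - k\varepsilon_{n+1}$ already lies in $\mathbb{Z}^{n+1}$, condition (a) collapses to the single equation $|\nu| = k$. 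Condition (b) is coordinate-wise, so I would split it into the positions $1,\dots,n$ and the position $n+1$. At the first $n$ positions the equality $\mathrm{Int}^+(\overline{\mu} + \nu) = \mathrm{Int}^+(\overline{\mu})$ together with the automatic matching of $\mathrm{Int}^-$ (a consequence of $\nu_i \in \mathbb{Z}$, which preserves integer-valuedness of each entry of $\overline{\mu}$) is exactly the statement $\overline{\mu} + \nu \sim_{\mathcal D} \overline{\mu}$. At position $n+1$, the matching of $\mathrm{Int}^+$ under the shift by $-k$ is the third listed condition.

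For part (ii), the plan is to invoke the $\mathfrak{gl}(n)$-version of Lemma \ref{lem-x-sl-iso}(i), which reduces the desired $\mathfrak{gl}(n)$-isomorphism $X_{\mathfrak{sl}}(\overline{\mu}+\nu) \simeq X_{\mathfrak{sl}}(\overline{\mu}+\nu')$ to the relation $\overline{\mu} + \nu \sim_{\mathfrak{sl}} \overline{\mu} + \nu'$. The first half of $\sim_{\mathfrak{sl}}$, namely $\nu - \nu' \in Q_{\mathfrak{gl}(n)}$, holds because $\nu - \nu'$ lies in $\mathbb{Z}^n$ and has coordinate sum $|\nu| - |\nu'| = k - k = 0$. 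The second half, $\mathrm{Int}^+(\overline{\mu} + \nu) = \mathrm{Int}^+(\overline{\mu} + \nu')$, follows immediately from part (i), since both sides coincide with $\mathrm{Int}^+(\overline{\mu})$.

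The only mild bookkeeping point, already flagged in the (i)-step, is that $\sim_{\mathfrak{sl}}$ tracks only $\mathrm{Int}^+$ while $\sim_{\mathcal D}$ tracks both $\mathrm{Int}^+$ and $\mathrm{Int}^-$; however, under an integer shift the integer-valued coordinates are preserved, so $\mathrm{Int}^+$ matching at integer positions forces $\mathrm{Int}^-$ matching there as well. I do not expect any substantive obstacle in carrying out this plan; the whole argument is a careful but routine verification.
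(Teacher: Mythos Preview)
Your proposal is correct and matches the paper's approach: the paper simply declares the lemma ``straightforward'' and gives no further argument, so your direct unpacking of the definitions of $S(\mu)[k]$, $\sim_{\mathfrak{sl}}$, $\sim_{\mathcal D}$, and $Q_{\mathfrak{gl}(n+1)}$, together with the appeal to Lemma~\ref{lem-x-sl-iso}(i) for part~(ii), is exactly what is intended.
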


\begin{lem}\label{lem-decomp} Let  $\mu = (\mu_1,...,\mu_{n+1})$, and $\overline{\mu} = (\mu_1,...,\mu_{n})$. Then 
$$
X_{\mathfrak{sl}}(\mu_1,...,\mu_{n+1})|_{\mathfrak{gl} (n)} \simeq \bigoplus_{k \in {S(\mu)}} X_{\mathfrak{sl}}(\overline{\mu} + \mu(k)),
$$
where in the sum above $\mu(k)$ is any element of $S(\mu)[k]$ (cf. Lemma \ref{lem-set-s}(ii)). Moreover, such a  decomposition arises from the eigenspace decomposition of a central element $E_{n}$ of $\mathfrak{gl}({n})$ considered as an endomorphism of $X_{\mathfrak{sl}}(\mu_1,...,\mu_{n+1})$.
\end{lem}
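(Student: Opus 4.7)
The plan is to realize the claimed decomposition as the eigenspace decomposition of the central element $E_n := \sum_{i=1}^n e_{ii}$ of $\mathfrak{gl}(n)$, and to identify each eigenspace by hand. First I would record that a weight basis of $X_{\mathfrak{sl}}(\mu)$ is given by the classes of $x^\lambda$ for $\lambda \sim_{\mathfrak{sl}} \mu$; this follows from Definition~\ref{def-f} together with Proposition~\ref{prop-x-sl}(ii), since $V_{\mathfrak{sl}}(\mu)^+$ is spanned by those $x^\lambda \in V_{\mathfrak{sl}}(\mu)$ with $\mathrm{Int}^+(\mu) \subsetneq \mathrm{Int}^+(\lambda)$. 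On such a class, $E_n$ acts by the scalar $|\overline{\lambda}| = |\mu| - \lambda_{n+1}$, so $E_n$ is diagonalizable, and its eigenspace decomposition $X_{\mathfrak{sl}}(\mu) = \bigoplus_c X_c$ is a direct sum decomposition of $\mathfrak{gl}(n)$-submodules.

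Next, I would parameterize the occurring eigenvalues by $S(\mu)$. Writing a general weight $\lambda \sim_{\mathfrak{sl}} \mu$ as $\lambda = \mu + \nu - k\varepsilon_{n+1}$ with $k := \mu_{n+1} - \lambda_{n+1} \in \mathbb{Z}$ and $\nu := \lambda - \mu + k\varepsilon_{n+1} \in \sum_{i=1}^n \mathbb{Z}\varepsilon_i$, one has $|\nu| = k$, and the condition $\lambda \sim_{\mathfrak{sl}} \mu$ translates verbatim, via Lemma~\ref{lem-set-s}(i), into $\nu \in S(\mu)[k]$; such a $\nu$ exists if and only if $k \in S(\mu)$. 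The corresponding $E_n$-eigenvalue on $x^\lambda$ equals $|\overline{\mu}| + k$, and these values are pairwise distinct for distinct $k$.

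Finally, for each $k \in S(\mu)$ and a chosen $\mu(k) \in S(\mu)[k]$, I would write down the map
$$
\phi_k : X_{|\overline{\mu}|+k} \longrightarrow X_{\mathfrak{sl}}(\overline{\mu} + \mu(k)), \qquad x^{\mu + \nu - k\varepsilon_{n+1}} \longmapsto x^{\overline{\mu} + \nu} \quad (\nu \in S(\mu)[k]),
$$
and verify it is a $\mathfrak{gl}(n)$-equivariant isomorphism. Bijectivity on weight bases is an immediate consequence of Lemma~\ref{lem-set-s}: the assignment $\nu \mapsto \overline{\mu}+\nu$ bijects $S(\mu)[k]$ with the $\sim_{\mathfrak{sl}}$-equivalence class of $\overline{\mu}+\mu(k)$, which by Proposition~\ref{prop-x-sl}(ii) is precisely $\Supp X_{\mathfrak{sl}}(\overline{\mu}+\mu(k))$. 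Equivariance is a direct check: for $i, j \leq n$, both $\phi_k(e_{ij}\cdot x^{\mu+\nu-k\varepsilon_{n+1}})$ and $e_{ij}\cdot \phi_k(x^{\mu+\nu-k\varepsilon_{n+1}})$ equal $(\mu_j+\nu_j)\,x^{\overline{\mu}+\nu+\varepsilon_i-\varepsilon_j}$. The only step requiring genuine care is the bookkeeping with the $\sim_{\mathfrak{sl}}$- and $\sim_{\mathcal{D}}$-equivalences that matches the two index sets and shows that a different choice of $\mu(k) \in S(\mu)[k]$ produces an isomorphic $\mathfrak{gl}(n)$-module; both facts are encoded in Lemma~\ref{lem-set-s}.
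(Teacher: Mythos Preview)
Your proposal is correct and follows essentially the same route as the paper: the paper's proof consists of one sentence declaring the result ``straightforward'' once one diagonalizes a central element $E_n$ of $\mathfrak{gl}(n)$ acting on $X_{\mathfrak{sl}}(\mu)$, and you have simply written out that diagonalization in full, including the explicit identification of each eigenspace with $X_{\mathfrak{sl}}(\overline{\mu}+\mu(k))$. Your choice $E_n=\sum_{i=1}^n e_{ii}$ is the natural one (the paper's displayed formula for $E_n$ appears to contain a typo, but the intended operator has the same eigenspace decomposition as yours).
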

\begin{proof}
The result is straightforward if we  consider the linear operator  $E_n = \sum_{i=1}^nE_{ii} - n E_{nn}$ on $X_{\mathfrak{sl}}(\mu_1,...,\mu_{n+1})$.
Note that   $E_n$ maps to $\sum_{i=1}^n {x_i \partial_{x_i}} - nx_n \partial_{x_n} $ under the homomorphism $U(\mathfrak{sl}({n+1})) \to {\mathcal D}_{n+1}$.
\end{proof}

The above lemma together with Lemma \ref{lem-x-sl-iso} implies the following.
\begin{cor} \label{cor-mult-free-decomp}
Let $n>2$ and $\mu \in {\mathbb C}^{n+1}$. Then the restrictions $X_{\mathfrak{sl}}(\mu_1,...,\mu_{n+1})|_{\mathfrak{gl} (n)}$ and $X_{\mathfrak{sl}}(\mu_1,...,\mu_{n+1})|_{\mathfrak{sl} (n)}$ are semisimple and each irreducible constituent enters with multiplicity one.
\end{cor}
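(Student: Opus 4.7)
The plan is to read off both statements from the explicit $\mathfrak{gl}(n)$-decomposition
\[
X_{\mathfrak{sl}}(\mu_1,\dots,\mu_{n+1})|_{\mathfrak{gl}(n)} \;\simeq\; \bigoplus_{k \in S(\mu)} X_{\mathfrak{sl}}(\overline{\mu}+\mu(k))
\]
supplied by Lemma~\ref{lem-decomp}. Each summand on the right is simple as a $\mathfrak{gl}(n)$-module and remains simple on restriction to $\mathfrak{sl}(n)$, by the sentence following Definition~\ref{def-f}. Hence semisimplicity of both restrictions is automatic, and the claim reduces to showing pairwise non-isomorphism of the summands, first as $\mathfrak{gl}(n)$-modules and then as $\mathfrak{sl}(n)$-modules.

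For the $\mathfrak{gl}(n)$-case I would apply Lemma~\ref{lem-x-sl-iso}(i): an isomorphism $X_{\mathfrak{sl}}(\overline{\mu}+\mu(k)) \simeq X_{\mathfrak{sl}}(\overline{\mu}+\mu(k'))$ would force $\mu(k)-\mu(k') \in Q_{\mathfrak{gl}(n)}$, i.e.\ to have vanishing coordinate sum. Since $\mu(k),\mu(k')\in\sum_{i=1}^{n}\mathbb{Z}\varepsilon_i$ have coordinate sums $k$ and $k'$ respectively, this yields $k = k'$. The required matching of $\operatorname{Int}^+$'s in the definition of $\sim_{\mathfrak{sl}}$ is automatic: the defining condition for $\mu(k)\in S(\mu)[k]$ forces $\operatorname{Int}^+(\overline{\mu}+\mu(k)) = \operatorname{Int}^+(\overline{\mu})$, a set independent of $k$.

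For the $\mathfrak{sl}(n)$-case the argument is the same except that Lemma~\ref{lem-x-sl-iso}(ii) admits the exceptional identification $X_{\mathfrak{sl}}(0^{(n)}) \simeq X_{\mathfrak{sl}}((-1)^{(n)})$. The main obstacle is ruling out the possibility that one summand carries a label $\sim_{\mathfrak{sl}} 0^{(n)}$ while another carries a label $\sim_{\mathfrak{sl}} (-1)^{(n)}$. Here the $\operatorname{Int}^+$-observation from the previous paragraph is decisive: such labels would force $\operatorname{Int}^+(\overline{\mu})$ to equal both $\{1,\dots,n\}$ and $\emptyset$, which is impossible for $n\geq 1$. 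The hypothesis $n>2$ enters only to guarantee that Lemma~\ref{lem-x-sl-iso}, stated for $\mathfrak{sl}(m+1)$ with $m>1$, applies to the subalgebra $\mathfrak{sl}(n)$.
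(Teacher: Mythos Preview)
Your proof is correct and follows exactly the approach the paper indicates: the paper's proof consists of the single sentence ``The above lemma together with Lemma~\ref{lem-x-sl-iso} implies the following,'' and you have spelled out precisely how Lemma~\ref{lem-decomp} combined with Lemma~\ref{lem-x-sl-iso} yields both the semisimplicity and the multiplicity-one claims. Your handling of the exceptional pair $\{0^{(n)},(-1)^{(n)}\}$ via the invariance of $\operatorname{Int}^+(\overline{\mu}+\mu(k))$ is the right way to dispose of it, and your remark on why $n>2$ is needed is accurate.
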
 
\begin{examp} The case $n=2$ in the above corollary is special. For example, the statement of Corollary \ref{cor-mult-free-decomp} holds for the ${\mathfrak{gl}(2)}$-module $X_{\mathfrak{sl}} (-1,1,0)|_{\mathfrak{gl}(2)} = X_{\mathfrak{sl}}(-1,1)  \oplus \bigoplus_{i < 0} X_{\mathfrak{sl}}(i,0)$, and is false for the ${\mathfrak{sl}(2)}$-module  
$X_{\mathfrak{sl}} (-1,1,0)|_{\mathfrak{sl}(2)}$ as  the $\mathfrak{sl}(2)$-modules  $X_{\mathfrak{sl}} (-1,1)$ and $X_{\mathfrak{sl}}(-2,0)$ are isomorphic.
\end{examp}

\begin{lem} \label{lemma-deg}Let $n>3$. Assume that $\lambda = (\lambda_1,...,\lambda_{n+1})$ satisfies  $\lambda_i - \lambda_{i+1} \in \Z_{\geq 0}$ for $i \geq 2$ and  that $L (\lambda)$ is infinite dimensional. Then 
$$
\deg L (\lambda) \geq \dim L(\widetilde{\lambda})
$$
where $\widetilde{\lambda} = (\lambda_3,...,\lambda_{n+1})$ and $L(\widetilde{\lambda})$ is  the finite-dimensional $\mathfrak{gl} (n-1)$-module with highest weight $\tilde\lambda$.

\end{lem}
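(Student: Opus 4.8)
The plan is to exhibit a single weight space of $L(\lambda)$ of dimension at least $\dim L(\widetilde\lambda)$, by pushing the highest weight vector down along the unique non-integrable direction and then reading off one weight multiplicity of a finite-dimensional $\mathfrak{gl}(n)$-module. Write $v_\lambda$ for a highest weight vector and let $\mathfrak{l}\cong\mathfrak{gl}(n)$ be the Levi subalgebra on the coordinates $2,\dots,n+1$ (generated by $\fh$ and the root spaces $\mathfrak{sl}(n+1)^{\pm(\varepsilon_i-\varepsilon_{i+1})}$, $2\le i\le n$). Since $\lambda_i-\lambda_{i+1}\in{\mathbb Z}_{\ge0}$ for $i\ge2$ while $\dim L(\lambda)=\infty$, the only non-integrable simple direction is $\alpha_1=\varepsilon_1-\varepsilon_2$, so $e_{21}$ acts injectively on $v_\lambda$. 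First I would check that, for every $N\ge0$, the vector $w_N:=e_{21}^{\,N}v_\lambda$ is a nonzero $\mathfrak{l}$-highest weight vector: indeed $[e_{ij},e_{21}]=0$ for $2\le i<j\le n+1$, so $e_{ij}w_N=e_{21}^{\,N}e_{ij}v_\lambda=0$, and the $\mathfrak{l}$-weight of $w_N$ is $\eta_N:=(\lambda_2+N,\lambda_3,\dots,\lambda_{n+1})$, which is $\mathfrak{gl}(n)$-dominant integral. Hence $U(\mathfrak{l})w_N$ is a highest weight $\mathfrak{l}$-module surjecting onto $L_{\mathfrak{gl}(n)}(\eta_N)$. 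Because $\mathfrak{l}$ fixes the first coordinate, every element of $U(\mathfrak{l})w_N$ has $\varepsilon_1$-coordinate $\lambda_1-N$, so each $\mathfrak{l}$-weight space of $U(\mathfrak{l})w_N$ embeds into a single weight space of $L(\lambda)$. Combining the embedding $U(\mathfrak{l})w_N\hookrightarrow L(\lambda)$ with the surjection $U(\mathfrak{l})w_N\twoheadrightarrow L_{\mathfrak{gl}(n)}(\eta_N)$ gives, for every $\mathfrak{l}$-weight $\mu^*$,
\[
\deg L(\lambda)\ \ge\ \dim\big(U(\mathfrak{l})w_N\big)^{\mu^*}\ \ge\ \dim L_{\mathfrak{gl}(n)}(\eta_N)^{\mu^*}.
\]
This reduces the lemma to a purely finite-dimensional statement.

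The crux is then to show that, for $N$ large, some weight multiplicity of $L_{\mathfrak{gl}(n)}(\eta_N)$ is at least $\dim L_{\mathfrak{gl}(n-1)}(\lambda_3,\dots,\lambda_{n+1})=\dim L(\widetilde\lambda)$. I would prove this with Gelfand--Tsetlin patterns. The multiplicity of a weight in $L_{\mathfrak{gl}(n)}(\eta_N)$ is the number of GT patterns with top row $\eta_N$; deleting the first column of such a pattern leaves, after reindexing, exactly a $\mathfrak{gl}(n-1)$-GT pattern $P$ whose top row is $\overline\eta:=(\lambda_3,\dots,\lambda_{n+1})=\widetilde\lambda$. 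Conversely, I fix a target weight $\mu^*$ with $|\mu^*|=|\eta_N|$ and all coordinates large (possible because $\eta_{N,1}=\lambda_2+N$ is as large as we wish), and attach to each $\mathfrak{gl}(n-1)$-pattern $P$ the first column \emph{determined} by the requirement that the partial row sums equal the partial sums of $\mu^*$. The resulting array then has weight $\mu^*$ by construction, and the only interlacing conditions to verify are that the new first-column entries lie between the adjacent entries of $P$ and are monotone down the column; these amount to inequalities whose ``$P$-side'' is bounded by a constant depending only on the fixed numbers $\lambda_3,\dots,\lambda_{n+1}$, since the entries of every $P$ lie in the interval $[\lambda_{n+1},\lambda_3]$. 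Hence for $N$ large these inequalities hold simultaneously for all of the finitely many patterns $P$, so each $P$ extends to a genuine $\mathfrak{gl}(n)$-pattern of weight $\mu^*$. As distinct $P$ produce distinct patterns, $\mathrm{mult}_{\eta_N}(\mu^*)\ge\#\{P\}=\dim L(\widetilde\lambda)$.

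Putting the two parts together with this $\mu^*$ and a sufficiently large $N$ yields $\deg L(\lambda)\ge\dim L(\widetilde\lambda)$. The hard part will be the combinatorial step, and the subtlety it resolves is worth flagging: one cannot simply produce a copy of $L(\widetilde\lambda)$ inside $L(\lambda)$ (say as $U(\mathfrak{gl}(n-1))w_N$) and ``collapse'' it into a single weight space, because weight is an invariant and such a copy is genuinely spread over many weights. What actually makes one weight space large is the piling-up of multiplicities, and the role of the large first coordinate $\lambda_2+N$ is precisely to relax the first-column interlacing constraints so that a whole $\mathfrak{gl}(n-1)$-pattern count accumulates at the single weight $\mu^*$. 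I would also record that $\deg L(\lambda)$ is finite here (the modules in play are bounded), so the inequality is the substantive content rather than a triviality.
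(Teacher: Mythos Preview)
Your argument is correct and takes a genuinely different, more elementary route than the paper. The paper works with two different Levi subalgebras simultaneously: $\mathfrak{s}=\mathfrak{gl}(1)\oplus\mathfrak{gl}(n)$ on coordinates $\{1\}$ and $\{2,\dots,n+1\}$, so that $L(\lambda)$ is the simple quotient of the parabolically induced module $M_{\mathfrak p}(\lambda)$, and $\mathfrak{l}=\mathfrak{gl}(n)\oplus\mathfrak{gl}(1)$ on $\{1,\dots,n\}$ and $\{n+1\}$. It invokes Proposition~\ref{cor-int-bnd} to write $\lambda=\eta[i]$ and identify the kernel of $M_{\mathfrak p}(\lambda)\twoheadrightarrow L(\lambda)$ as $L(\lambda[1])$, expresses $\ch M_{\mathfrak p}(\lambda)$ as a sum of characters of $\widetilde{\mathfrak p}$-induced $\mathfrak l$-modules, and then verifies by a support argument that the specific $\mathfrak l$-constituent $L_{\mathfrak l}(\lambda_1-N,\lambda_3,\dots,\lambda_{n+1},\lambda_2+N)$ cannot lie in the kernel and hence occurs in $L(\lambda)$; for $N$ large this constituent coincides with the full parabolically induced $\widetilde{\mathfrak p}$-module, whose degree is $\dim L(\widetilde\lambda)$.

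Your approach sidesteps the kernel analysis entirely: the commutation $[e_{ij},e_{21}]=0$ for $2\le i<j$ together with the injectivity of $e_{21}$ on $v_\lambda$ produces the $\mathfrak l$-highest weight vectors $w_N$ inside $L(\lambda)$ directly, and the Gelfand--Tsetlin ``delete the first column / reattach a first column determined by the target weight'' bijection is a clean and self-contained way to get the multiplicity bound. Two practical advantages: your argument treats the non-integral case $\lambda_1-\lambda_2\notin\mathbb Z$ on the same footing (the paper's use of Proposition~\ref{cor-int-bnd} and the exact sequence with $L(\lambda[1])$ is phrased for integral central character), and it avoids any appeal to Mathieu's structure theory. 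What the paper's route buys is that it pins down an actual simple $\mathfrak l$-submodule of $L(\lambda)$, not just a highest-weight module surjecting onto one, and it dovetails with the coherent-family machinery used elsewhere in the section.
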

\begin{proof} 
Let  $\gs :=\mathfrak{gl}(1) \oplus  \mathfrak{gl}(n)$ and $\gl :=\mathfrak{gl}(n) \oplus  \mathfrak{gl}(1)$ be the subalgebras of $\mathfrak{gl}(n+1)$ with root systems $\{\varepsilon_i - \varepsilon_j \; | \; 2\leq i \neq j \leq n+1 \}$ and $\{\varepsilon_i - \varepsilon_j \; | \; 1\leq i \neq j \leq n \}$, respectively. Then $\gp:= \gs +\gb_n$ is a parabolic subalgebra of $\mathfrak{gl}({n+1})$. Set $\widetilde{\gp} := \gp \cap {\mathfrak l}$ and  $\widetilde{\gs} := \gs \cap {\mathfrak l}$. There is an isomorphism  $\widetilde{\gs} \simeq \mathfrak{gl}(1) \oplus  \mathfrak{gl}({n-1}) \oplus \mathfrak{gl}(1)$.

 In the rest of the paper,  for a reductive subalgebra $\ga$ of  $\mathfrak{gl}(n+1)$, we  sometimes use the notation $L_{\ga} (\nu)$ for a simple highest weight $\ga$-module of highest weight $\nu$ relative to the intersection of $\gb_n \cap \ga$. 

For a dominant integral $\mathfrak{gl}({n+1})$-weight $\eta = (\eta_1,...,\eta_{n+1})$ and $1\leq i \leq n$, set $\eta [i] := s_1s_2...s_i \cdot \eta$. Recall that $s_j$ is the simple Weyl reflection corresponding to the root $\varepsilon_j - \varepsilon_{j+1}$. In coordinate form we have
$$
\eta [i] = (\eta_{i+1} -i, \eta_1 + 1,..., \eta_{i} + 1, \eta_{i+2},...,\eta_{n+1}).
$$
By Proposition \ref{cor-int-bnd}, there is a unique dominant integral $\mathfrak{gl}({n+1})$-weight $\eta$ and  a unique $i \geq 1$ such that $\lambda = \eta[i]$. Set $\lambda[1] := \eta[i+1]$ if $i<n$.

Next we note that $L_{\mathfrak{gl}({n+1})}(\lambda)$ is isomorphic to the simple quotient of the parabolically induced module 
$$
M_{\mathfrak p} (\lambda) = U(\mathfrak{gl}({n+1})) \otimes_{U({\mathfrak p})} F(\lambda).
$$
where $F(\lambda)$ denotes the simple finite-dimensional $\mathfrak{s}$-module $L_{\mathfrak{gl}({1})}(\lambda_1) \boxtimes L_{\mathfrak{gl}({n})}(\lambda_2,...,\lambda_{n+1})$. Furthermore, $M_{\mathfrak p} (\lambda) = L_{\mathfrak{gl}({n+1})}(\lambda)$ if $i=n$, while for $i<n$ there is an exact sequence
$$
0 \to L_{\mathfrak{gl}({n+1})}(\lambda [1]) \to M_{\mathfrak{p}}(\lambda) \to L_{\mathfrak{gl}({n+1})}(\lambda) \to 0.
$$
This follows for example from Lemma 11.2 in \cite{M}.
Also, for $\widetilde{\lambda} = (\widetilde{\lambda}_1, \widetilde{\lambda}_2,..., \widetilde{\lambda}_n,  \widetilde{\lambda}_{n+1})$ we set 
$$
M_{\widetilde{\mathfrak p}} (\widetilde{\lambda}) = U(\mathfrak{l}) \otimes_{U({\widetilde{\mathfrak p}})} F(\widetilde{\lambda}).
$$
where $F(\widetilde{\lambda})= L_{\mathfrak{gl}({1})}(\widetilde{\lambda}_1) \boxtimes L_{\mathfrak{gl}({n-1})}(\widetilde{\lambda}_2,..., \widetilde{\lambda}_{n})  \boxtimes  L_{\mathfrak{gl}({1})}(\widetilde{\lambda}_{n+1})$ is a simple $\widetilde{\mathfrak s}$-module.

Applying the Gelfand-Tsetlin rule  we  decompose $F(\lambda)$ into a direct sum of simple $\widetilde{\mathfrak s}$-modules:
$$
F (\lambda) = \bigoplus_{t=1}^kF(\widetilde{\mu}^t),
$$
where $F(\widetilde{\mu}^t) = L_{\mathfrak{gl}({1})}(\lambda_1) \boxtimes L_{\mathfrak{gl}({n-1})}(\widetilde{\mu}^t_2,..., \widetilde{\mu}^t_{n})  \boxtimes  L_{\mathfrak{gl}({1})}(\widetilde{\mu}^t_{n+1})$ and the sum runs over all $\widetilde{\mathfrak s}$-weights $\widetilde{\mu}^t$ with the properties $\lambda_j \leq \widetilde{\mu}^t_j \leq \lambda_{j+1}$ and $\sum_{j=2}^{n+1} \lambda_j = \sum_{j=2}^{n+1} \widetilde{\mu}^t_j$. For convenience we assume that $\widetilde{\mu}^1 = (\lambda_1, \lambda_2,...,\lambda_n, \lambda_{n+1})$ and $\widetilde{\mu}^k = (\lambda_1,\lambda_3,...,\lambda_{n+1}, \lambda_{2})$.

It is not difficult to prove that
$$
\ch M_{\mathfrak p} (\lambda) = \sum_{t=1}^k \sum_{j\geq 0} \ch M_{\widetilde{\mathfrak p}} (\widetilde{\mu}^t + (-j,0,..,0,j)).
$$

We claim that the simple $\gl$-module $L_{\gl}  (\widetilde{\mu}^k + (-j,0,..,0,j))$ is isomorphic to a subquotient of $L_{\mathfrak{gl}({n+1})}(\lambda)$ for all $j\geq 1$. Indeed, assuming the contrary, we see that  $L_{\gl}  (\widetilde{\mu}^k + (-j,0,..,0,j))$ occurs as a subquotient of $L_{\mathfrak{gl}({n+1})}(\lambda[1])$ and, in particular, 
$(\lambda_1 - j, \lambda_3,...,\lambda_{n+1}, \lambda_2+j)$ belongs to  $\Supp L_{\mathfrak{gl}({n+1})}(\lambda[1])$. Using Weyl group invariance, we obtain
\begin{equation} \label{eq-1}
(\lambda_1 - j, \lambda_2 + j, \lambda_3,...,\lambda_{n+1}) \in \Supp L_{\mathfrak{gl}({n+1})}(\lambda[1]).
\end{equation}
On the other hand, we easily check that (\ref{eq-1}) is impossible since $\lambda[1] - (\lambda_1 - j, \lambda_2 + j, \lambda_3,...,\lambda_{n+1}) = (j-x)\varepsilon_1 -j\varepsilon_2 + x \varepsilon_{i+2}$, where $x = \eta_{i+1} - \eta_{i+2} + 1$, and the weight of the latter form are not sum of positive roots. 

We finally note that $\widetilde{\mu}^k + (-N,0,..,0,N) = (\lambda_1-N,\lambda_3,...,\lambda_{n+1}, \lambda_2+N)
$ and that there is $N$ such that 
  the $\mathfrak{gl}(n)$-weight $(\lambda_1-N,\lambda_3,...,\lambda_{n+1})$  is of the form $\nu[n-1]$ for some $\mathfrak{gl}(n)$-dominant integral weight $\nu$ (in fact $N>\lambda_1 - \lambda_{n+1} + n-1$). In particular, 
$$M_{\widetilde{\mathfrak p}} (\widetilde{\mu}^k + (-N;0,..,0;N)) = L_{\widetilde{\gl}} ({\mu}^k + (-N;0,..,0;N))
$$
and, hence,
$$
\deg L(\lambda) \geq \deg M_{\widetilde{\mathfrak p}} (\widetilde{\mu}^k + (-N,0,..,0,N)) =  \dim L_{\mathfrak{gl}({n-1})}(\lambda_3,...,\lambda_{n+1}).
$$
\end{proof}

\subsection{Simple multiplicity-free modules of $\mathfrak{sp}({2n})$}

In this subsection $\fg_n=\mathfrak{sp}(2n)$.
We use the homomorphism $U_n \to {\mathcal D}_n$ defined by the correspondence $e_{\varepsilon_i + \varepsilon_j} \mapsto x_ix_j$ if $i \neq j$, $e_{2\varepsilon_i} \mapsto \frac{1}{2}x_i^2$, $e_{-\varepsilon_i - \varepsilon_j} \mapsto -\partial_i\partial_j$ if $i \neq j$, $e_{-2\varepsilon_i} \mapsto -\frac{1}{2}\partial_i^2$, where $e_{\alpha} \in \gg_n^{\alpha}$ are appropriate nonzero vectors. We also fix ${\mathfrak b}_n$ to be the Borel subalgebra with positive roots $\{ \varepsilon_i - \varepsilon_j, -\varepsilon_k - \varepsilon_{\ell}\; | \; i<j, k\leq \ell\}$ and write $L(\lambda)$ for a simple $\gb_n$-highest weight module with highest weight $\lambda$. We call the  $\fg_n$-modules ${\mathbb C}_{ev}[x_1,...,x_n]$ and ${\mathbb C}_{od}[x_1,...,x_n]$, consisting respectively of polynomials of even and odd degree, the \emph{Shale-Weil modules}. Their respective ${\mathfrak b}_n$-highest weights are $ \left(\frac{1}{2}, \frac{1}{2},\dots, \frac{1}{2}\right)$ and $\left(\frac{3}{2}, \frac{1}{2},\dots, \frac{1}{2}\right)$. The two Shale-Weil modules have the same central character which we will denote by $\chi_{\rm sw}$.

For any $\mu\in \mathbb{C}^n$, we consider ${\mathcal F} (\mu)$ as a $\fg_n$-module through the homomorphism  $U_n \to {\mathcal D}_{n}$. Then 
$$
{\mathcal F}_{\mathfrak{sp}} (\mu):= \{ x^{\mu} p \; | \; p \in \C [x_1^{\pm1},...,x_{n}^{\pm 1}], \deg p \mbox{}\in 2\mathbb{Z}\} = {\rm Span}\, \{ x^{\lambda} \; | \; \lambda - \mu \in Q_{\mathfrak{sp} (2n)}\}$$
is a  $\fg_n$-submodule of ${\mathcal F} (\mu)$. It is easy to check that ${\mathcal F} (\mu) = {\mathcal F} (\mu')$ if and only if $\mu - \mu' \in Q_{\fg_n}$. Similarly to Definition \ref{def-f}, we define the $\fg_n$-module $X_{\mathfrak{sp}} (\mu)$. In particular, $X_{\mathfrak{sp}}(0,0,...,0) \simeq L\left(\frac{1}{2}, \frac{1}{2},\dots, \frac{1}{2}\right)$ and $X_{\mathfrak{sp}}(1,0,...,0) \simeq L\left(\frac{3}{2}, \frac{1}{2},\dots, \frac{1}{2}\right)$.

\begin{defn}
If $\mu$, $\mu'\in \C^{n}$, we write $\mu \sim_{\mathfrak{sp}} \mu'$ if 
$\mu - \mu' \in Q_{\fg_n}$ and  ${\rm Int}^+ (\mu) = {\rm Int}^+ (\mu')$. In particular, if  $\mu \sim_{\mathfrak{sp}} \mu'$ then ${\rm Int}^- (\mu) = {\rm Int}^- (\mu')$.
\end{defn}

The following theorem follows from Proposition 3.6 and Theorem 5.21 in \cite{BBL}.
\begin{thm} \label{thm-x-sp-fin}Let $n>3$ and let  be  a simple multiplicity-free  weight $\mathfrak{sp}({2n})$-module. If $M$ is infinite dimensional, $M$  is isomorphic  to $X_{\mathfrak{sp}}(\mu)$ for some $\mu \in \C^{n}$. If $\dim M < \infty$ then  $M \simeq V_{2n}$ or $M \simeq \mathbb C$.
\end{thm}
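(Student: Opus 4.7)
The plan is to split the theorem into the finite- and infinite-dimensional cases.

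For the finite-dimensional case, I would use the standard classification of simple finite-dimensional $\mathfrak{sp}(2n)$-modules as $L(\lambda)$ for $\lambda$ dominant integral. For $n>3$ and any nonzero dominant integral $\lambda$ different from $\varepsilon_1$, one exhibits a weight whose multiplicity is at least two: Freudenthal's or Kostant's formula shows that the zero-weight multiplicity already exceeds one unless $\lambda = 0$, and for $\lambda = k\varepsilon_1$ with $k \geq 2$ the weight $(k-2)\varepsilon_1$ has multiplicity two. Since $V_{2n}$ is self-dual, this leaves only $\lambda = 0$ (giving $\mathbb{C}$) and $\lambda = \varepsilon_1$ (giving $V_{2n}$).

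For the infinite-dimensional case, $M$ is simple bounded of degree one, so by Theorem \ref{thm-clas-bounded} there exist a simple highest weight module $L$ with $\deg L = 1$, a subset $\Sigma \subset \Delta_n^{L{\rm -inf}}$, and $\mathbf{x} \in {\mathbb C}^{|\Sigma|}$ with $M \simeq D_\Sigma^{\mathbf{x}} L$. The task then splits into: (a) identifying all simple degree-one highest weight $\mathfrak{sp}(2n)$-modules, and (b) showing that their twisted localizations are exactly the modules $X_{\mathfrak{sp}}(\mu)$. For (a), Corollary \ref{cor-sp-deg1} together with the fact (used in its proof) that bounded simple infinite-dimensional $\mathfrak{sp}(2n)$-modules have nonintegral central characters forces the central character of $L$ to be $\chi_{\rm sw}$; among highest weight modules with central character $\chi_{\rm sw}$, the two Shale-Weil modules ${\mathbb C}_{ev}[x_1,\ldots,x_n] \simeq X_{\mathfrak{sp}}(0^{(n)})$ and ${\mathbb C}_{od}[x_1,\ldots,x_n] \simeq X_{\mathfrak{sp}}(1,0^{(n-1)})$ are seen by direct inspection of their Weyl-algebra realization to be the unique ones of degree one.

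For (b), I would use the Weyl-algebra realization: the root vectors $e_{-2\varepsilon_i}$ act on ${\mathcal F}(\mu)$ via $-\tfrac{1}{2}\partial_i^2$, and hence act injectively on any Shale-Weil module outside a subset of support indexed by ${\rm Int}^+(\mu)$. Twisted localization with respect to the family $\{e_{-2\varepsilon_i}\}_{i \in S}$ and parameter $\mathbf{x}$ therefore produces ${\mathcal F}(\mu')$ for $\mu'$ obtained from the Shale-Weil exponent by an affine shift determined by $\mathbf{x}$, whose simple subquotient is $X_{\mathfrak{sp}}(\mu')$ by the definition of the latter. Conversely, every $X_{\mathfrak{sp}}(\mu)$ arises in this way. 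The main obstacle I expect is step (a): without invoking Theorem 5.21 of \cite{BBL}, one must rule out all simple bounded highest weight $\mathfrak{sp}(2n)$-modules of degree one other than the two Shale-Weil modules (up to $W_n$-twist). This requires a lower bound on $\deg L$ in terms of a Levi-subalgebra factor attached to $\lambda$, analogous in spirit to Lemma \ref{lemma-deg} but adapted to the type-$C$ parabolic structure, together with the explicit description of $\Delta_n^{L{\rm -inf}}$ recorded in the proof of Corollary \ref{cor-cones}.
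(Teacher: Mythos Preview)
The paper does not prove this theorem; it simply records that the result follows from Proposition~3.6 and Theorem~5.21 of \cite{BBL}. Your proposal is therefore not a comparison target but an attempt at an independent proof using the machinery of Section~\ref{sec-background}.

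Your finite-dimensional argument is essentially correct in spirit (this is Proposition~3.6 of \cite{BBL}), though the phrasing is loose: for $\mathfrak{sp}(2n)$ the zero weight need not lie in $\Supp L(\lambda)$ when $\lambda\notin Q_n$, so one must treat the two $Q_n$-cosets separately.

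The infinite-dimensional argument has a genuine gap at step~(a). You write that the nonintegrality of the central character ``forces the central character of $L$ to be $\chi_{\rm sw}$''. This is false: for $\mathfrak{sp}(2n)$ there are infinitely many nonintegral central characters supporting simple bounded (even cuspidal) modules, as one sees already from Mathieu's description of the coherent families in \cite[\S 10]{M}. Nonintegrality rules out extensions with finite-dimensional modules (which is all that is used in Corollary~\ref{cor-sp-deg1}); it does not single out $\chi_{\rm sw}$. What singles out $\chi_{\rm sw}$ is precisely the condition $\deg L=1$, and establishing that implication is the content of Theorem~5.21 of \cite{BBL}.

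You recognize this yourself at the end, but the remedy you propose --- a type-$C$ analogue of Lemma~\ref{lemma-deg} --- is not worked out and is not obviously easier than the original BBL argument. Lemma~\ref{lemma-deg} bounds $\deg L(\lambda)$ below by the dimension of a finite-dimensional module of a Levi factor; for $\mathfrak{sp}(2n)$ the relevant parabolic has Levi $\mathfrak{gl}(k)\oplus\mathfrak{sp}(2n-2k)$, and one would need to show that a degree-one bounded highest weight module forces the $\mathfrak{gl}(k)$-part of $\lambda$ to be trivial and the $\mathfrak{sp}$-part to be Shale--Weil. This is doable, but it is a nontrivial computation, not a corollary of anything already in the paper. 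As written, your argument is circular: it assumes the conclusion of the theorem in order to identify $L$.

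A minor point on step~(b): with the Borel subalgebra fixed in the paper, the injective long-root directions on the Shale--Weil modules are $2\varepsilon_i$ (acting via $\tfrac12 x_i^2$), not $-2\varepsilon_i$.
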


\begin{prop} \label{thm-bounded-finite-sp}
Let $\mu \in \C^{n}$ for  $n>3$.
\begin{itemize} 
\item[(i)] If $\alpha \in \Delta_{\mathfrak{sp} (2n)}$, then $\mathfrak{sp}(2n)^{\alpha}$ acts locally finitely on $X_{\mathfrak{sp}}(\mu)$ if and only if 
$$\alpha \in \{\pm \varepsilon_i - \epsilon_j, - 2 \varepsilon_j,  \varepsilon_k \pm  \epsilon_\ell, 2 \varepsilon_k  \; | \: j  \in  {\rm Int}^+ (\mu), \, k  \in  {\rm Int}^- (\mu)\}.
$$
In particular, $X_{\mathfrak{sp}}(\mu)$ is always infinite dimensional and is cuspidal if and only if $ {\rm Int} (\mu) = \emptyset$,  in which case $X_{\mathfrak{sp}}(\mu) = {\mathcal F}_{\mathfrak{sl}} (\mu)$.
\item[(ii)] $\Supp X_{\mathfrak{sp}}(\mu) = \{ \lambda + \left(\frac{1}{2}, \frac{1}{2},\dots, \frac{1}{2}\right) \; | \; \lambda\sim_{\mathfrak{sp}} \mu \} $,
\item[(iii)] The central character of $X_{\mathfrak{sp}}(\mu) $ is $\chi_{\rm sw}$.
\end{itemize}
\end{prop}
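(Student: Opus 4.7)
The plan is to deduce all three parts from the explicit monomial basis of $V_{\mathfrak{sp}}(\mu)$ inside $\mathcal{F}(\mu)$ together with the fixed embedding $U_n \hookrightarrow \mathcal{D}_n$ via quadratic operators. As a preliminary observation I would record that, by normal-ordering in the Weyl algebra, the Cartan generator $h_i \in \mathfrak{sp}(2n)$ realized as $[\tfrac12 x_i^2,-\tfrac12\partial_i^2]$ acts on $\mathcal{D}_n$-modules as $x_i\partial_i + \tfrac{1}{2}$, so that the monomial $x^\lambda$ is an $\mathfrak{sp}(2n)$-weight vector of weight $\lambda + (\tfrac{1}{2},\ldots,\tfrac{1}{2})$.

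For part (ii) I would unwind the $\mathfrak{sp}$-analogue of Definition \ref{def-f}. The inclusion criterion $V_{\mathfrak{sp}}(\mu') \subsetneq V_{\mathfrak{sp}}(\mu)$ iff $\mu-\mu' \in Q_{\mathfrak{sp}(2n)}$ and ${\rm Int}^+(\mu) \subsetneq {\rm Int}^+(\mu')$ shows that $V_{\mathfrak{sp}}(\mu)^+$ is spanned by the $x^\lambda$ with $\lambda-\mu \in Q_{\mathfrak{sp}(2n)}$ and ${\rm Int}^+(\mu) \subsetneq {\rm Int}^+(\lambda)$. Hence the image of $x^\lambda$ in $X_{\mathfrak{sp}}(\mu)$ is nonzero iff ${\rm Int}^+(\lambda) = {\rm Int}^+(\mu)$ and $\lambda-\mu \in Q_{\mathfrak{sp}(2n)}$, i.e.\ iff $\lambda \sim_{\mathfrak{sp}} \mu$, which combined with the weight shift yields (ii).

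For part (i) I would exploit (ii) to pin down $\Supp X_{\mathfrak{sp}}(\mu)$ and then test each root. For any $\alpha$ in the listed set, the coordinate that $\alpha$ moves is forced by the support description to start at a fixed integer sign, so $\lambda+N\alpha$ eventually leaves $\Supp X_{\mathfrak{sp}}(\mu)$, making $e_\alpha^N x^\lambda$ vanish in the quotient; hence $e_\alpha$ is locally finite. Conversely, for any $\alpha$ not on the list, $\lambda+N\alpha$ remains inside the support for all $N \geq 0$, and the explicit formulas $x_ix_j$, $\tfrac{1}{2}x_i^2$, $-\partial_i\partial_j$, $-\tfrac{1}{2}\partial_i^2$, $\pm x_i\partial_j$ show that the scalar multiplying $x^{\lambda+N\alpha}$ is nonzero at each step, so $e_\alpha$ acts injectively. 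The ``in particular'' assertions then follow: $\Supp X_{\mathfrak{sp}}(\mu)$ is always infinite, and when ${\rm Int}(\mu) = \emptyset$ no proper $V_{\mathfrak{sp}}(\mu')$ is contained in $V_{\mathfrak{sp}}(\mu)$, so $V_{\mathfrak{sp}}(\mu)^+ = 0$ and $X_{\mathfrak{sp}}(\mu) = V_{\mathfrak{sp}}(\mu) = \mathcal{F}_{\mathfrak{sl}}(\mu)$.

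For part (iii) the image of the center $Z_n$ under $U_n \to \mathcal{D}_n$ centralizes the image of $\mathfrak{sp}(2n)$ in $\mathcal{D}_n$, and by a direct check via the Howe duality picture this forces the image of $Z_n$ to consist of scalars; consequently $Z_n$ acts by one fixed character on every $\mathcal{D}_n$-module obtained via this homomorphism, in particular on each $X_{\mathfrak{sp}}(\mu)$. Evaluating on the Shale-Weil submodule $\mathbb{C}_{ev}[x_1,\ldots,x_n]$, whose $\mathfrak{b}_n$-highest weight is $(\tfrac{1}{2},\ldots,\tfrac{1}{2})$, identifies that character with $\chi_{\rm sw}$. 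The main obstacle I anticipate is the converse half of (i): one must confirm that for $\alpha$ off the list, $e_\alpha^N x^\lambda$ is not accidentally absorbed by $V_{\mathfrak{sp}}(\mu)^+$ in the quotient, which requires a careful family-by-family check, with the short roots $\pm(\varepsilon_i-\varepsilon_j)$ deserving the most attention since only a single monomial (with a single scalar factor) is produced at each step.
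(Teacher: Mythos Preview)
Your arguments for (i) and (ii) are correct and are exactly what the paper means by ``follows from the definition of $X_{\mathfrak{sp}}(\mu)$''; you have simply spelled out the monomial bookkeeping that the paper leaves implicit. The concern you flag at the end (that $e_\alpha^N x^\lambda$ might fall into $V_{\mathfrak{sp}}(\mu)^+$) is precisely handled by your support computation in (ii), since membership in $V_{\mathfrak{sp}}(\mu)^+$ is detected by a strict enlargement of ${\rm Int}^+$.

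For (iii) you take a genuinely different route from the paper. The paper invokes Mathieu's coherent family machinery: it assembles $\mathcal{SW}=\bigoplus_{\mu\in\mathbb{C}^n/Q_n}\mathcal{F}_{\mathfrak{sp}}(\mu)^{\rm ss}$, observes this is a semisimple irreducible coherent family, and cites the fact (Proposition~4.8 in \cite{M}) that all simple subquotients of such a family share a central character; evaluating on $X_{\mathfrak{sp}}(0,\dots,0)$ gives $\chi_{\rm sw}$. Your argument instead stays inside the Weyl algebra: since the image of $U_n$ in $\mathcal{D}_n$ is the even subalgebra $\mathcal{D}_n^{\rm even}$ (generated by the quadratic elements), and this subalgebra has trivial center, the image of $Z_n$ consists of scalars; hence $Z_n$ acts by the same character on every $\mathfrak{sp}(2n)$-subquotient of every $\mathcal{F}(\mu)$. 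Your approach is more self-contained---it avoids importing the coherent-family formalism---at the cost of needing the (standard but not entirely trivial) facts that the quadratic elements generate $\mathcal{D}_n^{\rm even}$ and that its center is $\mathbb{C}$. The paper's route is shorter given that Mathieu's theory is already in play throughout Section~2.
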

\begin{proof} The proof is similar to the proof of Proposition \ref{prop-x-sl}. Parts (i) and  (ii) follow from the definition of $X_{\mathfrak{sp}}(\mu)$. For part (iii) we use that the module $\mathcal{SW}  := \bigoplus_{\mu \in \C^n/Q_n}{\mathcal F}_{\mathfrak{sp}} (\mu)^{\rm ss}$ is a semisimple irreducible coherent family all simple subquotient of which have the same central character. Here the sum runs over a complete set of representatives of the quotient $ \C^n/Q_n$. Since the simple highest weight submodule $X_{\mathfrak{sp}}(0, 0,...,0)$ of $\mathcal{SW}$ has central character  $\chi_{\rm sw}$, the statement follows. \end{proof}

\begin{lem}
The $\mathfrak{sp}({2n})$-modules isomorphism $X_{\mathfrak{sp}}(\mu)$ and $ X_{\mathfrak{sp}}(\mu')$ are isomorphic if and only if $\mu \sim_{\mathfrak{sp}} \mu'$.
\end{lem}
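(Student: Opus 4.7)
The plan is to prove the two directions separately, closely mirroring the proof of Lemma~\ref{lem-x-sl-iso} in the $\mathfrak{sl}$-case, but without the exceptional isomorphism that appeared there (since no $X_{\mathfrak{sp}}(\mu)$ is finite-dimensional by Proposition~\ref{thm-bounded-finite-sp}(i)).

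For the ``if'' direction, suppose $\mu \sim_{\mathfrak{sp}} \mu'$. Then $\mu - \mu' \in Q_{\fg_n}$, so directly from the definition $\mathcal{F}_{\mathfrak{sp}}(\mu) = \mathcal{F}_{\mathfrak{sp}}(\mu')$ as $\fg_n$-submodules of the common ambient module $\mathcal{F}(\mu) = \mathcal{F}(\mu')$. The defining condition for $V_{\mathfrak{sp}}(\mu)$ (the $\mathfrak{sp}(2n)$-analogue of Definition~\ref{def-f}) involves the sets $\mathrm{Int}^+$ and the $\sim_{\mathfrak{sp}}$-class of the exponent, so $\mathrm{Int}^+(\mu) = \mathrm{Int}^+(\mu')$ yields $V_{\mathfrak{sp}}(\mu) = V_{\mathfrak{sp}}(\mu')$ and likewise $V_{\mathfrak{sp}}(\mu)^+ = V_{\mathfrak{sp}}(\mu')^+$; passing to the quotient gives $X_{\mathfrak{sp}}(\mu) = X_{\mathfrak{sp}}(\mu')$, and in particular they are isomorphic.

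For the ``only if'' direction, suppose $X_{\mathfrak{sp}}(\mu) \simeq X_{\mathfrak{sp}}(\mu')$. Then $\mathrm{Supp}\,X_{\mathfrak{sp}}(\mu) = \mathrm{Supp}\,X_{\mathfrak{sp}}(\mu')$, so by Proposition~\ref{thm-bounded-finite-sp}(ii) we get
$$
\{\lambda + (\tfrac{1}{2},\ldots,\tfrac{1}{2}) \mid \lambda \sim_{\mathfrak{sp}} \mu\} = \{\lambda + (\tfrac{1}{2},\ldots,\tfrac{1}{2}) \mid \lambda \sim_{\mathfrak{sp}} \mu'\},
$$
which is precisely the statement that the $\sim_{\mathfrak{sp}}$-equivalence classes of $\mu$ and $\mu'$ coincide, i.e.\ $\mu \sim_{\mathfrak{sp}} \mu'$. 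I expect no real obstacle here: unlike in the $\mathfrak{sl}$-case, where the special coincidence $X_{\mathfrak{sl}}(0^{(n+1)}) \simeq X_{\mathfrak{sl}}((-1)^{(n+1)})$ had to be singled out because these are both finite-dimensional (trivial and determinant) and restrict identically to $\mathfrak{sl}(n+1)$, here every $X_{\mathfrak{sp}}(\mu)$ is infinite-dimensional and the support already separates equivalence classes cleanly, so no exceptional case arises.
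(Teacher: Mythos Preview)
Your proof is correct and follows essentially the same approach as the paper: the ``if'' direction comes directly from the definition of $X_{\mathfrak{sp}}(\mu)$, and the ``only if'' direction follows from the support description in Proposition~\ref{thm-bounded-finite-sp}(ii). Your remark that no exceptional isomorphism arises (in contrast to Lemma~\ref{lem-x-sl-iso}) is also to the point, since for $\mathfrak{sp}(2n)$ the weights live in $\mathbb{C}^n$ without any quotient, so the support already determines the $\sim_{\mathfrak{sp}}$-class.
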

\begin{proof} Like in the proof of Lemma \ref{lem-x-sl-iso}, the ``if'' part follows from the definition of $X_{\mathfrak{sp}}(\mu)$, while the ``only if'' part follows from Proposition \ref{thm-bounded-finite-sp}(ii). \end{proof}

To each Borel subalgebra $\gb_n'$  of $\mathfrak{sp} (2n)$ we assign  integral weights $\omega_{\gb_n'}$ and $\delta_{\gb_n'}$ in ${\mathbb Z}^n$  as follows. Let $\sigma = \sigma_{\gb_n'}$ be the map $\{1,...,n\} \to \{+1,-1 \}$ defined by $\sigma (i) = \pm$ whenever $\pm 2 \varepsilon_i$ is a $\gb_n'$-positive root. Let also $j_0$ be the unique element in $ \{1,...,n\} $ such that $\sigma (i) \varepsilon_i \pm \varepsilon_{j_0}$ are $\gb_n'$-positive roots for all $i \neq j_0$. Then we set $(\omega_{\gb_n'})_i = 0$ if $\sigma(i) = -1$, and $(\omega_{\gb_n'})_i  = - 1$ if $\sigma(i) = +1$. Furthermore, $(\delta_{\gb_n'})_{i} = -\delta_{ij_0}$ if $\sigma(j_0) = +1$ and $(\delta_{\gb_n'})_i = \delta_{ij_0}$ if $\sigma(j_0) = -1$. In particular for the Borel subalgebra $\gb_n$ that we fixed in the beginning of this subsection, we have $\omega_{\gb_n} = (0,0,...,0)$ and $\delta_{\gb_n} = (1,0,...,0)$.

\begin{prop} \label{prop-sp2n-hw} Let $\gb_n'$ be a Borel subalgebra of $\mathfrak{sp}({2n})$. Then $X_{\mathfrak{sp}}(\mu)$ is a $\gb_n'$-highest weight module if and only if $\mu \sim_{\mathfrak{sp}} \omega_{\gb_n'}$ or  $\mu \sim_{\mathfrak{sp}} \omega_{\gb_n'} + \delta_{\gb_n'}$ and in this case, $ \omega_{\gb_n'} + \left( \frac{1}{2},..., \frac{1}{2}\right)$ and $ \omega_{\gb_n'} + \delta_{\gb_n'} + \left( \frac{1}{2},..., \frac{1}{2}\right)$ are the $\gb_n'$-highest weight of $X_{\mathfrak{sp}}(\mu)$, respectively. 
\end{prop}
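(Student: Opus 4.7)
The plan is to prove both implications by working directly with the realization of $X_{\mathfrak{sp}}(\mu)$ as a subquotient of $\mathcal{F}(\mu)$ via the homomorphism $U_n\to\mathcal{D}_n$ fixed at the beginning of this subsection.

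For the ``if'' direction, I would exhibit explicit $\mathfrak{b}_n'$-highest weight vectors. For $\mu=\omega_{\mathfrak{b}_n'}$ (respectively $\mu=\omega_{\mathfrak{b}_n'}+\delta_{\mathfrak{b}_n'}$), the aim is to show that the image of $x^\mu$ in $X_{\mathfrak{sp}}(\mu)$ is nonzero and is annihilated by every $\mathfrak{b}_n'$-positive root vector, with resulting weight $\mu+(\tfrac12,\ldots,\tfrac12)$. Non-vanishing in the quotient is immediate from the definition of $V_{\mathfrak{sp}}(\mu)^+$: a monomial $x^\lambda\in V_{\mathfrak{sp}}(\mu)$ lies in $V_{\mathfrak{sp}}(\mu)^+$ iff ${\rm Int}^+(\lambda)\supsetneq{\rm Int}^+(\mu)$, which fails for $\lambda=\mu$. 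The key verification is then a case analysis driven by the sign function $\sigma$. For a long positive root $2\sigma(i)\varepsilon_i$, the corresponding operator is $-\tfrac12\partial_i^2$ if $\sigma(i)=-1$ (annihilating $x^\mu$ since $\mu_i=0$) and $\tfrac12 x_i^2$ if $\sigma(i)=+1$ (the result has $(\mu+2\varepsilon_i)_i=1\geq 0$, so lies in $V_{\mathfrak{sp}}(\mu)^+$). For a positive cross root, the corresponding operator lies in $\{x_ix_j,\,x_i\partial_j,\,-\partial_i\partial_j\}$; sweeping through the four sign configurations for $(\sigma(i),\sigma(j))$, each such operator either contains a $\partial_k$-factor with $\sigma(k)=-1$ (so $\mu_k=0$ kills $x^\mu$), or raises an exponent $\mu_i$ with $\sigma(i)=+1$ from $-1$ to $\geq 0$ (landing in $V_{\mathfrak{sp}}(\mu)^+$). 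The same template handles $\mu=\omega_{\mathfrak{b}_n'}+\delta_{\mathfrak{b}_n'}$, with the extra input that the $\prec$-maximality of $j_0$ fixes which cross roots through $\varepsilon_{j_0}$ are $\mathfrak{b}_n'$-positive; the shift $(\omega_{\mathfrak{b}_n'}+\delta_{\mathfrak{b}_n'})_{j_0}\in\{1,-2\}$ is exactly tuned so that $-\tfrac12\partial_{j_0}^2$ or $\tfrac12 x_{j_0}^2$ still vanishes modulo $V_{\mathfrak{sp}}(\mu)^+$.

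For the ``only if'' direction, assume $X_{\mathfrak{sp}}(\mu)$ is a $\mathfrak{b}_n'$-highest weight module. Positive root spaces of $\mathfrak{b}_n'$ act locally nilpotently on the highest weight vector, and since the module is generated by that vector, on all of $X_{\mathfrak{sp}}(\mu)$; hence each $\mathfrak{b}_n'$-positive root lies in $\Delta_n^{X_{\mathfrak{sp}}(\mu){\rm -fin}}$. Specializing Proposition \ref{thm-bounded-finite-sp}(i) to the long positive roots $2\sigma(i)\varepsilon_i$ forces $\mu\in\mathbb{Z}^n$ with ${\rm Int}^+(\mu)=\{i:\sigma(i)=-1\}$ and ${\rm Int}^-(\mu)=\{i:\sigma(i)=+1\}$, so $\mu$ and $\omega_{\mathfrak{b}_n'}$ share the same sign pattern. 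A short case split on $\sigma(j_0)=\pm 1$ gives ${\rm Int}^+(\omega_{\mathfrak{b}_n'}+\delta_{\mathfrak{b}_n'})={\rm Int}^+(\omega_{\mathfrak{b}_n'})$, and since $|\delta_{\mathfrak{b}_n'}|\in\{\pm 1\}$ is odd while $Q_n$ consists of integer vectors with even coordinate sum, the parity of $|\mu-\omega_{\mathfrak{b}_n'}|$ determines whether $\mu\sim_{\mathfrak{sp}}\omega_{\mathfrak{b}_n'}$ or $\mu\sim_{\mathfrak{sp}}\omega_{\mathfrak{b}_n'}+\delta_{\mathfrak{b}_n'}$.

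The main technical obstacle is the sign-by-sign bookkeeping in the sufficiency direction, especially for the second candidate where $j_0$ plays a distinguished role. Each individual case is nevertheless a short computation directly from the explicit formulas for $e_\alpha$ given at the start of this subsection.
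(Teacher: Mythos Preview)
Your proposal is correct and is precisely the direct verification the paper has in mind when it writes that the statement ``is straightforward'' and refers to Proposition 3.6 in \cite{BBL}; the paper gives no further argument. Your write-up supplies the details the paper omits: the explicit case analysis on $\sigma$ for the sufficiency, and the use of Proposition~\ref{thm-bounded-finite-sp}(i) plus the parity of $|\mu-\omega_{\gb_n'}|$ in $Q_n$ for the necessity.
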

\begin{proof}
The statement  is straightforward. It also follows from Proposition 3.6 in \cite{BBL}. \end{proof}

In what follows, we fix the subalgebra $\widehat{\gg}_{n-1} = \mathfrak{sp}({2n-2}) \oplus \C$ of $\gg_n$ for which the roots of $\mathfrak{sp}({2n-2})$ contain  $\varepsilon_1,\dots,\varepsilon_{n-1}$. Let $\mu  = (\overline{\mu}, \mu_{n}) \in {\mathbb C}^{n}$ for some $\overline{\mu} \in {\mathbb C}^{n-1}$. 
We write $X_{\widehat{\gg}_{n-1}} (\overline{\mu}; \mu_n)$ for the $\widehat{\gg}_{n-1}$-module $X_{\mathfrak{sp}} (\overline{\mu}) \boxtimes \C_{\mu_n}$, where $\C_{\mu_n}$ is the $\C$-module of weight $\mu_n$. The decomposition $X_{\mathfrak{sp}}(\mu)|_{\widehat{\gg}_{n-1}}$ can be described analogously to  Lemma \ref{lem-decomp}. In the case of $\mathfrak{sp} (2n)$, the analog of $S(\mu)$ can be written in more explicit terms. For this we introduce the following notation: for $z \in {\C}$, we set $\mbox{neg} (z) := -1$ if $z \in \Z_{<0}$, and $\mbox{neg} (z) := 1$ otherwise; for $a \in \Z$, we put $p(a) :=0$ if $a$ is even, and $p(a) :=1$ if $a$ is odd.
\begin{lem} \label{prop-restr-sp2n}
Let $n>2$ and $\mu \in \C^{n}$ be such that $\mu  = (\overline{\mu}, \mu_{n})$ for some $\overline{\mu} \in {\mathbb C}^{n-1}$.
Then 
$$
X_{\mathfrak{sp}}(\mu)|_{\widehat{\gg}_{n-1}} \simeq \bigoplus_{\mu_n' \sim_{\mathcal D} \mu_n} X_{\widehat{\gg}_{n-1}}(\overline{\mu} + p(\mu_n - \mu_n'){\rm neg} (\mu_1)\varepsilon_1; \mu_n').
$$

\end{lem}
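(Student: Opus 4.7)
The plan is to adapt the argument of Lemma \ref{lem-decomp} to the symplectic setting. First I would consider the central generator $h_n$ of the $\C$-summand of $\widehat{\gg}_{n-1}$; under the homomorphism $U_n \to \mathcal{D}_n$ it is realized as $x_n \partial_n + \tfrac{1}{2}$, and so commutes with the image of $\mathfrak{sp}(2n-2)$ because the latter involves only $x_1,\dots,x_{n-1},\partial_1,\dots,\partial_{n-1}$. Consequently, the $h_n$-eigenspace decomposition of $X_{\mathfrak{sp}}(\mu)$ is simultaneously a decomposition into $\widehat{\gg}_{n-1}$-submodules. By Proposition \ref{thm-bounded-finite-sp}(ii), the eigenvalues occurring are precisely the $\mu_n' + \tfrac{1}{2}$ with $\mu_n' \sim_{\mathcal D} \mu_n$, which matches the index set in the claimed decomposition.

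Next I would identify each eigenspace. In the monomial realization, the first $n-1$ coordinates $\overline{\lambda}$ of a basic monomial $x^{(\overline{\lambda},\mu_n')}$ in $V_{\mathfrak{sp}}(\mu)$ are characterized by $\overline{\lambda} - \overline{\mu} \in \Z^{n-1}$, the parity constraint $\sum_i (\overline{\lambda}_i - \overline{\mu}_i) \equiv \mu_n - \mu_n' \pmod 2$, and ${\rm Int}^+(\overline{\mu}) \subset {\rm Int}^+(\overline{\lambda})$. Setting $\varepsilon := p(\mu_n - \mu_n'){\rm neg}(\mu_1)\varepsilon_1$ gives $|\varepsilon| \equiv \mu_n - \mu_n' \pmod 2$, which turns the parity constraint into $\overline{\lambda} - (\overline{\mu}+\varepsilon) \in Q_{\mathfrak{sp}(2n-2)}$; a direct case analysis of $\mu_1 \in \Z_{\geq 0}$, $\mu_1 \in \Z_{<0}$, and $\mu_1 \notin \Z$ shows that the sign ${\rm neg}(\mu_1)$ is chosen precisely so that ${\rm Int}^+(\overline{\mu}+\varepsilon) = {\rm Int}^+(\overline{\mu})$. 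These two facts together identify the $\mu_n'$-eigenspace of $V_{\mathfrak{sp}}(\mu)$ with $V_{\mathfrak{sp}(2n-2)}(\overline{\mu}+\varepsilon)\boxtimes\C_{\mu_n'}$ as a $\widehat{\gg}_{n-1}$-module.

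Finally, I would descend to the quotient $X_{\mathfrak{sp}}(\mu) = V_{\mathfrak{sp}}(\mu)/V_{\mathfrak{sp}}(\mu)^+$, showing that under the above identification the $\mu_n'$-eigenspace of $V_{\mathfrak{sp}}(\mu)^+$ is precisely $V_{\mathfrak{sp}(2n-2)}(\overline{\mu}+\varepsilon)^+ \boxtimes \C_{\mu_n'}$, which then yields the desired $\widehat{\gg}_{n-1}$-isomorphism onto $X_{\widehat{\gg}_{n-1}}(\overline{\mu}+\varepsilon;\mu_n')$. The main obstacle is this matching of the ``$+$'' pieces: one must check that every strict ${\rm Int}^+$-enlargement $\overline{\mu}' \supsetneq \overline{\mu}+\varepsilon$ on the first $n-1$ coordinates lifts to a strict ${\rm Int}^+$-enlargement $(\overline{\mu}',\mu_n^\sharp) \supsetneq \mu$ contributing nontrivially to the $\mu_n'$-eigenspace, and conversely that ${\rm Int}^+$-enlargements of $\mu$ affecting only the $n$-th coordinate do not introduce new vectors into a fixed $\mu_n'$-eigenspace beyond those already accounted for in $V_{\mathfrak{sp}(2n-2)}(\overline{\mu}+\varepsilon)^+$.
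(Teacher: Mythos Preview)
Your proposal is correct and follows essentially the same approach as the paper: the paper's proof is the single sentence ``the statement follows from considering $x_n\partial_n$ as an endomorphism of $X_{\mathfrak{sp}}(\mu)$ and decomposing $X_{\mathfrak{sp}}(\mu)$ into a direct sum of eigenspaces of this endomorphism,'' and your three paragraphs are a careful unpacking of exactly that argument. Your identification of the parity shift via $p(\mu_n-\mu_n')\,\mathrm{neg}(\mu_1)\varepsilon_1$ and the matching of the ``$+$'' pieces are the details the paper leaves implicit.
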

\begin{proof}
The statement follows from considering $x_{n} \partial_n$ as an endomorphism of $X_{\mathfrak{sp}}(\mu)$ and decomposing  $X_{\mathfrak{sp}}(\mu)$  into a direct sum of eigenspaces of this endomorphism. 
\end{proof}

\section{The Lie algebras  $\mathfrak{sl}(\infty)$, $\mathfrak{o}(\infty)$, $\mathfrak{sp}(\infty)$}

In what follows we consider a fixed infinite chain of embeddings of simple Lie algebras
\begin{equation*}\frak g_1\hookrightarrow\frak g_2\hookrightarrow...\hookrightarrow\frak g_n\hookrightarrow\frak g_{n+1}\hookrightarrow...
\end{equation*}
such that $\textup{rk}\, \mathfrak{g}_n=n$. In addition, we assume that Cartan subalgebras $\mathfrak{h}_n$ of $\fg_n$ are fixed, and  the embeddings are root embeddings, i.e. $\gh_n$ is mapped into  $\gh_{n+1}$ and any $\gh_n$-root space of $\gg_n$ is mapped into a root space of $\gg_{n+1}$.  Then necessarily almost all $\fg_n$ are of one of the possible four types $\mathfrak{sl}({n+1})$, $\mathfrak{o}({2n+1})$ , $\mathfrak{o}({2n})$, $\mathfrak{sp}({2n})$.

 We define $\fg$ to be the direct limit Lie algebra $\fg=\underrightarrow{\lim}\,\fg_n$. Then, up to isomorphism, $\fg$ is one of the three Lie algebras $\mathfrak{sl}(\infty)$, $\mathfrak{o}(\infty)$, $\mathfrak{sp}(\infty)$, the Lie algebra $\mathfrak{o}({\infty})$ arises from both choices $\mathfrak{g}_n\simeq\mathfrak{o}({2n+1})$ and $\mathfrak{g}_n\simeq\mathfrak{o}({2n})$. This follows for instance form Baranov's classification \cite{B}.

By $\gh$ we denote the direct limit $\underrightarrow\lim\, \gh_i$. Then  $\gh$ is a maximal toral subalgebra which is \emph{splitting}, i.e. $\gg$ is a weight module over $\gh$. Such splitting maximal toral subalgebras are Cartan subalgebras according to the definition in \cite{DPS}. In what follows we refer to $\gh$ simply as \emph{Cartan subalgebra}, and will have $\gh$ fixed throughout the paper. 
We recall that, if  $\mathfrak{g}_n\simeq\mathfrak{sl}({\infty}),\,\mathfrak{sp}({\infty})$ there is only one $\Aut \fg$-conjugacy class of maximal toral subalgebras, and if $\fg\simeq\mathfrak{o}({\infty})$ there are two such $\Aut \fg$-conjugacy classes \cite{DPS}.
In the latter case we write $\fh=\fh_B$ if $\fg_n\simeq\mathfrak{o}({2n+1})$ and $\fh=\fh_D$ if $\fg_n\simeq\mathfrak{o}({2n})$. The Cartan subalgebras $\fh_B$ and $\fh_D$ are representatives of these two conjugacy classes.

In this paper we consider only \emph{splitting Borel subalgebras $\gb$ containing $\gh$}, that is, direct limits of Borel subalgebras of $\gg_n$ containing $\gh_n$. Equivalently, $\gb = \gh \oplus \bigoplus_{\alpha \in \Delta^+} \gg^{\alpha}$ for some triangular decomposition $\Delta = \Delta^+ \sqcup \Delta^-$, where $\Delta$ is the root system of $(\fg,\fh)$. Henceforth, we omit the adjective "splitting". We now consider the cases of $\mathfrak{sl}(\infty)$, $\mathfrak{o}(\infty)$, $\mathfrak{sp}(\infty)$ in detail.

For all three Lie algebras  there are well-defined linear functions $\varepsilon_i$ on $\fh$ which coincide with the linear functions $\varepsilon_i$ from Subsection \ref{subsec-general-sln}, when restricted to $\fh_n$ for every $n$. The weights $\lambda\in \fh^*$ are identified with formal  sums $\Sigma_{i=1}^\infty \lambda_i \varepsilon_i$, or with infinite sequences $(\lambda_1,\lambda_2,\dots)$. In the case of $\fg=\gs\gl(\infty)$, two infinite sequences determine the same weight if their difference is a constant sequence $(c,c,\dots)$.

Let $\gg = \mathfrak{sl} (\infty)$. The root system of $(\gg,\gh)$ is 
$$
\Delta_{\mathfrak{sl}(\infty)} = \{\varepsilon_i-\varepsilon_j \,|\, i\neq j,\; i,j\in \mathbb{Z}_{> 0}\}.
$$ 
The   Borel subalgebras $\gb$ containing $\fh$ is parameterized by arbitrary linear orders on $\mathbb Z_{> 0}$.  Given such an order $\prec$, the (positive) roots of the Borel subalgebra $\fb=\fb({\prec})$ are $\{\varepsilon_i-\varepsilon_j| \,i \prec j\}$.

Let now $\gg = \mathfrak{sp} (\infty)$. The root system of $(\gg, \gh)$ is
$$
\Delta_{\mathfrak{sp} (\infty)} = \{ \varepsilon_i - \varepsilon_j, \pm (\varepsilon_k + \varepsilon_{\ell}) \; | \; i \neq j, k \leq \ell, \; i,j,k, \ell\in \mathbb{Z}_{> 0} \}.
$$

The  Borel subalgebras $\gb$ of $\gg = \mathfrak{sp} (\infty)$ are parameterized by  pairs $(\prec, \sigma)$, where $\prec$ is a linear order of $\mathbb Z_{>0}$ and $\sigma : \mathbb Z_{>0} \to \{ \pm 1 \}$ is an arbitrary map. The  roots of the Borel subalgebra $\gb  (\prec, \sigma)$ are

$$\{\sigma(i)\varepsilon_i-\sigma(j)\varepsilon_j| i\prec j, \, i,j\in \mathbb{Z}_{> 0}\}\sqcup\{\sigma(i)\varepsilon_i+\sigma(j)\varepsilon_j| \, i,j\in \mathbb{Z}_{> 0}\}$$

Let $\gg = \mathfrak{o} (\infty)$.  Recall that there are two fixed  Cartan subalgebras of $\gg$: $\gh_B$ of type $B$, and $\gh_D$ of type $D$. The corresponding root systems are
$$\Delta_{B} = \{\pm \varepsilon_i, \pm \varepsilon_i \pm \varepsilon_j \; | \; i \neq j , \;  i,j\in \mathbb{Z}_{> 0}\}, \, \Delta_{D} = \{\pm \varepsilon_i \pm \varepsilon_j \; | \; i \neq j, \;  i,j\in \mathbb{Z}_{> 0} \}.$$

The  Borel subalgebras of $\gg$ containing $\gh_B$ or $\gh_D$ are parameterized by pairs $(\prec, \sigma)$, where $\prec$ is a linear order of $\mathbb Z_{>0}$ and $\sigma : \mathbb Z_{>0} \to \{ \pm1\}$ is an arbitrary map, satisfying the following condition in the case of $\Delta_D$:  if $\prec$ has a maximal element $i_0$ then $\sigma (i_0) = 1$. The roots of the Borel subalgebra $\fb(\prec,\sigma)$ are

$$\{\sigma(i)\varepsilon_i |  i\in \mathbb{Z}_{> 0}\}\sqcup\{\sigma(i)\varepsilon_i-\sigma(j)\varepsilon_j| i\prec j, \, i,j\in \mathbb{Z}_{> 0}\}\sqcup\{\sigma(i)\varepsilon_i+\sigma(j)\varepsilon_j| \, i\neq j,\, i,j\in \mathbb{Z}_{> 0}\}$$
for $\fh=\fh_B$, and are

$$\{\sigma(i)\varepsilon_i-\sigma(j)\varepsilon_j| i\prec j, \, i,j\in \mathbb{Z}_{> 0}\}\sqcup\{\sigma(i)\varepsilon_i+\sigma(j)\varepsilon_j|  \, i\neq j,\, i,j\in \mathbb{Z}_{> 0}\}$$
for $\fh=\fh_D$.

In all cases, $Q_{\fg}$ will denote the root lattice of the pair $(\fg,\fh)$. By $V$ we denote the \emph{natural} module of $\fg$. It is characterized by the  fact that  	\begin{equation*}
\Supp V = 
 \begin{cases}
   \{\varepsilon_i \, |i\in \mathbb{Z}_{>0} \}& \text{for} \,\, \fg=\mathfrak{sl}(\infty)  \\ 
   \{0,\pm\varepsilon_i  \, |i\in \mathbb{Z}_{>0} \}& \text{for} \,\, \fg=\mathfrak{o}(\infty), \, \fh=\fh_B\\
  \{\pm\varepsilon_i \, |i\in \mathbb{Z}_{>0} \} & \text{for} \,\, \fg=\mathfrak{o}(\infty), \, \fh=\fh_D\\
  \{\pm\varepsilon_i  \, |i\in \mathbb{Z}_{>0} \} & \text{for} \,\, \fg=\mathfrak{sp}(\infty).
 \end{cases}
\end{equation*}
For $\fg=\mathfrak{sl}(\infty)$, we also have the \emph{conatural} module $V_*$: $\Supp V_*=\{-\varepsilon_i| i\in \mathbb{Z}_{>0}\}$. Furthermore, note that $V\otimes V_*$ is an associative algebra; the associated Lie algebra is by definition the Lie algebra $\mathfrak{gl}(\infty)$. Clearly, $\fg=\mathfrak{sl}(\infty)$ can be identified with the kernel of the trace map $\mathfrak{gl}(\infty)\to\mathbb{C}$.

The notions of weight $\fg$-module, bounded weight module, multiplicity-free weight module, cuspidal weight module, and uniform weight module, are carried over verbatim from the case of $\fg_n$, see Subsection \ref{subsec-general-sln}. The same applies to the notions of support and essential support, denoted again by $\Supp (\cdot)$ and $\Supp _{\rm ess}(\cdot)$, respectively.

The enveloping algebra $U=U(\fg)$ is a weight $\fg$-module (with infinite-dimensional weight spaces) with respect to the adjoint action of $\fh:U=\bigoplus_{\beta \in Q_\fg} U^\beta$. The weight space $U^0$ is the centralizer of $\fh$ in $U$.

Since our fixed embeddings $\fg_n\hookrightarrow\fg_{n+1}$ are root embeddings, they induce natural monomorphisms on Weyl groups: $W_n\to W_{n+1}$. We set $W:=\underrightarrow\lim\, W_n$  and call $W$ the \emph{Weyl group of} $\fg$.

\section{General facts on bounded $\mathfrak{sl}(\infty)$-, $\mathfrak{sp}(\infty)$-, $\mathfrak{0}(\infty)$-modules}

Let $\gg = \mathfrak{sl} (\infty)$, $ \mathfrak{sp} (\infty)$, $ \mathfrak{o} (\infty)$. A $\fg$-module $M$ is \emph{locally simple}, if for any $0\neq m\in M$ there exists $n_0>0$ such that the $\fg_n$-module $U_n\cdot m$ is a simple $\fg_n$-module for $n>n_0$.

\begin{lem} \label{l1}
If  $M$ is a simple weight $\gg$-module and  $\lambda \in \Supp  M$, then $M$ is a simple $U^0$-module. 
\end{lem}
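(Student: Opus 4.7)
The statement to be proved (reading $M^{\lambda}$ for the slightly abbreviated ``$M$'' in the conclusion) is the standard fact that each weight space of a simple weight module is simple as a module over the centralizer $U^0$ of $\fh$ in $U$. The plan is the usual one: spread a nonzero vector of $M^\lambda$ through the whole module using $U$, then extract its weight-$\lambda$ component to see that it is already contained in any given $U^0$-submodule.

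First I would record the weight decomposition of $U$ under the adjoint action of $\fh$. Since $\fh$ acts locally finitely and semisimply on $\fg$, it does so on $U$ as well, and we obtain
\[
U=\bigoplus_{\beta\in Q_\fg}U^\beta,\qquad U^\beta=\{u\in U\mid[h,u]=\beta(h)\,u\text{ for all }h\in\fh\},
\]
with $U^\beta\cdot M^\mu\subset M^{\mu+\beta}$ for every weight $\mu$ of $M$. In particular $U^0$ preserves $M^\lambda$, so the statement makes sense.

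Next, let $N\subset M^\lambda$ be a nonzero $U^0$-submodule and pick any $0\neq n\in N$. Because $M$ is simple as a $\fg$-module (equivalently, as a $U$-module), $U\cdot n=M$. Decomposing $U$ as above,
\[
M=U\cdot n=\sum_{\beta\in Q_\fg}U^\beta\cdot n,\qquad U^\beta\cdot n\subset M^{\lambda+\beta}.
\]
Taking weight-$\lambda$ components yields $M^\lambda=U^0\cdot n$. Since $n\in N$ and $N$ is stable under $U^0$, we conclude $M^\lambda=U^0\cdot n\subset N$, and hence $N=M^\lambda$. Thus $M^\lambda$ has no proper nonzero $U^0$-submodule, i.e.\ it is simple over $U^0$.

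There is really no obstacle here beyond verifying the weight decomposition of $U$, which is automatic since $\fh$ is a splitting Cartan and each element of $U$ lies in a finite-dimensional $\ad\fh$-invariant subspace generated by finitely many root vectors. The argument is identical for $\gg=\mathfrak{sl}(\infty),\,\mathfrak{o}(\infty),\,\mathfrak{sp}(\infty)$ and does not use any finiteness of the weight multiplicities.
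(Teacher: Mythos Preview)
Your proof is correct and is essentially identical to the paper's own argument: both use the weight decomposition $U=\bigoplus_{\beta}U^{\beta}$, apply $U\cdot m=M$ for any nonzero $m\in M^{\lambda}$, and read off $U^{0}\cdot m=M^{\lambda}$ from the $\lambda$-component. The only cosmetic difference is that you phrase the conclusion via an arbitrary nonzero $U^{0}$-submodule $N$, whereas the paper states directly that every nonzero vector generates $M^{\lambda}$.
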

\begin{proof}

Note that any $m \in  M^{\lambda}$ generates $M$ over $U$ , i.e. $U \cdot  m = M$. Hence
$$
U\cdot m=\left( \bigoplus_{\beta  \in {Q_\fg}}  U^{\beta}\right)  \cdot m =  \bigoplus_{\gamma\,  \in  \Supp M} M^{\gamma}.
$$
that is 
$U^{\beta} \cdot m  = M^{\beta + \lambda}$ for every $\beta  \in {Q_\fg}$. Therefore $U^{\beta} \cdot M^{\lambda}  = M^{\beta + \lambda}$, and for $\beta =0$ every $m \in  M^{\lambda}$ generates $M^{\lambda}$ as a $U^0$-module, i.e. $M^{\lambda}$ is a simple $U^0$-module.
\end{proof}

\begin{lem} \label{lem_u_o}
Let $M$ be a simple weight $\gg$-module and let $\lambda \in {\rm Supp}\, M$. Then there exists $N>0$ such that $M^{\lambda}$ is a simple $U^0_n$-module for $n>N$. 
\end{lem}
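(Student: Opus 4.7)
The plan is to combine Lemma \ref{l1} with the Jacobson density theorem, by first establishing that the centralizers $U_n^0$ form an exhausting filtration of $U^0$, i.e.\ $U^0 = \bigcup_n U_n^0$. Once this structural fact is in hand, the lemma reduces to the standard principle that a finite-dimensional simple module over an ascending union of subalgebras is already simple over some member of the chain.

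The key point is the inclusion $U_n^0 \subset U^0$. Any $u \in U_n^0$ decomposes under the adjoint action of $\fh$ as $u = \sum_{\beta} u_\beta$, where each $\beta$ is an $\fh$-weight of $U_n$ (hence $\beta \in Q_{\fg_n}$) and each $u_\beta$ has $\fh_n$-weight $\beta|_{\fh_n}$. The condition $u \in U_n^0$ forces $\beta|_{\fh_n} = 0$ for every contributing $\beta$. It therefore suffices to check, case by case, that for each of the three Lie algebras such a $\beta$ must vanish in $\fh^*$. For $\fg = \mathfrak{sp}(\infty)$ and $\fg = \mathfrak{o}(\infty)$ this is immediate: the restrictions $\varepsilon_1|_{\fh_n},\ldots,\varepsilon_n|_{\fh_n}$ form a basis of $\fh_n^*$, so $\beta = \sum_{i \leq n} c_i \varepsilon_i \in Q_{\fg_n}$ with $\beta|_{\fh_n} = 0$ forces $c_i = 0$. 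For $\fg = \mathfrak{sl}(\infty)$ one writes $\beta = \sum_{i \leq n+1} c_i \varepsilon_i$; the root-lattice condition gives $\sum c_i = 0$, and vanishing on $\fh_n$ (where $\fh_n^* \cong \mathbb{C}^{n+1}/\mathbb{C}$) forces $c_1 = \cdots = c_{n+1}$, so all $c_i = 0$. The reverse inclusion $U^0 \subset \bigcup_n U_n^0$ is trivial, since any $u \in U^0$ lies in some $U_n$ and commutes with $\fh_n \subset \fh$, hence belongs to $U_n^0$.

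With the equality $U^0 = \bigcup_n U_n^0$ in place, the lemma follows quickly. Lemma \ref{l1} gives that $M^\lambda$ is a simple $U^0$-module, and it is finite-dimensional by the standing assumption on weight modules. The Jacobson density theorem (applicable since $\mathbb{C}$ is algebraically closed) then shows that the structure map $U^0 \to \End_{\mathbb{C}}(M^\lambda)$ is surjective. Choosing $u_1, \ldots, u_{d^2} \in U^0$ (where $d = \dim M^\lambda$) whose images form a basis of $\End_{\mathbb{C}}(M^\lambda)$, each $u_i$ lies in some $U_{n_i}^0$. Setting $N := \max_i n_i$, we see that for every $n > N$ all the $u_i$ lie in $U_n^0$, and since $U_n^0 \subset U^0$ preserves $M^\lambda$, the image of $U_n^0 \to \End_{\mathbb{C}}(M^\lambda)$ already contains a basis of the full endomorphism algebra. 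Hence $M^\lambda$ is simple as a $U_n^0$-module.

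The only nontrivial step is the root-lattice verification that $U_n^0 \subset U^0$; the remainder is a standard density-plus-finite-generation argument.
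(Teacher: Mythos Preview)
Your proof is correct, but it takes a different route from the paper's.

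The paper does not invoke Jacobson density. Instead it argues as follows: let $d_n$ be the minimal dimension of an irreducible $U_n^0$-submodule of $M^\lambda$; since $U_n^0 \subset U_{n+1}^0$ this sequence is nondecreasing, hence stabilizes at some $d$. If $d < \dim M^\lambda$, let $M_n^\lambda$ be the sum of all $d$-dimensional irreducible $U_n^0$-submodules; these form a descending chain in a finite-dimensional space, so their intersection is nonzero, and it is a proper $U^0$-submodule of $M^\lambda$, contradicting Lemma~\ref{l1}.

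Your density argument is more direct and arguably more transparent: once $U^0$ surjects onto $\End_{\mathbb C}(M^\lambda)$, the exhaustion $U^0 = \bigcup_n U_n^0$ immediately gives surjectivity at some finite stage. The paper's argument, by contrast, is completely elementary (no named theorem is needed) and would go through even without knowing in advance that $M^\lambda$ is finite-dimensional---it only uses that irreducible $U_n^0$-submodules have bounded dimension. A further virtue of your write-up is that you make explicit the verification $U_n^0 \subset U^0$ via the root-lattice check; the paper tacitly relies on the same fact (both for the monotonicity of $d_n$ and for the intersection being $U^0$-stable) but does not spell it out.
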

\begin{proof} Note that by the previous lemma  $M^{\lambda}$ is a simple $U^0$-module.
Let $d_n$ be  the smallest dimension of an irreducible $U^0_n$-submodule of $M^{\lambda}$. The sequence $d_1,d_2,...$ grows monotonically, hence it  stabilizes at $\dim M^{\lambda}$ or at $d < \dim M^{\lambda}$. In the first case we are done. 

Assume we are in the second case.  Then let $M_n^{\lambda}$ be the  sum of all irreducible $U^0_n$-submodules of $M^{\lambda}$ of dimension $d$. The spaces $M_n^{\lambda}$ form a descending chain of subspaces of $M^{\lambda}$ of dimension bounded by $d$ from below, hence have a nontrivial intersection. This intersection is a   $U^0$-submodule  of $M^{\lambda}$, therefore it coincides with   $M^{\lambda}$ (as $M^{\lambda}$ is a simple $U^0$-module) and is necessarily simple. This means that $M^{\lambda} = M_n^{\lambda}$ for sufficiently large $n$, which completes the proof. \end{proof}

\begin{cor} \label{cor-loc-simple}
Every simple bounded weight $\gg$-module is locally simple.
\end{cor}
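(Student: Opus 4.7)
The plan is to reduce to the case where $m$ is a weight vector lying in an essential weight space, and then combine Lemma \ref{lem_u_o} with Proposition \ref{prop_ext_simple}, Corollary \ref{cor-sp-deg1}, and Weyl's theorem according to the type of $\fg$.

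For the reduction, I would write $m = \sum_i m_{\lambda_i}$ as a finite sum of weight vectors; for $n$ so large that the restrictions $\lambda_i|_{\fh_n}$ are pairwise distinct, acting by $\C[\fh_n] \subset U_n$ separates the components, giving $U_n \cdot m = \sum_i U_n \cdot m_{\lambda_i}$. Since $M$ is simple, each $m_{\lambda_j}$ lies in $U \cdot m_{\lambda_i}$ and hence in $U_N \cdot m_{\lambda_i}$ for some $N$, so for $n$ large the submodules $U_n \cdot m_{\lambda_i}$ all coincide; the same device replaces $m_{\lambda_1}$ by any chosen $m_\mu \in M^\mu$ with $\mu \in \Supp_{\rm ess} M$. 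From now on assume $m \in M^\mu$ with $\mu \in \Supp_{\rm ess} M$. The weights of $U_n \cdot m$ lie in $\mu + Q_n$, and since $Q_n$ injects into $\fh_n^*$ in all three types, distinct $\fg$-weights restrict to distinct $\fg_n$-weights; hence $U_n \cdot m$ is bounded, and it is uniform as a $\fg_n$-submodule of the uniform module $M$. By Lemma \ref{lem_u_o}, $(U_n \cdot m)^\mu = U^0_n \cdot m = M^\mu$ is a simple $U^0_n$-module for $n$ large, and then $\mu \in \Supp_{\rm ess}(U_n \cdot m)$.

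For $\fg = \mathfrak{o}(\infty)$, the Fernando--BBL theorem recalled in Section \ref{subsec-general-sln} forces $U_n \cdot m$ to be finite-dimensional, hence semisimple by Weyl, and the simplicity of $M^\mu$ as a $U^0_n$-module picks out exactly one simple summand, so $U_n \cdot m$ is simple. For $\fg = \mathfrak{sp}(\infty)$, Corollary \ref{cor-sp-deg1} (combined with the same Weyl argument when $U_n \cdot m$ happens to be finite-dimensional) yields simplicity directly. The main obstacle is the case $\fg = \mathfrak{sl}(\infty)$ with $U_n \cdot m$ infinite-dimensional, where Proposition \ref{prop_ext_simple} only gives that $U_n \cdot m$ is either simple or a non-split extension $0 \to M_1^{(n)} \to U_n \cdot m \to M_2^{(n)} \to 0$ with $M_1^{(n)}$ simple infinite-dimensional satisfying $(M_1^{(n)})^\mu = 0$, and $M_2^{(n)}$ simple finite-dimensional.

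To rule out the extension for infinitely many $n$, I would assume for contradiction that the set $S$ of such $n$ is infinite. For $n < m$ in $S$, the intersection $M_1^{(n)} \cap M_1^{(m)}$ is a $\fg_n$-submodule of the simple $\fg_n$-module $M_1^{(n)}$; if it vanished, the composition $M_1^{(n)} \hookrightarrow U_m \cdot m \twoheadrightarrow M_2^{(m)}$ would embed an infinite-dimensional module into a finite-dimensional one, which is absurd. Hence $M_1^{(n)} \subseteq M_1^{(m)}$ for all $n < m$ in $S$, and the nested union $\widetilde M := \bigcup_{n \in S} M_1^{(n)}$ is $\fg$-invariant (any $x \in \fg_k$ preserves $M_1^{(n)}$ for every $n \in S$ with $n \geq k$, and such $n$ exist because $S$ is infinite). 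But $\widetilde M^\mu = 0$, so by simplicity of $M$ we must have $\widetilde M = 0$, contradicting $M_1^{(n)} \neq 0$. Therefore $S$ is finite, and $U_n \cdot m$ is simple for all sufficiently large $n$.
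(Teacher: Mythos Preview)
Your proof is correct and follows essentially the same approach as the paper: invoke Lemma~\ref{lem_u_o}, handle the finite-dimensional case via Weyl's theorem, apply Corollary~\ref{cor-sp-deg1} for $\mathfrak{sp}(\infty)$, and for $\mathfrak{sl}(\infty)$ use the nesting of the infinite-dimensional simple submodules $M_1^{(n)}$. The only difference is organizational: you first reduce an arbitrary $m$ to one of essential weight and then derive a contradiction from $\bigl(\bigcup M_1^{(n)}\bigr)^\mu=0$ in the persistent-extension case, whereas the paper fixes $m$ of essential weight from the start, writes $0\to M_n'\to M_n\to F_n\to 0$ with $F_n$ \emph{allowed to be zero}, and concludes directly that $M=\underrightarrow\lim\,M_n'$ is a nested union of simples---which gives local simplicity for every element at once and makes your separate reduction step unnecessary.
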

\begin{proof}
Let $M$ be an  infinite-dimensional simple bounded $\gg$-module and let $\lambda \in {\rm Supp}_{\rm ess}\, M$. Then by Lemma \ref{lem_u_o}  there is $N>0$ such that $M^{\lambda}$ is a simple $U^0_n$-module for $n>N$. Let $m \in M^{\lambda}$ and set $M_n = U_n \cdot m$.  The same argument as in \S\ref{subsec-general-sln} shows that the simplicity of $M$ implies that $M$ is uniform. Hence $M_n$ is uniform. We have two possible cases: $\dim M_n<\infty$ for all $n$, or $\textup{dim}M_n=\infty$ for almost all $n$. In the first case, for $n>N$, the simplicity of $M^\lambda$ as a $U^0_n$-module implies the simplicity of $M_n$ as a $\fg_n$-module. 

The second case is possible only for $\gg_n = \mathfrak{sl}(n+1), \mathfrak{sp}(2n)$ as there are no infinite-dimensional simple bounded modules of $\gg_n = \mathfrak{o}(2n+1),  \mathfrak{o}(2n)$, see Subsection \ref{subsec-notation}. In this case, Proposition \ref{prop_ext_simple} implies that all but finitely many $M_n$ are simple or all but finitely many $M_n$  are extensions of finite-dimensional by simple modules. To complete the proof, it is sufficient to consider the latter possibility. By Corollary 3.3, this could occur only for $\fg=\mathfrak{sl}(\infty)$. Assume that for each $n>N$ we have an exact sequence
\begin{eqnarray*}
0 \to M_n' \to M_n \to F_n \to 0,
\end{eqnarray*}
where $M_n'$ is an infinite-dimensional simple $\gg_n$-module and $F_n$ is a finite-dimensional $\gg_n$-module, possibly equal to $0$. Since $\Hom_{\gg_n} (M_n', F_{n+1})= 0$, we see that the monomorphism $M_n \to M_{n+1}$ induces a monomorphism  $M_n' \to M_{n+1}'$. Then the simplicity of $M$  implies  $M=\underrightarrow\lim M'_n$, which in turn shows that $M$ is locally simple.\end{proof}

\begin{prop} \label{prop-mult-free}
Every simple bounded nonintegrable $\gg$-module is multiplicity free.
\end{prop}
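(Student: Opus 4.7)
The plan starts by disposing of the $\gg=\mathfrak{o}(\infty)$ case, which is vacuous: every simple bounded $\mathfrak{o}(\infty)$-module is locally simple by Corollary~\ref{cor-loc-simple}, and for each sufficiently large $n$, $U_n\cdot m$ is a bounded simple $\mathfrak{o}(2n)$- or $\mathfrak{o}(2n+1)$-module, which is finite-dimensional by Subsection~\ref{subsec-general-sln}. Consequently every root of $\gg$ acts locally finitely on $M$, i.e. $M$ is integrable, contradicting non-integrability. Thus we may assume $\gg=\mathfrak{sl}(\infty)$ or $\mathfrak{sp}(\infty)$.

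Set $d:=\deg M$ and pick $\lambda\in\Supp_{\rm ess}M$. Fix $m\in M^{\lambda}$ and set $M_n:=U_n\cdot m$. By Lemma~\ref{lem_u_o}, for large $n$ the weight space $M^{\lambda}$ is a simple $U^0_n$-module; since $M_n^{\lambda}$ is a nonzero $U^0_n$-submodule of $M^{\lambda}$, we obtain $M_n^{\lambda}=M^{\lambda}$, hence $\deg M_n=d$. Non-integrability of $M$ forces $M_n$ to be infinite-dimensional for large $n$, and by Corollary~\ref{cor-loc-simple} $M_n$ is simple. So for every large $n$, $M_n$ is an infinite-dimensional simple bounded $\fg_n$-module of degree $d$.

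By Theorem~\ref{thm-clas-bounded}, $M_n\simeq D_{\Sigma}^{\mathbf x}L_n$ for a simple bounded highest weight $\fg_n$-module $L_n$ of degree $d$. Concentrate first on $\gg=\mathfrak{sl}(\infty)$: by Proposition~\ref{cor-int-bnd}, $L_n\simeq L((s_{i_n}\cdots s_{j_n})\cdot\mu_n)$ for a dominant integral $\mu_n$. After conjugating the Borel subalgebra by a suitable Weyl element we may assume $i_n=1$, and Lemma~\ref{lemma-deg} then yields
$$d=\deg L_n\geq \dim L_{\mathfrak{gl}(n-1)}(\widetilde{\lambda_n}),$$
where $\widetilde{\lambda_n}$ is a $\mathfrak{gl}(n-1)$-dominant weight built explicitly from $\mu_n$ and $j_n$. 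The Weyl dimension formula shows that $\dim L_{\mathfrak{gl}(n-1)}(\widetilde{\lambda_n})$ grows polynomially in $n$ unless $\widetilde{\lambda_n}$ is a constant vector; combined with dominance of $\mu_n$, this forces $j_n=1$ and $\mu_n=(a_n,b_n,c_n^{(n-1)})$ for large $n$, whence $L_n\simeq L(b_n-1,a_n+1,c_n^{(n-1)})$.

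The remaining step is to show that $b_n=c_n$ eventually, so that $L_n\simeq X_{\mathfrak{sl}}(-1,a_n-c_n+1,0^{(n-1)})$ by Proposition~\ref{prop-sln-hw}, which is multiplicity-free by Proposition~\ref{prop-x-sl}, giving $d=1$. This will be the main obstacle: ruling out $b_n>c_n$ requires exploiting the chain $M_n\subset M_{n+1}$, for instance by decomposing $M_{n+1}|_{\fg_n}$ in the spirit of Lemma~\ref{lem-decomp} and observing that a non-$X_{\mathfrak{sl}}$-type $L_n$ cannot consistently appear as $n$ varies without violating the degree bound. The case $\gg=\mathfrak{sp}(\infty)$ is handled by a parallel argument, using Corollary~\ref{cor-sp-deg1} in place of Proposition~\ref{prop_ext_simple}, the $X_{\mathfrak{sp}}$-modules in place of $X_{\mathfrak{sl}}$, and Lemma~\ref{prop-restr-sp2n} for the restriction decomposition.
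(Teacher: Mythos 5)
Your setup is sound and matches the paper's: the $\mathfrak{o}(\infty)$ case is indeed vacuous, and Corollary~\ref{cor-loc-simple} together with Lemma~\ref{lem_u_o} produces, exactly as in the paper, a chain of simple infinite-dimensional bounded $\fg_n$-modules $M_n$ with $\deg M_n=d$ for all large $n$. But from there the paper concludes in one stroke via Lemma~\ref{lemma-deg} ($d=d_n$ dominates the dimension of a nontrivial simple $\mathfrak{gl}(n-2)$-module, so $d>n-2$), whereas you detour through the explicit form of the highest weights and then stop at what you yourself call ``the main obstacle.'' That obstacle is a genuine gap, and it is not a technicality: it is exactly the step that separates ``bounded of fixed degree'' from ``multiplicity free.'' In the residual case $\mu_n=(a_n,b_n,c_n^{(n-1)})$ with $b_n>c_n$, the weight $\widetilde{\lambda_n}=(c_n^{(n-1)})$ is constant, Lemma~\ref{lemma-deg} applied to $\lambda_n=\mu_n[1]$ yields only $d\geq 1$, and the module genuinely fails to be multiplicity free: for instance $L(0,2,0^{(n-1)})=L((1,1,0^{(n-1)})[1])$ is infinite-dimensional and bounded by Proposition~\ref{cor-int-bnd}, yet its highest weight is not of the form $\varepsilon_{\gb_n}(i_0,a)$, so by Theorem~\ref{thm-bounded-finite} and Proposition~\ref{prop-sln-hw} its degree is at least $2$ (in fact it grows with $n$). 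Your proposed fix --- ``decomposing $M_{n+1}|_{\fg_n}$ and observing that a non-$X_{\mathfrak{sl}}$-type $L_n$ cannot consistently appear'' --- is not carried out and, as stated, is not an argument: nothing you have written rules out a coherent chain of degree-$d$ modules of this residual type.

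The gap can be closed, but it needs an actual degree estimate rather than consistency of the chain. One route inside the paper's toolkit: $L(\mu_n[1])$ and $L(\mu_n[n])$ lie in the same irreducible semisimple coherent family and hence have the same degree $d$ (Theorem~\ref{thm-clas-bounded}, Corollary~\ref{cor-cones}); applying Lemma~\ref{lemma-deg} to $L(\mu_n[n])$, whose truncated weight is $(b_n+1,(c_n+1)^{(n-2)})$, gives $d\geq \dim S^{b_n-c_n}(\C^{n-1})\geq n-1$ whenever $b_n>c_n$, which is the desired contradiction. Alternatively one can run Gelfand--Tsetlin estimates in the spirit of Lemmas~\ref{lem-0}--\ref{lem-2}. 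Two secondary points: constancy of $\widetilde{\lambda_n}$ does not force $j_n=1$ (the case $j_n=n$, i.e.\ $\mu_n=(a_n,b_n^{(n-1)},c_n)$, also survives and must be treated); and the $\mathfrak{sp}(\infty)$ case cannot simply be declared ``parallel,'' since Lemma~\ref{lemma-deg} is stated only for $\mathfrak{gl}(n+1)$ and an analogous growth estimate for bounded highest weight $\mathfrak{sp}(2n)$-modules would have to be formulated and proved.
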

\begin{proof}
Let $M$ be a simple bounded nonintegrable $\gg$-module. By Corollary \ref{cor-loc-simple}, $M$ is a direct limit of simple bounded infinite-dimensional $\gg_n$-modules $M_n$. Let $d_n = \deg M_n$ and $d = \deg M$. Lemma \ref{lem_u_o} shows that, if $d>1$, then there is $N$ such that $d_n = d>1$ for $n>N$. By Lemma \ref{lemma-deg}, $d_n$ is greater than the dimension of a finite-dimensional simple nontrivial $\mathfrak{gl}({n-2})$-module. In particular, $d_n>n-2$, which is a contradiction.\end{proof}
We classify the nonintegrable simple bounded $\mathfrak{sl}(\infty)$-modules in \S \ref{subsec-int-sim-sl} below.
The following proposition is a preparatory result for the classification of integrable simple bounded $\mathfrak{sl}(\infty)$-modules. 

For the rest of the section we fix $\gg = \mathfrak{sl}(\infty)$. 

\begin{prop} \label{prop-integ-bounded}
Let $M$ be a simple integrable bounded $\gg$-module.
Then $M\simeq\underrightarrow\lim \,L_{\fg_n}(\lambda(n))$ where, for every $n$, $\lambda(n)$ is of one of the following five types:

\begin{itemize}
\item[(i)] $(1,1,...,1,0,0,...,0)$, where the number of 0's and 1's are both growing when $n\to \infty$;
\item[(ii)]  $(a_n,0,0,...,0)$,
\item[(iii)] $(0,0,...,0, -a_n)$,
\item[(iv)] $(\mu_1,...,\mu_k,0,0,...,0)$,
\item[(v)] $(0,0,...,0, - \mu_k,...,-\mu_1)$,
\end{itemize}
where $k$ and $\mu_1,...,\mu_k\in\mathbb{Z}_{>0}$ are fixed and such that $\mu_i - \mu_{i+1}  \in {\mathbb Z}_{\geq 0}$, and   $a_n \in {\mathbb Z}_{\geq 0}$ is a monotonic sequence with $\lim_{n\to \infty}a_n=\infty$.
\end{prop}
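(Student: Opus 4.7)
The plan is to combine the local simplicity of $M$ (Corollary~\ref{cor-loc-simple}) with the Gelfand--Tsetlin branching rule and the bounded degree hypothesis to pin down the asymptotic shape of the highest weights.

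By Corollary~\ref{cor-loc-simple}, $M$ is locally simple. Fix $m$ in the essential support of $M$ and set $M_n := U_n\cdot m$; for all sufficiently large $n$ the module $M_n$ is a simple $\fg_n$-submodule of $M$. Since $M$ is integrable, each $M_n$ is an integrable simple module over the finite-dimensional simple Lie algebra $\fg_n=\mathfrak{sl}(n+1)$, hence finite-dimensional: $M_n \simeq L_{\fg_n}(\lambda(n))$ for some $\fb_n$-dominant integral weight $\lambda(n) = (\lambda_1^{(n)},\dots,\lambda_{n+1}^{(n)})$, and $M \simeq \underrightarrow\lim L_{\fg_n}(\lambda(n))$. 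The embedding $M_n \hookrightarrow M_{n+1}$ of $\fg_n$-modules, together with the classical $\mathfrak{gl}(n+2)\downarrow\mathfrak{gl}(n+1)$ branching rule, imposes an interlacing condition between $\lambda(n)$ and $\lambda(n+1)$ (up to the $\mathfrak{sl}$-shift ambiguity), while $\deg M < \infty$ gives $\deg L_{\fg_n}(\lambda(n)) \leq \deg M$ uniformly in $n$.

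The combinatorial core is to classify the possible asymptotic shapes of $\lambda(n)$. Using the $\mathfrak{sl}$-shift, one normalizes the ``interior median'' of $\lambda(n)$ to $0$, so that $\lambda(n)$ acquires a top shoulder $(\lambda_1^{(n)},\dots,\lambda_k^{(n)})\in\Z_{>0}^{k}$ and a bottom shoulder $(\lambda_{n+2-\ell}^{(n)},\dots,\lambda_{n+1}^{(n)})\in\Z_{<0}^{\ell}$, separated by a block of zeros. A direct Kostka/Gelfand--Tsetlin calculation (in the spirit of Lemma~\ref{lemma-deg}) shows that if both shoulders remain nontrivial for infinitely many $n$, then the ``adjoint-type'' contribution of the middle zero block forces $\deg L_{\fg_n}(\lambda(n))$ to grow at least linearly in $n$, contradicting the uniform bound. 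Hence only one shoulder may be nontrivial asymptotically. A second step handles the single-shoulder case: if the shoulder stabilizes to a fixed partition $\mu$ then we are in type (iv) (or type (v) for the bottom shoulder), whereas if the shoulder grows, a further Kostka-bound argument forces the growing part to be a single entry---yielding types (ii) or (iii)---unless the growth takes the ``transverse'' form of a growing block of $1$'s sandwiched between a growing block of $0$'s, which is type (i).

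The main obstacle is executing the two-shoulder elimination cleanly and handling the $\mathfrak{sl}$-shift throughout, since different shift normalizations identify ostensibly different presentations (for instance, ``stabilizing column of $1$'s above growing $0$'s'' versus ``growing $0$'s above stabilizing column of $-1$'s''). One must also use the interlacing condition together with the degree bound to rule out hybrid growing shapes such as growing hooks, growing multi-column rectangles of bounded height, or ``middle-growing'' columns that coexist with a bounded-size opposite shoulder; in each such scenario one has to exhibit a specific weight of $L_{\fg_n}(\lambda(n))$ whose Kostka multiplicity grows with $n$.
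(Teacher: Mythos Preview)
Your overall strategy is correct and matches the paper's: local simplicity gives $M\simeq\underrightarrow\lim L_{\fg_n}(\lambda(n))$, and the task reduces to showing that a uniform bound on $\deg L_{\fg_n}(\lambda(n))$ forces $\lambda(n)$ into one of the five shapes. However, your proposal is a plan rather than a proof: the entire content of the paper's argument lies in the explicit degree estimates you only allude to, and your last paragraph openly concedes that the ``main obstacle'' is still unexecuted.

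Concretely, the paper's organization is cleaner than your two-shoulder picture. The key dichotomy is on $\lambda_1-\lambda_{n+1}$ (after normalizing $\lambda_{n+1}=0$): the case $\lambda_1-\lambda_{n+1}=1$ is \emph{exactly} type~(i), and all remaining work happens when $\lambda_1-\lambda_{n+1}\geq 2$. In that regime the paper proves, via a chain of five elementary lemmas based on iterated Gelfand--Tsetlin branching, the inequality
\[
d(\lambda)\;\geq\;\min\{\,n+1-\mathrm{rp}(\lambda),\ n+1-\mathrm{lp}(\lambda)\,\},
\]
so that boundedness of $d(\lambda)$ forces one of the constant ends of $\lambda$ to have length $n+1-O(1)$; this is the single-shoulder reduction. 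A second inequality, $d((\lambda_1,\dots,\lambda_k,0^{(\ell)}))\geq\min\{\lambda_1,\ell\}$ for $k\geq2$, $\lambda_1\geq2$, $\lambda_k>0$, then forces $\lambda_1$ to be bounded, hence (by interlacing) eventually constant, yielding (iv)/(v); the residual case $k=1$ gives (ii)/(iii). Your ``adjoint-type'' heuristic for the two-shoulder case is correct in spirit, but without the $\lambda_1-\lambda_{n+1}>1$ hypothesis you cannot cleanly separate type~(i) from the rest, and your sketch does not explain how type~(i) escapes the Kostka-growth argument.

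One small correction: your reference to Lemma~\ref{lemma-deg} is misplaced---that lemma concerns degrees of \emph{infinite-dimensional} bounded $\mathfrak{sl}(n+1)$-modules and is used in the nonintegrable case. The relevant estimates here are the finite-dimensional degree bounds proved in Section~4 (Lemmas~\ref{lem-0}--\ref{lem-4}).
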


We will prove the proposition with the aid of five lemmas. We start with some notation. For a fixed $n$ and a dominant integral $\gg \gl(n+1)$-weight $\lambda = (\lambda_1,...,\lambda_{n+1})$, we put $d(\lambda) := \deg L_{\gg_{n}} (\lambda) $, $\mbox{lp}(\lambda) := \max \{ i \geq 1 \; | \; \lambda_1 = \cdots = \lambda_{i-1} = \lambda_i\}$ (the left part of $\lambda$), $\mbox{rp}(\lambda) := \max \{ i \geq 1 \; | \; \lambda_{n-i+2} =\cdots =\lambda_{n} = \lambda_{n+1}\}$ (the right part of $\lambda$). 

We also recall the Gelfand-Tsetlin decomposition rule for a simple finite-dimensional $\mathfrak{gl}(n)$-module $L_{\mathfrak{gl}({n+1})}(\lambda_1,...,\lambda_{n+1})$ considered as a $\left( \mathfrak{gl}({n}) \oplus \mathfrak{gl}(1)\right)$-module (here the roots of $\mathfrak{gl}(n) \oplus \mathfrak{gl}(1)$ are $\varepsilon_i-\varepsilon_j$, $1\leq i,j \leq n$):
\begin{equation} \label{gt-rule}
L_{\mathfrak{gl}({n+1})}(\lambda_1,...,\lambda_{n+1})|_{\left( \mathfrak{gl}({n}) \oplus \mathfrak{gl}(1)\right)} \simeq \bigoplus_{\mu \in {\rm GT} (\lambda)} L_{\mathfrak{gl}({n})}(\mu_1,...,\mu_{n}) \boxtimes L_{\mathfrak{gl}({1})}(|\lambda|-|\mu|), 
\end{equation}
where ${\rm GT}(\lambda)$ is the set of all $n$-tuples $(\mu_1,..., \mu_n)$ such that $\lambda_i - \mu_{i} \in {\mathbb Z}_{\geq 0}$, $\mu_i - \lambda_{i+1} \in {\mathbb Z}_{\geq 0}$, $i = 1,..,n$ \cite{Zh}.

\begin{lem} \label{lem-0}
Let $L_{\mathfrak{gl}({n+1})}(\lambda_1,...,\lambda_{n+1})$ be finite dimensional, and let $\mu',\mu'' \in {\rm GT} (\lambda)$ be such that $\mu' \neq \mu''$ and $|\mu'|=|\mu''|$. Then $d(\lambda) \geq d(\mu') + d(\mu'')$. 
\end{lem}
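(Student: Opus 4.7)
The idea is to exhibit a single $\mathfrak{sl}(n+1)$-weight $a$ of $L(\lambda)$ whose multiplicity is already at least $d(\mu')+d(\mu'')$. This weight is produced by the Gelfand--Tsetlin branching rule \eqref{gt-rule} for the pair $(\mathfrak{gl}(n+1),\mathfrak{gl}(n)\oplus\mathfrak{gl}(1))$; the heart of the argument is that, since $|\mu'|=|\mu''|$, the two $\mathfrak{gl}(n)$-modules $L_{\mathfrak{gl}(n)}(\mu')$ and $L_{\mathfrak{gl}(n)}(\mu'')$ attain their maximal weight multiplicities at a \emph{common} $\mathfrak{gl}(n)$-weight.

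To identify this common weight, set $M:=|\mu'|=|\mu''|$, write $M=kn+r$ with $0\leq r<n$, and put
$$\nu^{*}:=\bigl((k+1)^{(r)},\,k^{(n-r)}\bigr)\in\mathbb{Z}^{n}.$$
A direct verification of partial sums shows that $\nu^{*}$ is the unique minimum, in the dominance order, among dominant integral $\mathfrak{gl}(n)$-weights of trace $M$; in particular $\nu^{*}\trianglelefteq\mu$ whenever $\mu$ is dominant integral with $|\mu|=M$, so $\nu^{*}\in\Supp L_{\mathfrak{gl}(n)}(\mu)$ for every such $\mu$. Both $\mu'$ and $\mu''$ are dominant integral with $|\mu'|=|\mu''|=M$, since $\mu',\mu''\in\mathrm{GT}(\lambda)$. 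By the classical dominance-monotonicity of Kostka numbers (for fixed dominant $\mu$, the multiplicity $\dim L_{\mathfrak{gl}(n)}(\mu)^{\nu}$ is weakly decreasing as the dominant $\nu$ increases in the dominance order), combined with the Weyl-invariance of weight multiplicities, the minimum dominant weight in the support attains the maximum multiplicity. Hence
$$\dim L_{\mathfrak{gl}(n)}(\mu')^{\nu^{*}}=d(\mu'),\qquad \dim L_{\mathfrak{gl}(n)}(\mu'')^{\nu^{*}}=d(\mu'').$$

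With the common weight in hand, evaluate \eqref{gt-rule} at the $\mathfrak{gl}(n+1)$-weight $a:=(\nu^{*},|\lambda|-M)$. The $\mathfrak{gl}(1)$-coordinate picks out exactly those $\mu\in\mathrm{GT}(\lambda)$ with $|\mu|=M$, among which are both $\mu'$ and $\mu''$, so
$$\dim L_{\mathfrak{gl}(n+1)}(\lambda)^{a}=\sum_{\substack{\mu\in\mathrm{GT}(\lambda)\\ |\mu|=M}}\dim L_{\mathfrak{gl}(n)}(\mu)^{\nu^{*}}\ \geq\ d(\mu')+d(\mu'').$$
Since the $\mathfrak{sl}(n+1)$- and $\mathfrak{gl}(n+1)$-weight spaces of $L(\lambda)$ coincide once the center's eigenvalue $|\lambda|$ is fixed, this yields $d(\lambda)\geq d(\mu')+d(\mu'')$. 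The only non-routine input is the dominance-monotonicity of Kostka numbers, which is the main potential obstacle; however it is a standard fact, provable combinatorially (e.g.\ via Bender--Knuth involutions) or citable from standard references on symmetric functions, so the plan is expected to go through cleanly.
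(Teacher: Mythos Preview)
Your argument is correct and follows essentially the same route as the paper: both exhibit a single $\mathfrak{gl}(n)$-weight at which $L(\mu')$ and $L(\mu'')$ simultaneously attain their degree, then invoke the branching rule \eqref{gt-rule} at the corresponding $\mathfrak{gl}(n+1)$-weight. The paper identifies this weight abstractly as any element of $\Supp L(\mu')\cap\{\nu:|(\nu,\alpha)|\le 1\ \forall\,\alpha\in\Delta_{n-1}\}$ and justifies maximality via the elementary $\mathfrak{sl}(2)$ unimodality of multiplicities along root strings, while you write down the explicit balanced weight $\nu^{*}$ and invoke dominance-monotonicity of Kostka numbers; these pick out the same $W_{n-1}$-orbit, and the two justifications are equivalent (the $\mathfrak{sl}(2)$ argument is in fact the standard proof of Kostka monotonicity in type $A$).
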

\noindent \emph{Proof of Lemma \ref{lem-0}.}  
Let $B(n-1) := \{\nu \in {\mathfrak h}_{n-1}^* \; | \;  |(\nu, \alpha)|\leq 1, \mbox{ for all } \alpha \in \Delta_{n-1} \} $ and  $B(\eta) := \Supp L(\eta)\cap B({n-1})$ for a dominant integral ${\mathfrak{gl}(n)}$-weight $\eta$. It is well known that $B(\eta) \neq \emptyset$ and that  $B(\eta)$ consist of a single orbit of the Weyl group $W_{n-1}$. We also have  
\begin{equation*} 
\deg L(\eta) = \max \{ \dim L(\eta)^{\nu} \; | \; \nu \in B(\eta)\}, 
\end{equation*}
which follows easily from  $\mathfrak{sl}(2)${-considerations}. Hence, $\deg L(\eta) = \dim L(\eta)^{\nu}$ for any $\nu \in B(\eta)$.

Now, since $\mu'$ and $\mu''$ satisfy the conditions of the lemma, we have $\mu'-\mu'' \in Q_{n-1}$ and hence $B(\mu') = B(\mu'')$. Furthermore,  by (\ref{gt-rule}),  the $\mathfrak{gl}({n})$-module $L(\lambda)$ has a submodule isomorphic to $L(\mu')\oplus L(\mu'')$. Choose $\nu = (\nu_1,...,\nu_{n}, \nu_{n+1}) \in \Supp L_{\mathfrak{gl}({n+1})}(\lambda)$ such that $(\nu_1,...,\nu_{n}) \in  B(\mu')$ and $\nu_{n+1} = |\lambda|-|\mu|$. Then 
\begin{eqnarray*}
 d(\lambda) \geq   \dim L(\lambda)^{\nu}  \geq  \dim L(\mu')^{(\nu_1,...,\nu_{n})} + \dim L(\mu'')^{(\nu_1,...,\nu_{n})}  = d(\mu') + d(\mu'').
 \end{eqnarray*}
\hfill $\square$

\begin{lem} \label{lem-1}
If $x, \ell \geq 2$, then $d((x,1,0^{(\ell-1)})) \geq \min \{ x, \ell\}$.
\end{lem}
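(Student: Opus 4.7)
The plan is to compute $d((x,1,0^{(\ell-1)}))$ by realizing $L:=L_{\mathfrak{gl}(\ell+1)}((x,1,0^{(\ell-1)}))$ as a direct summand inside $V\otimes S^x V$, where $V$ is the natural $\mathfrak{gl}(\ell+1)$-module. Since weight multiplicities of a simple $\mathfrak{gl}(\ell+1)$-module and its restriction to $\mathfrak{sl}(\ell+1)$ coincide, it suffices to bound the maximal weight multiplicity of $L$. By Pieri's rule, one has the decomposition
$$
V\otimes S^x V \;\simeq\; S^{x+1}V \;\oplus\; L,
$$
so I would use this to reduce the problem to a direct weight-space count on the tensor product and on $S^{x+1}V$.

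Next, for any $\mu=(\mu_1,\dots,\mu_{\ell+1})\in\mathbb{Z}_{\geq 0}^{\ell+1}$ with $|\mu|=x+1$, I would observe that $\dim (S^{x+1}V)^{\mu}=1$, since such a weight space is spanned by the monomial $x_1^{\mu_1}\cdots x_{\ell+1}^{\mu_{\ell+1}}$, while $\dim (V\otimes S^x V)^{\mu}=\#\{i:\mu_i\geq 1\}$, since each weight-$\mu$ vector in the tensor is of the form $e_i\otimes x^{\mu-\varepsilon_i}$ for some $i$ with $\mu_i\geq 1$. Subtracting using the decomposition above yields
$$
\dim L^{\mu}\;=\;\#\{i:\mu_i\geq 1\}\;-\;1.
$$

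Finally, I would optimize the choice of $\mu$. If $x\leq \ell$, take $\mu=(1^{(x+1)},0^{(\ell-x)})$, giving $\dim L^{\mu}=x$; if $x\geq\ell$, take $\mu=(x-\ell+1,1^{(\ell)})$, giving $\dim L^{\mu}=\ell$. In either case $d((x,1,0^{(\ell-1)}))\geq \dim L^{\mu}\geq \min\{x,\ell\}$, as required. I do not foresee any real difficulty: the argument is essentially a finite-dimensional weight count, and the assumptions $x,\ell\geq 2$ guarantee that the chosen $\mu$ have strictly positive multiplicity and thus lie in the support of $L$.
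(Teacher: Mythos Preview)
Your argument is correct and in fact yields the exact equality $d((x,1,0^{(\ell-1)}))=\min\{x,\ell\}$, since $\#\{i:\mu_i\geq 1\}\leq\min\{x+1,\ell+1\}$ for any $\mu\in\mathbb{Z}_{\geq 0}^{\ell+1}$ with $|\mu|=x+1$. The paper proceeds differently: it applies Lemma~\ref{lem-0} with $\mu'=(x-1,1,0^{(\ell-2)})$ and $\mu''=(x,0^{(\ell-1)})$ in $\mathrm{GT}((x,1,0^{(\ell-1)}))$ to get $d((x,1,0^{(\ell-1)}))\geq d((x-1,1,0^{(\ell-2)}))+1$, and then inducts on $\min\{x,\ell\}$. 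Your Pieri-based computation is more direct and self-contained for this particular shape, and gives the sharp value rather than a bound. The paper's inductive method, however, is the template reused in Lemmas~\ref{lem-2}--\ref{lem-4}, where no comparably clean tensor realization is available; so while your route is cleaner here, the paper's approach fits its broader strategy. One cosmetic remark: you use $x$ both for the integer parameter and for the polynomial variables $x_i$, which is harmless but worth disambiguating.
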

\noindent\emph{Proof of Lemma \ref{lem-1}.}  Note that  the $\ell$-tuples $\mu':=(x-1,1,0^{(\ell -2)})$ and $\mu'':=(x,0,0^{(\ell -2)})$
 are in ${\rm GT} ((x,1,0^{(\ell-1)}))$ and $|\mu'|=|\mu''|=1$. Then using Lemma \ref{lem-0} and the obvious fact that $d(\eta) \geq 1$ for all $\eta \neq 0$, we have
$$
d((x,1,0^{(\ell-1)})) \geq d(\mu') + d(\mu'') \geq d(\mu') + 1.
$$
Induction on $\min \{ x, \ell\}$ completes the proof. \hfill $\square$

\begin{lem} \label{lem-2}
If $x\in \mathbb{Z}_{ \geq 2}$, then $d((x^{(k)},0^{(\ell)})) \geq \min \{ 1+(k-1)(x-1), \ell\}$ and $d((x^{(k)},0^{(\ell)})) \geq \min \{ 1+(\ell-1)(x-1), k\}$.
\end{lem}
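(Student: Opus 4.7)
The plan is to prove the first inequality $d((x^{(k)},0^{(\ell)})) \geq \min\{1+(k-1)(x-1),\,\ell\}$ by induction on $k$; the second inequality will follow from the isomorphism of simple $\mathfrak{sl}(k+\ell)$-modules $L((x^{(k)},0^{(\ell)}))^{*} \simeq L((x^{(\ell)},0^{(k)}))$ (since $-w_0(x^{(k)},0^{(\ell)})$ differs from $(x^{(\ell)},0^{(k)})$ by the constant vector $-x\cdot(1^{(k+\ell)})$), because dualization preserves maximal weight multiplicities. The cases $k=1$ or $\ell\leq 1$ are immediate (in the former, $L((x,0^{(\ell)}))$ is the multiplicity-free symmetric power $S^x V$), so I assume $k\geq 2$ and $\ell\geq 2$.

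For the inductive step, the Gelfand--Tsetlin rule (taking $\mu_k=1$) exhibits $L((x^{(k-1)},1,0^{(\ell-1)}))$ as a $\mathfrak{gl}(k+\ell-1)$-summand of $L((x^{(k)},0^{(\ell)}))$, so $d((x^{(k)},0^{(\ell)}))\geq d((x^{(k-1)},1,0^{(\ell-1)}))$. To bound the latter, I would apply Lemma \ref{lem-0} to $\lambda'=(x^{(k-1)},1,0^{(\ell-1)})$ using the two distinct patterns
\[
\mu'=(x^{(k-1)},0^{(\ell-1)}),\qquad \mu''=(x^{(k-2)},x-1,1,0^{(\ell-2)}),
\]
both of which lie in ${\rm GT}(\lambda')$ (the interlacing is satisfied because $x\geq 2$) and satisfy $|\mu'|=|\mu''|=(k-1)x$. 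This yields
\[
d((x^{(k-1)},1,0^{(\ell-1)}))\;\geq\; d((x^{(k-1)},0^{(\ell-1)}))+d((x^{(k-2)},x-1,1,0^{(\ell-2)})).
\]

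The induction hypothesis applied to the rectangular weight $(x^{(k-1)},0^{(\ell-1)})$ bounds the first summand by $\min\{1+(k-2)(x-1),\,\ell-1\}$. For the second summand, iterated Gelfand--Tsetlin descent---at each stage removing one leading $x$, which is compatible with the interlacing since $x\geq x-1\geq 1\geq 0$---exhibits $L((x-1,1,0^{(\ell-2)}))$ as a $\mathfrak{gl}(\ell)$-submodule of $L((x^{(k-2)},x-1,1,0^{(\ell-2)}))$, and Lemma \ref{lem-1} then gives $d((x^{(k-2)},x-1,1,0^{(\ell-2)}))\geq\min\{x-1,\,\ell-1\}$. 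Adding these two lower bounds and performing a short case analysis on the relative sizes of $\ell-1$, $1+(k-2)(x-1)$, and $x-1$ confirms that the sum is at least $\min\{1+(k-1)(x-1),\,\ell\}$ in every regime. The main obstacle is the ``small-$\ell$'' subcase $\ell<x$, where both summands collapse to $\ell-1$ and the sum becomes $2(\ell-1)$; this exceeds $\ell$ precisely because we assumed $\ell\geq 2$.
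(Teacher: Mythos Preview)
Your proof is correct and shares its first two moves with the paper: the Gelfand--Tsetlin step $d((x^{(k)},0^{(\ell)}))\geq d((x^{(k-1)},1,0^{(\ell-1)}))$, followed by an application of Lemma~\ref{lem-0} with exactly the same pair $\mu'=(x^{(k-1)},0^{(\ell-1)})$ and $\mu''=(x^{(k-2)},x-1,1,0^{(\ell-2)})$. The second inequality via the duality $L((x^{(k)},0^{(\ell)}))^*\simeq L((x^{(\ell)},0^{(k)}))$ is also what the paper does (phrased there as ``going from right to left'').

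The divergence is in how the two summands are handled. The paper throws away the rectangular term, bounding $d((x^{(k-1)},0^{(\ell-1)}))\geq 1$, and then \emph{iterates} Lemma~\ref{lem-0} on the non-rectangular term $(x^{(k-2)},x-1,1,0^{(\ell-2)})$, picking up $+1$ at each of $(k-1)(x-1)$ steps (or $\ell-1$ steps when $\ell$ is small). You do the opposite: keep the rectangular term and feed it to an induction on $k$, while collapsing the non-rectangular term in one shot via Gelfand--Tsetlin descent to $(x-1,1,0^{(\ell-2)})$ and Lemma~\ref{lem-1}. Your route trades the paper's somewhat elliptical ``continuing this line of reasoning'' chain for a clean inductive structure plus a short case analysis; both are of comparable length. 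One small point: your appeal to Lemma~\ref{lem-1} for $(x-1,1,0^{(\ell-2)})$ formally requires $x\geq 3$ and $\ell\geq 3$, but the boundary cases $x=2$ or $\ell=2$ give degree~$1$ modules and $\min\{x-1,\ell-1\}=1$, so the bound holds trivially there.
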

\noindent\emph{Proof of Lemma \ref{lem-2}.}  By (\ref{gt-rule}), we have $d((x^{(k)},0^{(\ell)}))  \geq d((x^{(k-1)},1, 0^{(\ell-1)}))$.  Furthermore Lemma \ref{lem-0} yields
$$
d((x^{(k-1)},1, 0^{(\ell-1)})) \geq d((x^{(k-2)},x-1,1, 0^{(\ell-2)})) + d((x^{(k-1)}, 0, 0^{(\ell-1)})) \geq d((x^{(k-2)},x-1,1, 0^{(\ell-2)})) + 1.
$$

By continuing this line of reasoning, we get
\begin{eqnarray*}
d((x^{(k-1)},1, 0^{(\ell-1)})) & \geq & d((x^{(k-2)},x-1,1, 0^{(\ell-2)})) + 1 \geq \cdots \\
& \geq & d((1^{(k)}, 0^{(\ell-kx)})) + (k-1)(x-1) = 1 + (k-1)(x-1),
\end{eqnarray*}
under the assumption that $\ell>(k-1)(x-1)$. If $\ell \leq (k-1)(x-1)$ then 
$$
d((x^{(k-1)},1, 0^{(\ell-1)})) \geq d((x^{(k-2)},x-1,1, 0^{(\ell-2)})) + 1 \geq \cdots \geq \ell.
$$ 
To prove the second inequality we observe that $d((x^{(k)}, 0^{(\ell)})) = d((0^k, (-x)^k))$. Now we apply the same reasoning (but going from right to left) as above  for $(0^k, (-x)^k)$. \hfill $\square$

\begin{lem} \label{lem-3}
 Let $\lambda = (\lambda_1,...,\lambda_{n+1})$ be a dominant integral weight   such that $\lambda_1 - \lambda_{n+1} >1$. Then:
$$
d(\lambda) \geq \min\{ {n+1} - {\rm rp}(\lambda), {n+1} - {\rm lp}(\lambda)\}.
$$ 
\end{lem}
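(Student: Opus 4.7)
The plan is to induct on $n$, reducing the general case to the plateau case of Lemma \ref{lem-2} by means of the Gelfand--Tsetlin branching rule \eqref{gt-rule} together with Lemma \ref{lem-0}. I would first normalize by subtracting $\lambda_{n+1}$ from every coordinate, so that $\lambda_{n+1}=0$ and $\lambda_1\geq 2$; this operation tensors $L_{\mathfrak{gl}(n+1)}(\lambda)$ with a one-dimensional character and hence preserves $d(\lambda)$. Set $m:=\min\{n+1-r,\,n+1-k\}$. In the plateau base case $k+r=n+1$, we have $\lambda=(\lambda_1^{(k)},0^{(r)})$ and $m=\min\{k,r\}$; applying Lemma \ref{lem-2} with $x=\lambda_1\geq 2$ to $\lambda$ and to its dual yields $d(\lambda)\geq\min\{1+(k-1)(x-1),r\}\geq k$ and $d(\lambda)\geq\min\{1+(r-1)(x-1),k\}\geq r$, whence $d(\lambda)\geq\min\{k,r\}=m$.

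For the inductive step with $k+r\leq n$, I would consider the two standard patterns $\mu^+:=(\lambda_1,\ldots,\lambda_n)$ and $\mu^-:=(\lambda_2,\ldots,\lambda_{n+1})$, both lying in $\mathrm{GT}(\lambda)$. A direct computation of their $\mathrm{lp}$ and $\mathrm{rp}$ shows that when $r>k$ (with $r\geq 2$, or with $k=1$), the inductive hypothesis applied to $\mu^+$, whose spread $\mu^+_1-\mu^+_n=\lambda_1\geq 2$, yields $d(\mu^+)\geq n+1-r=m$, so $d(\lambda)\geq m$; the case $k>r$ is handled symmetrically via $\mu^-$. The remaining case $k=r$ requires Lemma \ref{lem-0}: take
\begin{equation*}
\mu':=(\lambda_1,\ldots,\lambda_{k-1},\lambda_k-1,\lambda_{k+1},\ldots,\lambda_n),\quad \mu'':=(\lambda_1,\ldots,\lambda_{n-k},\lambda_{n-k+1}-1,\lambda_{n-k+2},\ldots,\lambda_n);
\end{equation*}
both lie in $\mathrm{GT}(\lambda)$, share the sum $|\lambda|-1$, and are distinct because $k+r\leq n$ forces $k<n-k+1$. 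Computing $\mathrm{lp}(\mu')=k-1$, $\mathrm{rp}(\mu')=r-1$ and $\mathrm{lp}(\mu'')=k$, $\mathrm{rp}(\mu'')\in\{r-1,r\}$, the inductive hypothesis gives $d(\mu')\geq n-k+1$ and $d(\mu'')\geq n-k$, so by Lemma \ref{lem-0},
$$d(\lambda)\geq d(\mu')+d(\mu'')\geq 2n-2k+1\geq m=n+1-k$$
since $k\leq n-1$.

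The main obstacle will be the smallest corner case $k=r=1$ with $\lambda_1-\lambda_n=2$, where $\mu'$ has spread only $1$ and the inductive hypothesis does not apply to it. In that situation I would fall back on the trivial bound $d(\mu')\geq 1$ and observe that $\mu''$ has spread $\lambda_1-\lambda_n+1=3\geq 2$, so induction on $\mu''$ still gives $d(\mu'')\geq n-1$; summing yields $d(\lambda)\geq n=m$. An analogous book-keeping takes care of the sub-cases in which $\mathrm{rp}(\mu'')$ jumps from $r-1$ to $r$ because $\lambda_{n-k+1}=1$. The bulk of the remaining work is a careful case analysis verifying that in each sub-case of the inductive step the chosen pair $(\mu',\mu'')$ exists, interlaces with $\lambda$, and that the combined inductive (or trivial) bounds on $d(\mu')$ and $d(\mu'')$ always reach $m$.
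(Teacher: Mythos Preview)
Your inductive scheme is correct, with one cosmetic slip: in the plateau base case the displayed inequalities $\min\{1+(k-1)(x-1),r\}\geq k$ and $\min\{1+(r-1)(x-1),k\}\geq r$ are each false when the second argument of the $\min$ is the smaller one. What you actually obtain (and what suffices) is $d(\lambda)\geq\min\{k,r\}$, by invoking the first bound when $k\leq r$ and the second when $r\leq k$. The case analysis you outline for the inductive step, including the $k=r=1$ corner, does close up; in particular for $k=r=1$ the pattern $\mu''=(\lambda_1,\dots,\lambda_{n-1},\lambda_n-1)$ always has $\mathrm{lp}=\mathrm{rp}=1$ and spread $\lambda_1-\lambda_n+1\geq 2$, so induction gives $d(\mu'')\geq n-1$, and the trivial bound on $\mu'$ finishes it.

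The paper takes a more direct route that avoids both the induction on $n$ and the trichotomy on $k$ versus $r$. Instead of peeling one coordinate from an end, it strips the entire middle block: whenever $\lambda_i>\lambda_{i+1}\geq\cdots\geq\lambda_{i+j}>\lambda_{i+j+1}$, the two patterns
\[
(\lambda_1,\dots,\lambda_i,\lambda_{i+2},\dots,\lambda_{n+1}),\qquad(\lambda_1,\dots,\lambda_i-1,\lambda_{i+2},\dots,\lambda_{i+j},\lambda_{i+j+1}+1,\dots,\lambda_{n+1})
\]
lie in $\mathrm{GT}(\lambda)$, have equal sum, and differ, so Lemma~\ref{lem-0} yields $d(\lambda)\geq d(\text{first pattern})+1$. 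Iterating $j=n+1-k-\ell$ times collapses $\lambda$ to the plateau $(\lambda_1^{(k)},\lambda_{n+1}^{(\ell)})$, giving $d(\lambda)\geq d((\lambda_1^{(k)},0^{(\ell)}))+(n+1-k-\ell)$; then Lemma~\ref{lem-2} supplies $d((\lambda_1^{(k)},0^{(\ell)}))\geq\min\{k,\ell\}$, and the sum is exactly $\min\{n+1-\ell,\,n+1-k\}$. Your edge-stripping induction reaches the same destination but with more bookkeeping; the paper's middle-stripping argument is shorter and case-free.
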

\noindent\emph{Proof of Lemma \ref{lem-3}.}  As usual, we apply again several times the Gelfand-Tsetlin rule (\ref{gt-rule}). We first observe that, if $i, j \geq 1$ are such that 
$$\lambda_i > \lambda_{i+1} \geq \lambda_{i+2} \geq \cdots \geq \lambda_{i+j} > \lambda_{i+j+1}
$$
then 
$$
d(\lambda) \geq d((\lambda_1,...,\lambda_i, \lambda_{i+2},...,\lambda_{i+j}, \lambda_{i+j+1}, ..., \lambda_{n+1})) + d((\lambda_1,...,\lambda_i -1, \lambda_{i+2},...,\lambda_{i+j}, \lambda_{i+j+1} + 1, ..., \lambda_{n+1})).
$$
Therefore
$$
d(\lambda) \geq d((\lambda_1,...,\lambda_i, \lambda_{i+2},...,\lambda_{i+j}, \lambda_{i+j+1}, ..., \lambda_{n+1})) + 1 \geq \cdots \geq d((\lambda_1,...,\lambda_i, \lambda_{i+j+1}, ..., \lambda_{n+1})) + j. 
$$
Let now $k =  \mbox{lp}(\lambda)$ and $\ell =  \mbox{rp}(\lambda)$. In particular: 
$$\lambda_1 = \cdots  = \lambda_k > \lambda_{k+1} \geq \cdots \geq \lambda_{n-\ell+1} > \lambda_{n-\ell + 2} = \cdots = \lambda_{n+1}.$$

If we apply the last inequality for $i = k$ and $j = n-k-\ell+1$, we obtain  
\begin{equation} \label{lem-ineq}
d(\lambda) \geq d((\lambda_1^{(k)}, 0^{(\ell)})) + n-k-\ell+1.
\end{equation}
But $\lambda_1 \geq 2$ since $\lambda_1 - \lambda_{n+1} >1$. So, by Lemma \ref{lem-2}, $d((\lambda_1^{(k)}, 0^{(\ell)})) \geq \min \{ k, \ell\}$ and that combined with (\ref{lem-ineq}) completes the proof of the lemma. \hfill $\square$

\begin{lem} \label{lem-4}
Let $k \geq 2$ and let $(\lambda_1,...,\lambda_k)$ be a dominant integral $\gg \gl (k)$-weight such that $\lambda_1 \geq 2$ and $\lambda_k>0$. Then 
$$
d((\lambda_1,...,\lambda_k, 0^{(\ell)})) \geq \min \{ \lambda_1, \ell\}.
$$ 
\end{lem}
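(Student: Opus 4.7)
I would argue by strong induction on $k + \ell$. The base case $\ell = 1$ is immediate since $\min\{\lambda_1, 1\} = 1 \leq d(\lambda)$. For the inductive step ($\ell \geq 2$), I split into two cases according to whether the entries $\lambda_1, \ldots, \lambda_k$ are all equal.

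If $\lambda_1 = \cdots = \lambda_k = x$, then $x \geq 2$ by hypothesis and Lemma~\ref{lem-2} gives $d(\lambda) \geq \min\{1+(k-1)(x-1), \ell\}$; since $k \geq 2$ and $x \geq 2$ imply $1+(k-1)(x-1) \geq x$, the desired bound $\min\{\lambda_1, \ell\}$ follows. Otherwise, let $j$ be the smallest index with $\lambda_j > \lambda_{j+1}$; necessarily $j \leq k - 1$. I would then consider the two patterns
\begin{align*}
\mu' &= (\lambda_1, \ldots, \lambda_{j-1}, \lambda_j - 1, \lambda_{j+1}, \ldots, \lambda_k, 0^{(\ell-1)}),\\
\mu'' &= (\lambda_1, \ldots, \lambda_{k-1}, \lambda_k - 1, 0^{(\ell-1)}).
\end{align*}
A routine check against the interlacing conditions shows that both lie in $\mathrm{GT}(\lambda)$ (one uses $\lambda_j - 1 \geq \lambda_{j+1}$ from the choice of $j$, and $\lambda_k - 1 \geq 0$ from $\lambda_k \geq 1$); they are distinct because they differ from $\lambda$ at positions $j < k$ and $k$ respectively; and they satisfy $|\mu'| = |\mu''| = |\lambda| - 1$. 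Lemma~\ref{lem-0} then yields $d(\lambda) \geq d(\mu') + d(\mu'')$, and I close the argument by bounding each summand through the inductive hypothesis.

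For the typical case $\lambda_k \geq 2$, both $\mu'$ and $\mu''$ again have the form $(\ast, \ldots, \ast, 0^{(\ell-1)})$ with first entry $\geq 2$ (since $\lambda_1 = 2$ forces $\lambda_k = 1$, which is excluded here) and last positive entry $\geq 1$, so the inductive hypothesis applied with parameters $(k, \ell-1)$ gives $d(\mu'), d(\mu'') \geq \min\{\lambda_1, \ell - 1\}$; summing yields $d(\lambda) \geq 2\min\{\lambda_1, \ell-1\}$, which exceeds $\min\{\lambda_1, \ell\}$ by a short case distinction on $\ell \leq \lambda_1$ versus $\ell > \lambda_1$. For $\lambda_k = 1$ and $k \geq 3$, the pattern $\mu'' = (\lambda_1, \ldots, \lambda_{k-1}, 0^{(\ell)})$ has one fewer nontrivial row, and the inductive hypothesis at parameters $(k-1, \ell)$ gives $d(\mu'') \geq \min\{\lambda_1, \ell\}$ on its own. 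The remaining case $k = 2, \lambda_2 = 1$ is exactly the hypothesis of Lemma~\ref{lem-1}, which provides the even stronger bound $\min\{\lambda_1, \ell+1\}$.

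The one subtlety that escapes direct induction occurs when $j = 1$ and $\lambda_1 = 2$: this forces $\lambda = (2, 1^{(k-1)}, 0^{(\ell)})$, and then $\mu' = (1^{(k)}, 0^{(\ell-1)})$ is isomorphic to the exterior power $\Lambda^k V_{k+\ell-1}$, all of whose weights have multiplicity one, so $d(\mu') = 1$ is computed by hand; combining this with $d(\mu'') \geq \min\{2, \ell\}$ (obtained by induction on $\mu'' = (2, 1^{(k-2)}, 0^{(\ell)})$, for which $k \geq 3$ is needed) still yields $d(\lambda) \geq \min\{\lambda_1, \ell\}$. The main obstacle is the bookkeeping: one must verify that the four sub-cases $\{j=1, j>1\} \times \{\lambda_k = 1, \lambda_k \geq 2\}$ indeed feed cleanly back into the inductive hypothesis on $k + \ell$, and must recognize that the all-equal configuration is inaccessible through the Lemma~\ref{lem-0} construction and must be dispatched separately via Lemma~\ref{lem-2}.
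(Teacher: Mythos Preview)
Your argument is correct in substance, but it is far more elaborate than necessary. The paper's proof is two lines: starting from $(\lambda_1,\dots,\lambda_k,0^{(\ell)})$, one applies the Gelfand--Tsetlin rule $k-1$ times, each time choosing the branching pattern that keeps $\lambda_1$ and replaces the last positive entry by $1$, obtaining
\[
d((\lambda_1,\dots,\lambda_k,0^{(\ell)}))\ \geq\ d((\lambda_1,\dots,\lambda_{k-1},1,0^{(\ell-1)}))\ \geq\ \cdots\ \geq\ d((\lambda_1,1,0^{(\ell-1)})),
\]
and then Lemma~\ref{lem-1} finishes the job. In other words, a single monotone chain of GT restrictions collapses the problem to the $k=2$ case directly, with no induction, no case split, and no appeal to Lemma~\ref{lem-0} or Lemma~\ref{lem-2}. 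Your approach instead proves the bound ``from scratch'' by splitting off two GT patterns and recursing on $k+\ell$; this works but incurs the all-equal case, the $\lambda_k=1$ case, and the $k=2$ boundary as separate obligations.

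One small imprecision: in your ``typical case $\lambda_k\geq 2$'' with $j=1$, the first entry of $\mu'$ is $\lambda_1-1$, not $\lambda_1$, so the inductive hypothesis only gives $d(\mu')\geq\min\{\lambda_1-1,\ell-1\}$ rather than $\min\{\lambda_1,\ell-1\}$. This does not break the argument---the sum $\min\{\lambda_1-1,\ell-1\}+\min\{\lambda_1,\ell-1\}$ still dominates $\min\{\lambda_1,\ell\}$ for $\ell\geq 2$---but you should state the weaker bound.
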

\noindent\emph{Proof of Lemma \ref{lem-4}.} 
Applying $k-1$ times (\ref{gt-rule}), we obtain
$$
d((\lambda_1,...,\lambda_k, 0^{(\ell)})) \geq d((\lambda_1,...,\lambda_{k-1}, 1, 0^{(\ell-1)})) \geq d((\lambda_1, 1, 0^{(\ell-1)})).
$$
Now by Lemma \ref{lem-1}, $d((\lambda_1, 1, 0^{(\ell-1)})) \geq \min \{ \lambda_1, \ell\}$, which completes the proof.
\hfill $\square$
\begin{proof}[Proof of Proposition 4.5] Let $\lambda(n) = (\lambda_1,...,\lambda_{n+1})$. If $\lambda_1-\lambda_{n+1}>1$ and  $d((\lambda_1,\dots,\lambda_{n+1}))$ is bounded for $n\to\infty$, Lemma \ref{lem-3} shows that $(\lambda_1,\dots,\lambda_{n+1})$ can be written as $(\mu_1,\dots,\mu_k,0^{(n-k+1)})$ or $(0^{(n-k+1)},-\mu_k,\dots,-\mu_1)$ for some fixed $k$. Lemma \ref{lem-4} shows that if $\mu_k\neq 0$ for $k\geq 2$, then $\mu_1$ must be constant when $n\to \infty$. The Gelfand-Tsetlin rule then implies that $\mu_2,\dots, \mu_k$ are also constants, hence $\lambda(n)$ is of the form (iv) or (v). The case when $\mu_2=0$ leads to cases (ii) and (iii). Finally, the possibility $\lambda_1-\lambda_{n+1}=1$ yields case (i).
\end{proof}

\section{Classification of integrable simple bounded weight  $\mathfrak{sl} (\infty)$-, $\mathfrak{o}(\infty)$-, $\mathfrak{sp} (\infty)$-modules}

\subsection{The case of $\mathfrak{sl} (\infty)$} \label{subsec-int-sim-sl}

In this subsection $\gg = \mathfrak{sl} (\infty)$. We start with some definitions.

 A subset $A$ of $\Z_{>0}$ is \emph{semi-infinite} if both $A$ and $\Z_{>0} \setminus A$ are infinite. For two semi-infinite sets $A$ and $B$ of $\Z_{>0}$, we write $A \approx B$ if there exist disjoint finite subsets $F_A$ and $F_B$ of $A$ and $B$, respectively, so that $A \setminus F_A = B \setminus F_B$. Obviously, $\approx$ defines an equivalence relation on the set of semi-infinite subsets of $\Z_{>0}$.  

For a semi-infinite subset $A$ of $\Z_{>0}$, let $A_n:=A \cap [1,n]$ and $k_n = \# (A_n)$. Consider the $k_n$-th exterior power $\Lambda^{k_n} (V_{n+1})$ of the natural representation $V_{n+1}$ of $\mathfrak{sl}({n+1})$. Since $A$ is semi-infinite, we have $\Lambda^{k_n} (V_{n+1}) \neq 0$ for $n>>0$. Moreover, there is a unique, up to a multiplicative constant, monomorphism of $\gg_n$-modules $\Lambda^{k_n} (V_{n+1}) \hookrightarrow \Lambda^{k_{n+1}} (V_{n+2})$. The resulting direct limit  $\underrightarrow\lim\Lambda^{k_n} (V_{n+1})$ will be denoted by $\Lambda_A^{\frac{\infty}{2}}V$ and is a \emph{semi-infinite fundamental representation of} $\gg$. In other words, $\Lambda_A^{\frac{\infty}{2}}V$ is isomorphic to $\underrightarrow\lim L_{\gg_n} (1^{(k_n)}, 0^{(n+1-k_n)})$, cf. Proposition \ref{prop-integ-bounded}(i). It is straightforward to check that  $\Lambda_A^{\frac{\infty}{2}}V \simeq \Lambda_B^{\frac{\infty}{2}}V$ if and only if $A \approx B$.

 Given a Borel subalgebra $\fb(\prec)$, we say that $\fb(\prec)$ is a $A$-\emph{compatible} for a subset $A\subset \mathbb{Z}_{>0}$, if $a \prec b$ for all $a\in A$, $b\in \mathbb{Z}_{>0}\backslash A$.

Next, for any infinite subset $A=\{a_1, a_2, \dots |\, a_i<a_{i+1}\}$ of $\mathbb{Z}_{\geq 0}$, we introduce the representation $S_A^{\infty} V$ as follows. There is a unique, up to a multiplicative constant, monomorphism of $\gg_n$-modules $S^{a_n} (V_{n+1}) \hookrightarrow S^{a_{n+1}} (V_{n+2})$. The resulting direct limit   $\underrightarrow\lim \,S^{a_n} (V_{n+1})$ will be denoted by  $S_A^{\infty} V$. Note that $S_A^{\infty} V$ is isomorphic to $\underrightarrow\lim L_{\gg_n} (a_n,0^{(n-1)})$, cf. Proposition \ref{prop-integ-bounded}(ii). For two  infinite sets $A$ and $B$ as above, we write $A \sim B$ if $a_n=b_n$ for all $n$ greater than some $n_0$. 
It is straightforward to check  that $S_A^{\infty} V \simeq S_B^{\infty} V$ if and only if $A \sim B$. 

Similarly, for a sequence $A$ as above, we introduce the modules $S_A^{\infty} V_*$. These modules are isomorphic to  $\underrightarrow\lim L_{\gg_n} (0^{(n-1)}, -a_n)$, cf. Proposition \ref{prop-integ-bounded}(iii).

 Recall that a \emph{partition} is a tuple $\mu=(\mu_1,...,\mu_k)$ of positive integers satisfying $\mu_i\geq\mu_{i+1}$ for $i=1,...,k-1$. Given a partition $\mu$ we define the $\fg$-module $S^\mu V$ as a direct limit  $\underrightarrow\lim L_{\gg_n} (\mu_1,..,\mu_k,0^{(n+1-k)})$, cf. Proposition \ref{prop-integ-bounded}(iv). Clearly, $S^\mu V$ is well defined up to isomorphism. Similarly, we define the module $S^{\mu} V_*$ as  $\underrightarrow\lim L_{\gg_n} (0^{(n+1-k)}, -\mu_k,..,-\mu_1)$,  cf. Proposition \ref{prop-integ-bounded}(v).

\begin{thm} \label{thm-simple-integ-sl} Let $M$ be a simple integrable bounded weight module of $\gg = \gs \gl (\infty)$. Then $M$ is isomorphic to one of the following:
\begin{itemize}
\item[(i)] $\Lambda_A^{\frac{\infty}{2}}V$ for a semi-infinite set $A$,
\item[(ii)]  $S_A^{\infty}V$ for an infinite set $A$,
\item[(iii)] $S_A^{\infty}V_*$ for an infinite set $A$,
\item[(iv)]  $S^{\mu} V$ for a partition $\mu$,
\item[(v)]  $S^{\mu} V_*$ for a partition $\mu$.
\end{itemize}
All isomorphisms between modules from the above list are : $\Lambda_A^{\frac{\infty}{2}}V \simeq \Lambda_B^{\frac{\infty}{2}}V$ for $A\approx B$,  $S_A^{\infty}V \simeq S_B^{\infty}V$ and $S_A^{\infty}V_* \simeq S_B^{\infty}V_*$ for $A\sim B$, $S^\emptyset V= S^\emptyset V_*=\mathbb{C}$ ($\emptyset$ stands for the empty partition).
\end{thm}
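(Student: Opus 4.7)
The plan is to combine Corollary \ref{cor-loc-simple} with Proposition \ref{prop-integ-bounded} to realize $M$ as a direct limit of simple finite-dimensional $\mathfrak{g}_n$-modules whose highest weights fall into one of the five shapes (i)--(v) of Proposition \ref{prop-integ-bounded}, and then to identify each such limit with a module from the list of the theorem.

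By Corollary \ref{cor-loc-simple}, $M$ is locally simple: fixing a weight vector $0 \neq m \in M$ and setting $M_n := U_n \cdot m$, the module $M_n$ is a simple $\mathfrak{g}_n$-submodule of $M$ for all $n \gg 0$, and $M = \varinjlim M_n$. Since $M$ is integrable, each $M_n$ is a simple integrable weight module over the finite-dimensional simple Lie algebra $\mathfrak{g}_n$, hence finite-dimensional, so $M_n \simeq L_{\mathfrak{g}_n}(\lambda(n))$ for a dominant integral $\lambda(n)$. Proposition \ref{prop-integ-bounded} restricts $\{\lambda(n)\}$ to one of the shapes (i)--(v).

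In each of these shapes the Gelfand--Tsetlin branching rule gives that $L_{\mathfrak{g}_n}(\lambda(n))$ appears in the restriction $L_{\mathfrak{g}_{n+1}}(\lambda(n+1))\vert_{\mathfrak{g}_n}$ with multiplicity one, so the embeddings in the direct system are unique up to scalar and the isomorphism class of the limit is determined by the sequence $\{\lambda(n)\}$. Consequently, shape (iv) produces $M \simeq S^\mu V$, shape (v) gives $M \simeq S^\mu V_*$, shape (ii) gives $M \simeq S_A^\infty V$ for $A = \{a_n\}$, and shape (iii) gives $M \simeq S_A^\infty V_*$. In shape (i), since $k_n$ and $(n+1) - k_n$ both tend to infinity, we may select a semi-infinite subset $A \subset \mathbb{Z}_{>0}$ satisfying $\#(A \cap [1,n+1]) = k_n$ for all $n \geq n_0$; the same uniqueness of embeddings then yields $M \simeq \Lambda_A^{\infty/2} V$.

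The stated equivalences $A \sim B$ (for the $S_A^\infty$-series) and $A \approx B$ (for the $\Lambda^{\infty/2}$-series) between parameters yield direct systems agreeing on a cofinal subset, hence isomorphic limits. Conversely, parameters within a shape are recovered, and different shapes are distinguished, by extracting the sequence of highest weights $\{\lambda(n)\}$ from the $\mathfrak{g}_n$-socle of $M$ for large $n$ and comparing with the defining direct systems. The only coincidence across shapes is the trivial module $\mathbb{C}$, which arises as both $S^\emptyset V$ and $S^\emptyset V_*$. The most delicate point is shape (i): a semi-infinite $A$ realizing a given sequence $k_n$ exists but is not unique --- any two such $A$ differ by a finite symmetric difference, i.e.\ are $\approx$-equivalent --- and this is exactly the ambiguity absorbed by the isomorphism $\Lambda_A^{\infty/2} V \simeq \Lambda_B^{\infty/2} V$ for $A \approx B$, so the assignment shape (i) $\mapsto \Lambda_A^{\infty/2}V$ is well-defined on isomorphism classes.
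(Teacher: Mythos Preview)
Your proof is correct and follows essentially the same approach as the paper's own (very terse) proof, which simply says the statement is a direct consequence of Proposition~\ref{prop-integ-bounded} because the listed modules exhaust, up to isomorphism, all direct limits arising there. Your version usefully unpacks the details the paper leaves implicit (the Gelfand--Tsetlin multiplicity-one embeddings, the construction of the semi-infinite set $A$ in case~(i), and the recovery of parameters from the local structure); note that your separate appeal to Corollary~\ref{cor-loc-simple} is redundant, since Proposition~\ref{prop-integ-bounded} already delivers $M$ as a direct limit $\varinjlim L_{\mathfrak g_n}(\lambda(n))$.
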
 

\begin{proof}
The statement is a direct consequence of Proposition \ref{prop-integ-bounded} as in each case the modules listed in the theorem  account, up to isomorphism, for all possible direct limit modules from Proposition \ref{prop-integ-bounded}. The claim about isomorphisms is straightforward.
\end{proof}

Recall that if $\fb\supset \fh$ is a Borel subalgebra, a $\fg$-module $M$ is a $\fb$\emph{-highest weight module} if it is generated by a vector $m$ such that $m\in M^{\lambda}$ for some $\lambda \in \fh^*$ and $\fg^\alpha\cdot m=0$ for any root $\alpha$ of $\fb$. 

\begin{prop} \label{5.2}
\begin{itemize}
\item[(a)] The modules  $\Lambda_A^{\frac{\infty}{2}}V$,  $S_A^{\infty}V$ and $S_A^{\infty}V_*$ are multiplicity free.
\item[(b)] The following holds:
\begin{itemize}
\item[(i)] $\Supp \Lambda_A^{\frac{\infty}{2}}V = \{  \sum_{i \in B} \varepsilon_i  \; | \; B \approx A\}$,
\item[(ii)]  $\Supp S_A^{\infty}V =  \{ \lambda   \; | \; \lambda_i\geq0, \, \exists n: \sum^n_{i=1}\lambda_i=a_n, \lambda_i = a_i-a_{i-1} \textup{ for } i>n \}$,
\item[(iii)]  $\Supp S_A^{\infty}V_* =  \{  \lambda   \; | \; \lambda_i\leq0, \, \exists n: \sum^n_{i=1}\lambda_i=-a_n, \lambda_i = a_{i-1}-a_i \textup{ for } i>n \}$,
\item[(iv)] $\Supp S^\mu V= \{\lambda \;  |  \; 0\leq \lambda_i \leq\mu_i \}$ where $\mu=(\mu_1,\dots, \mu_k)$,
\item[(v)] $\Supp S^\mu V_*= \{ \lambda \; | \; 0\leq -\lambda_i \leq\mu_i \}$ where $\mu=(\mu_1,\dots \mu_k)$.

\end{itemize}

\item[(c)] The module $ \Lambda_A^{\frac{\infty}{2}}V$ is $\mathfrak b$-highest weight module  for a given Borel subalgebra $\mathfrak b\supset \fh$, if and only if there is a semi-infinite set $B\subset\mathbb{Z}_{>0}$ such that $B\approx A$ and $\fb$ is $B$-compatible.

\item[(d)] The modules  $S^{\infty}_A V$ and $S^{\infty}_A V_*$ are not highest weight modules (for any Borel subalgebra $\fb\supset\fh$).

\item[(e)]  If $\mu=(\mu_1,\dots,\mu_k)$, the module $S^{\mu}V$ (respectively, $S^{\mu}V_{*}$) is a $\fb$-highest weight module for any Borel subalgebra $\fb\supset\fh$ whose corresponding order $\prec$ has the property that there are $k$ indices $i_1,\dots,i_k$ satisfying $i_1\prec\dots\prec i_k$, and $i_k\prec i$ (respectively, $i_1\succ \dots \succ i_k$ and $i_k\succ i$) $\forall i\neq i_1,\dots, i_k$.

\end{itemize}
\end{prop}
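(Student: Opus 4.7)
Every module in the list is realized as a direct limit of a chain of finite-dimensional simple $\mathfrak{sl}(n+1)$-modules, so the overall strategy is to read off supports, weight multiplicities, and highest weight vectors at each finite stage and then pass to the limit.

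For part (a), each finite-dimensional simple module $\Lambda^{k_n}(V_{n+1})$, $S^{a_n}(V_{n+1})$, or $S^{a_n}(V_{n+1}^*)$ is visibly multiplicity-free (the weights of $\Lambda^{k_n}(V_{n+1})$ are the sums $\sum_{i\in B_n}\varepsilon_i$ over distinct $k_n$-subsets $B_n\subset\{1,\dots,n+1\}$; the weights of $S^{a_n}(V_{n+1})$ parametrize degree-$a_n$ monomials uniquely), and weight-space dimensions are preserved under the direct-limit construction. For part (b) I would track, for each weight of a level-$n$ module, the fixed tail in $\mathfrak{h}^*$ contributed by all subsequent embeddings, and take the union over $n$; for $S^\mu V$ and $S^\mu V_*$ the Gelfand--Tsetlin rule stabilizes in $n$ and yields the claimed ``box'' supports.

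For part (c), in the ``if'' direction I would show that $\sum_{i\in B}\varepsilon_i$ is the $\mathfrak{b}(\prec)$-maximal weight of $\Supp\Lambda_A^{\infty/2}V$: for any other $C\approx B$ one has $|B\setminus C|=|C\setminus B|$, so the difference $\sum_{i\in B}\varepsilon_i-\sum_{i\in C}\varepsilon_i$ may be written as $\sum_{(i,j)}(\varepsilon_i-\varepsilon_j)$ via a bijection between $B\setminus C\subset B$ and $C\setminus B\subset\mathbb{Z}_{>0}\setminus B$, and each such $\varepsilon_i-\varepsilon_j$ is $\mathfrak{b}(\prec)$-positive by $B$-compatibility. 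Multiplicity-freeness forces the unique vector at this maximal weight to be annihilated by all $\mathfrak{b}(\prec)$-positive root vectors, and simplicity of the module makes it a generator. Conversely, if the highest weight is $\sum_{i\in B}\varepsilon_i$ and $c\notin B$ satisfied $c\prec b$ for some $b\in B$, then $(B\setminus\{b\})\cup\{c\}\approx A$ and the corresponding weight $\sum_{i\in B}\varepsilon_i+\varepsilon_c-\varepsilon_b$ lies strictly above the alleged maximum, contradiction.

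The main obstacle is part (d). Assume for contradiction that a $\mathfrak{b}(\prec)$-highest weight vector $w\in S_A^\infty V$ exists; it lives in some $S^{a_n}(V_{n+1})$ and, being also a highest weight vector for $\mathfrak{b}(\prec)\cap\mathfrak{sl}(n+1)$, must equal (up to scalar) $v_j^{a_n}$ for the $\prec$-smallest $j\in\{1,\dots,n+1\}$. The unique-up-to-scalar $\mathfrak{sl}(n+1)$-equivariant embedding $S^{a_n}(V_{n+1})\hookrightarrow S^{a_m}(V_{m+1})$ is multiplication by $v_{n+2}^{a_{n+1}-a_n}\cdots v_{m+1}^{a_m-a_{m-1}}$, so the image of $w$ is $v_j^{a_n}\,v_{n+2}^{a_{n+1}-a_n}\cdots v_{m+1}^{a_m-a_{m-1}}$. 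But this image must also be a $\mathfrak{b}(\prec)\cap\mathfrak{sl}(m+1)$-highest weight vector of $S^{a_m}(V_{m+1})$, hence a pure power $v_{j'}^{a_m}$; this is impossible since the distinct variables $v_{n+2},\dots,v_{m+1}$ all appear with strictly positive exponents, $A$ being strictly increasing. The argument for $S_A^\infty V_*$ is verbatim with $\partial$ replacing $x$.

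For part (e), for $n$ large enough that $\{i_1,\dots,i_k\}\subset\{1,\dots,n+1\}$, the $\mathfrak{b}(\prec)$-highest weight of $L_{\mathfrak{sl}(n+1)}(\mu_1,\dots,\mu_k,0,\dots,0)$ is $\sum_{j=1}^k\mu_j\varepsilon_{i_j}$ with one-dimensional weight space. The unique-up-to-scalar $\mathfrak{sl}(n+1)$-equivariant embedding into $L_{\mathfrak{sl}(n+2)}(\mu_1,\dots,\mu_k,0,\dots,0)$ therefore sends the level-$n$ highest weight vector into the matching one-dimensional weight space at level $n+1$, yielding a compatible system whose direct limit is a nonzero $\mathfrak{b}(\prec)$-highest weight vector; since each finite-level module is generated by its highest weight vector, so is $S^\mu V$. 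The analogous construction with the order on $\{i_1,\dots,i_k\}$ reversed handles $S^\mu V_*$.
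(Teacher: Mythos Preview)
Your proposal is correct and supplies precisely the kind of finite-level-then-direct-limit verification that the paper leaves implicit: the paper's own proof consists of the single word ``Straightforward.'' Your arguments for (a), (c), (d), and (e) are clean and complete (in particular, the pure-power contradiction in (d) is exactly the right mechanism), and your sketch for (b) is the intended computation.
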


\begin{proof}
Straightforward.
\end{proof}

\subsection{The case of $\mathfrak{o} (\infty)$} In this subsection $\fg=\go(\infty)$, and either $\fg_n=\mathfrak{o}(2n+1)$ for any $n$ or  $\fg_n= \mathfrak{o}(2n)$ for any $n$.
Recall that there are no infinite-dimensional simple bounded $\gg_n$-modules for $n\geq 3$, see Subsection \ref{subsec-general-sln}. Therefore, every bounded  simple weight $\fg$-module is integrable. 

\begin{prop}\label{asd} Let $M$ be a nontrivial bounded simple integrable weight $\fg$-module $M$. Then $M$ is  multiplicity free and is  isomorphic to $V$ or to a direct limit of (simple) spinor modules of $\fg_n$.
\end{prop}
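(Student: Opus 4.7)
The plan is to mimic the strategy used in the proof of Proposition \ref{prop-integ-bounded}. First, by Corollary \ref{cor-loc-simple}, $M$ is locally simple, so I would choose $\lambda\in\Supp_{\textup{ess}}M$ and $0\neq m\in M^{\lambda}$ and set $M_n:=U_n\cdot m$. For $n$ sufficiently large, $M_n$ is a simple $\fg_n$-module and $M=\underrightarrow{\lim}\,M_n$. As observed in Subsection \ref{subsec-general-sln}, the category $\mathcal B(\fg_n)$ coincides with the category of finite-dimensional $\fg_n$-modules when $\fg_n=\go(2n+1)$ or $\go(2n)$, hence each $M_n$ is finite dimensional.

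Fix a compatible family of Borel subalgebras $\fb_n\subset\fg_n$ containing $\fh_n$, and let $\lambda(n)$ denote the $\fb_n$-highest weight of $M_n$. Boundedness of $M$ forces $\deg L_{\fg_n}(\lambda(n))$ to be uniformly bounded in $n$, while the inclusion $M_n\hookrightarrow M_{n+1}$ requires $L_{\fg_n}(\lambda(n))$ to appear as a direct summand of $L_{\fg_{n+1}}(\lambda(n+1))|_{\fg_n}$. I would then classify all sequences $\{\lambda(n)\}$ meeting both constraints. Using the orthogonal branching rule $\go(m+1)\downarrow\go(m)$ (whose multiplicities are controlled by interleaving inequalities on the coordinates) and arguments modeled on Lemmas \ref{lem-0}--\ref{lem-4}, one shows: if the coordinates of $\lambda(n)$ are integers, then either $\lambda(n)\equiv 0$ (trivial, excluded by nontriviality of $M$) or eventually $\lambda(n)=\varepsilon_1$; indeed, $\lambda(n)_1\geq 2$ or the presence of two nonzero integer coordinates would produce, via branching, a growing family of distinct $\fg_{n-1}$-subquotients of $M_n$ sharing a common weight, contradicting boundedness. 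If the coordinates of $\lambda(n)$ are half-integers, the only bounded possibilities are the highest weights of the spinor modules, namely $(\tfrac12,\dots,\tfrac12)$ for $\fg_n=\go(2n+1)$ and $(\tfrac12,\dots,\pm\tfrac12)$ for $\fg_n=\go(2n)$.

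Consequently $M\simeq V$ in the integral case and $M$ is a direct limit of spinor modules in the half-integral case. Since $V$ and every spinor module is multiplicity free as an $\fg_n$-module, so is the direct limit $M$. The main obstacle is the classification of admissible sequences $\{\lambda(n)\}$ above: its type $A$ counterpart (Proposition \ref{prop-integ-bounded}) required four technical lemmas on Gelfand--Tsetlin decompositions, and the orthogonal versions demand analogous but somewhat more delicate case analyses because of the spinorial (half-integer) weights and the asymmetry between types $B$ and $D$ (notably the extra sign freedom for the last coordinate in type $D$, which is precisely what produces the two chains of half-spinor modules in the limit).
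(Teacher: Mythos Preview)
Your approach is sound in outline but takes a genuinely different route from the paper's, and leaves the hardest step as a sketch.

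The paper does \emph{not} redo the bounded-degree analysis for orthogonal groups. Instead it restricts $M$ to the standard subalgebra $\mathfrak{sl}(\infty)\subset\mathfrak{gl}(\infty)\subset\mathfrak{o}(\infty)$ (which contains $\mathfrak h$). After observing that distinct $\mathfrak{o}(\infty)$-weights in $\Supp M$ remain distinct upon restriction to $\mathfrak h\cap\mathfrak{sl}(\infty)$, one gets that $M|_{\mathfrak{sl}(\infty)}$ is a bounded integrable weight module, hence semisimple with simple constituents from the list of Theorem~\ref{thm-simple-integ-sl}. The key extra ingredient is that $\Supp M$ is stable under the Weyl group of $\mathfrak{o}(\infty)$, which contains sign changes $\varepsilon_i\mapsto -\varepsilon_i$. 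Inspecting the supports in Proposition~\ref{5.2}(b), only two possibilities survive this symmetry: $\Supp M=\Supp V$, or all weights have coordinates $\pm\tfrac12$. The first case gives $M\simeq V$ via Lemma~\ref{l1}; the second forces $M|_{\mathfrak g_n}$ to be a sum of spinor modules, and Lemma~\ref{lem_u_o} upgrades this to a direct limit of simple spinor modules.

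Your approach instead attacks the problem head-on: classify all sequences of dominant integral (or half-integral) $\mathfrak o$-weights whose associated simple modules have uniformly bounded degree and are compatible under branching. This is certainly doable, and your identification of the candidate answers (trivial, natural, spinor) is correct. But the step you flag as ``the main obstacle'' --- proving orthogonal analogs of Lemmas~\ref{lem-0}--\ref{lem-4} --- is precisely the work the paper's argument avoids. The paper's trick of restricting to $\mathfrak{sl}(\infty)$ and then exploiting sign-change symmetry is shorter and reuses Proposition~\ref{prop-integ-bounded} rather than reproving it in types $B$ and $D$; your approach would yield a self-contained argument independent of the $\mathfrak{sl}(\infty)$ classification, at the cost of carrying out that case analysis in full.
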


\begin{proof} Consider the standard embedding $\mathfrak{gl}(\infty)\subset\fg$ such that $\fh\subset\mathfrak{gl}(\infty)$. Here $\fh=\fh_B$ or $\fh_D$. Restricting the weights in $\Supp M$ to $\fh\cap\mathfrak{sl}(\infty)$ is an injective map as the simplicity of $M$ as $\fg$-module implies that no two weights in $\Supp M$ differ by a constant weight $c\sum^\infty_{i=1}\varepsilon_i$ for $c\neq 0$. Consequently, $M_{|\mathfrak{sl}(\infty)}$ is a bounded integrable  weight $\mathfrak{sl}(\infty)$-module, and is  semisimple by Proposition 3.3 in \cite{PS}. Therefore, by Theorem \ref{thm-simple-integ-sl}, $M_{|\mathfrak{sl}(\infty)}$ is isomorphic to a direct sum of modules as in Theorem \ref{thm-simple-integ-sl}(i)--(v). The $\mathfrak{sl}(\infty)$-weights determine the $\fg$-weights up to a constant, and $\Supp M$ is $W$-stable. Since $W$ contains sign changes, a direct inspection of the $\mathfrak{sl}(\infty)$-supports of the modules from Theorem \ref{thm-simple-integ-sl} (see Proposition \ref{5.2}(b)) shows that we have two cases: $\Supp M$=$\Supp V$ or $\lambda\in \Supp M\Rightarrow \lambda_i=\pm \frac{1}{2}$. In the first case, Lemma \ref{l1} shows that $M\simeq V$. In the second case, we conclude that $M_{|\fg_n}$ is a direct sum of spinor modules. Consequently, $M$ is isomorphic to a direct limit of finite-dimensional $\fg_n$-modules which are  direct sums of spinor modules. Finally, Lemma \ref{lem_u_o} shows that $M$ is isomorphic to a direct limit of  spinor modules. Consequently, $M$ is  multiplicity free.
\end{proof}

We fix the Borel subalgebra $\gb_n$ with positive roots $\{ \varepsilon_i - \varepsilon_j, \varepsilon_k + \varepsilon_{\ell}, \varepsilon_m \; | \; i<j, k< \ell\}$ if  $\fg_n=\mathfrak{o}(2n+1)$, and $\{ \varepsilon_i - \varepsilon_j, \varepsilon_k + \varepsilon_{\ell} \; | \; i<j, k< \ell\}$ if  $\fg_n=\mathfrak{o}(2n)$. Recall that the Lie algebra $\fg_n=\mathfrak{o}(2n)$ has, up to isomorphism, two spinor representations, ${\mathcal S}_n^+$ and ${\mathcal S}_n^-$, while the  Lie algebra $\fg_n=\mathfrak{o}(2n+1)$ has one spinor representation, ${\mathcal S}_n$. These spinor modules have $\gb_n$-highest weights $\left(\frac{1}{2},..., \frac{1}{2},\frac{1}{2}\right)$ and $\left(\frac{1}{2},..., \frac{1}{2},-\frac{1}{2}\right)$ for $\fg_n=\mathfrak{o}(2n)$  and $\left(\frac{1}{2},..., \frac{1}{2},\frac{1}{2}\right)$  for $\fg_n=\mathfrak{o}(2n+1)$. It is a standard fact
 that, up to a multiplicative constant, there are exactly two weight embeddings of ${\mathcal S}_n$ into ${\mathcal S}_{n+1}$, which will be denoted by $\iota_n^+$ and $\iota_n^-$. In one of them vectors of weight $\left( \frac{1}{2}\right)^{(n)}$ are mapped to vectors of weight $\left( \frac{1}{2}\right)^{(n+1)}$, in the other -- to vectors of weight  $\left(  \left( \frac{1}{2}\right)^{(n)}, -\frac{1}{2}\right)$. Furthermore, it is easy to check that there are unique, up to multiplicative constants, weight embeddings  ${\mathcal S}_n^+ \hookrightarrow  {\mathcal S}_{n+1}^+$, ${\mathcal S}_n^+ \hookrightarrow  {\mathcal S}_{n+1}^-$, ${\mathcal S}_n^- \hookrightarrow  {\mathcal S}_{n+1}^+$  and  ${\mathcal S}_n^- \hookrightarrow  {\mathcal S}_{n+1}^-$. 

We now define the spinor weight $\mathfrak{o}(\infty)$-modules  of type $B$ and type $D$. 
\begin{defn} Let $A$ be a subset of $\mathbb{Z}_{>0}$.
\begin{itemize}
\item[(i)] By ${\mathcal S}_A^B$ we denote the $\fh_B$-weight $\gg$-module obtained as the direct limit of  the weight embeddings $\varphi_n : {\mathcal S}_n \to {\mathcal S}_{n+1}$ defined  by  $\varphi_n = \iota_n^+$ if $n \in A$  and $\varphi_n = \iota_n^-$ if $n\notin A$.
\item[(ii)] By ${\mathcal S}_A^D$ we denote the $\fh_D$-weight  $\gg$-module obtained as the direct limit of  weight embeddings $\varphi_n : M_n \to M_{n+1}$ such that $M_n = {\mathcal S}_n^+$ if $n \in A$ and $M_n = {\mathcal S}_n^-$ if $n \notin A$.
\end{itemize}
\end{defn}

To a subset $A\subset \mathbb{Z}_{>0}$  we assign weights  $\omega_A^B \in \fh_B^*$ and  $\omega_A^D \in \fh_D^*$ by putting $(\omega_A^B)_k = (\omega_A^D)_k := \frac{1}{2}$ if $k \in A$, $(\omega_A^B)_k = (\omega_A^D)_k := -\frac{1}{2}$ otherwise. For two  weights $\lambda,\mu \in \fh_B^*$  we write $\lambda\sim_{B} \mu$ if $\lambda_i = \mu_i$ for all but finitely many indices $i>0$.
 For two weights $\lambda,\mu\in \fh_D^*$  we write $\lambda\sim_{D} \mu$ if $\lambda_i \neq \mu_i$ for an even (in particular, finite) number of  indices $i>0$.

The following theorem, together with Proposition \ref{asd}, completes the classification of simple bounded $\mathfrak{o}(\infty)$-modules.

\begin{thm}\label{th1} Let $M$ be a simple weight module which is a direct limit of spinor modules of $\mathfrak{o}(2n+1)$ or $\mathfrak{o}(2n)$ for $n\to \infty$. Then $M\simeq \mathcal S_A^B$ or $M\simeq \mathcal S_A^D$ for some $A\subset\mathbb{Z}_{>0}$.
 Furthermore,  ${\mathcal S}_{A_1}^B \simeq {\mathcal S}_{A_2}^B$ if and only if $A_1 \sim A_2$. In addition, the following holds:
\begin{itemize}
\item[(i)] The weight modules ${\mathcal S}_A^B$ and ${\mathcal S}_A^D$ are multiplicity free.
\item[(ii)] $\Supp {\mathcal S}_A^B = \{\lambda \in \fh_B^*\; | \; \lambda_i = \pm \frac{1}{2}, \lambda \sim_B \omega_A^B\}$, \, 
$\Supp {\mathcal S}_A^D = \{\lambda \in \fh_D^*\; | \; \lambda_i = \pm \frac{1}{2}, \lambda \sim_D \omega_A^D\}$. 
\item[(iii)] A weight $\lambda$ in the support of ${\mathcal S}_A^B$ or ${\mathcal S}_A^D$ is a highest weight  relative to a Borel subalgebra $\gb(\prec, \sigma)$ if and only if $\sigma(i) = \pm 1$ precisely when $\lambda_i =  \pm \frac{1}{2}$. In particular, the modules $\mathcal{S}_A^B$ and $\mathcal{S}_A^D$ are highest weight modules.
\end{itemize}
\end{thm}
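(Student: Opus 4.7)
The plan is to prove the theorem in four stages, leveraging local simplicity and explicit weight tracking along the direct systems. First, I would apply Corollary \ref{cor-loc-simple} together with Lemma \ref{lem_u_o} to reduce to the case $M = \underrightarrow{\lim}_n M_n$ with each $M_n \subset M$ a simple spinor $\fg_n$-submodule for $n$ large enough. In type B, each $M_n$ is (up to isomorphism) the unique spinor module $\mathcal{S}_n$, and any weight embedding $\mathcal{S}_n \hookrightarrow \mathcal{S}_{n+1}$ is a scalar multiple of either $\iota_n^+$ or $\iota_n^-$ (this is the branching fact recorded just before the definition of $\mathcal{S}_A^B$). Recording the set of indices where the chosen embedding is $\iota_n^+$ produces $A$, yielding $M \simeq \mathcal{S}_A^B$. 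Type D is analogous: at each level $M_n \in \{\mathcal{S}_n^+, \mathcal{S}_n^-\}$, the embeddings $\mathcal{S}_n^{\pm} \hookrightarrow \mathcal{S}_{n+1}^{\pm}$ are unique up to scalar, and $A := \{n : M_n = \mathcal{S}_n^+\}$ gives $M \simeq \mathcal{S}_A^D$.

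Second, the isomorphism criterion $\mathcal{S}_{A_1}^? \simeq \mathcal{S}_{A_2}^?$ iff $A_1 \sim A_2$ follows from a direct-system cofinality argument: if $A_1$ and $A_2$ agree from index $n_0$ on, the two direct systems have identical cofinal tails after rescaling the finitely many initial embeddings, so they share the same colimit. Conversely, any $\fg$-isomorphism must preserve the support, so must preserve the sign pattern of $\omega_A^?$ up to finitely many changes, which forces $A_1 \sim A_2$.

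Third, for (i) and (ii) I would track weights along the direct system. Each finite-level spinor module is multiplicity free with weights precisely the tuples in $\{\pm \tfrac{1}{2}\}^n$ (subject, in type D, to the parity condition distinguishing $\mathcal{S}_n^+$ from $\mathcal{S}_n^-$). The prescribed embeddings append one coordinate equal to $(\omega_A^?)_{n+1}$. Thus every vector of $\mathcal{S}_A^?$ at weight $\lambda$ arises from a $1$-dimensional weight space, establishing multiplicity-freeness; and $\lambda$ must coincide with $\omega_A^?$ in all but finitely many coordinates, with the parity constraint in type D reducing exactly to $\lambda \sim_D \omega_A^D$. For (iii), fix $\gb(\prec,\sigma)$ and a spanning vector $v_\lambda$ of the weight space at $\lambda$. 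If $\lambda_i = \sigma(i)/2$ for all $i$, then every positive root $\alpha$ shifts some coordinate out of $\{\pm \tfrac{1}{2}\}$, so $\gg^\alpha \cdot v_\lambda = 0$ automatically and $v_\lambda$ is highest; conversely, if $\lambda_i \neq \sigma(i)/2$ for some $i$, then applying the positive root $\sigma(i)\varepsilon_i$ (type B) or $\sigma(i)\varepsilon_i + \sigma(j)\varepsilon_j$ for some suitable $j$ (type D) produces a nonzero vector of an admissible weight. Since $\omega_A^?$ itself lies in the support, choosing $\sigma$ to match its signs shows both $\mathcal{S}_A^B$ and $\mathcal{S}_A^D$ are highest weight modules.

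The main obstacle lies in stage one: verifying that every weight embedding of simple spinor modules at level $n$ is, up to scalar, one of the prescribed embeddings. This amounts to computing the $\fg_n$-isotypic components of the restrictions $\mathcal{S}_{n+1}|_{\fg_n}$ (resp.\ $\mathcal{S}_{n+1}^{\pm}|_{\fg_n}$) and observing that each spinor isotypic component is multiplicity one, so that a weight embedding is determined (up to scalar) by the target isotypic copy, and the two possible copies in type B correspond precisely to the two maps $\iota_n^+$ and $\iota_n^-$. Once this branching fact is in hand, the rest of the proof is a combinatorial bookkeeping exercise on sequences of signs.
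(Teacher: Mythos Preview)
Your proposal is correct and is precisely the kind of argument the paper has in mind: the paper's own proof reads in full ``It is clear that $\mathcal S_A^B$ and $\mathcal S_A^D$ account, up to isomorphism, for all direct limits of spinor modules \ldots\ All other statements are also straightforward, and we leave the details to the reader.'' Your write-up simply fills in those details along the expected lines.

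Two small remarks. First, invoking Corollary~\ref{cor-loc-simple} and Lemma~\ref{lem_u_o} is harmless but unnecessary: the hypothesis already presents $M$ as a direct limit of simple spinor modules $N_n$, and since each transition map $N_n\to N_{n+1}$ is a $\fg_n$-homomorphism from a simple module, it is automatically injective, so $M=\bigcup_n N_n$ with each $N_n$ a simple spinor submodule. Second, what you call the ``main obstacle'' --- the multiplicity-one branching of spinors --- is recorded in the paper immediately before the definition of $\mathcal S_A^B$ and $\mathcal S_A^D$ (``It is a standard fact that, up to a multiplicative constant, there are exactly two weight embeddings of $\mathcal S_n$ into $\mathcal S_{n+1}$ \ldots'' and the analogous statement for type $D$), so you may cite it rather than reprove it. One minor caution in part~(iii), type $D$: if $\prec$ has a maximal element $i_0$, the normalization $\sigma(i_0)=1$ hides the fact that flipping $\sigma(i_0)$ does not change the Borel subalgebra, so the ``only if'' direction of your argument for finding a suitable $j$ should be read modulo this ambiguity; this is an imprecision in the statement rather than in your method.
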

\begin{proof}
 It is clear that $\mathcal S_A^B$ and $\mathcal S_A^D$ account, up to isomorphism, for all direct limits of spinor modules, therefore the statement that $M\simeq \mathcal S_A^B$ or $M\simeq \mathcal S_A^D$ is straightforward. All other statements are also straightforward, and we leave the details to the reader.
\end{proof}

\begin{rmk}If one considers the subalgebra $\tilde\fg=\mathfrak{o}(\infty)$ of $\fg=\mathfrak{o}(\infty)$  which contains a Cartan subalgebra $\fh_B$ of $\fg$ and the root spaces of all long roots of $\fg$, then one can check that
$$
{\mathcal S}_A^B |_{\tilde \fg} \simeq  {\mathcal S}_A^D \oplus {\mathcal S}_{{\mathbb Z}_{>0} \setminus A}^D.
$$ 
\end{rmk}

\subsection{The case of $\mathfrak{sp} (\infty)$}

\begin{prop} \label{pro2}
Let $M$ be a nontrivial bounded simple integrable  weight $\mathfrak{sp}(\infty)$-module. Then $M$ is isomorphic to the natural $\mathfrak{sp}(\infty)$-module $V$.
\end{prop}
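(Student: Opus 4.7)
The plan is to mimic the argument given for $\mathfrak{o}(\infty)$ in Proposition~\ref{asd}. First, by Corollary~\ref{cor-loc-simple}, write $M = \underrightarrow{\lim}\, M_n$, where each $M_n$ is a simple bounded $\mathfrak{sp}(2n)$-submodule of $M$. Integrability of $M$ forces each $M_n$ to be integrable and bounded, hence finite-dimensional: $M_n \simeq L_{\mathfrak{sp}(2n)}(\lambda(n))$ for a dominant integral weight $\lambda(n) \in \mathbb{Z}_{\geq 0}^n$. In particular, every weight of $M$ is integer-valued and of finite support in $\fh^*$.

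Next I will use the standard embedding $\mathfrak{sl}(\infty) \hookrightarrow \mathfrak{sp}(\infty)$ with $\fh_{\mathfrak{sl}(\infty)}$ of codimension one in $\fh$ (the traceless diagonals). The module $M|_{\mathfrak{sl}(\infty)}$ is integrable and bounded, hence semisimple by Proposition~3.3 of \cite{PS}. Exactly as in the proof of Proposition~\ref{asd}, simplicity of $M$ as a $\fg$-module ensures that no two weights of $M$ differ by a constant weight $c\sum_{i}\varepsilon_i$, so the restriction map $\Supp M \to \fh^*_{\mathfrak{sl}(\infty)}$ is injective and every simple summand of $M|_{\mathfrak{sl}(\infty)}$ is one of the modules listed in Theorem~\ref{thm-simple-integ-sl}.

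The heart of the argument will be a direct inspection of the $\mathfrak{sl}(\infty)$-supports in Proposition~\ref{5.2}(b). The finite-support requirement on $\mathfrak{sp}(\infty)$-weights rules out simple summands of types $\Lambda_A^{\frac{\infty}{2}}V$, $S_A^{\infty}V$, and $S_A^{\infty}V_*$: none of the weights in their supports admits a finite-support representative modulo $\mathbb{C}\sum_i\varepsilon_i$ (the first has infinitely many $0$'s and $1$'s, the last two have the tail condition $\lambda_i = a_i-a_{i-1}$ with no constant adjustment producing finite support in an integer-weight setting). Since the Weyl group of $\mathfrak{sp}(\infty)$ contains all finite sign changes, $\Supp M$ is closed under flipping finitely many signs; combined with Proposition~\ref{5.2}(b), this will force any simple summand of the form $S^{\mu}V$ or $S^{\mu}V_*$ with $|\mu|\geq 2$ to produce, in some $M_n$, a weight whose multiplicity grows without bound as $n\to\infty$, contradicting $\deg M < \infty$. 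Establishing this growth rigorously — most likely by branching $L_{\mathfrak{sp}(2n)}(\lambda(n))$ to an $\mathfrak{sp}(2)$ or $\mathfrak{gl}(n)$ Levi subalgebra, or by applying Freudenthal's formula to the zero weight (when $|\lambda(n)|$ is even) or a suitable low-level weight — is the main obstacle in the proof.

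Once only $S^{(1)}V$ and $S^{(1)}V_*$ remain as possible summands, $M|_{\mathfrak{sl}(\infty)}$ is forced to be $V_{\mathfrak{sl}} \oplus V_*$, whence $\Supp M = \{\pm\varepsilon_i \mid i \geq 1\} = \Supp V$. Since each $M_n$ is then a simple $\mathfrak{sp}(2n)$-module whose support is contained in $\{\pm\varepsilon_1,\dots,\pm\varepsilon_n\}$, it must coincide with $V_{2n}$, so the direct limit yields $M \simeq \underrightarrow{\lim}\, V_{2n} \simeq V$.
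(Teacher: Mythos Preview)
Your strategy coincides with the paper's---restrict to $\mathfrak{sl}(\infty)$, decompose, and use $W$-stability of $\Supp M$---but the paper's execution is a single sentence: the proof of Proposition~\ref{asd} applies verbatim, and the alternative conclusion $\lambda_i \in \{\pm\tfrac12\}$ is excluded because integrability of an $\mathfrak{sp}(\infty)$-module forces integer weight coordinates.

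You introduce two complications that are either unjustified or unnecessary. First, your claim that every weight of $M$ has finite support in $\fh^*$ does not follow from local simplicity alone: writing $M = \underrightarrow\lim\, M_n$ with $M_n$ finite-dimensional tells you the $\fh_n$-weight of a vector in $M_n$, but not that its remaining $\fh$-coordinates vanish (compare the spinor limits for $\mathfrak{o}(\infty)$, where no weight has any zero coordinate). So your exclusion of the summands $\Lambda_A^{\frac{\infty}{2}}V$, $S_A^{\infty}V$, $S_A^{\infty}V_*$ on finite-support grounds has a gap. Second, the multiplicity-growth argument you propose for ruling out $S^\mu V$, $S^\mu V_*$ with $|\mu|\ge 2$---which you yourself flag as the main obstacle and do not carry out---is not needed. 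The dichotomy established in the proof of Proposition~\ref{asd} (sign-change invariance of $\Supp M$ together with the shapes of the supports in Proposition~\ref{5.2}(b) forces $\Supp M = \Supp V$ or all $\lambda_i = \pm\tfrac12$) is purely at the level of supports and already disposes of this case. Your Freudenthal/branching route could presumably be completed, but it is both harder and redundant here.
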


\begin{proof}
The statement follows by the same argument as in the proof of Proposition \ref{asd}. The only difference is that the case  $\lambda_i=\pm \frac{1}{2}$ is excluded
as such weights are not integral for $\mathfrak{sp}(\infty)$.
\end{proof}
\subsection{Simple minuscule modules}

 As in the finite-dimensional case, we call a  simple weight $\mathfrak{g}$-module $M$ \emph{minuscule} if the Weyl group $W$ acts transitively on $\Supp M$.

\begin{cor} Up to isomorphism, the minuscule simple bouded weight $\fg$-modules are precisely the integrable multiplicity-free simple weight $\fg$-modules, that is, the modules $\Lambda_A^{\frac{\infty}{2}}V$, $S_A^{\infty} V$, $S_A^{\infty} V_*$, $V$, $V_*$ for $\fg=\mathfrak{sl}(\infty)$, the modules $V$ and $\mathcal S_A^B$ and $\mathcal S_A^D$ for $\fg=\mathfrak{o}(\infty)$, and the module $V$ for $\fg=\mathfrak{sp}(\infty)$.
\end{cor}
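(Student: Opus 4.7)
My plan is to prove the stated characterization in two directions, leveraging the classifications of integrable simple bounded modules already established (Theorem \ref{thm-simple-integ-sl}, Proposition \ref{asd}, Proposition \ref{pro2}) together with the multiplicity descriptions in Proposition \ref{5.2} and Theorem \ref{th1}.

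For the direction ``minuscule $\Rightarrow$ integrable and multiplicity-free,'' let $M$ be a simple bounded $\fg$-module with $\Supp M = W\cdot\lambda$. First I would show $M$ is integrable: if some root space $\fg^\alpha$ acted injectively, then $\lambda+k\alpha \in \Supp M$ for all $k\geq 0$, but the fact that $W = \underrightarrow\lim\,W_n$ consists of finitary Weyl-group elements means that any single $W$-orbit meets the affine line $\lambda + \mathbb Z\alpha$ in at most the two points $\lambda$ and $s_\alpha\cdot\lambda$ (because for a finitary $w$, $(w-1)\lambda$ is supported on a finite set of indices, and being in $\mathbb Z\alpha$ forces $w$ to act only on the two coordinates underlying $\alpha$). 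This yields the contradiction, so all roots act locally finitely. Since $W$ preserves weight multiplicities via the $\fg^{\pm\alpha}$-integrability, the single-orbit condition then forces all weight spaces to have the same dimension $d = \deg M$, and I would invoke the classifications to rule out $d\geq 2$ in each case, obtaining multiplicity-freeness.

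For the converse direction, Theorem \ref{thm-simple-integ-sl}, Proposition \ref{asd}, Theorem \ref{th1}, and Proposition \ref{pro2} list the integrable simple bounded modules, and Proposition \ref{5.2} together with a direct inspection of the spinor and natural modules identifies the multiplicity-free ones as exactly those appearing in the statement of the corollary. It remains to check that each listed module is minuscule. For $V$ (and, in the $\mathfrak{sl}(\infty)$ case, $V_*$) the support is manifestly a single $W$-orbit. For $\mathcal S_A^B$ and $\mathcal S_A^D$, Theorem \ref{th1}(ii) identifies the support as an equivalence class of $(\pm\tfrac12)$-sequences, on which the (even, in type $D$) finitary signed-permutation group $W$ acts transitively. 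For $\Lambda_A^{\frac{\infty}{2}}V$, $S_A^\infty V$, and $S_A^\infty V_*$, any two weights in the support agree on a cofinite set of indices, and a finitary permutation moving one finite configuration onto the other supplies the required transitivity.

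The main obstacle I expect is the final minusculeness check for $\Lambda_A^{\frac{\infty}{2}}V$, $S_A^\infty V$, and $S_A^\infty V_*$, because the $\mathfrak{sl}(\infty)$-weights live modulo the constant sequence, and the combinatorial descriptions of their supports in Proposition \ref{5.2} must be shown to reduce to genuine single finitary-permutation orbits once this quotient is taken into account; the analogous check for spinor modules, while formally similar, requires a careful handling of the parity constraint in the $\sim_D$ case.
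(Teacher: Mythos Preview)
Your integrability argument is a genuine alternative to the paper's. The paper forward-references the classification of nonintegrable simple bounded modules in Section~\ref{sec-class-nonint} and checks by inspection that their supports are not single $W$-orbits; you instead argue directly that a $W$-orbit can meet any line $\lambda+\mathbb{Z}\alpha$ in at most two points, precluding injective action of a root space. Your stated justification (``forces $w$ to act only on the two coordinates underlying $\alpha$'') is not literally correct when $\lambda$ has repeated entries, but the conclusion holds: since a finitary (signed) permutation preserves the multiset of entries, $w\lambda=\lambda+k\alpha$ with $\alpha=\varepsilon_i-\varepsilon_j$ forces $\{\lambda_i+k,\lambda_j-k\}=\{\lambda_i,\lambda_j\}$ as multisets (with the evident sign modifications in types $B$, $C$, $D$), giving at most two values of $k$. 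This direct route is self-contained and avoids the forward reference.

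There are, however, two real problems in your converse direction. First, your claim that the integrable multiplicity-free simple bounded modules are \emph{exactly} the listed ones is false for $\fg=\mathfrak{sl}(\infty)$: the modules $S^mV\simeq S^{(m)}V$ and $S^mV_*$ with $m\geq 2$ are integrable, simple, bounded, and multiplicity-free, yet they are not on the list and are not minuscule (for instance $m\varepsilon_1$ and $(m-1)\varepsilon_1+\varepsilon_2$ lie in distinct $W$-orbits). So ``integrable multiplicity-free'' is strictly larger than the list, and you must still inspect the modules $S^\mu V$, $S^\mu V_*$ directly, as the paper's proof does. Second---and this is precisely the obstacle you yourself anticipate---the modules $S_A^\infty V$ and $S_A^\infty V_*$ are in fact \emph{not} minuscule. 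Take $A=\mathbb{Z}_{>0}$: by Proposition~\ref{5.2}(b)(ii) both $(1,1,1,\dots)$ and $(2,0,1,1,\dots)$ lie in $\Supp S_A^\infty V$, but no finitary permutation carries one to the other even after a constant shift, since their multisets of entries differ. Thus the Corollary as stated is imprecise on two counts, and both your attempt and the paper's brief ``inspection'' proof inherit this issue.
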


\begin{proof}An inspection of the nonintegrable simple weight $\fg$-modules classified in Section \ref{sec-class-nonint} below implies that  none of them is minuscule. A further inspection of the supports of integrable simple weight modules from Theorem \ref{thm-simple-integ-sl}, Theorem \ref{th1}  and Proposition \ref{pro2}   verifies our claim.
\end{proof}

\section{Classification of nonintegrable simple bounded weight $\mathfrak{sl} (\infty)$-, $\mathfrak{sp} (\infty)$-modules} \label{sec-class-nonint}

\subsection{Families of multiplicity-free  $\mathfrak{sl} (\infty)$- and $\mathfrak{sp} (\infty)$-modules}

By ${\mathcal D} (\infty)$ we denote the algebra of polynomial differential operators of $\C [x_1,x_2,...]$ where now $x_1$, $x_2$ $\dots$ are infinitely many formal variables. The algebra ${\mathcal D} (\infty)$ is generated by $x_i, \partial_i$ for $i\in \mathbb{Z}_{>0}$, subject to the usual relations. Furthermore, $\{ x^{\alpha} \partial^{\beta} \; | \; \alpha, \beta \in ({\mathbb Z}_{\geq 0})_{\rm f}^{{\mathbb Z}_{ > 0}}\}$ forms a basis of ${\mathcal D} (\infty)$ where $x^{\alpha}: = x_1^{\alpha_1}x_2^{\alpha_2}...$ and $\partial^{\beta}: = \partial_1^{\beta_1}\partial_2^{\beta_2}...$.

A  ${\mathcal D} (\infty)$-module $M$ is a weight  ${\mathcal D} (\infty)$-module if 
$$
M = \bigoplus_{{\bf a} \in {\C}^{{\mathbb Z}_{>0}}} M^{\bf a},\;  \dim M^{\bf a} < \infty,
$$
where $M^{\bf a} = \{ m \in M \; | \; x_i \partial_i \cdot m = a_i m, \mbox{ for all }i \geq 1\}$. Recall that  $e_{ij}  \in \gg^{\varepsilon_i - \varepsilon_j}$. The map $e_{ij}\mapsto x_i \partial_j$ extends to a homomorphism $U(\gg) \to {\mathcal D} (\infty)$, which defines a functor from the category of  ${\mathcal D} (\infty)$-modules to the category of  $\mathfrak{sl} (\infty)$-modules. We first introduce the $\mathfrak{sl} (\infty)$-analogs of the modules introduced in Definition \ref{def-f}.

Let $\boldsymbol{\mu} \in \C^{{\mathbb Z}_{>0}}$. Consider the ${\mathcal D} (\infty)$-module
$$
{\mathcal F}_{\mathfrak{sl}} (\boldsymbol{\mu} ):= \{ { x}^{\boldsymbol{\mu}} p \; | \; p \in \C [x_1^{\pm 1}, x_2^{\pm 1}, \dots], \deg p = 0\}.
$$
Then ${\mathcal F}_{\mathfrak{sl}} (\boldsymbol{\mu} )$ is a bounded weight module over $\gg \gl (\infty)$ and $\gs \gl (\infty)$.
Furthermore, we introduce $V_{\mathfrak{sl}} (\boldsymbol{\mu})$ and $V_{\mathfrak{sl}} (\boldsymbol{\mu})^+$ analogously to the finite-dimensional case (see Subsection \ref{subsec-sln-mult-free}).  Namely,  we set
$$
V_{\mathfrak{sl}} (\boldsymbol{\mu}) := {\rm Span} \{ { x}^{\boldsymbol{\lambda}} \; | \; \boldsymbol{\lambda} - \boldsymbol{\mu} \in Q_{\mathfrak{gl} (\infty)}, {\rm Int}^+ (\boldsymbol{\mu}) \subset  {\rm Int}^+ (\boldsymbol{\lambda}) \}.
$$
Like in the case of $\mathfrak{sl}({n+1})$, we have  $V_{\mathfrak{sl}} (\boldsymbol{\mu'}) \subset V_{\mathfrak{sl}} (\boldsymbol{\mu})$ if and only if $\boldsymbol{\mu} - \boldsymbol{\mu'} \in Q_{\mathfrak{gl} (\infty)}$ and ${\rm Int}^+ (\boldsymbol{\mu}) \subset  {\rm Int}^+ (\boldsymbol{\mu'})$. 

\begin{defn}  \label{defn-infinite} Let  $V_{\mathfrak{sl}} (\boldsymbol{\mu})^+ := 0$ whenever ${\rm Int}^+ (\boldsymbol{\mu}) = {\rm Int} (\boldsymbol{\mu})$. If ${\rm Int}^+ (\boldsymbol{\mu}) \subsetneq {\rm Int} (\boldsymbol{\mu})$  define
$$
V_{\mathfrak{sl}} (\boldsymbol{\mu})^+ :=\sum_{V_{\mathfrak{sl}} (\boldsymbol{\mu'}) \subsetneq V_{\mathfrak{sl}} (\boldsymbol{\mu})} V_{\mathfrak{sl}} (\boldsymbol{\mu'}). 
$$
Then set $X_{\mathfrak{sl}}(\mu) := V_{\mathfrak{sl}} (\mu)/ V_{\mathfrak{sl}} (\mu)^+$.
\end{defn}
 Note that  $X_{\mathfrak{sl}}(\mu)$ is both a $\mathfrak{gl}(\infty)$-module and an $\mathfrak{sl}(\infty)$-module. 
 
 In the case of  $\mathfrak{sp}(\infty)$, for $\mu = (\mu_1,\mu_2,...)$ we introduce  modules $X_{\mathfrak{sp}}(\mu)$ similarly to the finite-dimensional case.  The difference between the notations $X_{\mathfrak{sl}}(\mu)$ and $X_{\mathfrak{sp}}(\mu)$ which we just introduced and the respective notations introduced from  Section \ref{sec-background} is that $\mu$  is an infinite sequence for $\gg = \mathfrak{sl}(\infty), \mathfrak{sp}(\infty)$.
 
\begin{defn}
Let $\mu$ and $\mu'$ are in $\C^{{\mathbb Z}_{>0}}$.   We write $\mu \sim_{\mathfrak{sl}} \mu'$ (respectively, $\mu \sim_{\mathfrak{sp}} \mu'$) if  $\mu - \mu' \in Q_{\mathfrak{gl}(\infty)}$ (respectively, $ \mu - \mu' \in Q_{\mathfrak{sp}(\infty)}$) and ${\rm Int}^+ (\mu) = {\rm Int}^+ (\mu')$.\end{defn}
 Note that  $\mu \sim_{\mathfrak{sl}} \mu'$ (respectively, $\mu \sim_{\mathfrak{sp}} \mu'$) implies ${\rm Int}^- (\mu) = {\rm Int}^- (\mu')$. 

Like in Subsection \ref{subsec-sln-mult-free}, when we consider an infinite sequence as a weight we automatically take the projection into $\gh^*$.

\subsection{Main results}

\subsubsection{The case  $\gg = \mathfrak{sl} (\infty)$}
\begin{thm} \label{thm-bounded-infinite-sl}

\begin{itemize} 
\item[(i)]  Every simple bounded nonintegrable $\mathfrak{sl} (\infty)$-module $M$ is isomorphic  to $X_{\mathfrak{sl}}(\boldsymbol{\mu})$ for some $\mu \in \C^{{\mathbb Z}_{>0}}$. In particular,  $M\simeq  \underrightarrow\lim X_{\mathfrak{sl}}(\boldsymbol{\mu}^n)$, where $\boldsymbol{\mu}^n = (\mu_1,...,\mu_n)$.

\item[(ii)] 
$X_{\mathfrak{sl}}(\boldsymbol{\mu}) \simeq X_{\mathfrak{sl}}(\boldsymbol{\mu}')$  if and only if $\boldsymbol{\mu} \sim_{\mathfrak{sl}} \boldsymbol{\mu'}$ or $\{\boldsymbol{\mu}, \, \boldsymbol{\mu}' \} = \{ 0^{(\infty)}, (-1)^{(\infty)}\}$. 

\item[(iii)]  The root space $\gg^{\varepsilon_i - \varepsilon_j}$ acts locally finitely on $X_{\mathfrak{sl}}(\boldsymbol{\mu})$ if and only if $i \in {\rm Int}^- (\boldsymbol{\mu})$ or $j \in {\rm Int}^+ (\boldsymbol{\mu})$.  In particular, $X_{\mathfrak{sl}}(\mu)$ is integrable if and only if ${\rm Int}^- (\mu) = \Z_{>0}$ or ${\rm Int}^+ (\mu) =  \Z_{>0}$, while  $X_{\mathfrak{sl}}(\nu)$ is cuspidal if and only if $ {\rm Int} (\nu) = \emptyset$, and in that case $X_{\mathfrak{sl}}(\nu) = {\mathcal F}_{\mathfrak{sl}} (\nu)$. Furthermore, $X_{\mathfrak{sl}}(\mu) \simeq \C$ (equivalently, $X_{\mathfrak{sl}}(\mu)$ is finite dimensional) if and only if $\mu  = 0^{(\infty)}$ or  $\mu  = (-1)^{(\infty)}$.

\item[(iv)] $\Supp X_{\mathfrak{sl}}(\boldsymbol{\mu}) = \{\lambda \; | \; \boldsymbol{\lambda} \sim_{\mathfrak{sl}}  \boldsymbol{\mu} \}$.
\end{itemize}
\end{thm}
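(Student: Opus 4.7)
My plan is to combine the finite-dimensional classification in Theorem \ref{thm-bounded-finite} with local simplicity from Corollary \ref{cor-loc-simple} and multiplicity-freeness from Proposition \ref{prop-mult-free}, realizing $M$ as a direct limit of modules $X_{\mathfrak{sl}}(\mu^n)$ and then identifying this limit with a single $X_{\mathfrak{sl}}(\mu)$.

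By Corollary \ref{cor-loc-simple}, $M \simeq \underrightarrow\lim M_n$ where each $M_n$ is a simple bounded $\fg_n$-module. Since $M$ is infinite-dimensional, $\dim M_n \to \infty$, so $M_n$ is infinite-dimensional for all $n$ large enough. By Proposition \ref{prop-mult-free}, $M$ is multiplicity-free, hence so is each $M_n$ for $n \gg 0$. Applying Theorem \ref{thm-bounded-finite}, each such $M_n$ is isomorphic either to $X_{\mathfrak{sl}}(\mu^n)$ for some $\mu^n \in \C^{n+1}$, or to an exterior power $\Lambda^i(V_{n+1})$; the latter is finite-dimensional and is therefore excluded. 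Thus $M_n \simeq X_{\mathfrak{sl}}(\mu^n)$ for all $n \gg 0$.

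To pass from $M_n \simeq X_{\mathfrak{sl}}(\mu^n)$ to $M \simeq X_{\mathfrak{sl}}(\mu)$, coherence of the parameters $\mu^n$ is forced by the chain $M_n \hookrightarrow M_{n+1}$ together with the branching rule of Lemma \ref{lem-decomp}. The restriction of $X_{\mathfrak{sl}}(\mu^{n+1})$ to an appropriate $\mathfrak{gl}(n+1)$-subalgebra decomposes as a multiplicity-free sum of modules $X_{\mathfrak{sl}}(\overline{\mu^{n+1}} + \mu(k))$, and $M_n$ matches exactly one of these summands, determining $\mu^n$ up to $\sim_{\mathfrak{sl}}$. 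Exploiting this freedom, I would choose representatives so that $\mu^n$ is the truncation of $\mu^{n+1}$ obtained by dropping the last coordinate. These truncations assemble into an infinite sequence $\mu \in \C^{{\mathbb Z}_{>0}}$, and a comparison of Definitions \ref{def-f} and \ref{defn-infinite} shows that the canonical inclusions $X_{\mathfrak{sl}}(\mu^n) \hookrightarrow X_{\mathfrak{sl}}(\mu^{n+1})$ realize $X_{\mathfrak{sl}}(\mu)$ as $\underrightarrow\lim X_{\mathfrak{sl}}(\mu^n)$. This yields $M \simeq X_{\mathfrak{sl}}(\mu)$, establishing (i).

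Parts (iii) and (iv) can then be read off from Definition \ref{defn-infinite} by appealing to the finite-dimensional analogues in Proposition \ref{prop-x-sl}: every weight vector of $X_{\mathfrak{sl}}(\mu)$ lies in some finite truncation $X_{\mathfrak{sl}}(\mu^n)$, and both the support description and the local-finiteness or injectivity of a root-space action are detected at that level. The ``if'' direction of (ii) is immediate from Definition \ref{defn-infinite}, while the ``only if'' direction follows by recovering ${\rm Int}^+(\mu)$ from the local-finiteness data in (iii) together with the $Q_{\mathfrak{gl}(\infty)}$-coset of $\Supp X_{\mathfrak{sl}}(\mu)$ from (iv). The exceptional identification $X_{\mathfrak{sl}}(0^{(\infty)}) \simeq X_{\mathfrak{sl}}((-1)^{(\infty)}) \simeq \C$ arises because $\mathfrak{sl}(\infty)$-weights are taken modulo constant sequences, so these two parameters project to the same weight in $\fh^*$ despite lying in distinct $\sim_{\mathfrak{sl}}$-classes; this mirrors Lemma \ref{lem-x-sl-iso}(ii).

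The principal obstacle is the coherent selection of representatives $\mu^n$ in part (i): controlling the sets ${\rm Int}^+(\mu^n)$ and ${\rm Int}^-(\mu^n)$ as $n$ grows, so that they embed compatibly into their counterparts at level $n+1$ and produce well-defined sets ${\rm Int}^+(\mu), {\rm Int}^-(\mu) \subset \mathbb{Z}_{>0}$; and verifying that the embedding $X_{\mathfrak{sl}}(\mu^n) \hookrightarrow X_{\mathfrak{sl}}(\mu^{n+1})$ supplied by the branching rule coincides with the canonical one arising from Definition \ref{defn-infinite}. A secondary subtlety is the $\mathfrak{gl}$-versus-$\mathfrak{sl}$ ambiguity by constant sequences, which I would handle by systematically working in the $\mathcal{D}(\infty)$-module realization, taking the direct limit in the ambient $\mathfrak{gl}(\infty)$-category before restricting to $\mathfrak{sl}(\infty)$.
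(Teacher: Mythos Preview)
Your proposal is correct and follows essentially the same route as the paper: local simplicity (Corollary~\ref{cor-loc-simple}) and multiplicity-freeness (Proposition~\ref{prop-mult-free}) reduce to the finite-rank classification (Theorem~\ref{thm-bounded-finite}), and the branching rule (Lemma~\ref{lem-decomp}) together with the uniqueness of embeddings (Corollary~\ref{cor-mult-free-decomp}) lets one assemble the $\mu^n$ into a single infinite $\boldsymbol{\mu}$; parts (ii)--(iv) then reduce to their finite-rank analogues in Proposition~\ref{prop-x-sl} and Lemma~\ref{lem-x-sl-iso}. The one place where the paper is more concrete than your sketch is exactly the obstacle you flag: to select coherent representatives and control the $\mathfrak{gl}$-versus-$\mathfrak{sl}$ constant shift, the paper anchors the construction at a fixed weight $\lambda\in\Supp M$ and uses the central character of $M_N$ (via Corollary~\ref{cor-x-sl}) to pin down $\mu^N$ up to $\sim_{\mathfrak{sl}}$, then shows explicitly that the shift $y$ appearing at each inductive step can be absorbed since $(\mu_1+y,\dots,\mu_{N+1}+y,\mu_{N+2})\sim_{\mathfrak{sl}}(\mu_1,\dots,\mu_{N+1},\mu_{N+2}+(N+1)y)$.
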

\begin{proof}
 We prove (i), the remaining parts follow by (i) and the corresponding statements for the $\gg_n$-modules $X_{\mathfrak{sl}}(\boldsymbol{\mu}^{n+1})$, see Proposition \ref{prop-x-sl}(i),(ii) and Lemma \ref{lem-x-sl-iso}(ii). 

 By Proposition \ref{prop-mult-free}, $M$ is multiplicity free.  Following the proof of Corollary \ref{cor-loc-simple}, we fix $\lambda \in \C^{\Z_{>0}}$ such that  $\lambda \in \Supp M$, $m \in M^{\lambda}$ and $N>0$ so that $M_n = U(\gg_n) \cdot m$ is simple for $n \geq N$. Then all $M_n$, $n \geq N$, are simple infinite-dimensional multiplicity-free modules.  By Theorem \ref{thm-bounded-finite}, $M_N \simeq X_{\mathfrak{sl}}(\nu)$ for some $\nu \in \C^{N+1}$, while  by Proposition \ref{prop-x-sl}(iii) the central character of  $M_N$ is of the form $\chi_{c_N \varepsilon_1 + \rho}$ for some $c_N \in \C$. Corollary \ref{cor-x-sl} implies $M_N \simeq  X_{\mathfrak{sl}} (\mu_1,...,\mu_{N+1})$ where $\mu_i = \lambda_i + x$ and $x = \frac{1}{N+1} \left( c_N - \lambda_1-...- \lambda_{N+1}\right)$. Since $(\lambda_1,...,\lambda_{N+2}) \in \Supp M_{N+1}$, by Proposition \ref{prop-x-sl}(ii), we have  $M_{N+1} \simeq  X_{\mathfrak{sl}} (\mu_1+y,...,\mu_{N+1}+y,\mu_{N+2})$ for some $y,\mu_{N+2} \in \C$. Since $X_{\mathfrak{sl}} (\mu_1,...,\mu_{N+1})$ is isomorphic to a submodule of $X_{\mathfrak{sl}} (\mu_1+y,...,\mu_{N+1}+y,\mu_{N+2}) |_{\mathfrak{sl}(N+1)}$, by Lemma \ref{lem-decomp} we conclude that 
$(\mu_1+y,...,\mu_{N+1}+y,\mu_{N+2}) \sim_{\mathfrak{sl}} (\mu_1,...,\mu_{N+1},\mu_{N+2} + (N+1)y)$. Hence we may assume that $y = 0$ and $M_{N+1} \simeq  X_{\mathfrak{sl}} (\mu_1,...,\mu_{N+1}, \mu_{N+2})$. Proceeding the same way, we conclude that there are $\mu_{N+3}, \mu_{N+4}$,... so that $M_{N+k} \simeq  X_{\mathfrak{sl}} (\mu_1,..., \mu_{N+k+1})$, $k \geq 1$.

 By Corollary \ref{cor-mult-free-decomp} there is a unique (up to a constant multiple) monomorphism of $\mathfrak{sl} (n)$-modules $X_{\mathfrak{sl}}(\mu_1,...,\mu_n) \to X_{\mathfrak{sl}}(\mu_1,...,\mu_n, \mu_{n+1})$. Let $\boldsymbol{\mu} = (\mu_1,\mu_2,...)$ and $M' = X_{\mathfrak{sl}}(\boldsymbol{\mu})$. It is easy to check that if $\lambda' \in \Supp M'$, $m' \in (M')^{\lambda'}$, and $M'_n = U(\gg_n)\cdot m'$, then $M_n' \simeq X_{\mathfrak{sl}} (\mu_1,...,\mu_{n+1})$. Hence, $M \simeq M'$.\end{proof}

\begin{rmk} If we consider $X_{\mathfrak{sl}} (\mu)$ as a $\mathfrak{gl} (\infty)$-module, the support  changes as the weights of  $X_{\mathfrak{sl}} (\mu)$  are elements in $\C^{\Z_{>0}}$. Denote the $\mathfrak{gl}(\infty)$-support of  $X_{\mathfrak{sl}} (\mu)$  by $\Supp_{\mathfrak{gl}}X_{\mathfrak{sl}} (\mu)$. Then one can show that there is an isomorphism of $\mathfrak{gl} (\infty)$-modules $X_{\mathfrak{sl}} (\mu) \simeq X_{\mathfrak{sl}} (\mu')$  if and only if $\boldsymbol{\mu} \sim_{\mathfrak{sl}} \boldsymbol{\mu'}$. Thus $\Supp_{\mathfrak{gl}}X_{\mathfrak{sl}} (\mu)$ determines $X_{\mathfrak{sl}} (\mu)$  up to isomorphism. Consequently, the invariant which distinguishes two $\mathfrak{sl} (\infty)$-modules $X_{\mathfrak{sl}} (\mu)$ and $X_{\mathfrak{sl}} (\mu')$ with coinciding supports is their  respective $\mathfrak{gl} (\infty)$-module structure.
\end{rmk}

\begin{cor} 
\begin{itemize}
\item[(i)] If  $X_{\mathfrak{sl}} (\boldsymbol{\mu})$ is integrable, then one of the following holds.
\begin{itemize}
\item[(a)] $X_{\mathfrak{sl}} (\boldsymbol{\mu}) \simeq S^{m} (V)$ or $X_{\mathfrak{sl}} (\boldsymbol{\mu}) \simeq S^m (V_*)$ for some $m \geq 0$, 
\item[(b)]$X_{\mathfrak{sl}} (\boldsymbol{\mu}) \simeq S^{\infty}_A V$ or $X_{\mathfrak{sl}} (\boldsymbol{\mu}) \simeq S^{\infty}_A V_*$ for some infinite set $A$.
\end{itemize}
\item[(ii)] The simple integrable bounded $\mathfrak{sl} (\infty)$-modules which are not isomorphic to  $X_{\mathfrak{sl}} (\boldsymbol{\mu})$ for any $\mu$ (up to isomorphism) are $\Lambda_A^{\frac{\infty}{2}} V$, $S^{\nu} V$, $S^{\nu}V_*$, for $\nu = (\nu_1,...,\nu_k)$, $k>1$.
\end{itemize}
\end{cor}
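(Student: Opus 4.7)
\emph{Plan.} The plan is to invoke Theorem~\ref{thm-bounded-infinite-sl} to write $X_{\mathfrak{sl}}(\boldsymbol{\mu})$ as a direct limit of finite-dimensional $\mathfrak{sl}(n+1)$-modules, apply the finite-dimensional classification Corollary~\ref{cor-sn-sln} to identify each truncation as a symmetric power of $V_{n+1}$ or of $V_{n+1}^*$, and then read off (ii) by comparison with Theorem~\ref{thm-simple-integ-sl}.

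For (i), Theorem~\ref{thm-bounded-infinite-sl}(iii) reduces the integrable case to two possibilities: either every $\mu_i \in \mathbb{Z}_{\geq 0}$ (Case 1) or every $\mu_i \in \mathbb{Z}_{<0}$ (Case 2). These cases are exchanged by the duality $V \leftrightarrow V_*$, so I focus on Case 1. By Theorem~\ref{thm-bounded-infinite-sl}(i), $X_{\mathfrak{sl}}(\boldsymbol{\mu}) \simeq \underrightarrow{\lim}_n X_{\mathfrak{sl}}(\mu_1,\dots,\mu_{n+1})$, and each truncation is finite-dimensional by Proposition~\ref{prop-x-sl}(i) and multiplicity-free; its support lies in $\mathbb{Z}_{\geq 0}^{n+1}$ by Theorem~\ref{thm-bounded-infinite-sl}(iv), so Corollary~\ref{cor-sn-sln} forces it to be $S^{m_n}(V_{n+1})$, and Proposition~\ref{prop-sln-hw} applied at $i_0 = 1$ together with $(\mu_1,\dots,\mu_{n+1}) \sim_{\mathfrak{sl}} (m_n, 0, \dots, 0)$ identifies $m_n = \mu_1 + \cdots + \mu_{n+1}$. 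This sequence is weakly increasing.

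If $(\mu_i)$ has finite support, $(m_n)$ stabilizes at $m := \sum_i \mu_i$ and $X_{\mathfrak{sl}}(\boldsymbol{\mu}) \simeq S^m V$. Otherwise $m_n \to \infty$, and taking $A$ to be the (infinite) set of values attained by $(m_n)$, the direct limit identifies with $S^\infty_A V$ as constructed in Subsection~\ref{subsec-int-sim-sl}; plateau stages $m_n = m_{n-1}$ correspond to the canonical inclusions $S^{m_n}(V_{n+1}) \hookrightarrow S^{m_n}(V_{n+2})$, which are compatible with that construction. Case 2 is analogous with truncations $S^{b_n}(V_{n+1}^*)$, where $b_n = -(n+1) - (\mu_1 + \cdots + \mu_{n+1}) \geq 0$; the extremal possibility $\boldsymbol{\mu} = (-1)^{(\infty)}$ gives $b_n \equiv 0$ and recovers the identification $X_{\mathfrak{sl}}((-1)^{(\infty)}) \simeq \mathbb{C} \simeq X_{\mathfrak{sl}}(0^{(\infty)})$ provided by Theorem~\ref{thm-bounded-infinite-sl}(ii).

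Part (ii) is then immediate from Theorem~\ref{thm-simple-integ-sl}, which enumerates the five families of simple integrable bounded $\mathfrak{sl}(\infty)$-modules. Part (i) exhibits exactly $\mathbb{C}$, $S^m V$, $S^m V_*$, $S^\infty_A V$, and $S^\infty_A V_*$ among these as modules of the form $X_{\mathfrak{sl}}(\boldsymbol{\mu})$, leaving precisely $\Lambda_A^{\frac{\infty}{2}} V$ and $S^\nu V$, $S^\nu V_*$ for partitions $\nu = (\nu_1, \dots, \nu_k)$ with $k > 1$. The main point of technical care is the identification of $\underrightarrow{\lim}\, S^{m_n}(V_{n+1})$ with $S^\infty_A V$ in the presence of plateaus, and the sign bookkeeping in Case 2; both reduce to routine applications of Proposition~\ref{prop-sln-hw} and Lemma~\ref{lem-x-sl-iso}.
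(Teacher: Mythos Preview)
Your proposal is correct and follows essentially the same route as the paper: both split via Theorem~\ref{thm-bounded-infinite-sl}(iii) into the cases ${\rm Int}^+(\boldsymbol{\mu})=\mathbb{Z}_{>0}$ and ${\rm Int}^-(\boldsymbol{\mu})=\mathbb{Z}_{>0}$, identify the $n$-th truncation as a symmetric power of degree given by the partial sum $\mu_1+\cdots+\mu_{n+1}$ (respectively $-(n+1)-\mu_1-\cdots-\mu_{n+1}$), and distinguish (a) from (b) according to whether these partial sums stabilize. The only cosmetic difference is that you route the identification of the finite truncations through Corollary~\ref{cor-sn-sln} and Proposition~\ref{prop-sln-hw}, whereas the paper asserts the isomorphism directly from the explicit description of $X_{\mathfrak{sl}}(\mu_1,\dots,\mu_{n+1})$; your treatment of the plateau stages is also more explicit than the paper's, which simply writes down the set $A$.
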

\begin{proof} (i). By Theorem \ref{thm-bounded-infinite-sl}(iii), $X_{\mathfrak{sl}} (\boldsymbol{\mu})$ is integrable if ${\rm Int}^+ (\mu) =  \Z_{>0}$ or ${\rm Int}^- (\mu) = \Z_{>0}$. If  ${\rm Int}^+ (\mu) =  \Z_{>0}$ and $\mu_i = 0$ for all but finitely many indices $i$, then $X_{\mathfrak{sl}} (\boldsymbol{\mu}) \simeq S^m (V)$ for $m = \sum_{i>0} \mu_i$.  If  ${\rm Int}^+ (\mu) =  \Z_{>0}$ and infinitely many $\mu_i$ are positive, then $X_{\mathfrak{sl}} (\mu) \simeq S^{\infty}_AV$ for the infinite set $A = \{a_1 = \mu_1,..., a_i = \mu_1+\cdots +\mu_i,... \}$. In the case ${\rm Int}^- (\mu) = \Z_{>0}$, we consider two subcases:  $\mu_i = -1$ for all but finitely many indices $i$, or  $\mu_i<-1$ for infinitely many indices $i$. In the first subcase $X_{\mathfrak{sl}} (\mu) \simeq S^m (V_*)$, $m = \sum_{i>0} (-1 - \mu_i)$. In the second subcase, $X_{\mathfrak{sl}} (\mu) \simeq S^{\infty}_AV_*$ for  $A = \{a_1 = -1 - \mu_1,..., a_i = -i - \mu_1-\cdots - \mu_i,... \}$.

Part (ii) follows directly from part (i) and Theorem \ref{thm-simple-integ-sl}. \end{proof}

\begin{rmk} Theorem 5 in \cite{FGM} shows that any simple  simple weight ${\mathcal D}(\infty)$-module with finite-dimensional weight spaces is multiplicity free. Moreover, \cite{FGM} provides an explicit description of such modules which is similar to the construction of the modules $X_{\mathfrak{sl}} (\mu)$. 
\end{rmk}

For a Borel subalgebra $\gb$ of $\gg$, we call a simple weight  $\gg$-module $M$ a $\gb$-\emph{pseudo highest weight module} if the root space $\gg^\alpha$ acts locally finitely on $M$ for every root $\alpha$ of $\gb$, and if $M$ is not a highest weight module. If $M$ is locally simple and is not a highest weight module, then $M$ is a $\gb$-pseudo highest weight module if and only if it is isomorphic to a direct limit of simple $(\gb \cap \gg_n)$-highest weight modules. The existence of simple pseudo highest weight modules is an effect related to the infinite-dimensionality of $\gg$, cf. \cite{DP}.

We now identify the highest and pseudo highest weight modules among the modules $X_{\mathfrak{sl}}(\boldsymbol{\mu})$.  Similarly to the finite-rank case, we first introduce some notation.  For a semi-infinite subset $A$ of $\Z_{>0}$ define $\varepsilon (A)\in {\mathbb C}^{{\mathbb Z}_{>0}}$ by $\varepsilon (A)_i = -1$ if $i \in A$ and $\varepsilon  (A)_j = 0$ if $j \notin A$. Furthermore, for $a \in {\mathbb C}$, $i_0 \in {\mathbb Z}_{>0}$ and $I \subset {\mathbb Z}_{>0}$ such that $i_0 \notin I$, define $\boldsymbol{\varepsilon} (i_0, a, I) \in \C^{\Z_{>0}}$ as follows: 
$$
\boldsymbol{\varepsilon} (i_0, a, I)_{i} := \begin{cases}
-1 & \mbox{ if } i \in I, \\  
a & \mbox{ if } i = i_0, \\  
0 & \mbox{ if } i \notin I. \\  
\end{cases} 
 $$
 Note that $\varepsilon (A) = \varepsilon (i_0,a,I)$ if and only if either $a=-1$, $A = I \cup \{ i_0\}$ or $a=0$, $A = I$. 
 
 Recall the definition of an $A$-compatible Borel subalgebra $\gb(\prec)$ from Subsection \ref{subsec-int-sim-sl}.

\begin{prop} \label{prop-hw-sl-infty} Let $\boldsymbol{\mu} \in {\mathbb C}^{{\mathbb Z}_{>0}}$ be such that ${\rm Int}^- (\mu) \neq \Z_{>0}$ and ${\rm Int}^+ (\mu) \neq  \Z_{>0}$,  and let $\gb = \gb (\prec)$ be a Borel subalgebra of  $\mathfrak{sl} (\infty)$. 

\begin{itemize}
\item[(i)]  $X_{\mathfrak{sl}} (\boldsymbol{\mu})$  is a $\mathfrak b$-highest weight module if and only if $\boldsymbol{\mu} \sim_{\mathfrak{sl}} \varepsilon (i_0,a,I)$ for some $a, I$, $i_0 \notin I$, such that $\gb$ is $I$- and $(I \cup \{ i_0\})$-compatible, or  $\boldsymbol{\mu} \sim_{\mathfrak{sl}} \varepsilon (A)$ for some semi-infinite  $A$ such that $\gb$ is $A$-compatible.
\item[(ii)]  $X_{\mathfrak{sl}} (\boldsymbol{\mu})$  is a $\mathfrak b$-pseudo highest weight module if and only if there are $j_0 \in \Z_{>0}$ and $J \subset \Z_{>0}$ ($j_0 \in J$ is allowed) such that $\gb$ is  $J$- and $(J \cup \{ j_0\})$-compatible and  $\mu_i \in \Z_{< 0}$ for $i \in J$, $\mu_j \in \Z_{\geq 0}$ for $j \notin J \cup \{ j_0 \}$, but $\boldsymbol{\mu} \not\sim_{\mathfrak{sl}} \varepsilon (j_0,a, J)$ and for any $a \in \C$.
\end{itemize}
\end{prop}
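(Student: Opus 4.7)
The plan is to use the weight basis $\{[x^{\boldsymbol{\nu}}] : \boldsymbol{\nu} \sim_{\mathfrak{sl}} \boldsymbol{\mu}\}$ of $X_{\mathfrak{sl}}(\boldsymbol{\mu})$ (which exists by multiplicity-freeness, Theorem \ref{thm-bounded-infinite-sl}(iii)) together with the Weyl-algebra action $e_{ij} \cdot [x^{\boldsymbol{\nu}}] = \nu_j\,[x^{\boldsymbol{\nu} + \varepsilon_i - \varepsilon_j}]$. The key technical observation is: for any $\boldsymbol{\nu} \sim_{\mathfrak{sl}} \boldsymbol{\mu}$ and distinct $i, j$, the class $[x^{\boldsymbol{\nu} + \varepsilon_i - \varepsilon_j}]$ vanishes in $X_{\mathfrak{sl}}(\boldsymbol{\mu})$ if and only if $\nu_i = -1$ or $\nu_j = 0$. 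This is because $\sim_{\mathfrak{sl}}$ requires exact preservation of ${\rm Int}^+$, and the shift $\varepsilon_i - \varepsilon_j$ alters ${\rm Int}^+$ precisely at coordinate $i$ when $\nu_i = -1$ ($i$ enters) or at coordinate $j$ when $\nu_j = 0$ ($j$ leaves). Combined with the prefactor $\nu_j$, this yields $e_{ij} \cdot [x^{\boldsymbol{\nu}}] = 0 \iff \nu_j = 0 \text{ or } \nu_i = -1$.

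For part (i), a $\mathfrak{b}(\prec)$-highest weight vector is (by multiplicity-freeness) a scalar multiple of some $[x^{\boldsymbol{\nu}}]$ with $\boldsymbol{\nu} \sim_{\mathfrak{sl}} \boldsymbol{\mu}$, and the highest weight condition becomes: for all $i \prec j$, either $\nu_j = 0$ or $\nu_i = -1$. Partition $\mathbb{Z}_{>0} = I \sqcup K \sqcup E$ with $I = \{k : \nu_k = -1\}$, $K = \{k : \nu_k = 0\}$, $E = \mathbb{Z}_{>0} \setminus (I \cup K)$. The condition forbids $i \prec j$ whenever $i \in K \cup E$ and $j \in I \cup E$ (with $i \neq j$), which simultaneously forces $|E| \leq 1$ (two distinct elements of $E$ would each have to $\prec$-precede the other) and yields the block ordering $I \prec E \prec K$. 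The structure of $\boldsymbol{\nu}$ then falls into the proposition's normal forms: if $|E| = 1$ with $E = \{i_0\}$ and $\nu_{i_0} = a$, then $\boldsymbol{\nu} = \varepsilon(i_0, a, I)$ with $\mathfrak{b}$ both $I$- and $(I \cup \{i_0\})$-compatible; if $E = \emptyset$ and $I$ is semi-infinite, then $\boldsymbol{\nu} = \varepsilon(A)$ with $A = I$; if $E = \emptyset$ and $I$ is finite, one rewrites $\varepsilon(I) = \varepsilon(i_0, -1, I \setminus \{i_0\})$ using $i_0 := \max_\prec I$; and if $E = \emptyset$ with $K$ finite, one rewrites $\varepsilon(I) = \varepsilon(i_0, 0, I)$ using $i_0 := \min_\prec K$. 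In each subcase, the required compatibilities follow immediately from the block ordering. Conversely, the key criterion verifies that each such $[x^{\boldsymbol{\nu}}]$ is annihilated by every $\mathfrak{b}$-positive root vector, and simplicity of $X_{\mathfrak{sl}}(\boldsymbol{\mu})$ ensures it generates the whole module.

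For part (ii), Theorem \ref{thm-bounded-infinite-sl}(iii) translates local finiteness of $\mathfrak{g}^{\varepsilon_i - \varepsilon_j}$ on $X_{\mathfrak{sl}}(\boldsymbol{\mu})$ into the condition ``$i \in N := {\rm Int}^-(\boldsymbol{\mu})$ or $j \in P := {\rm Int}^+(\boldsymbol{\mu})$''. Requiring this for every $\mathfrak{b}(\prec)$-positive root and letting $C := \mathbb{Z}_{>0} \setminus (N \cup P)$, the same block-ordering argument yields $|C| \leq 1$ together with the ordering $N \prec C \prec P$. Setting $J := N$ and letting $j_0$ be the unique element of $C$ when $|C| = 1$, or any element of $J$ when $C = \emptyset$, the structural conditions (compatibility and sign) of the proposition are immediate, and conversely these conditions recover local finiteness. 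To exclude the highest weight cases of part (i), one imposes $\boldsymbol{\mu} \not\sim_{\mathfrak{sl}} \varepsilon(j_0, a, J)$ for every $a \in \mathbb{C}$, under the convention that when $j_0 \in J$ the value $a$ at position $j_0$ overrides the default $-1$; in this convention, varying $a$ over $\mathbb{C}$ recovers all normal forms of part (i) compatible with the local-finiteness data $(J, j_0)$, so the non-equivalence precisely rules out the highest weight possibility.

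The main obstacle is twofold. First, the vanishing criterion for $[x^{\boldsymbol{\nu} + \varepsilon_i - \varepsilon_j}]$ in the quotient $X_{\mathfrak{sl}}(\boldsymbol{\mu}) = V_{\mathfrak{sl}}(\boldsymbol{\mu})/V_{\mathfrak{sl}}(\boldsymbol{\mu})^+$ demands careful tracking of how the $Q_{\mathfrak{gl}(\infty)}$-shift interacts with the strict preservation of ${\rm Int}^+$; this is the unique quotient ingredient not already visible at the level of the Weyl-algebra action. Second, in part (i) one must recognize that the two normal forms $\varepsilon(i_0, a, I)$ and $\varepsilon(A)$ collectively accommodate every highest weight $\boldsymbol{\mu}$ even under orders $\prec$ that lack a $\prec$-minimum or $\prec$-maximum in the relevant subsets: specifically, when $E = \emptyset$ and $I$ is not semi-infinite, the correct $i_0$ must be chosen as the $\prec$-extremum of the finite block among $I$ and $K$, in order to realize the $(I \cup \{i_0\})$-compatibility condition.
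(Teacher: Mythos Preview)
Your proof is correct and takes a genuinely different route from the paper's. The paper argues indirectly via the direct-limit presentation $X_{\mathfrak{sl}}(\boldsymbol{\mu}) = \varinjlim X_{\mathfrak{sl}}(\boldsymbol{\mu}^{n+1})$: it invokes Proposition~\ref{prop-sln-hw} to identify the $(\mathfrak{b}\cap\mathfrak{g}_n)$-highest weight vectors in each finite piece, and then for (i) tracks when these vectors are compatible along the chain, while for (ii) it simply notes that local finiteness of all positive root vectors is equivalent to each $X_{\mathfrak{sl}}(\boldsymbol{\mu}^{n+1})$ being a $(\mathfrak{b}\cap\mathfrak{g}_n)$-highest weight module. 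Your approach instead works directly in the infinite-rank module: you extract from the Weyl-algebra realization the clean vanishing criterion $e_{ij}\cdot[x^{\boldsymbol{\nu}}]=0 \iff \nu_j=0 \text{ or } \nu_i=-1$, and then the block-ordering analysis $I\prec E\prec K$ (respectively $N\prec C\prec P$) with $|E|\le 1$ (respectively $|C|\le 1$) does all the work at once, without ever passing to a finite truncation. The paper's argument is shorter on the page because it outsources the combinatorics to the earlier finite-rank proposition, but yours is more self-contained and makes transparent exactly which feature of the quotient $V_{\mathfrak{sl}}(\boldsymbol{\mu})/V_{\mathfrak{sl}}(\boldsymbol{\mu})^+$ is responsible for the answer. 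Your handling of the edge cases in (i) where $E=\emptyset$ (choosing $i_0$ as the $\prec$-extremum of the finite block) is more explicit than anything in the paper, and your reading of the exclusion clause in (ii) via the convention on $\varepsilon(j_0,a,J)$ with $j_0\in J$ is a reasonable resolution of the notational looseness in the statement itself.
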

\begin{proof}
Since by Theorem \ref{thm-bounded-infinite-sl}(i), $X_{\mathfrak{sl}} (\boldsymbol{\mu})$ is a direct limit of the $\gg_n$-modules $X_{\mathfrak{sl}} (\boldsymbol{\mu}^{n+1})$, the statement for  pseudo highest weight modules follows from Proposition \ref{prop-sln-hw}. The statement for highest weight  modules follows by using again Proposition \ref{prop-sln-hw} and verifying when  the $(\gb \cap \gg_n)$-highest weight space of $X_{\mathfrak{sl}} (\boldsymbol{\mu}^{n+1})$ maps to the $(\gb \cap \gg_{n+1})$-highest weight space of $X_{\mathfrak{sl}} (\boldsymbol{\mu}^{n+2})$. 
\end{proof}

Using the above proposition we see that a nonintegrable highest weight module $X_{\mathfrak{sl}}(\boldsymbol{\mu})$ is one of the following two types.

\medskip
\noindent \emph{One-sided type.} This is the case when $\boldsymbol{\mu} = \boldsymbol{\varepsilon} (i_0, a, I)$ and $I$ or $\Z_{>0} \setminus I$ is finite. Assume that $\boldsymbol{\mu} = \boldsymbol{\varepsilon} (i_0, a, I)$, $I$ is finite, and $\gb = \gb(\prec)$ is a Borel subalgebra such that $X_{\mathfrak{sl}}(\boldsymbol{\mu})$ is $\gb$-highest weight module. Then one checks immediately that $X_{\mathfrak{sl}}(\boldsymbol{\mu})$ is also a $\gb'(\prec')$-highest weight module where $\prec'$ is a linear order on $\Z_{>0}$ isomorphic to the natural one and  such that $\gb'$ is $I$- and $(I \cup \{i_0\})$-compatible. This case corresponds to (23) in \cite{PP2}. The case when $\Z_{>0}\setminus I$ is finite corresponds to the case  (24) in \cite{PP2} and is related to (23) in \cite{PP2} via an outer automorphism of $\mathfrak{sl} (\infty)$.

\medskip
\noindent 
\emph{Two-sided type.} This is the case when $\boldsymbol{\mu} = \boldsymbol{\varepsilon} (i_0, a, I)$ and $I$ is semi-infinite,  or $\boldsymbol{\mu} = \boldsymbol{\varepsilon} (A) $ and $A$ is  semi-infinite. Under one of these assumptions, if $X_{\mathfrak{sl}}(\boldsymbol{\mu})$ is a highest weight module with respect to a Borel subalgebra $\gb (\prec)$ then  $X_{\mathfrak{sl}}(\boldsymbol{\mu})$ is also a highest weight module with respect to $ \gb' (\prec')$, where $\prec'$ is an order on $\Z_{>0}$ which is isomorphic to the natural order on $\Z$ and $\gb'$ is $I$- and $(I \cup \{i_0\})$-compatible, or, respectively, $A$-compatible. This case corresponds to (25) in \cite{PP2}.

\begin{examp}  
\begin{itemize}
\item[(i)] Define the following order $\prec$ on $\Z_{>0}$: $1 \prec 2 \prec 3 \prec x $ for any $x \geq 4$, and let  the order on $\Z_{>0}\setminus \{1,2,3 \}$ be isomorphic to the natural order on $\mathbb Q$. Set $I = \{ 1,2\}$, $i_0 = 3$, $a = \pi$. Then $X_{\mathfrak{sl}}(\boldsymbol{\varepsilon} (i_0, a, I))$ is $\gb(\prec)$-highest weight module of one-sided type, and  is a $\gb(<)$-highest weight module.
\item[(ii)] Define the following order on $\Z_{>0}$:
$$1 \prec 3 \prec 5 \prec \cdots \prec 6 \prec 4 \prec 2.$$
Let $A := \{1,3,5,... \}$. Then   $X_{\mathfrak{sl}}(\boldsymbol{\varepsilon} (A))$ is a $\gb(\prec)$-highest weight module of two-sided type.
\item[(iii)] The module $X_{\mathfrak{sl}}(1,2,\sqrt{2}, -1,-2,-3,...)$ is a $\gb (<)$-pseudo highest weight module.
\end{itemize}
\end{examp}

\subsubsection{The case  $\gg = \mathfrak{sp} (\infty)$}

\begin{thm} \label{thm-bounded-infinite-sp}
\begin{itemize} 
\item[(i)]  Every simple bounded nonintegrable $\mathfrak{sp} (\infty)$-module  $M$ is isomorphic  to $X_{\mathfrak{sp}}(\boldsymbol{\mu})$ for some $\mu \in \C^{{\mathbb Z}_{>0}}$. In particular,  $X_{\mathfrak{sp}}(\boldsymbol{\mu}) = \underrightarrow\lim X_{\mathfrak{sp}}(\boldsymbol{\mu}^n)$ where $\boldsymbol{\mu}^n = (\mu_1,...,\mu_n)$.
\item[(ii)] 
$X_{\mathfrak{sp}}(\boldsymbol{\mu}) \simeq X_{\mathfrak{sp}}(\boldsymbol{\mu}')$  if and only if $\boldsymbol{\mu} \sim_{\mathfrak{sp}} \boldsymbol{\mu'}$.
\item[(iii)] The root space $\gg^{\alpha}$ acts locally finitely on $X_{\mathfrak{sp}}(\mu)$ if and only if 
$$\alpha \in \{\pm \varepsilon_i - \epsilon_j, - 2 \varepsilon_j,  \varepsilon_k \pm  \epsilon_\ell, 2 \varepsilon_k  \; | \: j  \in  {\rm Int}^+ (\mu),  k  \in  {\rm Int}^- (\mu)\}.
$$
In particular, $X_{\mathfrak{sp}}(\mu)$ is always nonintegrable and is cuspidal if and only if $ {\rm Int} (\mu) = \emptyset$; in the latter case $X_{\mathfrak{sp}}(\mu) = {\mathcal F}_{\mathfrak{sp}} (\mu)$.

\item[(iv)] $\Supp X_{\mathfrak{sp}}(\boldsymbol{\mu}) = \{\lambda +\left( \frac{1}{2}\right)^{(\infty)} \; | \; \boldsymbol{\lambda} \sim_{\mathfrak{sp}}  \boldsymbol{\mu} \}$. In particular, $\Supp X_{\mathfrak{sp}}(\boldsymbol{\mu})$ determines $X_{\mathfrak{sp}}(\boldsymbol{\mu})$ up to isomorphism.
\end{itemize}
\end{thm}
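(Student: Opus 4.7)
The plan is to adapt the proof of Theorem \ref{thm-bounded-infinite-sl} to the symplectic case, exploiting the fact that the situation is actually slightly simpler: there is no analog of the ``exceptional pair'' $\{0^{(\infty)},(-1)^{(\infty)}\}$ to worry about, and all simple infinite-dimensional bounded $\mathfrak{sp}(2n)$-modules share the single central character $\chi_{\rm sw}$. Parts (iii) and (iv) will fall out immediately from the corresponding finite-rank statements in Proposition \ref{thm-bounded-finite-sp} once the identification $M \simeq X_{\mathfrak{sp}}(\boldsymbol{\mu}) = \underrightarrow\lim X_{\mathfrak{sp}}(\boldsymbol{\mu}^n)$ in (i) has been established, and part (ii) is a direct limit consequence of Lemma 2.23.

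For part (i), let $M$ be a simple bounded nonintegrable $\mathfrak{sp}(\infty)$-module. By Proposition \ref{prop-mult-free}, $M$ is multiplicity free. Pick $\lambda \in \Supp M$ and $0\neq m\in M^\lambda$. Corollary \ref{cor-loc-simple} guarantees that there exists $N$ such that $M_n := U_n \cdot m$ is a simple $\fg_n$-module for all $n \geq N$; moreover, Corollary \ref{cor-sp-deg1} ensures that in the symplectic case no finite-dimensional extensions arise, so each $M_n$ is actually simple (no passage to a subquotient is necessary, which is the simplification compared to the $\mathfrak{sl}(\infty)$ argument). Since $M$ is nonintegrable, infinitely many (hence, past some index, all) of the $M_n$ are infinite-dimensional, and each is a simple bounded multiplicity-free $\mathfrak{sp}(2n)$-module. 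By Theorem \ref{thm-x-sp-fin}, for each $n \geq N$ there exists $\nu(n) \in \C^n$ with $M_n \simeq X_{\mathfrak{sp}}(\nu(n))$.

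Next I would normalize these $\nu(n)$ into a single infinite sequence. Since $M_n$ is a submodule of $M_{n+1}|_{\fg_n}$ via $m \mapsto m$, Lemma \ref{prop-restr-sp2n} forces $\nu(n+1) = (\overline{\nu(n+1)}, \nu(n+1)_{n+1})$ with $X_{\mathfrak{sp}}(\overline{\nu(n+1)} + \varepsilon) \simeq X_{\mathfrak{sp}}(\nu(n))$ as $\mathfrak{sp}(2n)$-modules, where $\varepsilon$ is either $0$ or $\mathrm{neg}(\nu(n+1)_1)\varepsilon_1$; by Lemma 2.23 this means $\nu(n) \sim_{\mathfrak{sp}} \overline{\nu(n+1)} + \varepsilon$. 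Using the freedom in choosing a representative of the $\sim_{\mathfrak{sp}}$-class, I can replace each $\nu(n)$ recursively so that $\nu(n+1)_i = \nu(n)_i$ for $1 \leq i \leq n$. This defines a single $\boldsymbol{\mu} \in \C^{\Z_{>0}}$ with $\boldsymbol{\mu}^n \sim_{\mathfrak{sp}} \nu(n)$ for all $n \geq N$, hence $M_n \simeq X_{\mathfrak{sp}}(\boldsymbol{\mu}^n)$. Uniqueness (up to a scalar) of a weight embedding $X_{\mathfrak{sp}}(\boldsymbol{\mu}^n)\hookrightarrow X_{\mathfrak{sp}}(\boldsymbol{\mu}^{n+1})$, provided by the analog of Corollary \ref{cor-mult-free-decomp} for $\mathfrak{sp}(2n)$ via Lemma \ref{prop-restr-sp2n}, then forces $M \simeq \underrightarrow\lim X_{\mathfrak{sp}}(\boldsymbol{\mu}^n)$, which by construction equals $X_{\mathfrak{sp}}(\boldsymbol{\mu})$ (Definition \ref{defn-infinite} applied in the $\mathfrak{sp}$ setting).

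The hard part, I expect, is precisely this bookkeeping in step two: showing that at each stage $n \to n+1$ one can simultaneously adjust the representative within its $\sim_{\mathfrak{sp}}$-class so that the resulting sequence of representatives stabilizes on each coordinate, and that the uniqueness of embeddings (provided by the single central character $\chi_{\rm sw}$ and multiplicity-freeness via $\mathfrak{sp}(2)$-considerations) forces the direct limit of the chosen embeddings to coincide up to isomorphism with $X_{\mathfrak{sp}}(\boldsymbol{\mu})$. Once (i) is in place, part (ii) follows by noting that an isomorphism $X_{\mathfrak{sp}}(\boldsymbol{\mu})\simeq X_{\mathfrak{sp}}(\boldsymbol{\mu}')$ restricts on each $\fg_n$ to an isomorphism of the $\fg_n$-module $U_n\cdot m$ with one of its images, hence (by Lemma 2.23) forces $\boldsymbol{\mu}^n\sim_{\mathfrak{sp}}(\boldsymbol{\mu}')^n$ for all sufficiently large $n$, which is equivalent to $\boldsymbol{\mu}\sim_{\mathfrak{sp}}\boldsymbol{\mu}'$; the converse direction is built into the definition. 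Parts (iii) and (iv) then reduce coordinatewise to Proposition \ref{thm-bounded-finite-sp}(i),(ii), and the final clause of (iv) is a restatement of (ii) combined with (iv).
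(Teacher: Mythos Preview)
Your overall strategy matches the paper's, but there are two points worth noting.

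First, the ``bookkeeping'' you flag as the hard part is entirely avoidable. The paper simply sets $\mu_i := \lambda_i - \tfrac{1}{2}$ for the fixed weight $\lambda\in\Supp M$ of the generator $m$. Since $(\lambda_1,\dots,\lambda_n)\in\Supp M_n$ and, by Theorem~\ref{thm-x-sp-fin}, $M_n\simeq X_{\mathfrak{sp}}(\nu)$ for some $\nu$, Proposition~\ref{thm-bounded-finite-sp}(ii) gives $(\lambda_1-\tfrac12,\dots,\lambda_n-\tfrac12)\sim_{\mathfrak{sp}}\nu$, i.e.\ $M_n\simeq X_{\mathfrak{sp}}(\boldsymbol{\mu}^n)$ directly. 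No recursive adjustment of representatives is needed: the single choice of $\lambda$ pins down $\boldsymbol{\mu}$ at once.

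Second, your uniqueness-of-embeddings claim is not quite right as stated. Lemma~\ref{prop-restr-sp2n} decomposes $X_{\mathfrak{sp}}(\boldsymbol{\mu}^{n+1})$ over the subalgebra $\widehat{\fg}_n=\mathfrak{sp}(2n)\oplus\C$, not over $\fg_n=\mathfrak{sp}(2n)$. The summands $X_{\widehat{\fg}_n}(\overline{\mu}+p(\mu_{n+1}-\mu_{n+1}')\mathrm{neg}(\mu_1)\varepsilon_1;\mu_{n+1}')$ are pairwise non-isomorphic as $\widehat{\fg}_n$-modules (the $\C$-factor separates them), but as $\fg_n$-modules several of them can coincide (whenever two values of $\mu_{n+1}'$ have the same parity of $\mu_{n+1}-\mu_{n+1}'$). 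So there is in general \emph{no} unique $\fg_n$-embedding $X_{\mathfrak{sp}}(\boldsymbol{\mu}^n)\hookrightarrow X_{\mathfrak{sp}}(\boldsymbol{\mu}^{n+1})$. The paper fixes this by passing to $\widehat{\fg}_n$: the module $\widehat{M}_n:=U(\widehat{\fg}_n)\cdot m$ is simple and isomorphic to $X_{\widehat{\fg}_n}(\boldsymbol{\mu}^n;\mu_{n+1})$, and the $\widehat{\fg}_n$-embedding into $M_{n+1}$ \emph{is} unique up to scalar. Since $\fg=\underrightarrow\lim\,\widehat{\fg}_n$, this suffices to identify the two direct systems. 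Your argument can be repaired by the same device; without it, the step ``forces $M\simeq\underrightarrow\lim X_{\mathfrak{sp}}(\boldsymbol{\mu}^n)$'' has a genuine gap.
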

\begin{proof} The proof is analogous to the proof of Theorem \ref{thm-bounded-infinite-sl}. It is enough to prove (i). Let $\lambda \in \Supp M$ and $m \in M^{\lambda}$ be such that $M_n = U(\gg_n) \cdot m$ is simple for $n>N$. Such $\lambda$ and $m$ exist thanks to Corollary \ref{cor-sp-deg1}. Set $\mu_i = \lambda_i - \frac{1}{2}$, $\boldsymbol{\mu}^n = (\mu_1,...,\mu_n)$, and $\boldsymbol{\mu}= (\mu_1,\mu_2,...)$. Then by Theorem \ref{thm-x-sp-fin} and Proposition \ref{thm-bounded-finite-sp}(ii), $M_n \simeq X_{\mathfrak{sp}} (\boldsymbol{\mu}^n)$. Recall the notation introduced prior to Lemma \ref{prop-restr-sp2n}. In particular, $\widehat{\gg}_n \simeq \gg_n \oplus \C$ is a subalgebra of $\gg_{n+1}$ and $\widehat{M}_n = U(\widehat{\gg}_n) \cdot m$ is a simple $\widehat{\gg}_n$-module isomorphic to 
$X_{\widehat{\gg}_n} (\boldsymbol{\mu}^n ; \mu_{n+1})$. We similarly have $\widehat{M}_{n+1} \simeq X_{\widehat{\gg}_{n+1}} (\boldsymbol{\mu}^{n+1}; \mu_{n+2})$. By Lemma \ref{prop-restr-sp2n}, there is a unique $\widehat{\gg}_n$-monomorphism $X_{\mathfrak{sp}} (\boldsymbol{\mu}^n; \mu_{n+1}) \to X_{\mathfrak{sp}} (\boldsymbol{\mu}^{n+1}; \mu_{n+2})$. Since $\gg =  \underrightarrow\lim \widehat{\gg}_n$, we have $X_{\mathfrak{sp}}(\boldsymbol{\mu})  =  \underrightarrow\lim X_{\mathfrak{sp}}(\mu^n) =   \underrightarrow\lim X_{\widehat{\gg}_n}(\mu^{n}; \mu_{n+1})$.  Lastly, we verify that if $M' = X_{\mathfrak{sp}}(\boldsymbol{\mu})$, then for $\lambda' \in \Supp M'$, $m' \in (M')^{\lambda'}$, and $M'_n = U(\gg_n)\cdot m'$, then $M_n' \simeq X_{\mathfrak{sp}} (\mu_1,...,\mu_n)$. Hence, $M \simeq M'$.\end{proof}

\begin{cor} Every simple nontrivial bounded $\mathfrak{sp} (\infty)$-module is isomorphic to $X_{\mathfrak{sp}} (\boldsymbol{\mu})$ for some $\boldsymbol{\mu} \in \C^{\Z_{>0}}$ or to $V$.
\end{cor}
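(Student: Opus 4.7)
The plan is to observe that this corollary is essentially a dichotomy statement that directly combines two results already established earlier in the excerpt: the classification of integrable simple bounded modules (Proposition \ref{pro2}) and the classification of nonintegrable simple bounded modules (Theorem \ref{thm-bounded-infinite-sp}).

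First I would take any simple nontrivial bounded weight $\mathfrak{sp}(\infty)$-module $M$ and split into two cases according to whether $M$ is integrable or not. In the integrable case, Proposition \ref{pro2} applies verbatim and yields $M \simeq V$, since $V$ is the unique (up to isomorphism) nontrivial simple bounded integrable weight $\mathfrak{sp}(\infty)$-module.

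In the nonintegrable case, Theorem \ref{thm-bounded-infinite-sp}(i) applies directly and produces some $\boldsymbol{\mu} \in \C^{\Z_{>0}}$ with $M \simeq X_{\mathfrak{sp}}(\boldsymbol{\mu})$. One small point that should be verified implicitly is that the phrase \emph{bounded} in the hypothesis of the corollary matches the hypothesis of both Proposition \ref{pro2} and Theorem \ref{thm-bounded-infinite-sp}, and that there is no missing case: every simple nontrivial bounded module is either integrable or nonintegrable, and these two possibilities are exhaustive by definition.

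There is no real obstacle here; the only subtlety worth flagging is that we must know the two classes are not accidentally overlapping in a way that would cause ambiguity in the statement. But this is immediate from Theorem \ref{thm-bounded-infinite-sp}(iii), which asserts that every $X_{\mathfrak{sp}}(\boldsymbol{\mu})$ is nonintegrable, so the integrable case $M \simeq V$ and the nonintegrable case $M \simeq X_{\mathfrak{sp}}(\boldsymbol{\mu})$ are genuinely disjoint. The proof can therefore be written in one or two sentences invoking the two prior results.
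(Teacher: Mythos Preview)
Your proposal is correct and matches the paper's own proof, which simply cites Theorem \ref{thm-bounded-infinite-sp}(i) and Proposition \ref{pro2}. Your additional remark about disjointness via Theorem \ref{thm-bounded-infinite-sp}(iii) is fine but not needed for the statement as phrased.
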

\begin{proof}
The statement follows from Theorem \ref{thm-bounded-infinite-sp}(i) and Proposition \ref{pro2}.
\end{proof}

In order to identify the highest and pseudo highest weight modules of the form $X_{\mathfrak{sp}}(\boldsymbol{\mu})$, we introduce some notation.  To each Borel subalgebra $\gb  (\prec, \sigma)$ of $\fg$ we assign integer sequences $\omega_{\gb}$ and $\delta_{\gb}$ as follows. For $i \in {\mathbb Z}_{>0}$, we set $(\omega_{\gb})_i := 0$ if $\sigma(i) = -1$ and $(\omega_{\gb})_i := - 1$ if $\sigma(i) = +1$. Furthermore, if $\prec$ has a maximal element $j_0$, then $\delta_{\gb} := -\varepsilon_{j_0}$ if $\sigma(j_0) = +1$ and $\delta_{\gb} := \varepsilon_{j_0}$ if $\sigma(j_0) = -1$, where $(\varepsilon_{j_0})_i  = \delta_{j_0i}$. Recall that $\left(\frac{1}{2}\right)^{(\infty)} := \left(\frac{1}{2}, \frac{1}{2}, ...\right)$.

\begin{prop} \label{prop-hw-sp-infty} Let $\boldsymbol{\mu} \in {\mathbb C}^{{\mathbb Z}_{>0}}$ and $\gb = \gb (\prec, \sigma)$ be a Borel subalgebra of  $\mathfrak{sp} (\infty)$. 

\begin{itemize}
\item[(i)] If $\prec$ has a maximal element $j_0$, then $X_{\mathfrak{sp}} (\boldsymbol{\mu})$ is a $\mathfrak b$-highest weight module if and only if $\boldsymbol{\mu} \sim_{\mathfrak{sp}} \omega_{\gb}$  or $\boldsymbol{\mu} \sim_{\mathfrak{sp}} (\omega_{\gb} + \delta_{\gb} )$ and in this case $\omega_{\gb}+ \left(\frac{1}{2}\right)^{(\infty)} $ and $\omega_{\gb} + \left(\frac{1}{2}\right)^{(\infty)}  + \delta_{\gb}$, respectively, are the $\gb$-highest weights of $X_{\mathfrak{sp}}(\boldsymbol{\mu})$. 

\item[(ii)]  If $\prec$ has no maximal element, then $X_{\mathfrak{sp}} (\boldsymbol{\mu})$ is a $\mathfrak b$-highest weight module if and only if $\boldsymbol{\mu} \sim_{\mathfrak{sp}} \omega_{\gb}$  and in this case $\omega_{\gb}+ \left(\frac{1}{2}\right)^{(\infty)} $  is the $\gb$-highest weight of $X_{\mathfrak{sp}} (\boldsymbol{\mu})$.

\item[(iii)] $X_{\mathfrak{sp}} (\boldsymbol{\mu})$ is a $\mathfrak b$-pseudo highest weight module if and only if $\boldsymbol{\mu} \not\sim_{\mathfrak{sp}} \omega_{\gb}$,  $\boldsymbol{\mu} \not\sim_{\mathfrak{sp}} (\omega_{\gb} + \delta_{\gb} )$, $\mu_i \in \Z_{\geq 0}$ whenever $\sigma(i) = -1$, and $\mu_j \in \Z_{<0}$ whenever $\sigma(j) = +1$. 
\end{itemize}
\end{prop}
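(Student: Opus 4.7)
The strategy is to combine the direct-limit description $X_{\mathfrak{sp}}(\boldsymbol{\mu})\simeq \underrightarrow\lim X_{\mathfrak{sp}}(\boldsymbol{\mu}^n)$ from Theorem \ref{thm-bounded-infinite-sp}(i) with the finite-rank characterization of highest weight modules in Proposition \ref{prop-sp2n-hw}, and track how the two options ``$\omega$'' and ``$\omega+\delta$'' behave under the direct limit.

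For parts (i) and (ii), I would observe that since $X_{\mathfrak{sp}}(\boldsymbol{\mu})$ is multiplicity free (Theorem \ref{thm-bounded-infinite-sp}(iv)), any nonzero $\mathfrak{b}$-highest weight vector $m$ is unique up to a scalar, with a uniquely determined weight $\lambda$. Such an $m$ lies in $X_{\mathfrak{sp}}(\boldsymbol{\mu}^n)$ for some $n$, and for every $n'\geq n$ it is a $\mathfrak{b}_{n'}$-highest weight vector of $X_{\mathfrak{sp}}(\boldsymbol{\mu}^{n'})$, where $\mathfrak{b}_{n'}:=\mathfrak{b}\cap\mathfrak{g}_{n'}$. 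Proposition \ref{prop-sp2n-hw} then forces $\lambda|_{\mathfrak{h}_{n'}}$ to equal either $\omega_{\mathfrak{b}_{n'}}+(\frac{1}{2})^{(n')}$ or $\omega_{\mathfrak{b}_{n'}}+\delta_{\mathfrak{b}_{n'}}+(\frac{1}{2})^{(n')}$, and correspondingly $\boldsymbol{\mu}^{n'}\sim_{\mathfrak{sp}}\omega_{\mathfrak{b}_{n'}}$ or $\omega_{\mathfrak{b}_{n'}}+\delta_{\mathfrak{b}_{n'}}$. The key observation is that $\delta_{\mathfrak{b}_{n'}}$ is supported only on the $\prec_{n'}$-maximal element $j_0(n')$ of $\{1,\dots,n'\}$. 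In case (i), once $n'\geq j_0$ we have $j_0(n')=j_0$, so $\omega_{\mathfrak{b}_{n'}}$ and $\delta_{\mathfrak{b}_{n'}}$ are truncations of $\omega_{\mathfrak{b}}$ and $\delta_{\mathfrak{b}}$; both options persist in the limit and yield the two classes $\boldsymbol{\mu}\sim_{\mathfrak{sp}}\omega_{\mathfrak{b}}$ and $\boldsymbol{\mu}\sim_{\mathfrak{sp}}\omega_{\mathfrak{b}}+\delta_{\mathfrak{b}}$, with the stated highest weights. In case (ii), $j_0(n')$ fails to stabilize as $n'$ grows, so the $\delta_{\mathfrak{b}_{n'}}$-shifted option would force the single weight $\lambda$ to deviate from $\omega_{\mathfrak{b}}$ at infinitely many coordinates, which is impossible; hence only $\boldsymbol{\mu}\sim_{\mathfrak{sp}}\omega_{\mathfrak{b}}$ survives.

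The converse implications in (i) and (ii) follow by constructing the claimed highest weight vector directly. Given $\boldsymbol{\mu}\sim_{\mathfrak{sp}}\omega_{\mathfrak{b}}$, Theorem \ref{thm-bounded-infinite-sp}(iv) places $\omega_{\mathfrak{b}}+(\frac{1}{2})^{(\infty)}$ in $\Supp X_{\mathfrak{sp}}(\boldsymbol{\mu})$, and a short case analysis over $\alpha=\sigma(i)\varepsilon_i\pm\sigma(j)\varepsilon_j$ shows that each positive root of $\mathfrak{b}(\prec,\sigma)$ added to $\omega_{\mathfrak{b}}$ alters $\mathrm{Int}^+$ at the index $i$, so the shifted weight exits the support; multiplicity-freeness then promotes that weight space to a highest weight space. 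The analogous verification works in case (i) for $\omega_{\mathfrak{b}}+\delta_{\mathfrak{b}}+(\frac{1}{2})^{(\infty)}$.

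For part (iii), local finiteness of every positive root of $\mathfrak{b}(\prec,\sigma)$ on $X_{\mathfrak{sp}}(\boldsymbol{\mu})$ is a root-by-root application of Theorem \ref{thm-bounded-infinite-sp}(iii): checking the four types of positive roots $\sigma(i)\varepsilon_i\pm\sigma(j)\varepsilon_j$ shows that universal local finiteness is equivalent to the numerical condition ``$\mu_i\in\mathbb{Z}_{\geq 0}$ whenever $\sigma(i)=-1$ and $\mu_j\in\mathbb{Z}_{<0}$ whenever $\sigma(j)=+1$''. A $\mathfrak{b}$-pseudo highest weight module is by definition one satisfying this local-finiteness condition without being an actual highest weight module; combining with (i) and (ii) gives exactly the characterization in (iii). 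I expect the main obstacle to be the case (ii) analysis: showing rigorously that a fixed weight $\lambda$ cannot realize the $\delta$-shifted option at infinitely many levels when $j_0(n')$ fails to stabilize, since this is precisely where the infinite-dimensional behavior genuinely diverges from the finite-rank picture captured by Proposition \ref{prop-sp2n-hw}.
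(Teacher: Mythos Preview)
Your approach is correct and essentially matches the paper's: the paper's proof is a one-liner invoking Theorem \ref{thm-bounded-infinite-sp}(i) and Proposition \ref{prop-sp2n-hw} and referring back to the analogous argument for $\mathfrak{sl}(\infty)$ (Proposition \ref{prop-hw-sl-infty}), whose proof says precisely that one checks when the $(\mathfrak{b}\cap\mathfrak{g}_n)$-highest weight space maps to the $(\mathfrak{b}\cap\mathfrak{g}_{n+1})$-highest weight space --- exactly your stabilization analysis of $j_0(n')$. Your treatment of (iii) via Theorem \ref{thm-bounded-infinite-sp}(iii) rather than directly via the finite-rank Proposition \ref{prop-sp2n-hw} is a harmless variation. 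One small sharpening for the case (ii) argument: the obstruction is not that a single weight $\lambda$ would deviate at infinitely many coordinates, but rather that a $\delta$-shift at level $n'$ produces a deviation at $j_0(n')$ which, once $j_0(n'')\neq j_0(n')$ for some larger $n''$, prevents $\lambda|_{\mathfrak{h}_{n''}}$ from matching \emph{either} of the two allowed forms in Proposition \ref{prop-sp2n-hw}; since $\prec$ has no maximal element such an $n''$ always exists.
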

\begin{proof}
We use the same reasoning as in the proof of Proposition \ref{prop-hw-sl-infty}. The statements follows from Theorem \ref{thm-bounded-infinite-sp}(i) and Proposition \ref{prop-sp2n-hw}. 
\end{proof}

All highest weight modules $X_{\mathfrak{sp}}(\boldsymbol{\mu})$ are described in the following corollary.
\begin{cor} \label{cor-class-hw-sp} Let $\boldsymbol{\mu} \in {\mathbb C}^{{\mathbb Z}_{>0}}$. 
\begin{itemize}
\item[(i)] The module  $X_{\mathfrak{sp}}(\boldsymbol{\mu})$  is a $\mathfrak b$-highest weight module for some Borel subalgebra  $\gb = \gb (\prec, \sigma)$ if and only if  there are $A \subset {\mathbb Z}_{>0}$, $a \in A$, $b \notin A$, so that $\mu  \sim_{\mathfrak{sp}} \omega_A$ or $\mu  \sim_{\mathfrak{sp}} \omega_A + \varepsilon_a$ or $\mu  \sim_{\mathfrak{sp}} \omega_A -\varepsilon_b$, where  $\omega_A$ is defined as follows: $(\omega_A)_i = \frac{1}{2}$ if $i \in A$ and  
$(\omega_A)_j = -\frac{1}{2}$ if $j \notin A$.
\item[(ii)]  The module  $X_{\mathfrak{sp}}(\omega_A)$ is a $\mathfrak b (\prec, \sigma )$-highest weight module if and only if $\sigma(i) =-1$ for $i \in A$ and $\sigma(j) =+1$ for $j \notin A$.  The module   $X_{\mathfrak{sp}}(\omega_A + \varepsilon_a)$ (respectively, $X_{\mathfrak{sp}}(\omega_A  - \varepsilon_b)$ ) is a $\mathfrak b (\prec, \sigma )$-highest weight module if and only if $\sigma(i) =-1$ for $i \in A$ and $\sigma(j) =+1$ for $j \notin A$, and $a$ (respectively, $b$) is a maximal element of $\prec$.
\item[(iii)] All simple nonitegrable bounded highest weight $\gg$-modules are obtained form $X_{\mathfrak{sp}} (0,0,0,...)$ and $X_{\mathfrak{sp}} (1,0,0,...)$ via a twist by an automorphism of $\gg$ that fixes $\gh$.
\end{itemize}
\end{cor}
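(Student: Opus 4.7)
My plan is to derive Corollary \ref{cor-class-hw-sp} by a case analysis built on top of Proposition \ref{prop-hw-sp-infty}, organized via the bijection $\sigma \leftrightarrow A(\sigma):=\{i\in\mathbb{Z}_{>0}:\sigma(i)=-1\}$ between maps $\sigma\colon\mathbb{Z}_{>0}\to\{\pm 1\}$ and subsets of $\mathbb{Z}_{>0}$. Under this bijection and after accounting for the uniform half-integer shift $\left(\frac{1}{2}\right)^{(\infty)}$ built into the parametrization of $X_{\mathfrak{sp}}$, the integer sequence $\omega_{\gb}$ of Proposition \ref{prop-hw-sp-infty} corresponds to the sequence $\omega_A$ of the present corollary.

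For part (i), I would simply unfold Proposition \ref{prop-hw-sp-infty}: $X_{\mathfrak{sp}}(\mu)$ is $\gb$-highest weight for some Borel $\gb=\gb(\prec,\sigma)$ iff either $\mu\sim_{\mathfrak{sp}}\omega_{\gb}$ for some $\gb$, or $\mu\sim_{\mathfrak{sp}}\omega_{\gb}+\delta_{\gb}$ for some $\gb$ whose $\prec$ has a maximum $j_0$. Under the $\sigma\leftrightarrow A$ correspondence, the first alternative is $\mu\sim_{\mathfrak{sp}}\omega_A$, while the second splits according to $\sigma(j_0)$: if $\sigma(j_0)=-1$ (so $j_0\in A$) then by the formula $\delta_{\gb}=\varepsilon_{j_0}$ we obtain $\omega_A+\varepsilon_a$ with $a=j_0\in A$; if $\sigma(j_0)=+1$ (so $j_0\notin A$) then $\delta_{\gb}=-\varepsilon_{j_0}$ gives $\omega_A-\varepsilon_b$ with $b=j_0\notin A$. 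The converse amounts to choosing $\sigma$ with $A(\sigma)=A$ and, for the second and third cases, $\prec$ with the prescribed maximum.

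Part (ii) is an immediate specialization of the same enumeration. In the case $\mu\sim_{\mathfrak{sp}}\omega_A$, Proposition \ref{prop-hw-sp-infty} forces $\omega_{\gb}=\omega_A$, pinning down $\sigma(i)=-1$ for $i\in A$ and $\sigma(j)=+1$ for $j\notin A$, with $\prec$ unconstrained. For $\mu\sim_{\mathfrak{sp}}\omega_A\pm\varepsilon_\ast$ the same constraint on $\sigma$ holds, and $\prec$ must additionally have $a$ (respectively $b$) as its maximum.

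Part (iii) is the main conceptual point. The plan is to invoke the group $G$ of automorphisms of $\gg$ stabilizing $\gh$; $G$ contains the Weyl group $W=\underrightarrow\lim\, W_n$ of finite-support permutations and sign changes of the basis $\{\varepsilon_i\}$, together with the automorphisms of $\gg$ induced by arbitrary permutations of $\mathbb{Z}_{>0}$ and by infinite sign changes, coming from the standard direct-limit presentation of $\mathfrak{sp}(\infty)$. Using a suitable permutation-automorphism one sends any subset $A$ to $\mathbb{Z}_{>0}$, reducing $\omega_A$ to $(0,0,\dots)$ up to a $\sim_{\mathfrak{sp}}$-shift; the distinguished index $a$ in the second case is then moved to position $1$, yielding $X_{\mathfrak{sp}}(1,0,0,\dots)$. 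The main obstacle is the third case: I would use a sign-change automorphism $\varepsilon_b \mapsto -\varepsilon_b$ to identify $X_{\mathfrak{sp}}(\omega_A-\varepsilon_b)$ with a module of the form $X_{\mathfrak{sp}}(\omega_{A'}+\varepsilon_{a'})$, after which the previous reduction applies. I expect this identification to require the most care, and I would verify it by computing supports via Theorem \ref{thm-bounded-infinite-sp}(iv) and then invoking the isomorphism criterion in Theorem \ref{thm-bounded-infinite-sp}(ii).
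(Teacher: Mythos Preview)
The paper offers no proof beyond placing the corollary after Proposition \ref{prop-hw-sp-infty}, so your overall strategy---reading off (i) and (ii) from that proposition via the dictionary $\sigma\leftrightarrow A=\{i:\sigma(i)=-1\}$---is precisely what is intended, and your treatment of (i) and (ii) is correct. One caveat: as literally stated in the corollary, $\omega_A$ has entries $\pm\tfrac12$, while $\omega_{\gb}$ has entries $0,-1$; these differ by $\bigl(\tfrac12\bigr)^{(\infty)}\notin Q_{\mathfrak{sp}(\infty)}$, so the condition $\mu\sim_{\mathfrak{sp}}\omega_A$ is \emph{not} the same as $\mu\sim_{\mathfrak{sp}}\omega_{\gb}$ (indeed ${\rm Int}^+(\omega_A)=\emptyset$). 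Your phrase ``after accounting for the uniform half-integer shift'' glosses over this; in fact it is a wording slip in the paper's statement, and the intended $\omega_A$ is the integer sequence $0,-1$ matching $\omega_{\gb}$.

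For (iii) there is a genuine gap in your plan. You propose to ``use a suitable permutation-automorphism to send any subset $A$ to $\mathbb{Z}_{>0}$'', but a permutation of $\mathbb{Z}_{>0}$ is a bijection and cannot carry a proper subset onto all of $\mathbb{Z}_{>0}$. The reduction you want is achieved instead by the (possibly infinite) sign-change automorphism flipping $\varepsilon_j$ for every $j\notin A$: a support computation via Theorem \ref{thm-bounded-infinite-sp}(iv), followed by the isomorphism criterion in Theorem \ref{thm-bounded-infinite-sp}(ii), shows that this twist identifies $X_{\mathfrak{sp}}(\omega_{\gb})$ with $X_{\mathfrak{sp}}(0,0,\dots)$. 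After that, a single further permutation moves the distinguished index $a$ to position $1$, yielding $X_{\mathfrak{sp}}(1,0,0,\dots)$; your treatment of the third case via the sign change at $b$ then reduces it to the second. So the ingredients you list are the right ones, but sign changes---not permutations---carry the essential load in all three cases, and the argument as you wrote it does not go through for any proper $A$.
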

\begin{examp}
The module $X_{\mathfrak{sp}} (1^{(\infty)})$ is a $\gb$-pseudo highest weight module for any $\gb = \gb (\prec, \sigma)$ such that $\sigma (i) = -1$ for all $i \in \Z_{>0}$.
\end{examp}

\section{Annihilators of simple bounded weight $\gs\gl(\infty)$-, $\go(\infty)$-, $\gs\gp(\infty)$-modules}

In this section,  $\Ann(\cdot)=\Ann_{U(\fg)}(\cdot)$ for $\fg=\gs\gl(\infty)$, $\go(\infty)$ or $\gs\gp(\infty)$.

\subsection{Annihilators of simple bounded nonintegrable modules of $\mathfrak{sl} (\infty)$}

We start by recalling the classification of the primitive  ideals of  $U(\mathfrak{sl} (\infty))$ obtained in \cite{PP}. For $x,y \in \Z_{\geq 0}$ and partitions $\lambda, \mu$, denote by $I(x,y,\lambda, \mu)$ the annihilator of the  $U(\mathfrak{sl} (\infty))$-module $(S^{\cdot} (V))^{\otimes x} \otimes (\Lambda^{\cdot}(V))^{\otimes y} \otimes S^{\lambda} V \otimes S^{\mu} V_*$.

\begin{thm}[Theorem 2.1, \cite{PP}]
All ideals  $I(x,y,\lambda, \mu)$ are primitive and nonzero, and any nonzero primitive
ideal $I$ of $U(\mathfrak{sl} (\infty))$  equals exactly one of these ideals.
\end{thm}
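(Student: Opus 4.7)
\emph{Plan.} I would prove the three assertions---that each $I(x,y,\lambda,\mu)$ is nonzero and primitive, that different quadruples give different ideals, and that every nonzero primitive ideal arises this way---in that order.

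\emph{Primitivity.} Write $T:=(S^{\cdot}V)^{\otimes x}\otimes(\Lambda^{\cdot}V)^{\otimes y}\otimes S^{\lambda}V\otimes S^{\mu}V_*$ and $I:=\Ann T$. To prove $I$ primitive, I would exhibit a simple subquotient $L$ of $T$ with $\Ann L=I$. The natural candidate is a generic simple constituent cut out by the $Z_n$-eigenspace decomposition of $T|_{\fg_n}$ for large $n$; its annihilator in $U(\fg)$ should coincide with $I$ because the Littlewood--Richardson combinatorics governing $T|_{\fg_n}$ stabilizes as $n\to\infty$, so passing to a single generic constituent preserves the kernel on all tensor factors. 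Nonzeroness is routine: a suitable polynomial in $Z_n$ for $n>x+y+\ell(\lambda)+\ell(\mu)$ will act by zero on $T$ and hence lie in $I$.

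\emph{Distinctness.} To the ideal $I(x,y,\lambda,\mu)$ I would attach the pair $(x+\ell(\lambda),\,y+\ell(\mu))$ of ``covariant'' and ``contravariant'' ranks, read off from the stable highest weights of simple $\fg_n$-subquotients of $T|_{\fg_n}$, together with $(\lambda,\mu)$ themselves, recovered from finer asymptotic data of those highest weights. A direct verification then shows that these invariants separate the quadruples.

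\emph{Exhaustion.} The hardest assertion is that every nonzero primitive ideal $I$ equals some $I(x,y,\lambda,\mu)$. Given a simple $\fg$-module $M$ with $\Ann M=I$, one controls the descending sequence of primitive ideals $J_n\subset U_n$ associated to $I$ via Joseph's theorem, which presents each $J_n$ in terms of a pair of Young diagrams. The main point, and the real obstacle, is to show that these diagrams stabilize in a controlled way, up to a bounded asymptotic correction recorded by the integers $x$ and $y$, so that in the limit one recovers partitions $\lambda,\mu$ with $I=I(x,y,\lambda,\mu)$. This step would use the coherent-family machinery of Mathieu \`a la \cite{M} together with the classification of simple bounded weight modules established in the main body of the present paper, which ensures that the asymptotic correction is indeed captured by tensor products of symmetric and exterior algebras of $V$ and $V_*$.
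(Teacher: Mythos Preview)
The paper does not prove this statement: it is quoted verbatim as Theorem~2.1 of \cite{PP} and used as a black box in Section~7. There is therefore no ``paper's own proof'' to compare against; the entire argument lives in \cite{PP} (and, for the exhaustion step, is further developed in \cite{PP2}).

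That said, your sketch has a real gap in the exhaustion step. You propose to control the sequence of primitive ideals $J_n\subset U_n$ by invoking Mathieu's coherent families together with the classification of simple \emph{bounded weight} modules obtained in the present paper. But a primitive ideal of $U(\mathfrak{sl}(\infty))$ is the annihilator of an arbitrary simple module, not of a bounded weight module; the results of Sections~4--6 here say nothing about such $M$, and there is no mechanism in your outline that reduces the general case to the bounded weight case. The actual proof in \cite{PP} proceeds quite differently: it analyses the intersections $I\cap U_n$ via Duflo's theorem and Joseph's classification of primitive ideals of $U(\mathfrak{sl}(n+1))$ in terms of left and right Robinson--Schensted tableaux, and then tracks how these tableaux behave along the chain $U_n\hookrightarrow U_{n+1}$ to extract the stable data $(x,y,\lambda,\mu)$. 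None of this machinery is reproduced or replaced in your proposal. Your primitivity and distinctness paragraphs are closer in spirit to what is done in \cite{PP}, but they too are only plans, not arguments; in particular, the nonzeroness claim needs care since the centre of $U(\mathfrak{sl}(\infty))$ itself is trivial, so one must produce a genuine (noncentral) element of $\bigcup_n U_n$ that annihilates $T$.
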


The goal of this subsection is to prove the following.

\begin{thm} \label{thm-ann-nonint-sl}
Let $M$ be a simple bounded nonintegrable module of  $ \gg=\mathfrak{sl} (\infty)$. Then $\Ann M = I(1,0,\emptyset,\emptyset)$.
\end{thm}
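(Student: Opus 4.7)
The plan is to identify both $\Ann M$ and $I(1,0,\emptyset,\emptyset)$ with the kernel of the oscillator homomorphism
\begin{equation*}
\varphi\colon U(\mathfrak{sl}(\infty))\longrightarrow \mathcal{D}(\infty),\qquad e_{ij}\longmapsto x_i\partial_j,
\end{equation*}
whence equality of the two ideals follows. By Theorem~\ref{thm-bounded-infinite-sl} we may write $M\simeq X_{\mathfrak{sl}}(\boldsymbol{\mu})$ for some nonintegrable $\boldsymbol{\mu}\in\mathbb{C}^{\mathbb{Z}_{>0}}$, realised as $M=\underrightarrow{\lim}\,X_{\mathfrak{sl}}(\boldsymbol{\mu}^k)$ with $\boldsymbol{\mu}^k=(\mu_1,\ldots,\mu_k)$, while by definition $I(1,0,\emptyset,\emptyset)=\Ann S^{\cdot}(V)$ with $S^{\cdot}(V)=\mathbb{C}[x_1,x_2,\ldots]$.

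The easy inclusion $I(1,0,\emptyset,\emptyset)\subseteq\Ann M$ goes as follows. The $U(\mathfrak{sl}(\infty))$-action on $X_{\mathfrak{sl}}(\boldsymbol{\mu})$ factors through $\varphi$, because this module is by construction a subquotient of the $\mathcal{D}(\infty)$-module $\mathcal{F}(\boldsymbol{\mu})$; similarly the action on $\mathbb{C}[x_1,x_2,\ldots]$ factors through $\varphi$. Since $\mathbb{C}[x_1,x_2,\ldots]$ is a faithful $\mathcal{D}(\infty)$-module (for instance because $\mathcal{D}(\infty)=\underrightarrow{\lim}\,\mathcal{D}_{n+1}$ is simple as a direct limit of the simple Weyl algebras $\mathcal{D}_{n+1}$, so any nonzero $\mathcal{D}(\infty)$-module is faithful), we obtain $\Ann S^{\cdot}(V)=\ker\varphi$, and hence $I(1,0,\emptyset,\emptyset)=\ker\varphi\subseteq\Ann M$.

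For the reverse inclusion it suffices to show $\Ann M\cap U_n\subseteq\ker\varphi_n$ for every $n$, where $U_n=U(\mathfrak{sl}(n+1))$ and $\varphi_n=\varphi|_{U_n}$. Fix $u\in U_n\cap\Ann M$. Iterating Lemma~\ref{lem-decomp} along the chain $\mathfrak{sl}(n+1)\subset\mathfrak{sl}(n+2)\subset\cdots\subset\mathfrak{sl}(k)$, the restriction $X_{\mathfrak{sl}}(\boldsymbol{\mu}^k)|_{\mathfrak{sl}(n+1)}$ splits as a direct sum $\bigoplus_\alpha X_{\mathfrak{sl}}(\nu_\alpha)$ of simple multiplicity-free $\mathfrak{sl}(n+1)$-modules with $\nu_\alpha\in\mathbb{C}^{n+1}$, and as $k\to\infty$ the scalars $|\nu_\alpha|$ cover an infinite subset of $|\boldsymbol{\mu}^{n+1}|+\mathbb{Z}\subset\mathbb{C}$. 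By Proposition~\ref{prop-x-sl}(iii) together with Mathieu's coherent family formalism, $\Ann_{U_n}X_{\mathfrak{sl}}(\nu_\alpha)$ coincides with the annihilator of the coherent family $\mathcal{M}(|\nu_\alpha|)$ to which $X_{\mathfrak{sl}}(\nu_\alpha)$ belongs, so depends only on $|\nu_\alpha|$; since $u$ annihilates every summand, $\varphi_n(u)$ lies in $\bigcap_a\Ann_{U_n}\mathcal{M}(a)$ over this infinite set of $a$'s. The main step, and the main obstacle, is to show that this intersection equals $\ker\varphi_n$: each $\Ann_{U_n}\mathcal{M}(a)/\ker\varphi_n$ is cut out inside $U_n/\ker\varphi_n$ by a single Casimir-eigenvalue equation depending algebraically on $a$, and infinitely many distinct such equations force zero. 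Equivalently, one may use Lemma~\ref{lem-loc-ann} (twisted localization preserves annihilators) to transport these families to integer coherent families $\mathcal{M}(m)$ containing the finite-dimensional modules $S^m(V_{n+1})$, and then $\bigcap_{m\in\mathbb{Z}_{\geq 0}}\Ann_{U_n}S^m(V_{n+1})=\ker\varphi_n$---itself a consequence of the faithfulness of $\mathbb{C}[x_1,\ldots,x_{n+1}]$ over the simple algebra $\mathcal{D}_{n+1}$---closes the argument and gives $u\in\ker\varphi_n$.
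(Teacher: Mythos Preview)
Your approach via the oscillator homomorphism $\varphi$ is genuinely different from the paper's, and the easy inclusion $I(1,0,\emptyset,\emptyset)=\ker\varphi\subseteq\Ann M$ is correct and cleanly argued: the identification $I(1,0,\emptyset,\emptyset)=\ker\varphi$ via faithfulness of $\mathbb{C}[x_1,x_2,\ldots]$ over the simple algebra $\mathcal{D}(\infty)$ is valid, and the action on $X_{\mathfrak{sl}}(\boldsymbol{\mu})$ does factor through $\varphi$. The paper establishes this inclusion by an entirely different route, namely by realising each $U_n$-constituent as a $\gg_n$-submodule of a simple highest weight $\mathfrak{sl}(\infty)$-module whose annihilator is computed in \cite{PP2}.

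The reverse inclusion, however, has real gaps. First, the assertion that $\Ann_{U_n}X_{\mathfrak{sl}}(\nu_\alpha)$ coincides with $\Ann_{U_n}\mathcal{M}(|\nu_\alpha|)$ is not justified and is false in general: a finite-dimensional constituent of $\mathcal{M}(a)$ has strictly larger annihilator than the family. One can arrange for all $X_{\mathfrak{sl}}(\nu_\alpha)$ to be infinite-dimensional by taking $n$ large, and then their annihilators agree with those of highest weight modules in the same family (by Corollary~2.7), but you still need to show that this common primitive ideal equals $\Ann_{U_n}\mathcal{M}(|\nu_\alpha|)$, which is an additional step. Second, and more seriously, your alternative ``transport to $S^m(V_{n+1})$ via twisted localization'' is incorrect: twisted localization shifts the support by an element of $\langle\Sigma\rangle_{\mathbb{C}}\subset Q_n\otimes\mathbb{C}$, which consists of trace-zero vectors, so $|\nu|$ is preserved. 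If $|\boldsymbol{\mu}^{n+1}|\notin\mathbb{Z}$ you will never reach an integer coherent family this way. Finally, the ``single Casimir-eigenvalue equation'' sketch is too vague: what you actually need is that the set $\bigcup_\alpha\Supp X_{\mathfrak{sl}}(\nu_\alpha)$ (intersected, for each root-lattice weight $\beta$ of $u$, with its $\beta$-translate) is Zariski-dense in $\mathbb{C}^{n+1}$, so that the polynomial matrix coefficients $p_\beta(\lambda)$ of $\varphi_n(u)$ vanish identically. This density is provable, but it is a nontrivial combinatorial verification depending on the sign pattern of $\boldsymbol{\mu}^{n+1}$ and on the infinitude of the set of $|\nu_\alpha|$'s, and you have not carried it out.

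The paper sidesteps all of this for the reverse inclusion by invoking the lattice of primitive ideals of $U(\mathfrak{sl}(\infty))$ from \cite{PP}: the only primitive ideals strictly containing $I(1,0,\emptyset,\emptyset)$ are the integrable ones $I(0,0,\lambda',\emptyset)$ and $I(0,0,\emptyset,\mu')$, and these are excluded because their intersection with each $U_n$ has finite codimension while $M|_{\gg_n}$ has an infinite-dimensional constituent. This is less elementary (it imports the classification) but avoids the analytic difficulty you encountered.
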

\begin{proof}
Set $J:=I(1,0,\emptyset,\emptyset)$. Using Theorem \ref{thm-bounded-infinite-sl}, let $M \simeq X_{\mathfrak{sl}}(\mu)$ for some $\mu \in {\mathbb C}^{{\mathbb Z}_{\geq 0}}$.  Then all simple subquotients of $M_{|\fg_n}$ are simple multiplicity-free $\fg_n$-modules, and by Theorem \ref{thm-clas-bounded} are isomorphic to twisted localizations of simple multiplicity-free highest weight $\fg_n$-modules. Since twisted localization does not change annihilators by Lemma \ref{lem-loc-ann}, we conclude that the annihilators in $U_n$ of the simple constituents of $M_{|\fg_n}$ are annihilators of simple multiplicity-free highest weight modules.  Primitive ideals of $U_n$ are invariant under conjugation by inner automorphisms of $\fg_n$, so the annihilators in question are annihilators of simple highest weight modules with respect to the Borel subalgebra $\fb_n$.  Next, by Proposition \ref{prop-sln-hw}, a simple multiplicity-free $\fb_n$-highest weight $\fg_n$-module has highest weight $\lambda=\varepsilon_{\gb_n} (i_0,a)$
where $a\in\mathbb{C}$ is arbitrary  and $1\leq i_0\leq n+1$.

Let $\tilde\lambda$ be the weight of $\mathfrak{sl}(\infty)$, which extends $\lambda$ by zero (i.e. $\tilde\lambda_j=\lambda_j$ for $1\leq j\leq n+1$, $\tilde\lambda_j=0$ for $j>n+1$), and let $L(\tilde\lambda)$ be the simple highest weight $\mathfrak{sl}(\infty)$-module with highest weight $\tilde\lambda$ with respect to the Borel subalgebra $\fb(<)$ of $\mathfrak{sl}(\infty)$; here $<$ is the usual order on the $\mathbb{Z}_{>0}$ and $\fb(<)=\underrightarrow\lim\fb_n$. By Example 8.1 in \cite{PP2}, the  $\mathfrak{sl} (\infty)$-module $L(\tilde\lambda)$ has annihilator $J$, unless $L(\tilde\lambda)$ is isomorphic to $S^t (V)$ (in the latter case, $\tilde\lambda=(t,0\dots)$ ). Moreover, $ \Ann S^t (V)=I(0, 0, (t), \emptyset)$ contains $J$ by Theorem \ref{thm-simple-integ-sl} in \cite{PP}. Since $L_{\fg_n}(\lambda)$ is a $\fg_n$-submodule of $L(\tilde\lambda)$, we have

\begin{equation}\label{eq-inter}
 J \cap U_n \subset \bigcap \Ann_{U_n} L,
\end{equation}
the intersection being taken over all annihilators of simple multiplicity-free $\fb_n$-highest weight $\fg_n$-modules $L$.
Thus $J \subset \Ann M$. 

Now, Theorem 5.3 in \cite{PP} implies that $J$ is contained properly only in primitive ideals of the form $I(0,0,\emptyset, \mu')$, or  $ I(0,0,\lambda', \emptyset)$ where $\lambda'$ and $\mu'$ are partitions. However, the ideals $I(0,0,\lambda', \emptyset) \cap U_n$ and $I(0,0,\emptyset, \mu') \cap U_n$  have finite codimension in $U_n$ for each $n$, therefore  cannot annihilate infinite-dimensional modules. Since $M_{|\fg_n}$ has at least one infinite-dimensional simple constituent, we conclude that 
$$\Ann M = J. $$ \end{proof}

\subsection{Annihilators of simple bounded nonintegrable modules of $\mathfrak{sp} (\infty)$}
In this subsection $\gg = \mathfrak{sp} (\infty)$. Recall that the two Shale-Weil $\fg_n=\gs\gp(2n)$-modules $L\left(\frac{1}{2}, \frac{1}{2},\dots, \frac{1}{2}\right)$ and $L\left( \frac{3}{2},  \frac{1}{2}, \dots,  \frac{1}{2}\right)$ have the same annihilator $J_n\subset U_n$. Since the $\fg$-module $L\left(\frac{1}{2}, \frac{1}{2},\dots \right)$ decomposes as a direct sum of copies of $L\left(\frac{1}{2}, \frac{1}{2},\dots, \frac{1}{2}\right)$ and $L\left( \frac{3}{2},  \frac{1}{2}, \dots,  \frac{1}{2}\right)$ after restriction to $\fg_n$, we conclude that $I_{\rm sw}:=\underrightarrow\lim\, J_n$ is a well-defined  primitive ideal of $U(\fg)$.   Clearly $I_{\rm sw}=\Ann L\left(\frac{1}{2}, \frac{1}{2},\dots, \frac{1}{2} \right)$. Note also that the intersection $I_{\rm sw}\cap U_n$ is a maximal Joseph ideal for each $n$, and therefore $I_{\rm sw}$ is a maximal ideal in $U(\fg)$. 

\begin{thm}
Let $M$ be a simple bounded nonintegrable module of  $\gg =  \mathfrak{sp} (\infty)$. Then $\Ann M = I_{\rm sw}$.
\end{thm}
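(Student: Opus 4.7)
My plan is to model the argument on the proof of Theorem~\ref{thm-ann-nonint-sl}; the situation here is actually simpler because the target $I_{\rm sw}$ is a single maximal ideal rather than one element of a family of primitive ideals.

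First, by Theorem~\ref{thm-bounded-infinite-sp} I may write $M\simeq X_{\mathfrak{sp}}(\boldsymbol{\mu})$ for some $\boldsymbol{\mu}\in\C^{\Z_{>0}}$, and by Corollary~\ref{cor-loc-simple} the module $M$ is locally simple. Fix an arbitrary nonzero $m\in M$. Since $M$ is nonintegrable and uniform (as $M$ is simple), some root space $\gg^{\alpha}$ acts injectively on $M$, and therefore injectively on $m$; hence $U(\fg_n)\cdot m$ is infinite-dimensional as soon as $\alpha\in\Delta_n$. Combining this with local simplicity, for all sufficiently large $n$ the $\fg_n$-module $U(\fg_n)\cdot m$ is simple, infinite-dimensional, and bounded; by Proposition~\ref{prop-mult-free} it is multiplicity free, so Theorem~\ref{thm-x-sp-fin} identifies it with $X_{\mathfrak{sp}}(\nu)$ for some $\nu\in\C^n$.

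The key step is to show that $\Ann_{U_n}X_{\mathfrak{sp}}(\nu)=J_n$ for every such $\nu$. By Theorem~\ref{thm-clas-bounded} the module $X_{\mathfrak{sp}}(\nu)$ is a twisted localization of a simple bounded highest weight $\fg_n$-module $L$, and the corollary following Lemma~\ref{lem-loc-ann} yields $\Ann X_{\mathfrak{sp}}(\nu)=\Ann L$. Since $L$ inherits multiplicity-freeness, infinite dimensionality, and central character $\chi_{\rm sw}$ from $X_{\mathfrak{sp}}(\nu)$, and since I may conjugate $L$ by an inner automorphism of $\fg_n$ (preserving annihilators) to make it a $\fb_n$-highest weight module, Proposition~\ref{prop-sp2n-hw} forces $L$ to be one of the two Shale--Weil $\fg_n$-modules, whose common annihilator is $J_n$ by definition. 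Consequently $J_n\cdot m=0$.

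Combining the two previous paragraphs, any $u\in I_{\rm sw}$ lies in $J_n$ for all sufficiently large $n$, and so by choosing $n$ large enough (depending on both $u$ and $m$) I obtain $u\cdot m=0$. Since $m$ was arbitrary, $I_{\rm sw}\subset\Ann M$. To finish, $\Ann M\neq U(\fg)$ because $M\neq 0$, while $I_{\rm sw}$ is a maximal ideal of $U(\fg)$ (as noted in the paragraph preceding the theorem, since $I_{\rm sw}\cap U_n=J_n$ is a maximal Joseph ideal for each $n$), and therefore $\Ann M=I_{\rm sw}$. I do not anticipate a serious obstacle: the only delicate point is keeping the choice of $n$ uniform, which is handled by running the argument pointwise in $m$ and in $u$.
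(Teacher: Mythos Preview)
Your proof is correct and follows essentially the same route as the paper: reduce the annihilator of each local piece $X_{\mathfrak{sp}}(\nu)$ to that of a simple multiplicity-free $\fb_n$-highest weight module via twisted localization (Theorem~\ref{thm-clas-bounded}, Lemma~\ref{lem-loc-ann} and its corollary), identify the latter as a Shale--Weil module via Proposition~\ref{prop-sp2n-hw}, and conclude using maximality of $I_{\rm sw}$. The paper's proof is extremely terse (two sentences) and omits the maximality step you spell out at the end; you have simply written out the details.
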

\begin{proof} Using twisted localization,Theorem \ref{thm-clas-bounded} and Lemma \ref{lem-loc-ann}, we conclude that all simple multiplicity free  $\gg_n$-modules $M_n$ have   annihilator equal to $J_n$.  Therefore  $\Ann M= I_{\rm sw}$ for any nonitegrable simple multiplicity-free weight $\fg$-module $M$.  \end{proof}

\subsection{Annihilators of simple bounded integrable weight $\mathfrak{sl} (\infty)$-, $\mathfrak{o} (\infty)$- , $\mathfrak{sp} (\infty)$-modules}

\begin{thm} Let $\gg = \mathfrak{sl} (\infty), \mathfrak{o} (\infty), \mathfrak{sp} (\infty)$, and let 
 $M$ be a nontrivial simple bounded integrable $\gg$-module.

\begin{itemize}
\item[(i)] If $\gg = \mathfrak{sl} (\infty)$, then
\begin{itemize}
\item[(a)] $\Ann M \simeq I(0,1,\emptyset, \emptyset)$ for $M = \Lambda_A^{\frac{\infty}{2}}  V$;
\item[(b)] $\Ann M \simeq I(1,0,\emptyset, \emptyset)$ for $M = S_A^{\infty}  V$ or $M = S_A^{\infty} V_*$;
\item[(c)] $\Ann M \simeq I(0,0,\lambda, \emptyset)$ for $M = S^{\lambda} V$;
\item[(d)] $\Ann M \simeq I(0,0,\emptyset, \mu)$ for $M = S^{\mu} V_*$.
\end{itemize}
\item[(ii)] If $\gg = \mathfrak{sp} (\infty)$, then $M \simeq  V$ and 
$ \Ann  V\neq I_{\rm sw}$. 
\item[(iii)] If $\gg = \mathfrak{o} (\infty)$ then,  under the assumption that $M\not\simeq V$, $\Ann M$ does not depend on $M$. Moreover $\Ann M\neq 0$ and $\Ann M\neq \Ann V$.
\end{itemize}
\begin{proof}
Claims  (i)(c) and (i)(d) follow directly from the definitions. For (i)(a) one needs to check that $\Ann \Lambda^{\frac{\infty}{2}}_A(V)$ coincides with $\Ann \Lambda^\cdot(V)$ for any $A$. This can be done by checking that ($\Ann \Lambda^{\frac{\infty}{2}}_A(V))\cap U_n$ and ($\Ann \Lambda^{\cdot}(V))\cap U_n$ coincide for any $M$. Similarly, we prove (b) by checking that ($\Ann S_A^{\infty}  V)\cap U_n=(\Ann S^{\cdot}(V))\cap U_n$ and ($\Ann S_A^{\infty}  V_*)\cap U_n=(\Ann S^{\cdot}(V_*))\cap U_n$  for any $n$. Claims (ii) and (iii) are also straightforward. In particular, it is easy to check that for $\Ann M$ in claim (iii) the intersection $(\Ann M)\cap U_n$   for $\fg_n\simeq \go(2n+1)$ equals $\Ann_{U_n}\mathcal{S}_n$ (for $\fg_n=\go(2n)$ we have $(\Ann M)\cap U_n=(\Ann_{U_n}\mathcal{S}^+_n)\cap(\Ann_{U_n}\mathcal{S}^-_n$)).
\end{proof}

\end{thm}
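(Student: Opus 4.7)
My plan is to reduce each statement to a comparison of the ideals $(\Ann M)\cap U_n$ as $n$ grows. Since $U(\fg)=\bigcup_n U_n$, the annihilator $\Ann M$ is recovered as the union of $\Ann_{U_n}(M|_{\fg_n})$, and $\Ann_{U_n}(M|_{\fg_n})$ coincides with the intersection of the $U_n$-annihilators of the simple constituents of $M|_{\fg_n}$. Each module listed in the theorem is defined as a direct limit of explicit finite-dimensional simple $\fg_n$-modules, so these restrictions are transparent, and the task becomes to identify the resulting ideals by invoking known facts about annihilators of finite-dimensional simple modules over classical Lie algebras.

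For (i)(c) and (i)(d) the identification is tautological: $I(0,0,\lambda,\emptyset)$ is by definition $\Ann S^\lambda V$ and $I(0,0,\emptyset,\mu)=\Ann S^\mu V_*$. For (i)(a) I would observe that $\Lambda_A^{\infty/2}V|_{\fg_n}$ is a direct sum of copies of $\Lambda^{k_n}(V_{n+1})$ with $k_n=\#(A\cap [1,n])$, and that as $n\to\infty$ the value $k_n$ takes infinitely many values in $\{1,\ldots,n\}$ (since $A$ is semi-infinite). The point is then to check that $\Ann_{U_n}\Lambda^k(V_{n+1})$ is independent of $k\in\{1,\ldots,n\}$ and equals $(\Ann \Lambda^\cdot V)\cap U_n$. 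Part (i)(b) is the exact analogue: $(\Ann S_A^\infty V)\cap U_n=\bigcap_{m\geq 1}\Ann_{U_n} S^m(V_{n+1})=(\Ann S^\cdot V)\cap U_n$, and similarly for $V_*$.

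For part (ii), Proposition \ref{pro2} forces $M\simeq V$. To separate $\Ann V$ from $I_{\rm sw}$ I would note that $V|_{\fg_n}$ has $V_{2n}$ as a summand together with a trivial remainder, so $\Ann_{U_n}(V|_{\fg_n})$ has finite codimension in $U_n$; on the other hand $I_{\rm sw}\cap U_n$ is the annihilator of the infinite-dimensional Shale-Weil modules and has infinite codimension. For part (iii), Proposition \ref{asd} gives either $M\simeq V$ or $M$ is of spinor type $\mathcal{S}_A^B$ or $\mathcal{S}_A^D$. Independence of $\Ann M$ on $A$ follows from branching rules for spinors: for type $B$, $\mathcal{S}_A^B|_{\fg_n}$ is a direct sum of copies of $\mathcal{S}_n$, so its annihilator is $\Ann_{U_n}\mathcal{S}_n$; for type $D$ and $n$ large, $\mathcal{S}_A^D|_{\fg_n}$ contains both $\mathcal{S}_n^+$ and $\mathcal{S}_n^-$ as summands (using $\mathcal{S}_{n+1}^\pm|_{\fg_n}=\mathcal{S}_n^+\oplus\mathcal{S}_n^-$), so its annihilator equals $\Ann_{U_n}\mathcal{S}_n^+\cap\Ann_{U_n}\mathcal{S}_n^-$. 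Nontriviality of $\Ann M$ is automatic because spinor modules are finite-dimensional, and distinctness from $\Ann V$ follows because the natural and spinor modules of $\fg_n$ have distinct central characters at every $n$.

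The principal obstacle will be verifying the invariance assertions ``$\Ann_{U_n}\Lambda^k(V_{n+1})$ is independent of $k\in\{1,\ldots,n\}$'' and ``$\Ann_{U_n}S^m(V_{n+1})$ is independent of $m\geq 1$''. These statements reduce to classical facts about minimal primitive ideals with a prescribed associated variety in the finite-dimensional theory: each family $\{\Lambda^k(V_{n+1})\}$ or $\{S^m(V_{n+1})\}$ gives rise to a single primitive ideal in $U_n$ once $k$ or $m$ is at least $1$, but one must rule out degenerations at the smallest values where the associated variety could a priori be larger. Once these stabilisations are in hand, the passage $n\to\infty$ via $U(\fg)=\bigcup_n U_n$ is routine.
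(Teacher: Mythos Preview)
Your overall strategy---reduce $\Ann M$ to the family of intersections $(\Ann M)\cap U_n$ and compare with the reference module---is exactly the paper's approach, and your treatment of (ii) and (iii) is correct. However, your handling of (i)(a) and the ``principal obstacle'' you identify for (i)(a) and (i)(b) contain genuine errors.

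First, the claim that $\Lambda_A^{\infty/2}V|_{\fg_n}$ is a direct sum of copies of $\Lambda^{k_n}(V_{n+1})$ is false. The module $\Lambda_A^{\infty/2}V$ is the union over $m\ge n$ of the restrictions $\Lambda^{k_m}(V_{m+1})|_{\fg_n}$, and since $V_{m+1}|_{\fg_n}=V_{n+1}\oplus \mathbb{C}^{m-n}$ (trivial complement), one has
\[
\Lambda^{k_m}(V_{m+1})|_{\fg_n}\;\simeq\;\bigoplus_{j}\Lambda^j(V_{n+1})^{\oplus\binom{m-n}{\,k_m-j\,}}.
\]
As $A$ is semi-infinite, both $k_m$ and $m-k_m$ tend to infinity, so for $m\gg 0$ every $\Lambda^j(V_{n+1})$ with $0\le j\le n+1$ occurs. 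Thus the simple $\fg_n$-constituents of $\Lambda_A^{\infty/2}V|_{\fg_n}$ are exactly all $\Lambda^j(V_{n+1})$, $0\le j\le n+1$---precisely the same constituents as those of $\Lambda^\cdot(V)|_{\fg_n}$. This immediately gives $(\Ann \Lambda_A^{\infty/2}V)\cap U_n=(\Ann \Lambda^\cdot V)\cap U_n$, with no further work required.

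Second, the invariance assertions you single out as the ``principal obstacle''---that $\Ann_{U_n}\Lambda^k(V_{n+1})$ is independent of $k\in\{1,\ldots,n\}$ and that $\Ann_{U_n}S^m(V_{n+1})$ is independent of $m\ge 1$---are simply false: these modules have pairwise distinct central characters, so their annihilators are pairwise distinct primitive ideals of $U_n$. Fortunately these claims are also unnecessary. For (i)(b), your intermediate equality $(\Ann S_A^\infty V)\cap U_n=\bigcap_{m\ge 0}\Ann_{U_n}S^m(V_{n+1})$ already coincides with $(\Ann S^\cdot V)\cap U_n$ for the same branching reason as above (both restrictions have as simple $\fg_n$-constituents exactly all $S^j(V_{n+1})$, $j\ge 0$), and no collapse of the intersection is needed. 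In short: the comparison works because the two restrictions have the \emph{same set} of simple constituents, not because that set is a singleton.
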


Note that Theorem 7.3, together with Theorem 7.4(i)(b), shows that $\Ann X_{\mathfrak{sl}}(\mu)= I(1,0,\emptyset, \emptyset)$ for any $\mu$ such that $X_{\mathfrak{sl}}(\mu)\not\simeq\mathbb{C}$.

\subsection{A final remark}

Note that all claims made in this paper about isomorphisms of weight $\mathfrak{sl}(\infty)$-, $\mathfrak{o}(\infty)$-, $\mathfrak{sp}(\infty)$-modules concern weight modules for a fixed Cartan subalgebra $\fh$. If one fixes two Cartan subalgebras $\fh_1$ and $\fh_2$ of $\fg$ and asks whether an $\fh_1$-weight module  $M_1$ is isomorphic to an $\fh_2$-weight module $M_2$, we do not have an answer to this question. It is however obvious, that a $\fg$-isomorphism
  $M_1\simeq M_2$ implies $\Ann M_1=\Ann M_2$ and that $\fh_2$ acts locally finitely on $M_1$, and vice versa. This allows to rule out some possible isomorphisms but the general isomorphism problem appears to be open.

\bibliography{ref,outref,mathsci}
\def\cprime{$'$} \def\cprime{$'$} \def\cprime{$'$} \def\cprime{$'$}
  \def\cprime{$'$}
\providecommand{\bysame}{\leavevmode\hbox
to3em{\hrulefill}\thinspace}
\providecommand{\MR}{\relax\ifhmode\unskip\space\fi MR }
\providecommand{\MRhref}[2]{%
  \href{http://www.ams.org/mathscinet-getitem?mr=#1}{#2}
} \providecommand{\href}[2]{#2}

\end{document}